\date{\today}
\definecolor{darkred}{rgb}{1,0,0} 
\definecolor{darkgreen}{rgb}{0,1,0}
\definecolor{darkblue}{rgb}{0,0,1}
\theoremstyle{plain}  
\newtheorem{theorem}{Theorem}[section]
\newtheorem*{theorem*}{Theorem}
\newtheorem{corollary}[theorem]{Corollary}
\newtheorem{lemma}[theorem]{Lemma}
\newtheorem{proposition}[theorem]{Proposition}
\newtheorem{conjecture}[theorem]{Conjecture}
\newtheorem{tech-lemma}[theorem]{Technical Lemma}
\newtheorem{definition}[theorem]{Definition}
\theoremstyle{remark}
\newtheorem{example}[theorem]{Example}
\newtheorem{notation}[theorem]{Notation}
\newtheorem{remark}[theorem]{Remark}
\newtheorem*{remark*}{Remark}
\newtheorem*{claim*}{Claim}
\newtheoremstyle{TheoremForIntro} 
        {.6em}{.6em}              
        {\itshape}                      
        {}                              
        {\bfseries}                     
        {.}                             
        { }                             
        {\thmname{#1}\thmnote{ \bfseries #3}}
    \theoremstyle{TheoremForIntro}
    \newtheorem{TheoremIntro}[theorem]{Theorem}
    \newtheorem{PropositionIntro}[theorem]{Proposition}
    \newtheorem{CorollaryIntro}[theorem]{Corollary}
\numberwithin{equation}{section}
\renewcommand{\leq}{\leqslant}
\renewcommand{\geq}{\geqslant}
\newcommand{\R}{\mathbb{R}}
\newcommand{\Z}{\mathbb{Z}}
\newcommand{\C}{\mathbb{C}}
\newcommand{\HH}{\mathbb{H}}
\newcommand{\V}{\mathbb{V}}
\newcommand{\W}{\mathbb{W}}
\newcommand{\cE}{{\mathcal E}}
\newcommand{\cG}{{\mathcal G}}
\newcommand{\cH}{{\mathcal H}}
\newcommand{\cK}{{\mathcal K}}
\newcommand{\cM}{{\mathcal M}}
\newcommand{\cO}{{\mathcal O}}
\newcommand{\cR}{{\mathcal R}}
\newcommand{\cS}{{\mathcal S}}
\newcommand{\cZ}{{\mathcal Z}}
\newcommand{\dbar}{\bar{\partial}}
\newcommand{\suchthat}{\;\;|\;\;}
\newcommand{\PSL}{\mathrm{PSL}}
\newcommand{\rS}{\mathrm{S}}
\newcommand{\rP}{\mathrm{P}}
\newcommand{\U}{\mathrm{U}}
\newcommand{\GL}{\mathrm{GL}}
\newcommand{\rL}{\mathrm{L}}
\newcommand{\SL}{\mathrm{SL}}
\newcommand{\SO}{\mathrm{SO}}
\newcommand{\rO}{\mathrm{O}}
\newcommand{\Sp}{\mathrm{Sp}}
\newcommand{\Bun}{\mathrm{Bun}}
\newcommand{\Fuch}{\mathrm{Fuch}}
\newcommand{\Hit}{\mathrm{Hit}}
\newcommand{\Sym}{\mathrm{Sym}}
\newcommand{\sym}{\mathrm{sym}}
\newcommand{\fgl}{\mathfrak{gl}}
\newcommand{\fso}{\mathfrak{so}}
\newcommand{\fg}{\mathfrak{g}}
\newcommand{\fh}{\mathfrak{h}}
\newcommand{\fl}{\mathfrak{l}}
\newcommand{\fm}{\mathfrak{m}}
\newcommand{\fo}{\mathfrak{o}}
\newcommand{\fp}{\mathfrak{p}}
\newcommand{\ft}{\mathfrak{t}}
\newcommand{\fu}{\mathfrak{u}}
\DeclareMathOperator{\ad}{ad}
\DeclareMathOperator{\Ad}{Ad}
\DeclareMathOperator{\Aut}{Aut}
\DeclareMathOperator{\tr}{tr}
\DeclareMathOperator{\rk}{rk}
\DeclareMathOperator{\Hom}{Hom}
\DeclareMathOperator{\End}{End}
\DeclareMathOperator{\Id}{Id}
\DeclareMathOperator{\Rep}{\mathcal{R}}
\newcommand{\G}{\mathrm{G}}
\newcommand{\rH}{\mathrm{H}}
\newcommand{\noi}{\noindent}
\newcommand{\smtrx}[1]{\left (\begin{smallmatrix}#1\end{smallmatrix}\right)}
\newcommand{\floor}[1]{\left\lfloor #1\right\rfloor}
\let\oldmarginpar\marginpar
\renewcommand\marginpar[1]{\oldmarginpar{\tiny\bf\begin{flushleft} #1
\end{flushleft}}}
\begin{document}

%
%

\title[$\SO(p,q)$-Higgs bundles and Higher Teichmüller components]
{$\SO(p,q)$-Higgs bundles and Higher Teichmüller components}
%
%


\author[M. Aparicio]{Marta Aparicio-Arroyo}
\address{M. Aparicio,
\newline\indent Raet | HR software and services\newline\indent
Avenida de Bruselas 7, 28108 Alcobendas, Madrid, Spain}
\email{Marta.Aparicio@raet.com}

\author[S. Bradlow]{Steven Bradlow}
\address{S. Bradlow,
\newline\indent
Department of Mathematics, University of Illinois at Urbana-Champaign\newline\indent
Urbana, IL 61801, USA}
\email{bradlow@math.uiuc.edu}

\author[B. Collier]{Brian Collier}
\address{B. Collier,
\newline\indent Department of Mathematics, University of Maryland\newline\indent College Park, MD 20742, USA}
\email{briancollier01@gmail.com}

\author[O. Garc\'{i}a-Prada]{Oscar Garc\'{i}a-Prada}
\address{O. Garc\'{i}a-Prada,
\newline\indent Instituto de Ciencias Matem\'aticas,  CSIC-UAM-UC3M-UCM, \newline\indent 
Nicol\'as Cabrera, 13--15, 28049 Madrid, Spain}
\email{oscar.garcia-prada@icmat.es}

\author[P.~B.\ Gothen]{Peter B.\ Gothen}
\address{P.~B. Gothen,
\newline\indent Centro de Matemática da Universidade do Porto, 
    \newline\indent Faculdade de Ci\^encias da Universidade do Porto, 
    \newline\indent Rua do Campo Alegre s/n, 4169-007 Porto, Portugal}
\email{pbgothen@fc.up.pt}

\author[A. Oliveira]{Andr\'e Oliveira}
\address{A. Oliveira, 
\newline\indent 
Centro de Matem\'atica da Universidade do Porto,
     \newline\indent Faculdade de Ci\^encias da Universidade do Porto, 
     \newline\indent
     Rua do Campo Alegre s/n, 4169-007 Porto, Portugal \newline\indent \textsl{On leave from:}\newline\indent Departamento de Matem\'atica, Universidade de Tr\'as-os-Montes e Alto Douro, UTAD,\newline\indent
Quinta dos Prados, 5000-911 Vila Real, Portugal}
\email{andre.oliveira@fc.up.pt\newline\indent agoliv@utad.pt}

\thanks{The authors acknowledge support from U.S. National Science Foundation grants DMS 1107452, 1107263, 1107367 ``RNMS: GEometric structures And Representation varieties" (the GEAR Network). The third author is funded by a National Science Foundation Mathematical Sciences Postdoctoral Fellowship, NSF MSPRF no. 1604263. The fourth author was partially supported by the Spanish MINECO under ICMAT Severo Ochoa project No. SEV-2015-0554, and under grant No. MTM2013-43963-P. The fifth and sixth authors were partially supported by CMUP (UID/MAT/00144/2013) and the project PTDC/MAT-GEO/2823/2014 funded by FCT (Portugal) with national funds. The sixth author was also partially supported by the Post-Doctoral fellowship SFRH/BPD/100996/2014 funded by FCT (Portugal) with national funds.
}
\keywords{Semistable Higgs bundles, connected components of moduli spaces}
\subjclass[2000]{14D20, 14F45, 14H60}

\begin{abstract}

Some connected components of a moduli space are mundane in the sense that they are distinguished only by obvious topological invariants or have no special characteristics. Others are more alluring and unusual either because they are not detected by primary invariants, or because they have special geometric significance, or both.  In this paper we describe new examples of such `exotic' components in moduli spaces of $\SO(p,q)$-Higgs bundles on closed Riemann surfaces or, equivalently, moduli spaces of surface group representations into the Lie group $\SO(p,q)$. Furthermore, we discuss how these exotic components are related to the notion of positive Anosov representations recently developed by Guichard and Wienhard. We also provide a complete count of the connected components of these moduli spaces (except for $\SO(2,q)$, with $q\geq 4$). 

\end{abstract}

\maketitle

\markleft{{Aparicio-Arroyo, Bradlow, Collier, Garc\'{i}a-Prada, Gothen, Oliveira}}

\tableofcontents


\section{Introduction}

For a fixed closed orientable surface $S$ and a semisimple Lie group $\G$, the representation variety $\mathcal{R}(S,\G)$, i.e. the moduli space of group homomorphisms from the fundamental group of $S$ into $\G$, has multiple connected components. Some of the components are mundane in the sense that they are distinguished by obvious topological invariants and have no known special characteristics. Others are more alluring and unusual, either because they are not detected by the primary invariants or because they parametrize objects of special significance, or both. 

Instances of such `exotic' components are well understood in two situations. 
The first is the case where $\G$ is the split real form of a complex semisimple Lie group, in which case the exotic components are known as Hitchin components (see \cite{liegroupsteichmuller}).  The second occurs when $\G$ is the isometry group of a non-compact Hermitian symmetric space, in which case the subspace with so-called maximal Toledo invariant has exotic components (see \cite{HermitianTypeHiggsBGG}). 
In \cite{BurgerIozziWienhardSurvey}, both of these classes of exotic components of representation varieties have been called {\em higher Teichm\"uller components} since they enjoy many of the geometric features of Teichm\"uller space.  

One common feature to all higher Teichm\"uller components is that the representations which they parametrize are all Anosov, a concept introduced by Labourie \cite{AnosovFlowsLabourie}. 
Anosov representations have many interesting dynamical and geometric properties, generalizing convex cocompact representations into rank one Lie groups. In particular, higher Teichm\"uller components consist entirely of discrete and faithful representations \cite{AnosovFlowsLabourie} which are holonomies of geometric structures on certain closed manifolds \cite{guichard_wienhard_2012}. 
In general, the Anosov condition is open in the representation variety and so does not by itself distinguish connected components.
More recently, in \cite{PosRepsGWPROCEEDINGS}, Guichard and Wienhard defined a notion of positivity which refines the Anosov property and is still an open condition. 
They conjecture that such positivity for Anosov representations is also a closed condition, and hence should detect connected components of a representation variety. 
They showed, moreover, that apart from the split real forms and the real forms of Hermitian type, the only other non-exceptional groups which allow positive representations are the groups  locally isomorphic to $\SO(p,q)$ for $1<p<q$, i.e. to the special orthogonal groups with signature $(p,q)$.  This leads directly to the conjecture that $\Rep(S,\SO(p,q))$ should have `exotic' connected components, fitting in the higher Teichm\"uller components framework in the above sense.

In this paper we establish the existence of such exotic components, count them, and show that each exotic component contains positive Anosov representations.  Our methods exploit the Non-Abelian Hodge (NAH) correspondence which defines a homeomorphism between $\mathcal{R}(S,\G)$ and the moduli space of polystable $\G$-Higgs bundles on a Riemann surface, say $X$, homeomorphic to $S$.  Denoting these moduli spaces by $\cM(X,\G)$ or simply $\cM(\G)$ (where we drop the $X$ from the notation unless explicitly needed for clarity or emphasis) our results thus actually address the connected components of $\cM(\SO(p,q))$.  Our main theorem\footnote{This result was announced, without details, in \cite{SOpqAnnouncement}. We now provide the details of the proof.} 
has two parts --- one is an existence result and one is a non-existence result. Namely we prove

\begin{enumerate}
\item the existence of a class of explicitly described exotic components of $\cM(\SO(p,q))$ for $1<p\leq q$, and
\item the non-existence of any other exotic components of $\cM(\SO(p,q))$ for both $p=1$ and $2<p\leq q$.
\end{enumerate}

\noindent Combining these two results, and including the $2^{2g+2}$ `mundane' components, yields a complete count of the connected components for the moduli spaces of $\SO(p,q)$-Higgs bundles $\cM(X,\SO(p,q))$ or, equivalently, the representation varieties $\Rep(S,\SO(p,q))$, for $2<p\leq q$. 

\begin{TheoremIntro}[\ref{Theorem: component count p>2}] 
    Let $X$ be a compact Riemann surface of genus $g\geq 2$ and denote the moduli space of $\SO(p,q)$-Higgs bundles on $X$ by $\cM(\SO(p,q)).$ For $2< p\leq q$, we have 
 \[|\pi_0(\cM(\SO(p,q)))|=2^{2g+2}+\begin{dcases}
        2^{2g+1}+2p(g-1)-1&\text{if\ $q=p+1$}\\
        2^{2g+1}&\text{otherwise}.
    \end{dcases}\]
\end{TheoremIntro}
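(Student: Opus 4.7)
The plan is to apply the standard Morse-theoretic strategy using the Hitchin function $f : \cM(\SO(p,q)) \to \R$, $f(E,\varphi) = \|\varphi\|_{L^2}^2$. This function is proper on $\cM(\SO(p,q))$, so each connected component of $\cM(\SO(p,q))$ contains at least one connected component of the subvariety $\cN \subset \cM(\SO(p,q))$ of local minima of $f$, and counting $|\pi_0(\cM(\SO(p,q)))|$ reduces to counting $|\pi_0(\cN)|$ together with the verification that distinct components of $\cN$ lie in distinct components of $\cM(\SO(p,q))$ (which follows from properness and the Morse-Bott structure of $f$). So I organize the argument around identifying $\cN$ and counting its components.

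First I would dispose of the \emph{mundane} contribution. The zero-Higgs-field locus $\cN_0 \subset \cN$ consists of polystable $\SO(p)\times\SO(q)$-bundles $(V,W)$ (with the orthogonal constraint $\det V\otimes \det W \cong \cO_X$). Its topological invariants are $w_1(V) = w_1(W) \in H^1(X,\Z/2) \cong (\Z/2)^{2g}$ together with $w_2(V),\, w_2(W) \in H^2(X,\Z/2) \cong \Z/2$. For $2 < p \leq q$ the moduli space of orthogonal bundles of rank $\geq 3$ with fixed Stiefel-Whitney invariants is connected, so $\cN_0$ contributes exactly $2^{2g+2}$ components. This is where the $2^{2g+2}$ summand comes from.

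The heart of the proof, and what I expect to be the main obstacle, is the analysis of the \emph{exotic} minima $\cN\setminus\cN_0$, those with $\varphi \neq 0$. Using the infinitesimal criterion for Hitchin function minima (local minima correspond to $\C^*$-fixed polystable Higgs bundles whose Hodge decomposition satisfies a specific vanishing of the deformation complex), one reduces the classification to Higgs bundles of a prescribed Hodge type. I would then develop a Cayley-type correspondence to identify the moduli space of such Hodge-type minima with a moduli space of auxiliary twisted pairs $(N, \beta)$, where $N$ is a rank-$p$ orthogonal bundle and $\beta$ a $K$-valued symmetric-type morphism, plus an auxiliary line bundle $I$ satisfying an orthogonality condition. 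Counting connected components of this auxiliary moduli space then splits into two cases. In the generic case $q > p+1$ the Cayley moduli carries only $\Z/2$-invariants coming from a choice of square root type of $I$, giving exactly $2^{2g+1}$ additional components. In the borderline case $q = p+1$, the Cayley correspondence involves an extra line-bundle parameter whose degree $d$ is constrained to a Milnor--Wood-type range; each allowable $d$ contributes its own component, which after careful bookkeeping of boundary overlaps (one value coincides with a bundle already counted) yields the extra $2p(g-1) - 1$ components.

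Combining these gives
\[
|\pi_0(\cM(\SO(p,q)))| = \underbrace{2^{2g+2}}_{\cN_0} + \underbrace{2^{2g+1}}_{\text{generic exotic}} + \begin{dcases} 2p(g-1) - 1 & q = p+1 \\ 0 & q > p+1, \end{dcases}
\]
which is the claimed formula. The routine parts are the properness of $f$, the connectedness results for orthogonal bundles with fixed topology, and the identification of the Hitchin-function minima. The delicate and genuinely hard step is the third one: setting up the correct Cayley correspondence on the nose, checking that the resulting auxiliary moduli problem is well-posed and polystable, and verifying connectedness of each stratum (including the somewhat surprising $2p(g-1)-1$ count in the case $q=p+1$, which reflects the fact that the associated line-bundle parameter varies in a discrete family bounded by a Milnor--Wood-type inequality).
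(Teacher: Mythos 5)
Your overall strategy (properness of the Hitchin function, the $2^{2g+2}$ mundane components from the zero-Higgs-field locus, classification of the nonzero local minima, and a Cayley-type description of the exotic minima) is the same skeleton as the paper's argument. However, there is a genuine gap at the step you dismiss as following ``from properness and the Morse-Bott structure of $f$.'' Properness only gives the \emph{upper} bound $|\pi_0(\cM(\SO(p,q)))|\leq|\pi_0(\mathrm{Min}(\cM(\SO(p,q))))|$: every component of the moduli space contains at least one component of the minimum locus, but nothing prevents two distinct components of the minimum locus from sitting in the same component of $\cM(\SO(p,q))$. The Morse--Bott mechanism that would rule this out requires smoothness, and $\cM(\SO(p,q))$ is singular precisely at many of the relevant points; the paper explicitly notes that the minima classification alone is insufficient for the count. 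The matching lower bound is the content of Theorem \ref{Thm Psi open and closed}: one constructs the explicit map $\Psi$ from $\cM_{K^p}(\SO(1,q-p+1))\times\bigoplus_{j=1}^{p-1}H^0(K^{2j})$ and proves its image is \emph{open} (via the vanishing $\HH^2(C^\bullet)=0$ along the image, established by the weight-by-weight hypercohomology computations at $\C^*$-fixed points plus semicontinuity, so that Kuranishi local models apply) and \emph{closed} (via properness of the Hitchin fibration). Your proposal contains no substitute for this existence argument, so as written it only yields an upper bound on $|\pi_0|$.

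Two smaller points. First, your auxiliary Cayley object --- a rank-$p$ orthogonal bundle with a symmetric $K$-valued morphism --- is not the right one; the Cayley partner here is a $K^p$-twisted $\SO(1,q-p+1)$-Higgs bundle (a rank $q-p+1$ orthogonal bundle mapping to $I\otimes K^p$) together with the differentials $\bigoplus_{j=1}^{p-1}H^0(K^{2j})$. Second, in the case $q=p+1$ the correction $-1$ does not come from a boundary overlap in the degree parameter: the degree of $W_{-p}$ ranges over the full interval $(0,p(2g-2)]$ giving exactly $2p(g-1)$ components, and the $-1$ arises because the invariant bundle $W_0'$ has rank $2$, so $sw_1(W_0')=0$ forces $W_0'\cong L\oplus L^{-1}$ and hence $sw_2(W_0')=0$, which removes one of the $2^{2g+1}$ topological classes from the type-\eqref{eq min type 2} family. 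The arithmetic coincides but the bookkeeping you describe is not the correct one.
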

\begin{remark}
Our methods also show that $\cM(\SO(1,q))$ does not have exotic components for $q>2$, yielding $2^{2g+1}$ connected components. 
For  $q\geq 4$ our techniques fall short of a component count of $\cM(\SO(2,q))$. However, we expect no new exotic components to exist (see Section \ref{Section SO2q} for details). 
\end{remark}

Except for the special cases $p=2$, $q=p$ or $q=p+1$, the group $\SO(p,q)$ is neither split nor of Hermitian type, so the relation between topological invariants and connected components in the representation varieties or related Higgs bundle moduli spaces  cannot be inferred from previously known mechanisms.    

The primary topological invariants are apparent from the structure of the Higgs bundles. 
In the case of $\SO(p,q)$-Higgs bundles on $X$, the objects are described by a triple $(V,W,\eta)$, where $V$ and $W$ are holomorphic orthogonal bundles of rank $p$ and $q$ respectively, such that $\Lambda^pV\cong\Lambda^qW$, and $\eta$ is a holomorphic section of the bundle $\Hom(W,V)\otimes K$, where $K$ is the canonical bundle of $X$.
The topological invariants are then the first and second Stiefel-Whitney classes of $V$ and $W$, subject to the constraint that $sw_1(V)=sw_1(W)$. 
These invariants provide a primary decomposition of the moduli space $\cM(\SO(p,q))$ into (not necessarily connected) components labeled by triples $(a,b,c)\in H^1(S,\Z_2)\times H^2(S,\Z_2)\times H^2(S,\Z_2)$. 
Using the notation $\cM^{a,b,c}(\SO(p,q))$ to denote the union of components labeled by $(a,b,c)$, we can thus write
\begin{equation}\label{Mpq-abc}
\cM(\SO(p,q))=\coprod_{(a,b,c)\in \Z_2^{2g}\times\Z_2\times\Z_2}\cM^{a,b,c}(\SO(p,q)).
\end{equation}
Each space $\cM^{a,b,c}(\SO(p,q))$ has one connected component characterized entirely by the topological invariants $(a,b,c)$. 
This is the connected component which contains the moduli space of polystable orthogonal bundles with these invariants, corresponding to Higgs bundles for the maximal compact subgroup of $\SO(p,q)$. 
Denoted by $\cM^{a,b,c}(\SO(p,q))_{\mathrm{top}}$, these comprise the $2^{2g+2}$ `mundane' components for $2< p\leq q$.  Our existence result identifies additional components disjoint from the  $\cM^{a,b,c}(\SO(p,q))_{\mathrm{top}}$ components. Identifying the topological invariants of each component of Theorem \ref{Theorem: component count p>2} gives the following precise component count.

\begin{CorollaryIntro}[\ref{corollary component count Mabc}]
For $2<p< q-1$ and $(a,b,c)\in H^1(S,\Z_2)\times H^2(S,\Z_2)\times H^2(S,\Z_2)$, we have
\[|\pi_0(\cM^{a,b,c}(\SO(p,q)))|=\begin{dcases}
    2&\text{if $p$ is odd and $b=0$}\\
    2^{2g}+1&\text{if $p$ is even, $a=0$ and $b=0$}\\
    1&\text{otherwise~.}
\end{dcases}\]

 \end{CorollaryIntro}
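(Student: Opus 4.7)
The strategy is to combine the total count from Theorem \ref{Theorem: component count p>2} with an explicit identification of the topological invariants $(sw_1(V), sw_2(V), sw_2(W))$ carried by each exotic component. From the decomposition \eqref{Mpq-abc}, each $\cM^{a,b,c}(\SO(p,q))$ always contains the distinguished mundane component $\cM^{a,b,c}_{\mathrm{top}}$, so after subtracting these $2^{2g+2}$ contributions from Theorem \ref{Theorem: component count p>2}, exactly $2^{2g+1}$ exotic components must be distributed among the $2^{2g+2}$ labels. The task is thus to show that this distribution matches the case analysis in the statement.

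The plan is to use the Cayley-type parametrization of exotic components established earlier in the paper, which identifies each exotic component with a moduli space of auxiliary data (a $K^p$-twisted Higgs-type object on $X$ together with, depending on parity, a choice of square root or orientation datum). Given such an explicit model, one reads off the isomorphism class of the pair $(V,W)$ and computes $sw_1$ and $sw_2$ directly in terms of the auxiliary data. First I would treat the parity of $p$ separately. For odd $p$, the rank $p$ bundle $V$ takes the form of an orthogonal direct sum involving a line bundle and its dual, plus a trivial summand coming from the Cayley construction; this forces $sw_2(V)=0$, so every exotic component lies in a stratum with $b=0$. The remaining invariants $a$ and $c$ are parametrized, respectively, by $sw_1$ of the orthogonal building block and by $sw_2(W)$, giving a free and transitive action of $H^1(S,\Z_2)\times H^2(S,\Z_2)$ on the set of exotic components, hence exactly one exotic per label $(a,0,c)$, accounting for $2\cdot 2^{2g}=2^{2g+1}$ exotic components.

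For even $p$, the corresponding Cayley datum requires a choice of square root of an auxiliary line bundle, producing an $H^1(S,\Z_2)\cong \Z_2^{2g}$ fibre of exotic components over the base of underlying data; simultaneously, the structure of $V$ forces $\det V=\cO$ and $sw_2(V)=0$, so $a=0$ and $b=0$. The invariant $c=sw_2(W)$ remains free, giving $2^{2g}$ exotic components for each of the two values of $c$, and again $2^{2g+1}$ in total. In both parity cases the sum agrees with the count $2^{2g+1}$ forced by Theorem \ref{Theorem: component count p>2}, which simultaneously confirms that no exotic components are hidden in the remaining labels.

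The main obstacle is Step 2: correctly computing the Stiefel--Whitney classes of $V$ and $W$ from the auxiliary Cayley data, especially $sw_2$, which requires tracking how a square root of the determinant line bundle lifts an $\SO(p)\times\SO(q)$-structure to a $\Spin$-structure and how this lift interacts with the off-diagonal Higgs field $\eta$. A secondary, but routine, check is that distinct choices of the discrete invariants of the auxiliary moduli space land in distinct connected components of $\cM(\SO(p,q))$; this follows once one exhibits the Cayley map as a continuous (in fact algebraic) morphism and uses connectedness of the auxiliary fibres, which has been established in the preceding sections. Assembling these observations gives the three-case formula of the Corollary.
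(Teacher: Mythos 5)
Your proposal is correct and follows essentially the same route as the paper: the proof of Theorem \ref{Theorem: component count p>2} reduces everything to the explicit nonzero local minima (all of type \eqref{eq min type 2} when $2<p<q-1$), and the paper then simply tabulates $sw_1(W)$, $sw_2(V)$ and $sw_2(W)$ for these chains, which is precisely your Step 2. The computation you flag as the main obstacle is in fact elementary and involves no spin-structure lifting: $V$ and the summands of $W$ other than $W_0'$ are explicit orthogonal sums of even-degree line bundles (plus the $2$-torsion line $I$ when the relevant rank is odd), so the Whitney formula immediately gives $b=0$ and $c=sw_2(W_0')$, and the $2^{2g}$-fold multiplicity for even $p$ is exactly the $2^{2g}$ choices of $I=\det W_0'$ (equivalently $sw_1(W_0')$) becoming invisible in the label $(a,b,c)$.
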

\begin{remark}
    For $p=1$ and $p=2$, the primary topological invariants are slightly different. 
    For $p=q$ and $p=q-1,$ the connected component count of $\cM^{a,b,c}(\SO(p,q))$ is different (see Corollaries \ref{corollary connected components of SO(p,p+1)} and \ref{corollary connected components of SO(p,p)}). For $p=q$ and $p=q-1,$ all components had been previously detected in \cite{liegroupsteichmuller} and \cite{CollierSOnn+1components} respectively. Nevertheless, the nonexistence of additional components is new.
\end{remark}

One advantage of working on the Higgs bundle side of the NAH correspondence is that Higgs bundles and their moduli spaces possess a rich structure that provides tools which are not readily available in the representation varieties.  Two of these tools, which we exploit, are a real-valued proper function defined by the $\mathrm{L}^2$-norm of the Higgs field, called the Hitchin function, and a natural holomorphic $\C^*$-action. 
These are related since the critical points of the Hitchin function occur at fixed points of the $\C^*$-action. 
When the moduli space is smooth the Hitchin function is a perfect Morse-Bott function. 
While this is not the case in general, the properness of the Hitchin function implies that it attains its minimum on each connected component.  This allows useful information about $\pi_0$ to be extracted from the loci of local minima which, in turn, can be described using information about the corresponding $\C^*$-fixed points.

For many groups $\G$ the Hitchin function has no local minima on $\mathcal{M}(\G)$ other than those defining the mundane components (see for example \cite{Sp(2p2q)modulispaceconnected,Oliveira_GarciaPrada_2016}). In such cases these local minima yield enough information to completely count the components of $\cM(\G)$. 
The group $\SO(p,q)$ is not of this type.  While we are able to classify all the local minima on $\cM(\SO(p,q))$, the singularities in the space render this insufficient for completely determining the number of connected components.  The classification of local minima nevertheless plays a crucial role in the non-existence part of our main result.  In the proof of the  main existence theorem, the $\C^*$-fixed points are helpful but the new exotic components are detected by a more direct approach. 

To show that the components exist, we first describe a model for the supposed components.  We then construct a map from the model to $\cM(\SO(p,q))$ and show that the map has open and closed image. The description of the model invokes a variant of Higgs bundles in which the canonical bundle $K$ is replaced by the $p^\text{th}$ power of $K$.

\begin{TheoremIntro}[\ref{Thm Psi open and closed}]
Let $X$ be a compact Riemann surface with genus $g\geq2$ and canonical bundle $K$. Denote the moduli space of $K^p$-twisted $\SO(1,q-p+1)$-Higgs bundles on $X$ by $\cM_{K^p}(\SO(1,q-p+1))$ and the moduli space of $K$-twisted $\SO(p,q)$-Higgs bundles on $X$ by $\cM(\SO(p,q)).$ For $1\leq p\leq q,$ there is a well defined map
\begin{equation}\label{Eq Psi.1}
\Psi:\xymatrix{\cM_{K^p}(\SO(1,q-p+1))\times\displaystyle\bigoplus\limits_{j=1}^{p-1}H^0(X,K^{2j})\ar[r]&\cM(\SO(p,q))}
\end{equation}
which is an isomorphism onto its image and has an open and closed image. Furthermore, if $p>1,$ then every Higgs bundle in the image of $\Psi$ has a nowhere vanishing Higgs field.  
 \end{TheoremIntro}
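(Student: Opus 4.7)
The plan is to define $\Psi$ by an explicit "Hitchin-section type" staircase construction, verify polystability and nowhere vanishing directly, recover an inverse on the image, and then establish openness by a dimension count and closedness by a limit argument.

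\textbf{Construction and basic properties.} Starting from $(I,W',\eta')\in\cM_{K^p}(\SO(1,q-p+1))$ (so $I^2\cong\cO$, $\det W'\cong I$, $\eta':W'\to I\otimes K^p$) and differentials $(q_{2j})_{j=1}^{p-1}$, I build a rank-$p$ orthogonal bundle $V$ and a rank-$q$ orthogonal bundle $W$ by attaching to $I$ and $W'$ a common "staircase" of line bundles of the form $K^{p-1-2i}$, equipped with the canonical pairings between dual powers of $K$ together with the given pairing on $W'$ (and a twist by $I$ to enforce $\det V\cong\det W$). The Higgs field $\eta:W\to V\otimes K$ is defined to have identity isomorphisms on the subdiagonal blocks $K^{p-1-2i}\to K^{p-3-2i}\otimes K$, multiplication by $q_{2j}$ on the upper off-subdiagonal blocks, and a block built from $\eta'$ from the $W'$-piece, arranged to be symmetric with respect to the orthogonal forms. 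Matching Stiefel--Whitney classes is a direct check. Polystability of the output reduces to polystability of $(I,W',\eta')$ by a slope argument on $\eta$-invariant isotropic subbundles. The identity subdiagonal blocks make $\eta$ injective at every point of $X$, giving the nowhere-vanishing assertion for $p>1$.

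\textbf{Isomorphism onto image and openness.} An inverse to $\Psi$ on its image is built by recovering the top staircase summand $K^{p-1}\subset V$ intrinsically from $\eta$ (e.g.\ as the image of a suitable iterate of $\eta$ composed with its orthogonal transpose), peeling off the rest of the staircase by induction, and reading the $q_{2j}$ from the off-subdiagonal blocks and $(I,W',\eta')$ from the residual block. A Riemann--Roch dimension count for the domain, matching the dimension of $\cM(\SO(p,q))$ at a smooth image point, combined with injectivity of the differential, shows that $\Psi$ is a local isomorphism onto its image, so the image is open.

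\textbf{Closedness and the main obstacle.} Take a sequence $\{(V_n,W_n,\eta_n)\}$ in the image converging to $(V_\infty,W_\infty,\eta_\infty)$ in $\cM(\SO(p,q))$. To place the limit in the image, I reconstruct its staircase decomposition: each subdiagonal block of $\eta_n$ is a nonvanishing section of a fixed line bundle on $X$, and these sections admit a subsequential limit which is again nonvanishing by compactness of $X$ together with semicontinuity in the moduli space. Iterating this extraction recovers all staircase summands in the limit, the orthogonal structure furnishes the required pairings, and the residual block produces $(I,W',\eta')$ while the off-subdiagonal blocks produce the $q_{2j}$. The main obstacle is precisely this closedness step, because $\cM(\SO(p,q))$ is singular and one must rule out degenerations in which the cyclic staircase structure collapses upon passage to a polystable representative of the limit; openness, by contrast, is essentially a dimension count once the explicit inverse is in hand.
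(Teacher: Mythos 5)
Your construction of $\Psi$, the verification of polystability, the nowhere-vanishing claim, and the recovery of an inverse on the image all match the paper's approach. However, both of the global topological claims have genuine gaps, and you have the difficulty located in the wrong place: in the paper closedness is the easy step and openness is the hard one.

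For openness, a Riemann--Roch dimension count plus injectivity of the differential only proves that $\Psi$ is open at \emph{smooth} points of $\cM(\SO(p,q))$, and the image of $\Psi$ contains many singular points: strictly polystable Higgs bundles (e.g.\ when $W'$ is a strictly polystable orthogonal bundle) and stable points with positive-dimensional automorphism groups coming from $\fso(W_0')$. At such points the local model of the moduli space is not a manifold of the expected dimension but a Kuranishi quotient $\HH^1(C^\bullet)\sslash\Aut$, and this model is only valid once one knows the obstruction space $\HH^2(C^\bullet)$ vanishes. The paper's entire Section 4.2 exists to supply exactly this: a weight-by-weight computation of the deformation complex at the $\C^*$-fixed points in the image, proving $\HH^2(C^\bullet_k)=0$ for every $k$ and identifying $\HH^1$ with the tangent data of the domain; upper semicontinuity of $\dim\HH^2$ along the $\C^*$-flow then extends the vanishing to all points of the image, and the $\C^*$-action transports openness from neighborhoods of fixed points to arbitrary points. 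None of this is replaceable by a dimension count. For closedness, your limit-extraction of the nonvanishing subdiagonal sections runs into precisely the problem you flag and do not resolve: convergence in the moduli space is convergence of $\cS$-equivalence classes, so you cannot assume the limiting polystable representative carries a staircase in any fixed gauge, and ruling out collapse of the cyclic structure is the whole difficulty. The paper sidesteps this entirely with one observation: applying the $\SO(p,q)$-Hitchin fibration $h$ to $\Psi(I,\widehat W,\hat\eta,q_2,\ldots,q_{2p-2})$ returns $(q_2,\ldots,q_{2p-2},h_p(I,\widehat W,\hat\eta))$, so properness of $h$ and of $h_p$ forces any sequence that diverges in the domain to diverge in $\cM(\SO(p,q))$, which together with continuity and injectivity gives a closed image. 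That properness argument is the missing idea you would need to make your closedness step work.
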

In the case $p=2$,  the model described in this theorem coincides exactly with the description of 
the `exotic' maximal components of $\cM(\SO(2,q))$ (see \cite{HermitianTypeHiggsBGG,BGRmaximalToledo}), where the objects parametrized by the components are described by $K^2$-twisted Higgs bundles referred to as Cayley partners. 
In that setting, the emergence of the Cayley partners is a consequence of the fact that $\SO(2,q)$ is a group of Hermitian type; our new results for $\SO(p,q)$ with $p\geq 2$ show that the phenomenon has a more fundamental origin. 
In this regard, we note that our new components generalize both the afore-mentioned Cayley partners in the Hermitian case (i.e. for $p=2$) and also the Hitchin components for the split real forms $\SO(p,p)$ and $\SO(p,p+1)$ (see Section \ref{section Cayley partner} for more details). 

A key technical detail required to show that the map \eqref{Eq Psi.1} has open image, is the fact that the spaces (both the model and its image under the map) are essentially smooth.  
This means that all points are either smooth points or mildly singular, thus allowing the use of Kuranishi's methods to describe open neighborhoods of all points. 
The proof of this key technical detail uses the relation between the tangent spaces for points in $\cM(\SO(p,q))$ and hypercohomology spaces computed from a deformation complex.  
This complex has three terms, with the first term coming from infinitesimal automorphisms and the third term encoding integrability obstructions.  
The crucial lemma establishes the vanishing of the second hypercohomology, i.e.\ of integrability obstructions for infinitesimal deformations.  This is the first place where we exploit the natural $\C^*$-action on the moduli space. 
More precisely, it is the special structure of the fixed points of the action which allows us to prove the vanishing results for the deformation complexes at those points. 
We then use an upper-semicontinuity argument to extend the result to all  points where it is needed. 
To show that the image of the map \eqref{Eq Psi.1} is closed, the properness of the Hitchin fibration is exploited.

The non-existence part of the main theorem follows from a careful analysis of all the $\C^*$-fixed points, most of which is devoted to identifying which fixed points correspond to local minima of the Hitchin function.  We show that these are of two types, namely those where the Higgs field is identically zero, and those which lie in the new exotic components.  Since the former label the known `mundane' components, this proves that we have not missed any components.  



We now discuss a few consequences of our work for the $\SO(p,q)$-representation variety $\cR(S,\SO(p,q))$. Recall that a representation $\rho:\pi_1(S)\to \SO_0(2,1)$ is called {\em Fuchsian} if it is discrete and faithful. Recall also that
there is a unique (up to conjugation) principal embedding 
\begin{equation}
\label{eq princ embedd intro}
\iota:\SO_0(2,1)\to\SO_0(p,p-1)~.
\end{equation}
One consequence of our techniques is a dichotomy for polystable $\SO(p,q)$-Higgs bundles (see Corollary \ref{cor: Higgs bundle dichotomy}).
Translating this statement across the NAH correspondence leads to the following dichotomy for surface group representations into $\SO(p,q)$.
\begin{TheoremIntro} [\ref{THM: rep var dichotomy}]
Let $S$ be a closed surface of genus $g\geq 2$. For $2<p<q-1$, the representation variety $\cR(S,\SO(p,q))$ is a disjoint union of two sets,
\begin{equation}
    \label{Eq intro rep dichotomy}\cR(S,\SO(p,q))~=~\cR^{cpt}(S,\SO(p,q))~\sqcup~ \cR^{ex}(S,S,\SO(p,q))~,
\end{equation}
where
\begin{itemize}
    \item  $[\rho]\in\cR^{cpt}(S,\SO(p,q))$ if and only if $\rho$ can be  deformed to a compact representation,
    \item  $[\rho]\in\cR^{ex}(S,\SO(p,q))$ if and only if $\rho$ can be  deformed to a representation 
    \begin{equation}\label{eq: model rep intro}
    \rho'=\alpha\oplus (\iota\circ\rho_{\mathrm{Fuch}})\otimes \det(\alpha)~,
    \end{equation}
where $\alpha$ is a representation of $\pi_1(S)$ into the compact group $\rO(q-p+1)$, $\rho_{\mathrm{Fuch}}$ is a Fuchsian representation of $\pi_1(S)$ into $\SO_0(2,1)$, and $\iota$ is the principal embedding from \eqref{eq princ embedd intro}.
 \end{itemize} 
\end{TheoremIntro}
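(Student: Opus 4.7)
The plan is to transport the Higgs bundle dichotomy (Corollary \ref{cor: Higgs bundle dichotomy}) across the Non-Abelian Hodge correspondence, which identifies $\cR(S,\SO(p,q))$ with $\cM(X,\SO(p,q))$ as topological spaces and hence on $\pi_0$. Under the hypothesis $2<p<q-1$, Theorem \ref{Theorem: component count p>2} together with Theorem \ref{Thm Psi open and closed} show that every connected component of $\cM(\SO(p,q))$ either coincides with some mundane component $\cM^{a,b,c}(\SO(p,q))_{\mathrm{top}}$ or lies in the image of the map $\Psi$; the two families are disjoint because Higgs bundles in the image of $\Psi$ have nowhere vanishing Higgs field whereas the mundane components all contain points with $\eta=0$. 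Pulling this partition back defines $\cR^{cpt}(S,\SO(p,q))$ and $\cR^{ex}(S,\SO(p,q))$.

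For the compact side, a polystable Higgs bundle of the form $(V,W,0)$ reduces to the maximal compact subgroup $\rS(\rO(p)\times\rO(q))$, so its NAH-corresponding representation factors through this compact group. If $[\rho]\in\cR^{cpt}$ then its Higgs bundle lies in some $\cM^{a,b,c}(\SO(p,q))_{\mathrm{top}}$, and path-connectedness of that component produces a path to a Higgs bundle with vanishing Higgs field; the corresponding path in $\cR(S,\SO(p,q))$ is the required deformation to a compact representation. Conversely, any compact representation has zero Higgs field and thus belongs to some $\cM^{a,b,c}(\SO(p,q))_{\mathrm{top}}$.

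For the exotic side, each connected component of the image of $\Psi$ deformation-retracts onto $\Psi(\cM_{K^p}(\SO(1,q-p+1))\times\{0\})$ by scaling the differentials $q_{2j}\in H^0(X,K^{2j})$ to zero. It therefore suffices to exhibit, in each connected component of $\cM_{K^p}(\SO(1,q-p+1))$, a point $\xi$ whose image $\Psi(\xi,0,\ldots,0)$ is the Higgs bundle associated under NAH to a model representation of the form $\alpha\oplus(\iota\circ\rho_{\mathrm{Fuch}})\otimes\det(\alpha)$. The Higgs bundle of any such model representation is a direct sum of the flat orthogonal bundle determined by the compact representation $\alpha$ (a polystable orthogonal bundle of rank $q-p+1$) with the Higgs bundle associated to $(\iota\circ\rho_{\mathrm{Fuch}})\otimes\det(\alpha)$. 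The latter is the basepoint of the $\SO_0(p,p-1)$-Hitchin component twisted by $\det(\alpha)$, which in the Cayley-type description of Section \ref{section Cayley partner} arises as the image of the uniformizing $K^p$-twisted $\SO(1,1)$-datum under the principal embedding $\iota$ of \eqref{eq princ embedd intro}. Assembling this uniformizing datum with $\alpha$ produces a $\xi\in\cM_{K^p}(\SO(1,q-p+1))$ whose image under $\Psi$ (with vanishing higher differentials) reproduces the direct-sum Higgs bundle above, and deforming $\alpha$ within its component and $\rho_{\mathrm{Fuch}}$ within the Fuchsian locus sweeps out the full component.

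The main obstacle is the explicit identification in the previous paragraph: one must verify that the Cayley-type recipe built into $\Psi$ in Theorem \ref{Thm Psi open and closed} is compatible with the principal embedding $\iota:\SO_0(2,1)\to\SO_0(p,p-1)$, so that $\Psi$ applied to the natural $\xi$ yields precisely the direct-sum Higgs bundle of $\alpha$ and $(\iota\circ\rho_{\mathrm{Fuch}})\otimes\det(\alpha)$. This is a tracing of the principal $\fsl_2$-triple through the explicit construction of $\Psi$ in Section \ref{section Cayley partner}. Once that compatibility is in hand, the decomposition \eqref{Eq intro rep dichotomy} follows: disjointness comes from the nowhere-vanishing of the Higgs field on the image of $\Psi$, while exhaustiveness comes from the component count of Theorem \ref{Theorem: component count p>2}, which leaves no room for Higgs bundles outside the union of mundane and exotic components.
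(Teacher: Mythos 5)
Your proposal is correct and follows essentially the same route as the paper: transport Corollary \ref{cor: Higgs bundle dichotomy} across the Non-Abelian Hodge correspondence, identify the zero-Higgs-field locus with representations deformable into the maximal compact, and identify the exotic minima $\Psi(I,\widehat W,0,0,\dots,0)$ with the model representations $\alpha\oplus(\iota\circ\rho_{\mathrm{Fuch}})\otimes\det(\alpha)$ via the block embedding $\SO_0(p,p-1)\times\rO(q-p+1)\hookrightarrow\SO(p,q)$. The one step you flag as the ``main obstacle'' --- compatibility of $\Psi$ with the principal embedding --- requires no tracing of the principal $\fsl_2$-triple: the map $\Psi$ is \emph{defined} using the explicit Hitchin section $s_H^I$ of \eqref{EQ Hitchin section O(p,C)}, so at $\hat\eta=0$ and vanishing differentials its image is visibly the direct sum of the flat bundle $\widehat W$ with $s_H^I(0)$, and the paper then simply cites Hitchin's theorem that $s_H(0)$ corresponds to $\iota\circ\rho_{\mathrm{Fuch}}$ (with the $I$-twist accounting for $\det(\alpha)$); also note that $\Psi$ is constructed in Section \ref{sec:exoticcomp}, not in Section \ref{section Cayley partner}.
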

\begin{remark}
The above theorem still holds for $p=q>2$, with $\cR^{ex}(S,\SO(p,p))$ being the union of the Hitchin components, but it does not hold when $2<p=q-1$. Namely, there are exactly $2p(g-1)$ exotic components of $\cR(S,\SO(p,p+1))$ for which the result fails. With the exception of the Hitchin component, in \cite{CollierSOnn+1components} it is conjectured that all representations in these components are Zariski dense. 
\end{remark}
The model representations \eqref{eq: model rep intro} connect our work on the Higgs bundle side of the NAH correspondence to the theory of positive Anosov representations. For a parabolic subgroup $\rP<\G,$ the set of P-Anosov representations (see Definition \ref{DEF: Anosov rep}) defines an open set in the representation variety consisting of representations with desirable dynamic and geometric properties \cite{AnosovFlowsLabourie}.
In \cite{PosRepsGWPROCEEDINGS}, Guichard and Wienhard show that for certain pairs $(\G,\rP)$ Anosov representations can satisfy an additional positivity property.  The set of positive Anosov representations is open in $\cR(S,\G)$ and also conjectured by Guichard, Labourie and Wienhard to be closed \cite{PosRepsGLW,PosRepsGWPROCEEDINGS}, and hence to define connected components. Moreover the connected components of this set carry natural labels determined by the topological types of principal $\rP$-bundles (see \cite{TopInvariantsAnosov}).
For the classical groups, the pairs $(\G,\rP)$ which admit a notion of positivity come in three families: one with $\G$ a split real form, one with $\G$ a Hermitian group of tube type, and a third in which $\G$ is locally isomorphic to $\SO(p,q)$.  In the first two families the set of positive Anosov representations corresponds exactly to the connected components of Hitchin representations and maximal representations respectively; thus, for these families, positivity is indeed a closed condition. 
In the case of $\SO(p,q)$ the conjecture is open. However, it follows from the work of Guichard and Wienhard that the model representations \eqref{eq: model rep intro} are positive Anosov representations with respect to a parabolic subgroup $\rP$ which stabilizes a partial isotropic flag in $\R^{p+q}$. Hence as a corollary to Theorem \ref{THM: rep var dichotomy} we have:

\begin{PropositionIntro}[\ref{Prop existence of positive reps}]
Let $\rP\subset\SO(p,q)$ be the stabilizer of the partial flag $V_1\subset V_2\subset\cdots \subset V_{p-1},$
where $V_j\subset\R^{p+q}$ is an isotropic $j$-plane.   If $2<p<q-1$, then each connected component of $\cR^{ex}(S,\SO(p,q))$ from \eqref{Eq intro rep dichotomy} contains a nonempty open set of positive $\rP$-Anosov representations.
\end{PropositionIntro}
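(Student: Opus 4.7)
The plan is to combine the structural information from Theorem~\ref{THM: rep var dichotomy} with the openness of the positive Anosov condition. By that theorem, every connected component of $\cR^{ex}(S,\SO(p,q))$ contains a class $[\rho']$ with
\[
\rho'=\alpha\oplus\bigl((\iota\circ\rho_{\mathrm{Fuch}})\otimes\det(\alpha)\bigr),
\]
where $\alpha:\pi_1(S)\to\rO(q-p+1)$ is compact, $\rho_{\mathrm{Fuch}}$ is Fuchsian, and $\iota:\SO_0(2,1)\to\SO_0(p,p-1)$ is the principal embedding. Since the set of positive $\rP$-Anosov representations is open in $\cR(S,\SO(p,q))$ by the work of Guichard--Wienhard \cite{PosRepsGLW,PosRepsGWPROCEEDINGS}, it is enough to show that each such $\rho'$ is positive $\rP$-Anosov; openness will then produce the desired open neighborhood in every exotic component.

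To carry out this verification, I would first isolate the Hitchin summand. Embed $\SO(p,p-1)\times\rO(q-p+1)$ into $\SO(p,q)$ as the stabilizer of an orthogonal splitting $\R^{p+q}=\R^{p,p-1}\oplus\R^{0,q-p+1}$, so that $\rho'$ factors through this subgroup. The partial isotropic flag $V_1\subset\cdots\subset V_{p-1}$ cutting out $\rP$ can be chosen to lie in the signature $(p,p-1)$ summand, giving a partial isotropic flag for $\SO_0(p,p-1)$ whose stabilizer is a parabolic $\rP'\subset\SO_0(p,p-1)$. The twisted summand $(\iota\circ\rho_{\mathrm{Fuch}})\otimes\det(\alpha)$ agrees, up to the sign character $\det(\alpha)$, with a Hitchin representation into $\SO_0(p,p-1)$; by \cite{PosRepsGWPROCEEDINGS}, Hitchin representations into split real forms are positive Anosov with respect to the relevant parabolic, so the Hitchin summand is positive $\rP'$-Anosov. (The twist by the $\{\pm 1\}$-valued character is harmless since its image centralizes the positive structure.)

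The remaining step, which I expect to be the principal technical obstacle, is to check that the extension by the compact factor $\alpha$ preserves positivity as one passes from $\rP'\subset\SO_0(p,p-1)$ to $\rP\subset\SO(p,q)$. Using the explicit description of the positive unipotent semigroup inside the Levi of $\rP$ given in \cite{PosRepsGWPROCEEDINGS}, this should reduce to the observation that the positivity data on the partial isotropic flag variety lives entirely in the $\R^{p,p-1}$ summand, while $\alpha$ acts on the negative-definite orthogonal complement and therefore contributes trivially to the positivity condition along the flag. Granted this compatibility, $\rho'$ is positive $\rP$-Anosov; combined with the reduction in the first paragraph and the openness of positivity, the proposition follows.
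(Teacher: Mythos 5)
Your proposal is correct and follows essentially the same route as the paper: the paper likewise combines Theorem \ref{THM: rep var dichotomy} (each exotic component contains a model representation $\alpha\oplus(\iota\circ\rho_{\mathrm{Fuch}})\otimes\det(\alpha)$) with Proposition \ref{prop: example of positive reps}, whose proof is exactly your key step — that the embedding $\SO_0(p,p-1)\times\rO(q-p+1)\hookrightarrow\SO(p,q)$ carries the positive unipotent semigroup of the split form into $U^\Theta_{>0}$, with the compact factor and the sign twist handled via finite-index subgroups and the triviality of the centralizer's action on the boundary curve — and then invokes openness of the positive Anosov condition. The step you flag as the principal technical obstacle is precisely the content the paper defers to the $q=p+1$ case in \cite{CollierSOnn+1components}, noting the general proof is identical.
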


Assuming the conjecture of Guichard and Wienhard, it would follow from Proposition  \ref{Prop existence of positive reps} that the exotic components described in this paper correspond exactly to the components in $\cR(S,\SO(p,q))$ containing positive Anosov $\SO(p,q)$-representations. As further evidence for this conclusion it is noteworthy that in the cases where positivity is known to be a closed condition, the representations all satisfy a certain irreducibility condition, namely they do not factor through any proper parabolic subgroup of $\G$.  For the components of $\cR(S,\SO(p,q))$ which do not contain representations into compact groups, we can establish this irreducibility property as a corollary to Theorem \ref{Thm Psi open and closed}. 
In particular, it holds for all representations in the components $\cR^{ex}(S,\SO(p,q))$ from Theorem \ref{THM: rep var dichotomy}.

\begin{PropositionIntro}[\ref{Prop: irreducibility of exotic reps}] Let $\cR^{cpt}(S,\SO(p,q))$ be the union of the connected components of $\cR(S,\SO(p,q))$ containing compact representations.
Let $2<p\leq q$ and $\rho\in \cR(S,\SO(p,q))\setminus\cR^{cpt}(S,\SO(p,q))$. Then $\rho$ does not factor through any proper parabolic subgroup of $\SO(p,q)$.
\end{PropositionIntro}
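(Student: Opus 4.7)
The plan is to translate the condition of factoring through a proper parabolic into a structural statement about the associated polystable $\SO(p,q)$-Higgs bundle, and then exploit the rigidity of Higgs bundles in the image of the map $\Psi$ from Theorem~\ref{Thm Psi open and closed}.

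Suppose, for contradiction, that $\rho$ factors through a proper parabolic $\rP\subset\SO(p,q)$. Since $\rho$ is reductive (being polystable), it in fact factors through the Levi $\rL$ of $\rP$. Every Levi of a parabolic of $\SO(p,q)$ has the form $\rL=\prod_{i=1}^s\GL(n_i,\R)\times\SO(p',q')$ with $\sum_i 2n_i+p'=p$, $\sum_i 2n_i+q'=q$, and $s\geq 1$ (as $\rP$ is proper). Translating across the NAH correspondence, the associated polystable Higgs bundle $(V,W,\eta)$ splits as an orthogonal direct sum of $\SO(p_i,q_i)$-Higgs bundles together with at least one summand of ``$\GL(n,\R)$-type'', where the latter arises from the embedding chain $\GL(n,\R)\hookrightarrow\SO(n,n)\hookrightarrow\SO(p,q)$. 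Equivalently, $(V,W,\eta)$ contains a non-trivial $\eta$-invariant isotropic holomorphic subbundle $\mathcal U\subset V\oplus W$ which, together with its paired isotropic subbundle $\mathcal U^{*}$, is a holomorphic direct summand of $V\oplus W$.

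Combining the component count of Theorem~\ref{Theorem: component count p>2} with the open-and-closed image statement of Theorem~\ref{Thm Psi open and closed}, the exotic components of $\cR(S,\SO(p,q))$ correspond under NAH precisely to the image of $\Psi$. The hypothesis $\rho\notin\cR^{cpt}(S,\SO(p,q))$ therefore forces $(V,W,\eta)$ to lie in the image of $\Psi$, and since $p>1$, Theorem~\ref{Thm Psi open and closed} yields two crucial facts: the Higgs field $\eta$ is nowhere vanishing on $X$, and $(V,W,\eta)=\Psi\bigl((I,W',\tilde\eta),(q_2,\ldots,q_{2p-2})\bigr)$ for a uniquely determined $K^p$-twisted $\SO(1,q-p+1)$-Higgs bundle $(I,W',\tilde\eta)$ and tuple of holomorphic differentials.

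The crux, and the main obstacle, is then to show that no Higgs bundle in the image of $\Psi$ admits a $\GL$-type direct summand. The plan is to exploit the fact that elements of the image of $\Psi$ carry a canonical ``Hitchin-section'' sub-Higgs bundle for $\SO(p,p-1)$, built from the principal embedding $\iota:\SO(2,1)\hookrightarrow\SO(p,p-1)$ and the differentials $q_{2j}$; this sub-Higgs bundle is itself stable (its corresponding representation is Zariski dense in $\SO_0(p,p-1)$), so any polystable direct-summand decomposition of $(V,W,\eta)$ must preserve it as a whole. Consequently, a putative $\GL$-type summand $\mathcal U\oplus\mathcal U^{*}$ must sit inside the orthogonal complement---that is, within the ``Cayley partner'' part determined by $(I,W',\tilde\eta)$. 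But that complementary part is itself an $\SO(1,q-p+1)$-Higgs bundle, and its direct-summand structure, combined with the nowhere vanishing of $\eta$ together with the absence of proper parabolic subgroups in the compact factor $\SO(q-p+1)$, rules out the existence of a $\GL$-type summand. The most delicate step will be making rigorous the assertion that an $\SO(p,q)$-direct-summand decomposition of an element of the image of $\Psi$ descends to a decomposition of the $\Psi$-parameter data $((I,W',\tilde\eta),(q_2,\ldots,q_{2p-2}))$; this requires carefully tracking how the inverse of $\Psi$ interacts with polystable decompositions and using the nowhere vanishing of $\eta$ to exclude isotropic invariant subbundles that interact non-trivially with both parts of the $\Psi$-decomposition.
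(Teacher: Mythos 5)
Your overall architecture coincides with the paper's: reduce factoring through a parabolic to factoring through its Levi (by reductivity of $\rho$), identify the Levi as a product of $\GL(n_i,\R)$'s with an $\SO(p',q')$, use the minima classification and the open-and-closed image of $\Psi$ to place the associated Higgs bundle in the image of $\Psi$, and then derive a contradiction from the assertion that no Higgs bundle in the image of $\Psi$ reduces to such a Levi. The paper disposes of this last assertion by citing Corollary \ref{Cor: Higgs irr.}, which it records as a direct consequence of the construction of $\Psi$. The step you correctly identify as ``the crux'' is exactly the content of that corollary, and it is here that your proposal has a genuine gap.

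Your proposed mechanism is that $\Psi(I,\widehat W,\hat\eta,q_2,\ldots,q_{2p-2})$ contains a canonical stable $\SO(p,p-1)$ ``Hitchin-section sub-Higgs bundle'' $(I\otimes\cK_{p-1},\,I\otimes\cK_{p-2},\,\eta(q_2,\ldots,q_{2p-2}))$ which every polystable direct-sum decomposition must preserve. But this is not a sub-Higgs bundle: while $\eta$ maps $I\otimes\cK_{p-2}$ into $V\otimes K$, the adjoint field $\eta^*$ carries $V=I\otimes\cK_{p-1}$ into $(\widehat W\oplus I\otimes\cK_{p-2})\otimes K$ with a generally nonzero component $\eta_{\widehat W}^*$ landing in $\widehat W\otimes K$, so $V\oplus(I\otimes\cK_{p-2})$ is not invariant under $\Phi=\smtrx{0&\eta\\\eta^*&0}$. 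The splitting $W=\widehat W\oplus I\otimes\cK_{p-2}$ is a splitting of orthogonal bundles, not of Higgs bundles, so a Levi-type decomposition $V\cong U\oplus V_2$, $W\cong U\oplus W_2$, $\eta=\smtrx{\eta_U&0\\0&\eta_2}$ with $\eta_U^*=\eta_U$ has no a priori reason to respect it --- and you yourself flag the descent of such a decomposition to the $\Psi$-parameters as unresolved. A route that does work is to flow to the fixed point: a reduction to $\GL(n,\R)\times\SO(p-n,q-n)$ persists under $\lim_{\lambda\to 0}(V,W,\lambda\eta)$, which by closedness of the image of $\Psi$ has the explicit form \eqref{EQ: Higgs field at singularity}; there the summand of $V\oplus W$ containing $V$ is an indecomposable chain, forcing $U_V=V$ and hence $U_W\cong I\otimes\cK_{p-1}$ to be an orthogonal summand of $W$ containing $I\otimes\cK_{p-2}$, and comparing the degrees of the available line-bundle summands (e.g.\ $\deg(IK^{p-1})=(p-1)(2g-2)>0$ against the degree-zero pieces of $W_0'$) yields the contradiction. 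Without an argument of this kind the key step of your proof remains unestablished.
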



Though our main results are the first to prove the existence of exotic components outside the realm of higher Teichm\"uller theory for groups of split and Hermitian type, evidence for such components has been building for some time.   
As mentioned above, considerations based on the Guichard-Wienhard positivity property had placed $\cR(S,\SO(p,q))$ among the representation varieties expected to have such components. Even earlier indications had come from a study of the local minima of the Hitchin function on $\cM(\SO(p,q))$. While the absolute minimum, i.e.\ the zero level, of the function is attained on the components $\cM^{a,b,c}(\SO(p,q))_{\mathrm{top}}$, in \cite{MartaThesis} the first author described additional smooth local minima at non-zero values, thus opening up the possibility that further components exist.

The special case $q=p+1$ provided the first confirmation of this possibility. 
Hitchin components were known to exist in $\cM(\SO(p,p+1))$ by virtue of the fact that the group $\SO(p,p+1)$ is the split real form of $\SO(2p+1,\C)$.  The results in \cite{CollierSOnn+1components} show that these are not the only exotic components. With the luxury of hindsight, we now see that the additional components in $\cM(\SO(p,p+1))$ coincide exactly with the exotic components described by our main results for the case $q=p+1$.

We note finally that additional features of the connected components of $\cM(\SO(p,q))$ have been detected by Baraglia and Schaposnik (in \cite{DavidLauraCayleyLanglands}) by examining spectral data on generic fibers of the Hitchin fibration for $\cM(\SO(p+q,\C))$.
Their methods cannot distinguish connected components because of the genericity assumption on the fibers, but, where they apply, their methods provide an intriguing alternative perspective. 

\subsubsection*{Acknowledgments}
The authors are grateful to Olivier Guichard, Beatrice Pozzetti, Carlos Simpson, Richard Wentworth and Anna Wienhard for useful conversations and to the referee for a careful reading and for a number of helpful remarks and corrections.

\section{Higgs bundle background}

In this section we recall the necessary background on $\G$-Higgs bundles on a compact Riemann surface and their deformation theory. Special attention is then placed on the group $\SO(p,q).$ Higgs bundles were introduced by Hitchin in \cite{selfduality} and Simpson in \cite{SimpsonVHS}, and have been studied extensively by many authors. For real groups we will mostly follow \cite{HiggsPairsSTABILITY}. For the rest of the paper, let $X$ be a compact Riemann surface of genus $g\geq 2$ and with canonical bundle $K\to X$.

\subsection{General Definitions} Let $\G$ be a real reductive Lie group with Lie algebra $\fg$ and choose a maximal compact subgroup $\rH\subset\G$ with Lie algebra $\fh\subset\fg$. Fix a Cartan splitting $\fg\cong\fh\oplus \fm$, where $\fm$ is the orthogonal complement of $\fh\subset \fg$ with respect to a nondegenerate $\Ad(\G)$-invariant bilinear form. In particular, $[\fh,\fm]\subset\fm$ and $[\fm,\fm]\subset\fh$, thus such a splitting is preserved by the adjoint action of $\rH$ on $\fg$, giving a linear representation $\rH\to\GL(\fm).$ 
Complexifying everything yields an $\Ad(\rH^\C)$-invariant splitting $\fg^\C\cong\fh^\C\oplus \fm^\C$. 

For any group $\G$, if $P$ is a principal $\G$-bundle and $\alpha:\G\to\GL(V)$ is a linear representation, denote the associated vector bundle $P\times_{\G} V$ by $P[V].$
\begin{definition}\label{Def Ltwisted GHiggsBundle}
  Fix a holomorphic line bundle $L\to X$. An {\em $L$-twisted $\G$-Higgs bundle} is a pair $(\cE,\varphi)$ where $\cE$ is a holomorphic principal $\rH^\C$-bundle  and $\varphi\in H^0(X,\cE[\fm^\C]\otimes L)$ is a holomorphic section of the associated $\fm^\C$-bundle twisted by $L.$ The section $\varphi$ is called the {\em Higgs field}.
\end{definition}

 \begin{remark}
As usual, when the line bundle $L$ is the canonical bundle $K$ of the Riemann surface, we refer to a $K$-twisted Higgs bundle as a \emph{Higgs bundle}. We are mainly interested in the case $L=K,$ however, taking $L=K^p$ will also play an important role. 
 \end{remark}

 \begin{example}\label{Ex compact complex Higgs}
When $\G$ is a compact group, we have $\G^\C=\rH^\C$ and $\fm^\C=0$, so a $\G$-Higgs bundle is just a holomorphic $\G^\C$-bundle on $X.$ When $\G$ is a complex group, we have $\G=\rH^\C$ and $\fm^\C\cong\fg.$ In this case, the Higgs field is just an $L$-twisted section of the adjoint bundle. 
 \end{example}



Rather than dealing with principal bundles, we will use a linear representation $\alpha:\rH^\C\to\GL(V)$ and work with vector bundles and sections of associated bundles. A holomorphic principal $\GL(n,\C)$-bundle is equivalent to a rank $n$ holomorphic vector bundle $E$. 
For $\SL(n,\C)$ we obtain an oriented vector bundle $(E,\omega)$, where $\omega\in H^0(\Lambda^nE)$ is nowhere vanishing. For $\rO(n,\C)$ we get an orthogonal vector bundle $(E,Q)$, where $Q\in H^0(\Sym^2E^*)$ such that $\det(Q)$ is nowhere vanishing. Finally, for $\SO(n,\C)$ we obtain an oriented orthogonal vector bundle $(E,Q,\omega),$ where $\det(Q)(\omega,\omega)=1.$
 
 The standard representations give the following vector bundle definitions, which are equivalent to the principal bundle formulations given by Definition \ref{Def Ltwisted GHiggsBundle}.
\begin{definition}\label{def L twisted GL and SL Higgs}\label{def L twisted On and SO Higgs}

     An \emph{$L$-twisted $\GL(n,\C)$-Higgs bundle} over $X$ is a pair $(E,\Phi)$, where $E\to X$ is a rank $n$ holomorphic vector bundle and $\Phi\in H^0(\End(E)\otimes L)$.
    
     An \emph{$L$-twisted $\SL(n,\C)$-Higgs bundle} is a triple $(E,\omega,\Phi)$, where $(E,\omega)$ is a rank $n$ holomorphic oriented vector bundle and $\Phi\in H^0(\End(E)\otimes L)$ satisfies $\tr(\Phi)=0$.
    
     An \emph{$L$-twisted $\rO(n,\C)$-Higgs bundle} is a triple $(E,Q,\Phi)$, where $(E,Q)$ is a rank $n$ holomorphic orthogonal vector bundle and $\Phi\in H^0(\End(E)\otimes L)$ satisfies $\Phi^TQ+Q\Phi=0.$
     
     An \emph{$L$-twisted $\SO(n,\C)$-Higgs bundle} is a quadruple $(E,Q,\omega,\Phi)$, where $(E,Q,\omega)$ is a rank $n$ holomorphic oriented orthogonal vector bundle and $\Phi\in H^0(\End(E)\otimes L)$ satisfies $\Phi^TQ+Q\Phi=0.$
\end{definition}

Two $\GL(n,\C)$-Higgs bundles $(E_1,\Phi_1)$ and $(E_2,\Phi_2)$ are isomorphic if there exists a holomorphic bundle isomorphism $f:E_1\to E_2$ so that $f^*\Phi_2=\Phi_1.$ For $\SL(n,\C),$ $\rO(n,\C)$ and $\SO(n,\C)$-Higgs bundles we require that the isomorphism $f$ pulls back the additional structure. 

The group $\rO(p,q)$ is the group of linear automorphisms of $\R^{p+q}$ which preserve a nondegenerate symmetric quadratic form of signature $(p,q)$.  Note that $\rO(p,q)$ and $\rO(q,p)$ are isomorphic groups, so we can assume that $p\leq q$ without loosing any generality. We are mainly interested in the subgroup $\G=\SO(p,q)$ of $\rO(p,q)$ which also preserves an orientation of $\R^{p+q}$. This group has two connected components provided $p$ and $q$ are both positive, and the connected component of the identity is denoted by $\SO_0(p,q)$. We shall assume throughout the paper that $0<p\leq q$.

If $Q_p$ and $Q_q$ are positive definite symmetric $p\times p$ and $q\times q$ matrices, then the Lie algebra $\fso(p,q)$ is defined by the matrices
\[\fso(p,q)\cong\left\{\smtrx{A&B\\C&D}\ \left|\ \smtrx{A&B\\C&D}^T\right.\smtrx{Q_p&\\&-Q_q}+\smtrx{Q_p&\\&-Q_q}\smtrx{A&B\\C&D}=0\right\},\]
where $A$ is a $p\times p$ matrix, $B$ is a $p\times q$ matrix, $C$ is a $q\times p$ matrix and $D$ is a $q\times q$ matrix.
Thus, 
\begin{equation}\label{EQ so(p,q) Lie algebra}
  \xymatrix@C=.5em{A^TQ_p+Q_pA=0,& D^TQ_q+Q_qD=0&\text{and}&C=-Q_q^{-1}B^TQ_p~.}
\end{equation}

The maximal compact subgroup of $\rO(p,q)$ is $\rO(p)\times \rO(q)$ and the maximal compact subgroup of $\SO(p,q)$ is $\rS(\rO(p)\times\rO(q))$. Using \eqref{EQ so(p,q) Lie algebra}, the complexified Cartan decomposition of $\fso(p,q)$ is 
\[\fso(p+q,\C)\cong(\fso(p,\C)\oplus\fso(q,\C))\oplus \Hom(W,V),\]
 where $V$ and $W$ are the standard representations of $\rO(p,\C)$ and $\rO(q,\C).$ 
 Using these representations, we have the following vector bundle definition of an $\SO(p,q)$-Higgs bundle. 

\begin{definition}\label{DEF SO(p,q) Higgs bundles}
An \emph{$L$-twisted $\rO(p,q)$-Higgs bundle} is a tuple $(V,Q_V,W,Q_W,\eta)$, where $(V,Q_V),$ $(W,Q_W)$ are rank $p$, $q$ holomorphic orthogonal vector bundles respectively and $\eta\in H^0(\Hom(W,V)\otimes L).$

An \emph{$L$-twisted $\SO(p,q)$-Higgs bundle} is a tuple $(V,Q_V,W,Q_W,\omega,\eta),$ where $(V,Q_V,W,Q_W,\eta)$ is an $L$-twisted $\rO(p,q)$-Higgs bundle and $\left(V\oplus W,\smtrx{Q_V\\&-Q_W},\omega\right)$ is an oriented orthogonal vector bundle.
\end{definition}

\begin{remark}
We will usually interpret the orthogonal structures and the orientation as isomorphisms:
\[Q_V:V\xrightarrow{\ \ \cong\ \ } V^*, \ \ \ \  \ \ \   Q_W:W\xrightarrow{\ \ \cong\ \ } W^*\ \ \ \ \  \ \ \ \text{and}\ \ \ \ \ \ \ \ \ \omega:\Lambda^p V\xrightarrow{\ \ \cong\ \ }\Lambda^qW.\]
Moreover, we will usually suppress the orthogonal structures and orientation from the notation.
\end{remark}
Two $\SO(p,q)$-Higgs bundles $(V_1,Q_{V_1},W_1,Q_{W_1},\omega_1,\eta_1)$ and $(V_2,Q_{V_2},W_2,Q_{W_2},\omega_2,\eta_2)$ are isomorphic if there exists bundle isomorphisms $g_V:V_1\to V_2$ and $g_W:W_1\to W_2$ so that 
\[\xymatrix@=1em{Q_{V_1}=g_V^TQ_{V_2}g_V,&Q_{W_1}=g_W^TQ_{W_2}g_W,&\omega_1=\det(g_V)\det(g_W)\omega_2&\text{and}&\eta_1=g_{V}^{-1}\eta_2g_{W}}.\]

Given an $L$-twisted $\SO(p,q)$-Higgs bundle $(V,Q_V,W,Q_W,\eta)$, let 
\[\eta^*=(Q_W^{-1}\otimes\Id_L)(\eta^T\otimes\Id_L)Q_V,\] where
$\eta^T:V^*\otimes L^{-1}\to W^*$ is the dual map. The $L$-twisted $\SO(p+q,\C)$-Higgs bundle associated to $(V,Q_V,W,Q_W,\eta)$ is given by 
\begin{equation}\label{Eq SO(p+q,C) Higgs bundle}
  (E,Q,\omega,\Phi)=\Big(V\oplus W,\smtrx{Q_V&0\\0&-Q_W},\omega,\smtrx{0&\eta\\\eta^*&0}\Big)~.
\end{equation}

In subsequent sections, we will also need the notions of $\U(p,q)$-Higgs bundles and $\GL(n,\R)$-Higgs bundles. The complexified Cartan decompositions for these groups are given by 
\[\fu(p,q)^\C\cong(\fgl(p,\C)\oplus\fgl(q,\C)) \oplus (\Hom(E,F)\oplus \Hom(F,E)),\]
\[\fgl(n,\R)^\C\cong \fo(n,\C)\oplus \sym(\C^n),\]
where $E$ and $F$ are respectively the standard representations of $\GL(p,\C)$ and $\GL(q,\C)$ and $\sym(\C^n)$ denotes the set of symmetric endomorphisms of $\C^n.$ 
\begin{definition}\label{Def GLnR and Upq Higgs def}
  An \emph{$L$-twisted $\U(p,q)$-Higgs bundle} is a tuple $(E,F,\beta,\gamma)$, where $E$, $F$ are holomorphic vector bundles of rank $p$, $q$ respectively, $\beta\in H^0(\Hom(F,E)\otimes L)$ and $\gamma\in H^0(\Hom(E,F)\otimes L)$.
 
  An \emph{$L$-twisted $\GL(n,\R)$-Higgs bundle} is a tuple $(E,Q,\Phi)$ where $(E,Q)$ is a holomorphic rank $n$ orthogonal vector bundle and $\Phi\in H^0(\End(E)\otimes L)$ such that $\Phi^TQ=Q\Phi$.
 \end{definition} 

\subsection{The Higgs bundle moduli space and deformation theory}

To form a moduli space of $\G$-Higgs bundles we need a notion of stability for these objects. 
In general, these stability notions involve the interaction of the Higgs field with 
certain parabolic reductions of structure group.
For the above groups, stability can be simplified and expressed in vector bundle terms (see \cite{HiggsPairsSTABILITY}). 

\begin{proposition}\label{Def Stability SL(n,C)}
An $L$-twisted $\SL(n,\C)$-Higgs bundle $(E,\Phi)$ is 
\begin{itemize}
  \item semistable if for every holomorphic subbundle $F\subset E$ with $\Phi(F)\subset F\otimes L$ we have $\deg(F)\leq0,$
  \item stable if for every proper holomorphic subbundle $F\subset E$ with $\Phi(F)\subset F\otimes L$ we have $\deg(F)<0,$
  \item polystable if it is semistable and for every degree zero subbundle $F\subset E$ with $\Phi(F)\subset F\otimes L$, there is a subbundle $F'$ with $\Phi(F')\subset F'\otimes L$ so that $E\cong F\oplus F'.$  That is, 
  \[(E,\Phi)=\Big(F\oplus F',\smtrx{\Phi_F&0\\0&\Phi_{F'}}\Big)~.\]
  \end{itemize} 
\end{proposition}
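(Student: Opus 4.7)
The plan is to derive this vector-bundle criterion from the general principal-bundle definition of stability for $\G$-Higgs bundles formulated in \cite{HiggsPairsSTABILITY}, specialized to $\G=\SL(n,\C)$. In that framework, $(\cE,\varphi)$ is semistable iff for every parabolic $P\subset\G$, every holomorphic reduction $\sigma$ of $\cE$ to $P$ satisfying $\varphi\in H^0(\cE_\sigma[\fp]\otimes L)$, and every antidominant character $\chi$ of $\fp$ trivial on the center of $\fg$, the associated degree $\deg(\cE)(\sigma,\chi)$ is $\geq 0$; stability requires strict positivity on every nontrivial such pair $(\sigma,\chi)$; and polystability additionally requires that any equality case come from a further $\varphi$-compatible reduction of $\sigma$ to a Levi factor of $P$.

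The first step is the dictionary for $\SL(n,\C)$. Parabolic subgroups are the stabilizers of partial flags $0\subsetneq F_1\subsetneq\cdots\subsetneq F_{k-1}\subsetneq\C^n$; a reduction of the principal $\SL(n,\C)$-bundle attached to $E$ to such a parabolic amounts to a holomorphic filtration $0\subsetneq E_1\subsetneq\cdots\subsetneq E_{k-1}\subsetneq E$ by subbundles of the prescribed ranks; Higgs-compatibility becomes $\Phi(E_i)\subset E_i\otimes L$ for every $i$; and antidominant characters of $\fp$ correspond to weight vectors $(\lambda_1,\dots,\lambda_k)$ on the graded pieces $E_i/E_{i-1}$ subject to a trace-zero constraint (from the $\SL$ condition) and a monotonicity condition fixed by the chosen positive system.

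A direct computation then identifies $\deg(\cE)(\sigma,\chi)$ as a weighted sum of the degrees of the subquotients; Abel summation, together with $\deg(E)=0$, rewrites this as a non-negative linear combination of the elementary quantities attached to the intermediate $E_i$, one for each term of the flag. This reduces the general parabolic condition to the elementary condition on each single $\Phi$-invariant subbundle, yielding the stated semistability and stability criteria (for stability, a nontrivial antidominant $\chi$ forces at least one weight gap to be strictly positive, so at least one elementary strict inequality is produced). For polystability, the Levi of the maximal parabolic stabilizing a subbundle $F\subset E$ is $\rS(\GL(F)\times\GL(E/F))$, and a $\Phi$-compatible reduction to this Levi is precisely a direct-sum decomposition $E\cong F\oplus F'$ with $\Phi(F')\subset F'\otimes L$; iterating over any flag realizing equality then yields the splitting formulation stated in the proposition.

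The main obstacle is the bookkeeping in the first step, namely carefully identifying antidominant characters with antidominant weight sequences and checking that, after the trace-zero reduction, the degree pairing $\deg(\cE)(\sigma,\chi)$ becomes a non-negative combination of single-subbundle degrees. Once this dictionary and the passage to maximal parabolics are in place, the three equivalences follow formally from the general stability formalism.
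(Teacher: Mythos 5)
Your proposal is correct in outline and takes the same route as the source the paper relies on: the paper itself gives no proof of this proposition, presenting it as the known vector-bundle simplification of the general parabolic-reduction definition of (semi/poly)stability and citing \cite{HiggsPairsSTABILITY}, which carries out precisely the derivation you sketch (parabolic reductions as $\Phi$-invariant filtrations, antidominant characters as monotone trace-free weight sequences, Abel summation with $\deg(E)=0$ reducing the degree pairing to non-negative combinations of single-subbundle degrees, and Levi reductions producing the direct-sum splitting in the equality case). Nothing in your plan would fail; just be aware that the ``bookkeeping'' you defer in the first step is the entire substance of the argument, so a complete write-up must actually exhibit the weight-sequence dictionary and the summation identity rather than assert them.
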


\begin{remark}\label{Remark stability O(n,C)}
  For the notions of stability, semistability and polystability for an $L$-twisted $\rO(n,\C)$-Higgs bundles $(E,Q,\Phi),$ one only needs to consider {\em isotropic} subbundles $F\subset E$ with $\Phi(F)\subset F\otimes K$. Here a subbundle $F\subset E$ is isotropic if $F\subset F^\perp$, where $F^\perp$ is the perpendicular subbundle defined by $Q.$ For a polystable $L$-twisted $\rO(n,\C)$-Higgs bundle, if $F\subset E$ is a degree zero isotropic subbundle with $\Phi(F)\subset F\otimes L,$ then $E\cong F\oplus F'$ where $F'$ is a degree zero coisotropic subbundle satisfying $\Phi(F')\subset F'\otimes L.$ We note also that the polystability of $(E,Q)$ as an orthogonal vector bundle is equivalent to the polystability of $E$ as a vector bundle \cite{Ramanan1981}.
 \end{remark}

 For real groups, the notions of semistability, stability and
 polystability are a bit more involved.  However, for the purpose of defining the moduli spaces we are interested in, it is sufficient to use the following result of
 \cite{HiggsPairsSTABILITY}.
 

\begin{proposition}\label{Prop G polystable iff SLn Polystable}
  Let $\G$ be a real form of a simple subgroup of $\SL(n,\C).$ An
  $L$-twisted $\G$-Higgs bundle $(\cE,\varphi)$ is polystable if and
  only if the induced $\SL(n,\C)$-Higgs bundle is polystable in the
  sense of Proposition \ref{Def Stability SL(n,C)}. The analogous statement for semistability also holds.
\end{proposition}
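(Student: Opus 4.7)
The plan is to factor the equivalence through $\G^\C$-polystability, relying on Hitchin--Kobayashi theory developed in \cite{HiggsPairsSTABILITY}. Concretely, I would prove (i) that $\G$-polystability of $(\cE,\varphi)$ is equivalent to $\G^\C$-polystability of the same object, viewed as a $\G^\C$-Higgs bundle whose Higgs field is $\varphi$ regarded as $\fg^\C$-valued, and (ii) that $\G^\C$-polystability is equivalent to $\SL(n,\C)$-polystability of the induced Higgs bundle under the faithful inclusion $\G^\C\hookrightarrow\SL(n,\C)$.

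For (i), I would invoke Hitchin--Kobayashi for real reductive groups: polystability of $(\cE,\varphi)$ is equivalent to the existence of a reduction of $\cE$ from $\rH^\C$ to $\rH$ whose Chern connection $A$ and Higgs field satisfy $F_A-[\varphi,\tau_h(\varphi)]=0$, where $\tau_h$ is the compact involution determined by the reduction. Since this is also the Hitchin equation for the complexified $\G^\C$-Higgs bundle, the two notions of polystability coincide. For (ii), one direction is routine: a solution of the $\G^\C$-Hitchin equation yields, via composition with the faithful representation, a Hermitian metric on $E=\cE[V]$ solving the $\SL(n,\C)$-Hitchin equation, and then $\SL(n,\C)$-polystability follows from Simpson's theorem.

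The converse direction of (ii) is where the main obstacle lies: given an $\SL(n,\C)$-polystable Higgs bundle, Simpson's theorem supplies a harmonic metric $h$ on $E$, and I must arrange $h$ to be compatible with the $\rH^\C$-reduction of $E$ coming from $\cE$. My plan is to exploit the uniqueness of harmonic metrics on a polystable Higgs bundle up to complex gauge transformations in the stabilizer of $\Phi$, together with the observation---relying on simplicity of $\G^\C$ inside $\SL(n,\C)$ and rigidity of $\Ad$-invariant forms up to scale---that the $\G^\C$-structure forces this stabilizer to commute with a suitable Cartan involution. Averaging over the resulting compact ambiguity then produces a harmonic metric adapted to the $\rH^\C$-structure, which corresponds to a solution of the $\G^\C$-Hitchin equation and hence to $\G^\C$-polystability. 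The semistability assertion follows by the same scheme, using approximate Hermite--Einstein metrics or, equivalently, the Jordan--H\"older/Harder--Narasimhan filtration to reduce to the polystable case.
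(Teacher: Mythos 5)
The paper does not actually prove this proposition: it is quoted as ``the following result of \cite{HiggsPairsSTABILITY}'', and your two-step factorization through the complexification --- comparing $\G$-polystability with $\G^\C$-polystability, and then $\G^\C$-polystability with $\SL(n,\C)$-polystability via the faithful embedding, with the Hitchin--Kobayashi correspondence as the engine --- is exactly the standard route taken in that reference. So the strategy is the right one. There is, however, a genuine gap in your step (i). You assert that the two notions of polystability coincide because the $\G$-Hitchin equation and the $\G^\C$-Hitchin equation are ``the same equation'', but they are equations for reductions to different groups: a metric on the $\rH^\C$-bundle $\cE$ is a reduction to $\rH$, whereas a metric on the extended $\G^\C$-bundle is a reduction to a maximal compact subgroup of $\G^\C$, which contains $\rH$ but is in general strictly larger. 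A solution of the first kind extends to one of the second, so $\G$-polystable $\Rightarrow$ $\G^\C$-polystable is immediate; the converse requires showing that the harmonic reduction of the $\G^\C$-bundle can be gauged so as to be compatible with the holomorphic involution of $\G^\C$ whose fixed-point subgroup is $\rH^\C$, i.e.\ so that it restricts to a reduction of $\cE$ to $\rH$. This is precisely the same compatibility problem you isolate and (sketchily) address in step (ii), and it needs the same treatment; as written, the hardest implication of the proposition, namely that polystability of the induced $\SL(n,\C)$-Higgs bundle forces polystability of $(\cE,\varphi)$, is not closed.

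Two smaller points. ``Averaging over the resulting compact ambiguity'' cannot be a literal average, since the space of metrics is not linear; what makes the argument work is that the relevant compact group of ambiguities acts on the space of reductions, which is fibrewise a nonpositively curved symmetric space, so the Cartan fixed point theorem supplies an invariant harmonic metric. That is the step you should spell out, both for the tensors cutting out $\G^\C$ inside $\SL(n,\C)$ and for the real structure in step (i). Finally, for semistability the detour through Jordan--H\"older filtrations is risky: the Jordan--H\"older reduction of $(\cE,\varphi)$ as a $\G$-Higgs bundle and that of the induced $\SL(n,\C)$-Higgs bundle need not correspond, so ``reducing to the polystable case'' does not obviously commute with extension of structure group. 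The semistability comparison is more safely proved directly from the algebraic characterization in terms of parabolic reductions and degrees (or via approximate solutions of the equations), which is in any case the standard approach.
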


\begin{definition}
The \emph{moduli space of $L$-twisted $\G$-Higgs bundles on $X$} is the set $\cM_L(\G)$ of isomorphism classes of polystable $L$-twisted $\G$-Higgs bundles. The subset where $\mathcal{E}$ has fixed topological type $a$ is denoted by $\cM^a_L(\G)\subset\cM_L(\G)$. In the case $L=K$, we shall denote the corresponding moduli spaces just by $\cM^ a(\G)\subset \cM(\G)$.
\end{definition}
\begin{remark}\label{rmk:dimension}
The above defines the moduli space as a set. It can be given the structure of a complex analytic variety using standard methods, as we briefly outline in Section~\ref{sec:complex-analytic} below.
Alternatively, the moduli space can be constructed algebraically as the set of $\cS$-equivalence classes of semistable $\G$-Higgs bundles
as a particular case of a construction of Schmitt \cite{schmitt_2005} using geometric invariant theory. When the maximal compact subgroup of $\G$ is semisimple and $\deg(L)\geq 2g-2$, the expected dimension of $\cM_L(\G)$ is $\dim(\fh)(g-1)+\dim(\fm)(\deg(L)+1-g)$.
\end{remark}

The automorphism group $\Aut(\cE,\varphi)$ of a $\G$-Higgs bundle $(\cE,\varphi)$ consists of holomorphic bundle automorphisms which act trivially on the Higgs field: 
\begin{equation}
  \label{eq: automorphism group def}\Aut(\cE,\varphi)=\{f:\cE\to\cE|\ \Ad_f\varphi=\varphi\}.
\end{equation}
The center $\cZ(\G^\C)$ of $\G^\C$ is the intersection of the center of $\rH^\C$ and the kernel of the representation $\Ad:\rH^\C\to\GL(\fm^\C)$. 
Thus, we always have $\cZ(\G^\C)\subset\Aut(\cE,\varphi)$. 

\begin{remark}\label{remark stable open in polystable}
  If $\G^\C$ is semisimple, then a $\G$-Higgs bundle is {\em stable}
  if and only if it is polystable with finite automorphism group. 
\end{remark}

The deformation theory of a $\G$-Higgs bundle $(\cE,\varphi)$ is
governed by the complex of sheaves 
\begin{equation}\label{eq complex of sheaves}
  C^\bullet(\cE,\varphi):
  \xymatrix{\cE[\fh^\C]\ar[r]^-{\ad_\varphi}&\cE[\fm^\C]\otimes L;}
\end{equation}
indeed $\HH^0(C^\bullet(\cE,\varphi))$ can be identified with the Lie
algebra of $\Aut(\cE,\varphi)$ and
$\HH^1(C^\bullet(\cE,\varphi))$ is the infinitesimal deformation space
(see \cite{biswas-ramanan}). There is a long exact sequence in hypercohomology:
\begin{equation}\label{EQ deformation complex DEF}
\xymatrix@R=1em@C=1.3em{0\ar[r]&\HH^0(C^\bullet(\cE,\varphi))\ar[r]&H^0(\cE[\fh^\C])\ar[r]^{ \ad_\varphi\ \ \ \ }&H^0(\cE[\fm^\C]\otimes L)\ar[r]&\HH^1(C^\bullet(\cE,\varphi))\ar@{->}`r/3pt [d] `/10pt[l] `^d[llll] `^r/3pt[d][dlll]\\&H^1(\cE[\fh^\C])\ar[r]^{\ad_\varphi \ \ }&H^1(\cE[\fm^\C]\otimes L)\ar[r]&\HH^2(C^\bullet(\cE,\varphi))\ar[r]&0~.}
\end{equation}
\begin{remark}
    \label{remark duality of H0 and H2 for complex groups} When the group $\G$ is complex, Serre duality implies that the second hypercohomology group in this deformation complex is isomorphic to the dual of the zeroth hypercohomology group \cite[Proposition 3.17]{HiggsPairsSTABILITY}. In particular, this implies that for complex semisimple groups $\HH^2(C^\bullet(\cE,\varphi))$ vanishes if and only if the Higgs bundle $(\cE,\varphi)$ is stable.
\end{remark}

\subsection{The complex analytic point of view on the moduli space}
\label{sec:complex-analytic}

Fix a $C^\infty$ principal $\rH^\C$-bundle $P$.  The set of holomorphic structures on $P$ is an affine space modeled on $\Omega^{0,1}(X,P[\fh^\C])$: indeed, since $\dim(X)=1$, any partial connection on $P$ is integrable and thus defines a holomorphic structure on $P$. A partial connection induces a Dolbeault operator $\dbar_P$ on any vector bundle associated to $P$ and we shall by a slight abuse of notation denote the partial connection itself by the same symbol. We can now introduce the space of $L$-twisted Higgs bundle structures on $P$ by
\begin{equation}
  \label{eq configuration space}
  \cH_L(\G,P)=\{(\dbar_P,\varphi)\ |\ \dbar_P\varphi=0\},
\end{equation}
where $\varphi\in\Omega^{0}(X,P[\fm^\C]\otimes L)$ is the Higgs field.

The complex gauge group $\cG_{\rH^\C}$ of $C^\infty$ bundle automorphisms of $P$ acts on the space $\cH_L(\G,P)$. Moreover, this action preserves the subspace $\cH_L(\G,P)^{ps}\subset\cH_L(\G,P)$ of polystable $L$-twisted Higgs bundles. If we denote the topological type of $P$ by $a$, we thus have an identification
\begin{displaymath}
  \cM^a_L(\G) = \cH_L(P,\G)^{ps}/\cG_{\rH^\C}~.
\end{displaymath}
In order to give the moduli space a topology, suitable Sobolev
completions must be used in standard fashion; see
\cite{atiyah-bott:1982}, and also \cite[Sec.~8]{hausel-thaddeus:2004}
where the straightforward adaptation to Higgs bundles is discussed in
the case $\G=\GL(n,\C)$. Then the orbits of the $\cG_{\rH^\C}$-action
on $\cH_L(\G,P)^{ps}$ are closed in the space of semistable $\G$-Higgs bundles and the moduli space $\mathcal{M}_L(\G)$
becomes a Hausdorff topological space.

\begin{remark} If $\cH^s_L(P,\G)\subset \cH^{ps}_L(P,\G)$ denotes the subset of stable Higgs bundle structures, then $\cH^s_L(P,\G)$ is open in $\cH^{ps}_L(P,\G)$. The stable objects thus define open subsets of  $  \cM^a_L(\G) $.
\end{remark}

The moduli space can be given the structure of a complex analytic
variety using the Kuranishi model in a way analogous to the
case of vector bundles on algebraic surfaces; see, e.g.,
\cite[Sec~6.4.1]{donaldson-kronheimer:1990} or
\cite[Chap.~4]{friedman-morgan}. We briefly recall this for a point
represented by a Higgs bundle $(\cE,\varphi)$ with
vanishing $\HH^2(C^\bullet(\cE,\varphi))$, this being the only case we
shall need. For any such $(\cE,\varphi)$, there is an open
neighborhood $U$ of zero in $\HH^1(C^\bullet(\cE,\varphi))$ and a
local versal family
of $\G$-Higgs bundles parametrized
by $U$ which restricts to $(\cE,\varphi)$ over $\{0\}\times X$.
Moreover, if $(\cE,\varphi)$ is polystable then (semistability being
an open condition) $U$ can be taken to consist only of semistable
$\G$-Higgs bundles, and the map taking a semistable $\G$-Higgs bundle
in $U$ to the polystable representative of its $\cS$-equivalence class
projects $U$ onto an open neighborhood of $(\cE,\varphi)$ in the
moduli space.


Though we shall not need this, we note that the neighborhood $U$ can
be taken to be $\Aut(\cE,\varphi)$-invariant and then an open
neighborhood of $(\cE,\varphi)$ in the moduli space is modeled on
the GIT quotient
$U\sslash \Aut(\cE,\varphi).$
When the automorphism group $\Aut(\cE,\varphi)$ is finite, the GIT quotient  simplifies to a regular quotient, and the isomorphism class $(\cE,\varphi)$ defines (at worst) an orbifold point of $\cM_L(\G)$.

\subsection{Stability and deformation complex for $\G=\SO(p,q)$}
We shall need the precise notion of stability for $\SO(p,q)$-Higgs bundles. The derivation of the following simplification of the stability notion for $\SO(p,q)$-Higgs bundles is very similar to many cases treated in the literature. For example, see \cite{Sp(2p2q)modulispaceconnected} for the case $\G=\Sp(2p,2q).$ 


\begin{proposition}\label{Def SO(p,q) stability and polystability}
Let $(V,Q_V,W,Q_W,\omega,\eta)$ be an $L$-twisted $\SO(p,q)$-Higgs bundle and let $\eta^*=Q_W^{-1}\eta^TQ_V$. Then it is
\begin{itemize}
  \item semistable if and only if for any pair of isotropic subbundles $V_1\subset V$ and $W_1\subset W$ such that $\eta(W_1)\subset V_1\otimes L$ and $\eta^*(V_1)\subset W_1\otimes L$, we have $\deg(V_1)+\deg(W_1)\leq 0,$
  \item stable if and only if for any pair of isotropic subbundles $V_1\subset V$ and $W_1\subset W$, at least one of which is a proper\footnote{We note that for a rank two orthogonal bundle of the form ($L\oplus L^*,\smtrx{0&\Id\\ \Id&0})$, the isotropic subbundle $L$ is not considered to be proper. This is because $\SO(2,\C)\cong\C^*$, so $L$ does not define a proper reduction of structure group.} subbundle, and such that $\eta(W_1)\subset V_1\otimes L$ and $\eta^*(V_1)\subset W_1\otimes L$, we have $\deg(V_1)+\deg(W_1)< 0,$
  \item polystable if and only if it is semistable and whenever $V_1\subset V$ and $W_1\subset W$ are isotropic subbundles with $\eta(W_1)\subset V_1\otimes L$, $\eta^*(V_1)\subset W_1\otimes L$ and $\deg(V_1)+\deg(W_1)=0,$ there are coisotropic bundles $V_2\subset V$ and $W_2\subset W$, complementary to $V_1$ and $W_1$ respectively, so that $\eta(W_2)\subset V_2\otimes L$ and $\eta^*(V_2)\subset W_2\otimes L$. That is, 
  \[(V,W,\eta)=\Big(V_1\oplus V_2, W_1\oplus W_2, \smtrx{\eta_1&0\\0&\eta_2}\Big)~.\]
\end{itemize}
\end{proposition}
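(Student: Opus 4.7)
The plan is to derive the stated conditions from the general Hitchin-Kobayashi stability notion for $\G$-Higgs bundles (as in \cite{HiggsPairsSTABILITY}) applied to $\G=\SO(p,q)$, specialized via parabolic reductions of the structure group of the principal $\rH^\C$-bundle $\cE$, where $\rH^\C = \rS(\rO(p,\C)\times \rO(q,\C))$. The reference \cite{Sp(2p2q)modulispaceconnected} for $\Sp(2p,2q)$ is a direct template: the argument for $\SO(p,q)$ proceeds in exactly the same manner, replacing symplectic isotropic filtrations by orthogonal ones.

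\textbf{Step 1: classify parabolic reductions.} A parabolic subgroup of $\rS(\rO(p,\C)\times\rO(q,\C))$ is (up to conjugacy) a product of parabolics in the two factors, and parabolics of $\rO(n,\C)$ are the stabilizers of isotropic flags in the standard representation. After a reduction step I would argue (as is standard, see e.g. \cite{HiggsPairsSTABILITY}) that it suffices to test stability against one-step flags, so a reduction corresponds to a pair $(V_1,W_1)$ of isotropic subbundles $V_1\subset V$ and $W_1\subset W$. The corresponding parabolic subalgebra $\fp\subset\fso(p,\C)\oplus\fso(q,\C)$ has nilpotent radical expressible in terms of $V_1$ and $W_1$.

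\textbf{Step 2: decode the Higgs field compatibility condition.} The condition that the Higgs field $\varphi\in H^0(\cE[\fm^\C]\otimes L)$ lie in the appropriate $L$-twisted subbundle associated to $\fp$ translates, via the identification $\fm^\C\cong\Hom(W,V)$ of Section~2, precisely to $\eta(W_1)\subset V_1\otimes L$ and $\eta^*(V_1)\subset W_1\otimes L$ (the second condition being a consequence of the first once one applies $Q_V$ and $Q_W$, so strictly one only needs to track $\eta$).

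\textbf{Step 3: evaluate antidominant characters.} For an antidominant character $\chi$ of $\fp$, the degree $\deg(\cE)(\sigma,\chi)$ of the reduction $\sigma$ to $\fp$ reduces to a positive linear combination of $\deg(V_1)$ and $\deg(W_1)$ (with the sign convention fixed so that the basic isotropic reductions contribute $-\deg(V_1)$ and $-\deg(W_1)$, as in \cite{Sp(2p2q)modulispaceconnected}). Semistability, meaning $\deg(\cE)(\sigma,\chi)\geq 0$ for all such antidominant $\chi$, thus becomes $\deg(V_1)+\deg(W_1)\leq 0$, and the stable/polystable versions follow by the same translation, polystability corresponding to the existence of a further reduction to a Levi that splits the filtration into the complement pair $(V_2,W_2)$. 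The expression $(V,W,\eta)=(V_1\oplus V_2,W_1\oplus W_2,\mathrm{diag}(\eta_1,\eta_2))$ in the statement is then exactly the Levi reduction data.

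\textbf{The main subtlety} (and the footnote in the statement) concerns what counts as a \emph{proper} reduction of structure group. Because $\SO(2,\C)\cong\C^*$, a rank-two hyperbolic orthogonal summand $L\oplus L^*$ with its obvious isotropic sub $L$ does not define a proper reduction of the $\SO$-structure (the parabolic coincides with the whole group in this isolated case), so one must be careful not to count the resulting decompositions as destabilizing filtrations in the stability definition. I would handle this by matching the definition of proper parabolic subgroup in $\rS(\rO(p,\C)\times\rO(q,\C))$ with the footnote's convention, excluding precisely these rank-two factors. Once this bookkeeping is done, the three bullet points fall out mechanically from the general stability notion, and no further analytical input is required.
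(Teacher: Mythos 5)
Your overall strategy---deriving the three bullet points from the general Hitchin--Kobayashi stability notion for $\G$-Higgs bundles via parabolic reductions of $\rS(\rO(p,\C)\times\rO(q,\C))$, antidominant characters, and the reduction to one-step isotropic flags---is exactly the route the paper intends: the paper in fact gives no proof of this proposition, remarking only that the derivation ``is very similar to many cases treated in the literature'' and pointing to the $\Sp(2p,2q)$ case as a template, which is precisely the template you invoke. Your treatment of the $\SO(2,\C)\cong\C^*$ subtlety behind the footnote is also correct.

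There is, however, one genuine error in your Step 2: the parenthetical claim that $\eta^*(V_1)\subset W_1\otimes L$ is a consequence of $\eta(W_1)\subset V_1\otimes L$ ``once one applies $Q_V$ and $Q_W$, so strictly one only needs to track $\eta$.'' This is false in general. The adjunction $Q_W(\eta^*(v),w)=Q_V(v,\eta(w))$ shows that $\eta(W_1)\subset V_1\otimes L$ is equivalent to $\eta^*(V_1^{\perp})\subset W_1^{\perp}\otimes L$, \emph{not} to $\eta^*(V_1)\subset W_1\otimes L$; the two conditions in the proposition are independent unless $V_1$ and $W_1$ are maximal isotropic. Concretely, for $V=V_1\oplus V_1^*$ of rank $2$ and $W=W_1\oplus W_0\oplus W_1^*$ of rank $4$, the component $W_0\to V_1^*\otimes L$ of $\eta$ is unconstrained by $\eta(W_1)\subset V_1\otimes L$ but obstructs $\eta^*(V_1)\subset W_1\otimes L$. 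The correct mechanism is that the parabolic attached to $(V_1,W_1)$ stabilizes the full flags $V_1\subset V_1^{\perp}\subset V$ and $W_1\subset W_1^{\perp}\subset W$, and the requirement that $\varphi$ lie in the nonpositive-weight subbundle of $\cE[\fm^\C]\otimes L$ is that $\eta$ preserve the whole filtration, i.e.\ $\eta(W_1)\subset V_1\otimes L$ \emph{and} $\eta(W_1^{\perp})\subset V_1^{\perp}\otimes L$; it is the latter that dualizes to $\eta^*(V_1)\subset W_1\otimes L$. If you literally dropped the second condition as redundant, you would be imposing the degree inequality on a strictly larger class of pairs, yielding a stronger and incorrect semistability criterion. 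With that correction, the rest of your outline goes through.
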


We now give a recursive classification of strictly polystable $\SO(p,q)$-Higgs bundles, which will be important in the following sections of the paper.

Given a $\U(p,q)$-Higgs bundle $(E,F,\beta,\gamma)$ with $\deg(E\oplus F)=0$, consider the associated $\SO(2p,2q)$-Higgs bundle 
\[(V,Q_V,W,Q_W,\eta)=\Big(E\oplus E^*,\smtrx{0&\Id\\ \Id&0},F\oplus F^*,\smtrx{0&\Id\\ \Id&0},\smtrx{\beta&0\\0&\gamma^T}\Big)~.\]
If $(E,F,\beta,\gamma)$ is a polystable $\U(p,q)$-Higgs bundle, then this $\SO(2p,2q)$-Higgs bundle is strictly polystable.  Indeed, $E$, $E^*,$ $F$ and $F^*$ are all isotropic subbundles with $\deg(E)+\deg(F)=0$ and 
\[\xymatrix@=1em{\eta(F)\subset E\otimes K,&\eta(F^*)\subset E^*\otimes K,&\eta^*(E)\subset F\otimes K,&\text{and}&\eta^*(E^*)\subset F^*\otimes K.}\]

\begin{proposition}\label{Prop: strictly polystable SOpq}
  An $\SO(p,q)$-Higgs bundle $(V,Q_V,W,Q_W,\eta)$ is polystable if and only if it is isomorphic to
\begin{equation}\label{eq:SOpq polystable}
 \bigg(E\oplus E^*\oplus  V_0, \smtrx{0&\Id&0\\ \Id&0&0\\0&0&Q_{V_0}}, F\oplus F^*\oplus W_0, \smtrx{0&\Id&0\\ \Id&0&0\\0&0&Q_{W_0}},\smtrx{\beta&0&0\\0&\gamma^T&0\\0&0&\eta_0}\bigg),
\end{equation}  
where $(E,F,\beta,\gamma)$ is a polystable $\U(p_1,q_1)$-Higgs bundle with $\deg(E)+\deg(F)=0$, and $(V_0,Q_{V_0},W_0,Q_{W_0},\eta_0)$ is a stable $\SO(p-2p_1,q-2q_1)$-Higgs bundle.
 \end{proposition}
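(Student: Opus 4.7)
The plan follows the standard inductive decomposition familiar from the $\Sp(2p,2q)$ analogue cited in the excerpt. The argument splits into sufficiency, where one verifies directly that a Higgs bundle of the form \eqref{eq:SOpq polystable} is polystable, and necessity, which proceeds by induction on $p+q$ by iteratively peeling off hyperbolic blocks until a stable remainder is reached.

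For the sufficiency direction, the paragraph preceding the proposition already shows that the $\SO(2p_1,2q_1)$-block built from a polystable $\U(p_1,q_1)$-Higgs bundle is polystable: any isotropic pair destabilizing that block descends to a destabilizer of $(E,F,\beta,\gamma)$ as a $\U(p_1,q_1)$-pair. Moreover, an orthogonal direct sum of two polystable $\SO$-Higgs bundles is again polystable, which unwinds directly from Proposition~\ref{Def SO(p,q) stability and polystability}: isotropic subbundles of a direct sum project to isotropic subbundles compatible with $\eta$ in each summand, and the required coisotropic complement in the sum is assembled from those of the summands. Combined with the assumed stability of $(V_0,Q_{V_0},W_0,Q_{W_0},\eta_0)$, this proves the ``if'' direction.

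For necessity, start with $(V,Q_V,W,Q_W,\eta)$ polystable and induct on $p+q$. If it is already stable, take $p_1=q_1=0$. Otherwise, failure of stability together with semistability produces, after a standard refinement inside a semistable reduction, a pair of isotropic subbundles $V_1\subset V$ and $W_1\subset W$, at least one of which is proper, compatible with $\eta$ and $\eta^*$ in the sense of Proposition~\ref{Def SO(p,q) stability and polystability}, and with $\deg(V_1)+\deg(W_1)=0$. Polystability then supplies coisotropic complements $V_2$, $W_2$ respecting the Higgs field. Using nondegeneracy of $Q_V$, one selects inside $V_2$ an isotropic subbundle $V_1^\vee$ of rank $p_1$ such that $Q_V$ pairs $V_1$ with $V_1^\vee$ nondegenerately; setting $V_0=(V_1\oplus V_1^\vee)^\perp$ yields the orthogonal splitting $V=V_1\oplus V_1^\vee\oplus V_0$ with the block form of $Q_V$ as in \eqref{eq:SOpq polystable}, and an analogous construction produces $W=W_1\oplus W_1^\vee\oplus W_0$. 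Compatibility of $\eta$ with both splittings forces the $3\times 3$ block-diagonal form of $\eta$, and the hyperbolic block is recognized as the $\SO(2p_1,2q_1)$-Higgs bundle attached to the $\U(p_1,q_1)$-Higgs bundle $(V_1,W_1,\beta,\gamma)$, where $\beta=\eta|_{W_1}:W_1\to V_1\otimes L$ and $\gamma$ is read off from $\eta^*|_{V_1}$ via the identification $V_1^\vee\cong V_1^*$ induced by $Q_V$. Polystability of this $\U(p_1,q_1)$-pair and of the residual $\SO(p-2p_1,q-2q_1)$-piece are inherited, since destabilizing subbundles in either would pull back to isotropic destabilizers in $V$, $W$. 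Induction on $p+q$ concludes.

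The step I expect to be the main obstacle is the linear-algebra refinement producing the orthogonal splitting $V=V_1\oplus V_1^\vee\oplus V_0$ compatibly with the Higgs block structure: a naive choice of isotropic $V_1^\vee\subset V_2$ need not be $\eta^*$-compatible, so $V_1^\vee$ must be chosen so that the extra off-diagonal blocks of $\eta$ also vanish. The saving feature is that the polystable splitting $V=V_1\oplus V_2$ already kills cross-terms between $V_1$ and $V_2$, and the orthogonal refinement of $V_2$ inside $V_2$ inherits this compatibility; a fibrewise linear-algebra argument, promoted to a bundle statement via the usual extension arguments on the Riemann surface, should produce the required isotropic subbundle $V_1^\vee$.
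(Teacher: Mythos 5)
Your skeleton --- peel off a hyperbolic block coming from a polystable $\U(p_1,q_1)$-Higgs bundle and induct --- is the same as the paper's, and you have correctly located the crux; but the crux is not actually resolved by what you wrote. Given the polystable splitting $V=V_1\oplus V_2$ with $V_2$ coisotropic, the existence of a \emph{holomorphic} isotropic subbundle $V_1^{\vee}\subset V_2$ pairing nondegenerately with $V_1$ is exactly the assertion that the extension $0\to V_1^{\perp}\cap V_2\to V_2\to V_1^{*}\to 0$ admits a holomorphic (isotropic) splitting. A ``fibrewise linear-algebra argument promoted to a bundle statement'' cannot deliver this: fibrewise such complements always exist, and the obstruction to choosing one holomorphically is precisely the extension class in $H^1(\Hom(V_1^{*},V_1^{\perp}\cap V_2))$, which does not vanish on general grounds. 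This is the one nontrivial computation in the paper's proof: writing the $\dbar$-operator of $V$ in the smooth splitting $E\oplus E^{\perp}/E\oplus E^{*}$, the only possible off-diagonal term is $\alpha\in\Omega^{0,1}(\Hom(E^{*},E^{\perp}/E))$, and holomorphicity of $Q_V$ forces $\alpha=0$, so the extension class vanishes. If you prefer to stay inside your framework there is a cleaner fix: take $V_1^{\vee}:=V_2^{\perp}$ (orthogonal complement with respect to $Q_V$). This is automatically a holomorphic subbundle, it is isotropic because $V_2$ is coisotropic, it pairs nondegenerately with $V_1$ because $Q_V$ is nondegenerate and $V=V_1\oplus V_2$, and it is compatible with the Higgs field by adjointness: for $w\in W_2^{\perp}$ and $v\in V_2$ one has $Q_V(\eta(w),v)=Q_W(w,\eta^{*}(v))=0$ since $\eta^{*}(V_2)\subset W_2\otimes K$. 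Either route closes the gap; as written, your argument does not.

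A secondary flaw is in your sufficiency direction: the projection to a summand of an isotropic subbundle of an orthogonal direct sum need not be isotropic (for $(a,b),(a',b')$ in such a subbundle one only gets $Q(a,a')+Q'(b,b')=0$, not the vanishing of each term). The paper declares the converse ``clear''; the clean justification is Proposition \ref{Prop G polystable iff SLn Polystable}: the $\SL(p+q,\C)$-Higgs bundle associated to \eqref{eq:SOpq polystable} is a direct sum of degree-zero polystable Higgs bundles, hence polystable.
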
 
\begin{proof}
If $(V,Q_V,W,Q_V,\eta)$ is stable, take $p_1=q_1=0$. Suppose that $(V,Q_V,W,Q_W,\eta)$ is strictly polystable and that $E\subset V$ and $F\subset W$ are isotropic subbundles of rank $p_1$ and $q_1$ respectively, such that $\deg(E)+\deg(F)=0$ and 
 \[\xymatrix{\eta(F)\subset E\otimes K&\text{and}&\eta^*(E)\subset F\otimes K}.\]
Since $(V,W,\eta)$ is polystable, the bundles $V$ and $W$ split as $V=E\oplus V'$ and $W=F\oplus W'$, 
where $V'$ and $W'$ are both coisotropic subbundles with the property
\[\xymatrix{\eta(W')\subset V'\otimes K&\text{and}&\eta^*(V')\subset W'\otimes K}.\]
Since the bundles $E$ and $F$ are isotropic, the bundles $V'$ and $W'$ are extensions of the form:
\[\xymatrix@=1em{0\ar[r]&E^\perp/E\ar[r]&V'\ar[r]&E^*\ar[r]&0&\text{and}&0\ar[r]&F^\perp/F\ar[r]&W'\ar[r]&F^*\ar[r]&0}.\]
We claim that the above extension classes vanish. For the bundle $V$ we have a holomorphic splitting $E\oplus V'$ and a smooth splitting $E\oplus E^\perp/E\oplus E^*$. In this smooth splitting, the orthogonal structure $Q_V$ and the $\dbar$-operator on $V$ are isomorphic to 
\[\xymatrix{Q_V\cong\smtrx{0&0&\Id\\0&Q_{E^\perp/E}&0\\ \Id&0&0}&\text{and}&\dbar_V\cong \smtrx{\dbar_{E}&0&0\\0&\dbar_{E^\perp/E}&\alpha\\0&0&\dbar_{E^*}}},\]  
where $\alpha\in\Omega^{0,1}(\Hom(E^*,E^\perp/E)).$ However, since the orthogonal structure $Q_V$ is holomorphic, we have $\alpha=0.$ By applying the same argument to the bundle $W$, we have the following holomorphic splitting
\[(W,Q_W)\cong \bigg(F\oplus F^\perp/F\oplus F^*,\smtrx{0&0&\Id\\0&Q_{F^\perp/F}&0\\\Id&0&0}\bigg).\]
The conditions $\eta(F)\subset E\otimes K$, $\eta^*(E)\subset F\otimes K$ and $\eta(W')\subset V'\otimes K$ imply that $\eta$ is given by 
\[\eta=\smtrx{\beta&0&0\\0&\eta_0&0\\0&0&\gamma^T}:F\oplus F^\perp/F\oplus F^*\longrightarrow E\oplus E^\perp/E\oplus E^*.\]
The tuple $(E,F,\beta,\gamma)$ defines a polystable $\U(p_1,q_1)$-Higgs bundle and 
\[(V_0,Q_{V_0},W_0,Q_{W_0},\eta_0)=(E^\perp/E,Q_{E^\perp/E},F^\perp/F,Q_{F^\perp/F},\eta_0)\]
defines a polystable $\SO(p-2p_1,q-2q_1)$-Higgs bundle. By iterating this process if necessary, we may assume $(V_0,W_0,\eta_0)$ is stable. 

The converse statement is clear.
 \end{proof}

For an $L$-twisted $\SO(p,q)$-Higgs bundle $(V,Q_V,W,Q_W,\omega,\eta)$, write
\[\xymatrix@=.8em{\fso(V)=\{\alpha\in \End(V)\ |\ \alpha^TQ_V+Q_V\alpha=0\}&\text{and}&\fso(W)=\{\beta\in \End(W)\ |\ \beta^TQ_W+Q_W\beta=0\}.}\] 
Then the Lie algebra bundles $\cE[\fh^\C]$ and $\cE[\fm^\C]\otimes L$ are given by
\[\xymatrix{\cE[\fso(p,\C)\oplus \fso(q,\C)]\cong \fso(V)\oplus \fso(W)&\text{and}&\cE[\fm^\C]\otimes L\cong\Hom(W,V)\otimes L.}\]


The deformation complex \eqref{eq complex of sheaves} becomes 
\begin{equation}\label{EQ ad eta definition}
  C^\bullet(V,W,\eta): \xymatrix@R=0em{\fso(V)\oplus\fso(W)\ar[r]^-{\ad_\eta}&\Hom(W,V)\otimes L~~,\\(\alpha,\beta)\ar@{|->}[r]&\eta\otimes \beta-(\alpha\otimes \Id_L)\otimes \eta}
\end{equation}
and the long exact sequence \eqref{EQ deformation complex DEF} is given by 
\begin{equation}\label{EQ SO(p,q) deformation complex DEF}
\xymatrix@R=1em@C=.8em{0\ar[r]&\HH^0(C^\bullet(V,W,\eta))\ar[r]&H^0(\fso(V)\oplus\fso(W))\ar[r]^-{\ad_\eta}&H^0(\Hom(W,V)\otimes L)\ar[r]&\HH^1(C^\bullet(V,W,\eta))\ar@{->}`r/3pt [d] `/10pt[l] `^d[llll] `^r/3pt[d][dlll]\\&H^1(\fso(V)\oplus\fso(W))\ar[r]^-{\ad_\eta}&H^1(\Hom(W,V)\otimes L)\ar[r]&\HH^2(C^\bullet(V,W,\eta))\ar[r]&0~.}
\end{equation}
We will use the above complex and long exact sequence extensively throughout the paper.

Finally, we make explicit the gauge theoretic perspective for $\SO(p,q)$-Higgs bundles. 
Fix $C^\infty$ rank $p$ and $q$ orthogonal vector bundles $(\underline V,Q_V)$ and $(\underline W,Q_W)$ respectively, and a smooth nowhere vanishing section
$\omega$ of $\Lambda^{p+q}\underline (V\oplus W)$ so that $\det(Q_V\oplus -Q_W)(\omega,\omega)=1$. An
$\SO(p,q)$-Higgs bundle structure on $(\underline V,Q_V,\underline W,Q_W,\omega)$
consists of a triple $(\dbar_V,\dbar_W,\eta)$, where $\eta\in\Omega^{0,1}(\Hom(W,V)\otimes K)$ and $\dbar_V$ and $\dbar_W$ are Dolbeault operators on $\underline V$ and $\underline W$ with respect to which $Q_V,$ $Q_W,$ $\omega$ and $\eta$ are each holomorphic.
An isomorphism between two such Higgs bundle structures $(\dbar_V,\dbar_W,\eta)$ and
$(\dbar_V',\dbar_W',\eta')$ is given by an element of the $\rS(\rO(p,\C)\times\rO(q,\C))$-gauge
group. That is, a pair of $C^\infty$ bundle automorphism $f_V:\underline {V}\to \underline{ V} $ and $f_W:\underline W\to\underline W$ so that
\[\xymatrix{f_V^TQ_Vf_V=Q_V,&f_W^TQ_Wf_W=Q_W,& \text{and}& \det(f_V)\otimes \det(f_W)=1,}\] with the property that
$(f_V^*\dbar_V',f_W^*\dbar_W',f_W^{-1}\eta'f_V)=(\dbar_V,\dbar_W,\eta).$
\subsection{The Hitchin fibration and Hitchin component}
Let $\G^\C$ be a complex semisimple Lie group of rank $\ell$ and let $p_1,\ldots,p_{\ell}$ be a basis of $\G^\C$-invariant homogeneous polynomials on $\fg^\C$ with $\deg(p_j)=m_j+1.$ 
Given an $L$-twisted $\G^\C$-Higgs bundle $(\cE,\varphi),$ the tensor $p_j(\varphi)$ is a holomorphic section of $L^{m_j+1}.$ The map 
$(\cE,\varphi)\mapsto(p_1(\varphi),\ldots,p_{\ell}(\varphi))$ descends to a map 
\begin{equation}\label{EQ Hitchin Fibration}
  h:\xymatrix{\cM_L(\G^\C)\ar[r]&\displaystyle\bigoplus\limits_{j=1}^{\ell}H^0(L^{m_j+1})}
\end{equation}
known as the Hitchin fibration. In \cite{selfduality}, Hitchin showed that $h$ is a {\em proper} map for $L=K$, and for general $L$ properness was shown by Nitsure in \cite{NitsureHiggs}. 

 Another important aspect of the Hitchin fibration for this paper is the Hitchin section. 
 \begin{theorem}\label{THM Hitchin component}
  (Hitchin \cite{liegroupsteichmuller}) Let $\G$ be the split real form of a complex semisimple Lie group $\G^\C$ of rank $\ell$. There is a section of the fibration \eqref{EQ Hitchin Fibration} with $L=K$ such that the image consists of $\G$-Higgs bundles and
  defines a connected component of $\cM(\G).$
 \end{theorem}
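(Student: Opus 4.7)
The plan is to construct the section explicitly via Kostant's theorem on principal $\fsl_2$-triples, verify the image lands in the polystable locus, and then argue the image is simultaneously open and closed in $\cM(\G)$.

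First, choose a principal $\fsl_2$-triple $(e,f,h)$ in $\fg$. Because $\G$ is split, this triple can be arranged so that $h\in\fh$ and $e,f\in\fm$. The element $\ad_h$ gives a $\Z$-grading $\fg^\C=\bigoplus_k \fg_k$, compatible with the Cartan decomposition: $\fh^\C=\bigoplus_k \fh_k$ and $\fm^\C=\bigoplus_k\fm_k$, with $\fm^\C$ supported on the even (or odd, depending on convention) integers. By Kostant's theorem, under the principal $\fsl_2$, $\fg^\C$ decomposes into $\ell$ irreducible subrepresentations with highest weights $2m_1,\ldots,2m_\ell$, and one can pick highest weight vectors $e_j\in\fm^\C$. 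Let $\chi_h\colon\C^*\to\rH^\C$ be the one-parameter subgroup with derivative $h$; pick a square root $K^{1/2}$ and let $\cE$ be the principal $\rH^\C$-bundle obtained from $K^{1/2}$ by extension of structure group via $\chi_h$. Then $\cE[\fm^\C]\cong\bigoplus_k \fm_k\otimes K^{k/2}$, and in particular each $e_j$ becomes a holomorphic section of $\fm_{2m_j}\otimes K^{m_j}\subset\cE[\fm^\C]$, while $f$ is a holomorphic section of $\fm_{-2}\otimes K^{-1}\subset\cE[\fm^\C]\otimes K^{-1}$. Define the section
\begin{equation*}
s\colon \bigoplus_{j=1}^\ell H^0(X,K^{m_j+1})\longrightarrow \cM(\G),\qquad (q_1,\ldots,q_\ell)\longmapsto\left(\cE,\ f+\sum_{j=1}^\ell q_j\, e_j\right).
\end{equation*}
Weight-counting shows $\varphi=f+\sum q_j e_j$ lies in $H^0(X,\cE[\fm^\C]\otimes K)$, so $s$ is well defined at the level of pairs.

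Next, one checks that $s$ is indeed a section of $h$. By Kostant's theorem, the affine subspace $f+\ker(\ad_e)\subset\fg^\C$ (the Kostant slice) meets every regular $\Ad(\G^\C)$-orbit in exactly one point, and the invariant polynomials $p_j$ of degree $m_j+1$ restrict to give algebraically independent coordinates on this slice; in particular, $p_j(f+\sum_i q_i e_i)$ is an affine function of $q_j$ (with nonzero top-degree coefficient), so, up to a harmless linear change of coordinates on the Hitchin base, $h\circ s=\mathrm{id}$.

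The critical analytic step is polystability. At every point $x\in X$, the value $\varphi(x)$ is a regular element of $\fm^\C\otimes K_x$ because $f$ is a principal nilpotent and the perturbation by $\sum q_j e_j$ preserves regularity. A parabolic reduction of $\cE$ to which $\varphi$ is subordinate would give an $\Ad$-invariant flag preserved pointwise by $\varphi$, hence a flag in $\fg^\C$ stabilized by a regular element; but regular elements of $\fg^\C$ lie in a unique Borel, and combined with the principal grading this forces the reduction to come from the cocharacter $\chi_h$ itself, and a direct check using Hitchin's computation (Section~5 of \cite{liegroupsteichmuller}) rules out all proper reductions giving nonpositive degree. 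Hence every Higgs bundle in the image of $s$ is stable, and $s$ lands in $\cM(\G)\subset\cM(\G^\C)$ (the real form condition is built into the choice of $e,f\in\fm$ and into the orthogonal/symplectic form preserved by $\cE$).

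Finally, the image is open and closed. It is closed because $s$, being a section of the proper map $h$ (Hitchin, Nitsure), is itself proper, so it is a closed embedding. For openness, compute dimensions: $\sum_{j=1}^\ell \dim H^0(K^{m_j+1})=\sum_j(2m_j+1)(g-1)=\dim(\fg)(g-1)$, which by Remark~\ref{rmk:dimension} equals the expected complex dimension $\dim(\fh)(g-1)+\dim(\fm)(g-1)$ of $\cM(\G)$. Since every point of $s$'s image is stable with finite automorphism group (only the center $\cZ(\G^\C)\cap\rH^\C$), the moduli space is smooth there, and the injective differential of $s$ (also of the right dimension, by the hypercohomology exact sequence \eqref{EQ deformation complex DEF}) makes $s$ an open embedding on the smooth locus. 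Being open, closed, nonempty, and connected (as the continuous image of a vector space), the image is a connected component of $\cM(\G)$. The main obstacle is the stability argument in the third paragraph: ensuring no parabolic reduction destabilizes $(\cE,\varphi)$ requires a careful use of the regularity of the principal nilpotent together with the specific grading coming from $\chi_h$.
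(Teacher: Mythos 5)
The paper does not actually prove this theorem: it is quoted from Hitchin \cite{liegroupsteichmuller}, and all the paper supplies is the explicit model \eqref{EQ Hitchin section O(p,C)}--\eqref{eq choice of Hitchin section O(p-1,p)} of the section for $\rO(p,p-1)$. Your construction is precisely Hitchin's original one: the principal $\fsl_2$-triple with $h\in\fh^\C$ and $e,f\in\fm^\C$, the bundle $\cE$ induced from $K^{1/2}$ via $\chi_h$, the Higgs field $f+\sum q_je_j$, Kostant's slice for the section property, and properness of $h$ together with the dimension count $\sum_j(2m_j+1)(g-1)=\dim(\fg)(g-1)$ for closedness and openness. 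Those last two steps are fine as you state them (a continuous section of a proper map to a Hausdorff space is proper, hence a closed embedding, and the image of a stable point with vanishing $\HH^0$ and $\HH^2$ is a smooth, or at worst orbifold, point of the expected dimension).

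The step that does not hold as written is the stability argument. You argue that because $\varphi(x)=f+\sum q_je_j(x)$ is regular at every point, any $\varphi$-invariant parabolic reduction is forced to come from the cocharacter $\chi_h$, on the grounds that ``regular elements of $\fg^\C$ lie in a unique Borel.'' That is true only for regular \emph{nilpotent} elements; for generic $q$ the values of $\varphi$ are regular \emph{semisimple}, lie in $|W|$ distinct Borel subalgebras, and admit many invariant subspaces (sums of eigenlines), so the pointwise argument does not pin down the invariant subbundles at all. The standard repair --- and exactly the device this paper uses for its analogous map $\widetilde\Psi$ in Section \ref{sec:exoticcomp}, cf.\ \eqref{EQ scaling gauge action} --- is a scaling trick: the holomorphic gauge transformation $\chi_h(\lambda^{1/2})$ carries $(\cE,\lambda\varphi)$ to the point of the section with parameters $(\lambda^{m_1+1}q_1,\dots,\lambda^{m_\ell+1}q_\ell)$, so, stability being an open condition, stability for arbitrary $q$ reduces to stability in a neighborhood of $q=0$. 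At $q=0$ the Higgs field is the principal nilpotent $f$, where your unique-Borel reasoning is valid: the only $\varphi$-invariant isotropic subbundles are the tails $\bigoplus_{k\leq k_0}\fm_k\otimes K^{k/2}$ of the grading, all of negative degree. With that substitution your outline becomes a correct reconstruction of the cited proof.
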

 \begin{remark}\label{Remark auto of Hitchin component}
  For a split real group $\G$, a connected component of $\cM(\G)$ described by Theorem \ref{THM Hitchin component} is called a \emph{Hitchin component}.  Since the Hitchin components are smooth, the automorphism group of a Higgs bundle in such a component is as small as possible. For $\rO(p,p-1)$, it is given by $\pm(\Id_V,\Id_W).$
 \end{remark}

We now describe an explicit construction of a section of \eqref{EQ Hitchin Fibration} for the group $\G^\C=\rO(2p-1,\C)$.  This construction will be used in Section \ref{sec:exoticcomp}. We will construct one such section $s_H^I$ for each choice of a holomorphic line bundle $I$ with $I^2\cong \cO$. In this case, the rank is $p-1$, the integers $m_j+1$ equal to $2j$ and the split real form is isomorphic to $\rO(p,p-1)$.  Therefore the Hitchin section is given by 
\[  
  s_H^I:\bigoplus\limits_{j=1}^{p-1}H^0(K^{2j})\to \cM(\rO(2p-1,\C)).
\]
For each $n$, consider the holomorphic orthogonal bundle
\begin{equation}\label{eq:KnQn}
(\cK_n,Q_n)=\bigg(K^{n}\oplus K^{n-2}\oplus\cdots\oplus K^{2-n}\oplus K^{-n},\smtrx{&&1\\&\iddots &\\1}\bigg).
\end{equation}
For $(q_2,\ldots,q_{2p-2})\in \bigoplus\limits_{j=1}^{p-1}H^0(K^{2j})$, the $\rO(p,p-1)$-Higgs bundle $(V,Q_V,W,Q_W,\eta)$ in the image of a Hitchin section $s^I_H$ is given by 
\begin{equation}
    \label{EQ Hitchin section O(p,C)}s^I_H(q_2,\ldots,q_{2p-2})=(I\otimes\cK_{p-1},Q_{p-1},I\otimes \cK_{p-2},Q_{p-2},\eta(q_2,\ldots,q_{2p-2})),
\end{equation}
where $\eta(q_2,\ldots,q_{2p-2})$ depends on a choice of the basis of invariant polynomials. Notice that, in particular, the holomorphic structures on $V=I\otimes\cK_{p-1}$ and $W=I\otimes\cK_{p-2}$ are fixed. One choice for $\eta(q_2,\ldots,q_{2p-2})$ is given by 
\begin{equation}
  \label{eq choice of Hitchin section O(p-1,p)}
  \eta(q_2,\ldots,q_{2p-2})=\smtrx{q_2&q_4&\cdots& q_{2p-2}\\1&q_2&\cdots&q_{2p-4}\\&\ddots&\ddots&\\&&1&q_2\\&&&1}:\xymatrix{I\otimes\cK_{p-2}\ar[r]&I\otimes \cK_{p-1}\otimes K}.
\end{equation}
For example, when $p=3$ we have 
\[(V,Q_V,W,Q_W,\eta(q_2,q_4))=\Big(IK^2\oplus I\oplus IK^{-2},\smtrx{&&1\\&1&\\1}, IK\oplus IK^{-1},\smtrx{&1\\1},\smtrx{q_2&q_4\\1&q_2\\0&1}\Big)~.\]
If $(E,Q,\Phi)$ is the associated $\rO(5,\C)$-Higgs bundle from \eqref{Eq SO(p+q,C) Higgs bundle}, 
then $\tr(\Phi^2)=8q_2$ and $\tr(\Phi^4)=20q_2^2+8q_4.$ So the above description describes the Hitchin section for the basis $p_1(\Phi)=\frac{1}{8}\tr(\Phi^2)$ and $p_2=\frac{1}{8}\tr(\Phi^4)-\frac{20}{64}(\tr(\Phi^2))^2$.

\subsection{Topological invariants}
\label{sec:top-invariants}
 
Since $\rH^\C$ and $\G$ are both homotopy equivalent to $\rH,$ the set of equivalence classes of topological $\rH^\C$-bundles on $X$ is the same as the set of equivalence classes of topological $\G$-bundles on $X$. Denote this set by $\mathrm{Bun}_X(\G)$. 
This gives a decomposition of the Higgs bundle moduli space,
\[\cM_L(\G)=\coprod\limits_{a\in\mathrm{Bun}_X(\G)}\cM_L^a(\G)~,\]
where $a\in\mathrm{Bun}_X(\G)$ is the topological type of the underlying $\rH^\C$-bundle of the Higgs bundles in $\cM_L^a(\G).$ 

In general, the number of connected components of the moduli space of $\G$-Higgs has not been established. However, when $\G$ is {\em compact} and semisimple, the spaces $\cM^a(\G)$ are connected and nonempty \cite{ramanathan_1975}. Using Example \ref{Ex compact complex Higgs}, this implies the following proposition.
\begin{proposition}\label{prop pi0 H semisimple}
If $\G$ is a connected real semisimple Lie group such that the maximal compact subgroup $\rH$ is semisimple, then, for each $a\in\Bun_X(\G),$ the space $\cM^{a}(\G)$ is nonempty. Moreover, each space $\cM^a(\G)$ contains a unique connected component with the property that every Higgs bundle in it can be  deformed to a Higgs bundle with zero Higgs field. 
\end{proposition}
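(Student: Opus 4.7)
The plan is to reduce both assertions to Ramanathan's theorem on the moduli space of principal bundles over a Riemann surface with compact semisimple structure group \cite{ramanathan_1975}. The key observation is that a $\G$-Higgs bundle with vanishing Higgs field is just a holomorphic principal $\rH^\C$-bundle, and, since $\G$ is homotopy equivalent to $\rH$, the topological types of $\G$-bundles and of $\rH^\C$-bundles coincide. Moreover, polystability of the pair $(\cE,0)$ as a $\G$-Higgs bundle reduces to polystability of $\cE$ as an $\rH^\C$-bundle, because a vanishing Higgs field imposes no extra constraint on the reductions of structure group that enter the stability criterion.

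For nonemptiness of $\cM^a(\G)$, fix $a\in\Bun_X(\G)$. Since $\rH$ is compact and semisimple, Ramanathan's theorem guarantees the existence of a polystable $\rH^\C$-bundle $\cE$ of topological type $a$. By the preceding observation, $[(\cE,0)]\in\cM^a(\G)$. For the uniqueness of the component containing Higgs bundles that can be deformed to ones with zero Higgs field, I would consider the locus
\[
\cM^a(\G)_0 \;=\; \{\,[(\cE,0)]\in\cM^a(\G) \;\mid\; \cE \text{ polystable } \rH^\C\text{-bundle of type } a\,\}.
\]
This is precisely the image of Ramanathan's moduli space under the inclusion $\cE\mapsto(\cE,0)$; since Ramanathan's theorem also asserts that this moduli space is connected, $\cM^a(\G)_0$ is connected and therefore contained in a single connected component $\cC\subset\cM^a(\G)$. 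By definition, a Higgs bundle admits a deformation to one with zero Higgs field exactly when its connected component meets $\cM^a(\G)_0$, and since all of $\cM^a(\G)_0$ lies inside $\cC$, this component $\cC$ is uniquely determined.

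The only step requiring some care is verifying the reduction of $\G$-Higgs bundle polystability to $\rH^\C$-bundle polystability when $\varphi=0$; this should follow by direct inspection of the stability criteria, in the spirit of Proposition \ref{Prop G polystable iff SLn Polystable}, since the parabolic reductions of $\cE$ preserving the zero Higgs field are just arbitrary parabolic reductions. Beyond this technical check, the argument is essentially a bookkeeping application of Ramanathan's connectedness result, and this is precisely where the hypothesis that $\rH$ is semisimple enters.
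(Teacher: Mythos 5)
Your proposal is correct and follows essentially the same route as the paper, which deduces the proposition from Ramanathan's nonemptiness and connectedness result for moduli of bundles with compact semisimple structure group, combined with the observation (Example \ref{Ex compact complex Higgs}) that an $\rH$-Higgs bundle is just a holomorphic $\rH^\C$-bundle sitting inside $\cM^a(\G)$ as the zero-Higgs-field locus. The technical check you flag — that polystability of $(\cE,0)$ as a $\G$-Higgs bundle reduces to polystability of $\cE$ as an $\rH^\C$-bundle — is exactly the point the paper absorbs into that example, so nothing is missing.
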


The above proposition implies that, when $\G$ is a semisimple {\em complex} Lie group, the space $\cM^a(\G)$ is nonempty for each $a\in\Bun_X(\G)$. In fact, each of the spaces $\cM^a(\G)$ is connected. This was proven for connected groups by Li \cite{JunLiConnectedness} and in general in \cite{Oliveira_GarciaPrada_2016}. In particular, we have the following:
\begin{corollary}
      If $\G$ is a semisimple complex Lie group, then every Higgs bundle  $(\cE,\varphi)\in\cM(\G)$ can be  deformed to a Higgs bundle with vanishing Higgs field. In particular,
      \[|\pi_0(\cM(\G))|=|\Bun_X(\G)|.\]
     \end{corollary}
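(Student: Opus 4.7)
My plan is to reduce the corollary to the two ingredients already in hand: Proposition \ref{prop pi0 H semisimple} and the connectedness result of Li \cite{JunLiConnectedness} and \cite{Oliveira_GarciaPrada_2016} quoted in the paragraph immediately above. The first gives, for each topological type $a$, a distinguished component of $\cM^{a}(\G)$ consisting exactly of the Higgs bundles that can be deformed to one with zero Higgs field; the second asserts that $\cM^{a}(\G)$ has only one component. Combining the two yields both statements of the corollary.

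First I would check that Proposition \ref{prop pi0 H semisimple} applies. The hypothesis is that $\G$ is a connected real semisimple Lie group whose maximal compact subgroup $\rH$ is semisimple. Viewing the complex semisimple Lie group $\G$ as a real Lie group, its maximal compact subgroup is a compact real form of $\G$, which is semisimple because $\G$ is (real and complex semisimplicity of $\G$ as a real group coincides, and the compact real form of a complex semisimple Lie algebra is semisimple). Hence Proposition \ref{prop pi0 H semisimple} indeed applies, and for each $a \in \Bun_X(\G)$ we obtain a nonempty connected component $\cM^{a}(\G)_{0} \subset \cM^{a}(\G)$ all of whose points can be deformed to a Higgs bundle with vanishing Higgs field.

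Next I would invoke the cited connectedness result: for $\G$ complex semisimple, each stratum $\cM^{a}(\G)$ is connected. (For $\G$ connected this is due to Li; in general it is \cite{Oliveira_GarciaPrada_2016}.) Hence $\cM^{a}(\G) = \cM^{a}(\G)_{0}$, so every $(\cE,\varphi) \in \cM^{a}(\G)$ can be deformed to a Higgs bundle with zero Higgs field. Since this holds for every topological type $a$, it proves the first assertion of the corollary.

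For the component count I would simply observe that the decomposition
\[
\cM(\G) = \coprod_{a \in \Bun_{X}(\G)} \cM^{a}(\G)
\]
is a disjoint union of nonempty connected subspaces indexed by $\Bun_{X}(\G)$, so $|\pi_{0}(\cM(\G))| = |\Bun_{X}(\G)|$. There is no real obstacle in this proof; the only subtlety worth double-checking is that the hypothesis ``$\rH$ semisimple'' of Proposition \ref{prop pi0 H semisimple} is truly satisfied when $\G$ is complex semisimple, which is the small verification above.
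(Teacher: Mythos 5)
Your proposal is correct and follows exactly the paper's own (implicit) argument: the corollary is deduced from Proposition \ref{prop pi0 H semisimple} together with the connectedness of each $\cM^a(\G)$ due to Li and \cite{Oliveira_GarciaPrada_2016}, which is precisely the content of the paragraph preceding the corollary. Your extra verification that the maximal compact subgroup of a complex semisimple group is semisimple is a reasonable, if routine, addition that the paper leaves implicit.
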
     
A semisimple Lie group $\G$ whose maximal compact subgroup is not semisimple but only reductive is called {\em a group of Hermitian type.} We will discuss this case in more detail in Section \ref{Section SO2q}.

We have $\rO(1)\cong \Z_2$ and $\rO(1)$-bundles are classified by their first Stiefel-Whitney class $sw_1\in H^1(X,\Z_2).$ For $p\geq 2$, topological $\rO(p)$-bundles have two characteristic classes, a first Stiefel-Whitney class and a second Stiefel-Whitney class $sw_2\in H^2(X,\Z_2).$ 
When the first Stiefel-Whitney class vanishes, the structure group can be reduced to $\SO(p).$ Since $\SO(2)$ is a circle, the second Stiefel-Whitney class of an $\rO(2)$-bundle lifts to the degree of a circle bundle when $sw_1=0.$ However, as an $\rO(2)$-bundle, it is only the absolute value of the degree which is a topological invariant. 
For $p>2,$ the first and second Steifel-Whitney classes classify topological $\rO(p)$-bundles over $X$, while the $\SO(p)$-bundles are classified topologically just by $sw_2$.

We will be particularly interested in the case of $K^p$-twisted $\SO(1,n)$-Higgs bundles and $K$-twisted $\SO(p,q)$-Higgs bundles. 
Since the maximal compact subgroup of $\SO(p,q)$ is $\rS(\rO(p)\times \rO(q))$, the Higgs bundles are determined by two orthogonal bundles which have the same first Stiefel-Whitney class. Let $\cM^{a,b,c}_L(\SO(p,q))$ denote the subset of $\SO(p,q)$-Higgs bundles $(V,Q_V,W,Q_V,\eta)$ so that 
\[\xymatrix{a=sw_1(V,Q_V)=sw_1(W,Q_W)&b=sw_2(V,Q_V)&\text{and}&c=sw_2(W,Q_W).}\]
These invariants are constant on connected components, thus we have a decomposition
\begin{equation}
  \label{eq sopq moduli top inv}\cM_L(\SO(p,q)) = \coprod\cM_L^{a,b,c}(\SO(p,q))~.
\end{equation}
Note that when $p=1$ the invariant $b$ is zero, while when $q=1$ then $c=0$.

The case of $\SO(2,q)$ with vanishing first Stiefel-Whitney class behaves differently.
Let $(V,W,\eta)$ be a polystable $K^p$-twisted $\SO(2,q)$-Higgs bundle with $sw_1(V)=0$. Then there is a line bundle $N$ so that the $\SO(2,\C)$-bundle $(V,Q_V)$ is isomorphic to 
\begin{equation}
  \label{eq:SO_0-2q-Higgs}
  (V,Q_V)\cong (N\oplus N^{-1}, \smtrx{0&1\\1&0})~.
\end{equation}
With respect to this splitting, the Higgs field $\eta:W\to V\otimes K^p$ decomposes as
\[\eta=\smtrx{\gamma\\\beta}:W\to (N\oplus N^{-1})\otimes K^p.\]
\section{The $\C^*$-action and its fixed points}

In this section we recall the definition of the $\C^*$-action on the
Higgs bundle moduli space and discuss its importance for the study of
the connected components of the moduli space of $\G$-Higgs
bundles. This method was pioneered by Hitchin
\cite{selfduality,liegroupsteichmuller} using gauge theoretic methods.  
%
%
For completeness we have included in
Appendix~\ref{sec:review-gauge} a brief review of some essential facts
coming from the gauge theoretic approach and how they translate into
the language of holomorphic geometry used in the main body of the paper.

\subsection{Definition and basic properties of the action}
\label{sec:basic-properties-action}

The action of $\C^*$ on the $L$-twisted Higgs bundle moduli space is defined by
scaling the Higgs field. Namely, $\lambda\cdot(\cE,\varphi)=(\cE,\lambda\varphi)$ for $\lambda\in\C^*$.
 Since this preserves the notions of
(poly)stability, it induces a holomorphic action on the moduli space.
By properness of the Hitchin fibration, if
$(\cE,\varphi)$ is the isomorphism class of a polystable $L$-twisted
$\G$-Higgs bundle, then the limit
$\lim\limits_{\lambda\to 0}(\cE,\lambda\varphi)$ exists and is a
polystable fixed point of the $\C^*$-action \cite{SimpsonVHS}.

\begin{notation}
  \label{not:equivalence}
  Note that we have denoted the isomorphism class of a Higgs bundle
  and the Higgs bundle itself with the same symbol. The context will
  always clarify which object we are referring to.
\end{notation}

Consider the function on the moduli space of $\G$-Higgs
bundles which assigns the $\mathrm{L}^2$-norm of the Higgs field with
respect to the harmonic metric solving the self-duality equations (cf.~\eqref{EQ Hitchin Function gauge}):
\begin{equation}\label{EQ Hitchin Function}
    f:\cM(\G)\to\R,\ \ \ (\cE,\varphi)\mapsto\int_X||\varphi||^2.
\end{equation}
We will refer to the function $f$ as the \emph{Hitchin function}.
Note that $f$ is non-negative and zero if and only if
$\varphi=0$. Using Uhlenbeck compactness, Hitchin showed that the map
$f$ is proper  and hence it attains local minima on
each closed subset of $\cM(\G)$ \cite{selfduality}. In particular, we have
\[|\pi_0(\cM(\G))|\leq|\pi_0(\mathrm{Min}(\cM(\G)))|,\]
where 
$\mathrm{Min}(\cM(\G))\subset\cM(\G)$ denotes the subset where $f$
attains a local minimum.

The starting point for determining the local minima of $f$ is the
following result (Lemma~\ref{lem:minima-fixed-HH-vanishing}):
\begin{proposition}
  \label{prop:minima-are-fixed}
  Let $(\cE,\varphi)$ be a $\G$-Higgs bundle such that 
  $\HH^0(C^\bullet(\cE,\varphi))=0$ and
  $\HH^2(C^\bullet(\cE,\varphi))=0$. If $(\cE,\varphi)$ is a local
  minimum of $f$ then it is a fixed point of the $\C^*$-action. 
\end{proposition}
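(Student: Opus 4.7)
My plan is to view the Hitchin function $f$ as a moment map for the $S^1\subset \C^*$ action on the smooth locus of $\cM(\G)$, and then invoke the general principle that critical points of a Kähler moment map coincide with zeros of the associated fundamental vector field. The proposition then becomes the observation that a local minimum is a critical point, so the infinitesimal $S^1$-action vanishes there, which by holomorphy forces the full $\C^*$-action to vanish.

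First I would use the deformation-theoretic picture reviewed in Section~\ref{sec:complex-analytic}. The hypothesis $\HH^2(C^\bullet(\cE,\varphi))=0$ kills the Kuranishi obstruction, so a neighbourhood of $[\cE,\varphi]$ in $\cM(\G)$ is smooth and modelled on an open set of $\HH^1(C^\bullet(\cE,\varphi))$; the hypothesis $\HH^0(C^\bullet(\cE,\varphi))=0$ says $\Aut(\cE,\varphi)$ is discrete modulo centre, so no orbifold issue obstructs viewing $f$ as a smooth function on a bona fide complex manifold near $[\cE,\varphi]$.

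Second I would pass to the gauge-theoretic description (Appendix~\ref{sec:review-gauge}): represent the polystable class by a solution $(A,\Phi)$ of the Hitchin equations, identify the Kähler structure on the smooth part of the moduli space as the symplectic reduction of the Atiyah--Bott form on pairs $(A,\Phi)$ by the unitary gauge group, and recall the classical observation of Hitchin \cite{selfduality} that the $S^1$-action $\Phi\mapsto e^{i\theta}\Phi$ is Hamiltonian with Hamiltonian $f(A,\Phi)=\|\Phi\|^2_{\mathrm{L}^2}$. Writing $X$ for the fundamental vector field of the $S^1$-action and $\omega$ for the Kähler form, this gives the identity $df_{[\cE,\varphi]}=\omega_{[\cE,\varphi]}(X,\,\cdot\,)$ at our smooth point. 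Since $\omega$ is non-degenerate there, $df=0$ at $[\cE,\varphi]$ if and only if $X$ vanishes at $[\cE,\varphi]$. A local minimum is a critical point, so $df_{[\cE,\varphi]}=0$, and therefore $[\cE,\varphi]$ is $S^1$-fixed. Holomorphy of the $\C^*$-action then finishes the argument: the fundamental vector field of the $\R_{>0}$-subaction is $-JX$, which also vanishes at $[\cE,\varphi]$, so the full $\C^*$ fixes $[\cE,\varphi]$.

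The main obstacle is the moment-map identification in the second step, since it requires the infinite-dimensional gauge-theoretic setup rather than anything intrinsic to the complex-analytic picture; this is precisely what Appendix~\ref{sec:review-gauge} is designed to record. Once one has the equality $d\|\Phi\|^2=\omega(X,\cdot)$ in hand, the remaining reasoning, including the extension from $S^1$-fixed to $\C^*$-fixed via holomorphy of the action and the relation $\tilde{X}=-JX$ between the generators, is formal.
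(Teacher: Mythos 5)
Your overall strategy is exactly the paper's: $f$ is a moment map for the $S^1$-action with respect to the K\"ahler structure, so critical points of $f$ coincide with zeros of the generating vector field, and a local minimum is in particular a critical point. This is precisely the content of Lemma~\ref{lem:minima-fixed-HH-vanishing} in the appendix, to which the paper defers the proof.

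There is, however, one imprecision worth repairing. The vanishing of $\HH^0(C^\bullet(\cE,\varphi))$ makes $\Aut(\cE,\varphi)$ finite, but a nontrivial finite group acting on $\HH^1(C^\bullet(\cE,\varphi))$ still produces an orbifold point of the moduli space, not a ``bona fide complex manifold'' as you assert; so you cannot directly apply the identity $df=\omega(X,\cdot)$ on the quotient, where $\omega$ need not make sense as a nondegenerate form at the singular point. The paper's proof addresses exactly this case in its second paragraph: it lifts the argument to the smooth infinite-dimensional space $\mathcal{C}^s\subset\mathcal{C}^s_{\C}$, where $f$ is still a moment map for the $S^1$-action and the restriction of the K\"ahler form is nondegenerate transverse to the $\mathcal{G}_{\rH}$-orbits (Remark~\ref{rem:symplectic-quotient}), and concludes that $(A,\varphi)$ is critical for $f$ if and only if its gauge equivalence class is $S^1$-fixed. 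Alternatively one can run your argument on the local uniformizing chart, an open set of $\HH^1(C^\bullet(\cE,\varphi))$ on which the finite automorphism group acts, where the origin is a genuine smooth point and the lifted $f$ still has a local minimum there. Either repair is routine, but some such step is needed; as written the claim that no orbifold issue arises is false.

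Your final step, passing from $S^1$-fixed to $\C^*$-fixed by observing that the generator of the $\R_{>0}$-subaction is $-JX$ and hence also vanishes, is a correct alternative to the paper's route, which instead deduces the equivalence of $S^1$-fixedness and $\C^*$-fixedness from the Hitchin--Kobayashi correspondence (Proposition~\ref{prop:S1-equivariance}(2)).
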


In the situation of Proposition~\ref{prop:minima-are-fixed}
there is a weight space splitting (see Proposition \ref{prop:weight-decomposition-fixed-H}, \eqref{eq:weight-decomposition-adE} and also Section \ref{section SO(p,q) fixed points} for $\G=\SO(p,q)$) of the
Lie algebra bundle $\cE[\fg^\C] = \cE[\fh^\C] \oplus \cE[\fm^\C]$ as
\[\xymatrix{\cE[\fh^\C]=\bigoplus\cE[\fh^\C]_k&\text{and}&\cE[\fm^\C]=\bigoplus\limits\cE[\fm^\C]_k}\]
with $\varphi\in H^0(\cE[\fm^\C]_1\otimes K)$.
Thus, the complex $C^\bullet=C^\bullet(\cE,\varphi)$ defined in
\eqref{eq complex of sheaves} splits (see \eqref{eq:Ck-complex}) as $C^\bullet=\bigoplus
C^\bullet_k$, where
\begin{equation}
    \label{Eq fixedpoint deformation sheaf splitting}
C^\bullet_k=C^\bullet_k(\cE,\varphi):\xymatrix{\cE[\fh^\C]_{k}\ar[r]^-{\ad_\varphi}&\cE[\fm^\C]_{k+1}\otimes K,}
\end{equation}
yielding corresponding splittings
\begin{math}
  \HH^i(C^\bullet(\cE,\varphi)) = \bigoplus_k \HH^i(C^\bullet_{k}(\cE,\varphi)). 
\end{math}
There is also a corresponding splitting of the long exact sequence in cohomology from \eqref{EQ deformation complex DEF}:
\begin{equation}\label{EQ deformation complex splitting}    \xymatrix@R=1em{0\ar[r]&\HH^0(C^\bullet_{k})\ar[r]&H^0(\cE[\fh^\C]_k)\ar[r]^-{\ad_\varphi}&H^0(\cE[\fm^\C]_{k+1}\otimes K)\ar[r]&\HH^1(C^\bullet_k)\ar@{->}`r/3pt [d] `/10pt[l] `^d[llll] `^r/3pt[d][dlll]\\&H^1(\cE[\fh^\C]_{k})\ar[r]^-{\ad_\varphi}&H^1(\cE[\fm^\C]_{k+1}\otimes K)\ar[r]&\HH^2(C^\bullet_k)\ar[r]&0.}
\end{equation}

We have the following criterion for local minima of $f$ (see Lemma~\ref{lem:local-minima-HH-plus-vanishes}).

\begin{proposition}\label{prop:mincrit1}
 Let $(\cE,\varphi)$ be a $\G$-Higgs bundle which
    is a fixed point of the $\C^*$-action such that
    $\HH^0(C^\bullet(\cE,\varphi))=0$ and
    $\HH^2(C^\bullet(\cE,\varphi))=0$. Then $(\cE,\varphi)$ is a local
    minimum of the Hitchin function $f$ if and only if
    $\HH^1(C^\bullet(\cE,\varphi))_k=0$ for all $k>0$.
\end{proposition}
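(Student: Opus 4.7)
The plan is to combine the Kuranishi local model around $(\cE,\varphi)$ with the fact that the Hitchin function $f$ is, up to a constant, a moment map for the induced $S^1$-action on $\cM_L(\G)$. The vanishing of $\HH^0(C^\bullet(\cE,\varphi))$ and $\HH^2(C^\bullet(\cE,\varphi))$ places us in the situation of Section~\ref{sec:complex-analytic}: a neighborhood of $(\cE,\varphi)$ in $\cM_L(\G)$ is identified with an open neighborhood of the origin in $\HH^1(C^\bullet(\cE,\varphi))$, and in particular $(\cE,\varphi)$ is a smooth point of the moduli space. Since $(\cE,\varphi)$ is a $\C^*$-fixed point, the splitting \eqref{Eq fixedpoint deformation sheaf splitting} induces the weight decomposition
\[
T_{(\cE,\varphi)}\cM_L(\G) \;=\; \HH^1(C^\bullet(\cE,\varphi)) \;=\; \bigoplus_{k\in\Z}\HH^1(C^\bullet_k(\cE,\varphi)),
\]
with the summand indexed by $k$ being the weight-$k$ eigenspace of the linearized $\C^*$-action on the tangent space. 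This reduces the problem to the standard finite-dimensional setting of a Hamiltonian $S^1$-action on a Kähler manifold at a fixed point.

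Next, I would invoke the moment-map interpretation recalled in Appendix~\ref{sec:review-gauge}: after restriction to $S^1\subset\C^*$, the Hitchin function $f$ is, up to an additive constant, a moment map for the $S^1$-action, a fact that goes back to Hitchin \cite{selfduality}. The classical local normal form for moment maps of Hamiltonian circle actions on Kähler manifolds then says that, in the linear chart provided by the Kuranishi identification, the quadratic part of $f$ at $(\cE,\varphi)$ is a weighted sum $\sum_k c_k \lVert v_k\rVert^2$ over the weight-$k$ components $v_k\in\HH^1(C^\bullet_k(\cE,\varphi))$. The sign convention coming out of Appendix~\ref{sec:review-gauge} forces $c_k$ to have the opposite sign of $k$, so the Hessian of $f$ at $(\cE,\varphi)$ is strictly positive on $\HH^1(C^\bullet_k(\cE,\varphi))$ for $k<0$, strictly negative for $k>0$, and zero on the weight-zero piece.

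The proposition then follows by combining these ingredients. If $\HH^1(C^\bullet_k(\cE,\varphi))\neq 0$ for some $k>0$, then $f$ strictly decreases along a direction in that weight space, so $(\cE,\varphi)$ cannot be a local minimum. Conversely, if every weight space with $k>0$ vanishes, the Hessian of $f$ at $(\cE,\varphi)$ is positive semidefinite, and the weight-zero directions are tangent to a critical submanifold of $f$ on which $f$ is locally constant at the value $f(\cE,\varphi)$; hence $(\cE,\varphi)$ is a local minimum. I expect the main obstacle to be bookkeeping rather than conceptual: accurately tracking the sign in the identification of $f$ as a moment map is what selects the $k>0$ form of the vanishing condition rather than the formally symmetric $k<0$ version, and this requires carefully unwinding the gauge-theoretic set-up of Appendix~\ref{sec:review-gauge}. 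Once that sign is in hand, the argument is a standard application of the local normal form for moment maps at a fixed point.
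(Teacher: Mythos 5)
Your proof is correct and follows essentially the same route as the paper, which cites Hitchin's identification of $H^1_{(A,\varphi),k}$ with the $(-k)$-eigenspace of the Hessian of the moment map $f$ and then transports this through the isomorphism $H^1_{(A,\varphi),k}\cong\HH^1(C^\bullet_k(\cE,\varphi))$. The only small imprecision is your claim that $(\cE,\varphi)$ is a smooth point: $\HH^0=0$ only forces a finite automorphism group, so the point may be an orbifold singularity, but since $f$ lifts to the smooth configuration space (or to the $\Aut$-invariant Kuranishi chart) this does not affect the argument, and the paper handles it in exactly this way.
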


The following criterion for the vanishing in Proposition~\ref{prop:mincrit1} will be useful (see \cite[Section 3.4]{BGGHomotopyGroups}).

\begin{proposition}
    \label{prop: minima criteria}
    If $(\cE,\varphi)$ is a $\G$-Higgs bundle which is a
    fixed point of the $\C^*$-action such that $\HH^0(C^\bullet)=0$
    and $\HH^2(C^\bullet)=0$, then $(\cE,\varphi)$ is a local minimum
    of the Hitchin function $f$ if and only if either $\varphi=0$ or
    the map \eqref{Eq fixedpoint deformation sheaf splitting} is an
    isomorphism of sheaves for every $k>0.$
\end{proposition}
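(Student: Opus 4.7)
The plan is to combine the criterion of Proposition \ref{prop:mincrit1} with the weight-graded long exact sequence \eqref{EQ deformation complex splitting} and then upgrade the resulting cohomology-level condition to the sheaf-level statement. First I would dispose of the trivial case $\varphi=0$: since $f\geq 0$ with equality exactly when the Higgs field vanishes, such a point is a global (hence local) minimum and the first alternative in the conclusion holds. So I may assume $\varphi\neq 0$ and aim at the sheaf-isomorphism alternative.

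Next, because $(\cE,\varphi)$ is a $\C^*$-fixed point, the weight decomposition from Proposition \ref{prop:weight-decomposition-fixed-H} splits $\HH^i(C^\bullet)=\bigoplus_k\HH^i(C^\bullet_k)$. The hypotheses $\HH^0(C^\bullet)=\HH^2(C^\bullet)=0$ therefore give $\HH^0(C^\bullet_k)=\HH^2(C^\bullet_k)=0$ for every $k$. Plugging these vanishings into \eqref{EQ deformation complex splitting} collapses the sequence for each $k$ to
\[0\to H^0(\cE[\fh^\C]_k)\xrightarrow{H^0(\ad_\varphi)} H^0(\cE[\fm^\C]_{k+1}\otimes K)\to \HH^1(C^\bullet_k)\to H^1(\cE[\fh^\C]_k)\xrightarrow{H^1(\ad_\varphi)} H^1(\cE[\fm^\C]_{k+1}\otimes K)\to 0,\]
so that $\HH^1(C^\bullet_k)=0$ is equivalent to both $H^0(\ad_\varphi)$ and $H^1(\ad_\varphi)$ being isomorphisms. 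Combined with Proposition \ref{prop:mincrit1}, this already identifies $(\cE,\varphi)$ as a local minimum precisely when these two cohomological isomorphisms hold for every $k>0$.

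It remains to upgrade this to the statement that $\ad_\varphi\colon\cE[\fh^\C]_k\to\cE[\fm^\C]_{k+1}\otimes K$ is an isomorphism of sheaves for every $k>0$. The direction ``sheaf isomorphism implies cohomological isomorphism'' is immediate. For the converse, the plan is to use the $\Ad(\rH^\C)$-invariant bilinear form on $\fg^\C$, which at a $\C^*$-fixed point gives non-degenerate pairings between opposite weight spaces and identifications $\cE[\fh^\C]_k^\ast\cong\cE[\fh^\C]_{-k}$ and $\cE[\fm^\C]_{k+1}^\ast\cong\cE[\fm^\C]_{-k-1}$. Under these identifications Serre duality turns $C^\bullet_k$ into the Serre dual of the two-term complex $\cE[\fm^\C]_{-k-1}\xrightarrow{\ad_\varphi}\cE[\fh^\C]_{-k}\otimes K$, so that cohomological vanishing for $C^\bullet_k$ translates (via Serre duality) into vanishing conditions on global sections of the sheaf kernel and cokernel of $\ad_\varphi|_k$. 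These vanishings, applied in tandem at weights $k$ and $-k-1$, force the sheaf kernel and cokernel of $\ad_\varphi|_k$ themselves to be zero, following the argument of \cite[\S3.4]{BGGHomotopyGroups}.

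The main obstacle is precisely this last upgrade. In general, a morphism of vector bundles on a curve that induces isomorphisms on $H^0$ and $H^1$ need not be an isomorphism of sheaves (kernels or cokernels with trivial cohomology can perfectly well be nonzero vector bundles). What rescues the present setting is the rigid Killing-form symmetry between weight spaces $k$ and $-k$ afforded by the $\C^*$-fixed point structure, together with the global vanishing of $\HH^0(C^\bullet)$ and $\HH^2(C^\bullet)$ across all weights; these are exactly the ingredients that prevent the pathological cohomology-invisible kernels or cokernels from arising.
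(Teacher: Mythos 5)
Your reduction to the cohomological statement is fine: the weight splitting of \eqref{EQ deformation complex splitting} together with Proposition \ref{prop:mincrit1} does show that, under the hypotheses, the point is a local minimum if and only if $H^0(\ad_\varphi)$ and $H^1(\ad_\varphi)$ are isomorphisms in every positive weight. (Note the paper itself gives no proof of this proposition but defers to \cite[Section 3.4]{BGGHomotopyGroups}, so that is the argument to compare against.) The gap is in the final upgrade to a sheaf isomorphism, and the fix you sketch does not close it, for two reasons. First, the Serre dual of $C^\bullet_k$ is the complex $\cE[\fm^\C]_{-k-1}\xrightarrow{\ad_\varphi}\cE[\fh^\C]_{-k}\otimes K$, a graded piece of the \emph{other} half $\cE[\fm^\C]\to\cE[\fh^\C]\otimes K$ of the $\G^\C$-deformation complex, not one of the $C^\bullet_j$; the hypotheses $\HH^0(C^\bullet)=\HH^2(C^\bullet)=0$ therefore give no vanishing for it beyond what Serre duality already encodes. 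Second, and more fundamentally, even the full vanishing $\HH^0(C^\bullet_k)=\HH^1(C^\bullet_k)=\HH^2(C^\bullet_k)=0$ does not formally imply that $\ad_\varphi|_k$ is a sheaf isomorphism: writing $N$ for the kernel, $Q$ for the quotient of the target by the saturation of the image, and $T$ for the torsion of that saturation modulo the image, these vanishings are equivalent to $h^0(N)=h^1(N)=h^0(Q)=h^1(Q)=\deg T=0$, which is satisfied for instance by the zero map $L\to L$ with $L$ a generic line bundle of degree $g-1$. So no purely cohomological bookkeeping, with or without the Killing-form symmetry, can finish the proof; "kernel and cokernel have no cohomology" does not mean "kernel and cokernel vanish".

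The missing ingredient, which is what the argument in \cite{BGGHomotopyGroups} actually uses, is the polystability of $(\cE,\varphi)$ via the semistability of the degree-zero adjoint Higgs bundle $(\cE[\fg^\C],\ad_\varphi)$. Both $N=\ker(\ad_\varphi|_k)$ and $Q^\ast\otimes K=\ker\big(\ad_\varphi\colon\cE[\fm^\C]_{-k-1}\to\cE[\fh^\C]_{-k}\otimes K\big)$ are $\ad_\varphi$-invariant subsheaves of $\cE[\fg^\C]$ and hence have non-positive degree. This yields $-\chi(N)\geq\rk(N)(g-1)$ and $\chi(Q)\geq\rk(Q)(g-1)$, and therefore
\[
\dim\HH^1(C^\bullet_k)-\dim\HH^0(C^\bullet_k)-\dim\HH^2(C^\bullet_k)=-\chi(N)+\chi(Q)+\deg T\;\geq\;(g-1)\big(\rk(N)+\rk(Q)\big)\;\geq\;0,
\]
with equality if and only if $\ad_\varphi|_k$ is an isomorphism of sheaves. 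With this inequality your argument closes: the local-minimum condition forces the left-hand side to vanish for every $k>0$. Without it, the passage from isomorphisms on $H^0$ and $H^1$ to an isomorphism of sheaves is unjustified.
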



To classify the local minima of $f$, the following two results are needed
(with proofs given in the Appendix, where they appear as Lemmas~\ref{lem:reducing-minima} and \ref{lem:non-minimum-infinity-limit} respectively).

\begin{proposition}
  \label{prop:reducing-minima}
  Let $\G'\subset\G$ be a reductive subgroup. Suppose
  $(\cE,\varphi)$ is a $\G$-Higgs bundle which reduces to
  a $\G'$-Higgs bundle. If $(\cE,\varphi)$ is a minimum of the Hitchin
  function on $\cM(\G)$ then it is a minimum of the Hitchin function
  on $\cM(\G')$.
\end{proposition}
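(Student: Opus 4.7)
The plan is to realize the Hitchin function $f_{\G'}$ as a positive multiple of the restriction of $f_{\G}$ along a natural continuous map
\[\iota:\cM(\G')\to\cM(\G)\]
sending $[\cE',\varphi']\mapsto[\cE,\varphi]$. Once this is in place, if $(\cE',\varphi')$ fails to be a local minimum of $f_{\G'}$ we can pick a sequence of polystable $\G'$-Higgs bundles converging to $(\cE',\varphi')$ with strictly smaller $f_{\G'}$; applying $\iota$ produces a sequence of polystable $\G$-Higgs bundles converging to $(\cE,\varphi)$ with strictly smaller $f_{\G}$, contradicting the hypothesis.

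To build $\iota$, I would use the inclusions $\rH'^\C\hookrightarrow \rH^\C$ of complexified maximal compact subgroups and the equivariant inclusion $\fm'^\C\hookrightarrow\fm^\C$: at the level of configuration spaces we extend the structure group of the $\rH'^\C$-bundle and push the Higgs field forward along the inclusion of the $\fm$-pieces. This operation takes semistable $\G'$-Higgs bundles to semistable $\G$-Higgs bundles and is compatible with $\rS$-equivalence, so it descends to a continuous map of moduli spaces. In particular the image of $[\cE',\varphi']$ coincides with $[\cE,\varphi]$ by the assumption that the $\G$-Higgs bundle reduces to a $\G'$-Higgs bundle.

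To compare the two Hitchin functions, I would invoke the gauge-theoretic description recalled in Appendix \ref{sec:review-gauge}. If $h'$ is the harmonic metric on $(\cE',\varphi')$ solving the $\G'$-Hitchin self-duality equations, then the $\rH$-reduction of the associated $\G$-bundle induced by $h'$ also solves the $\G$-Hitchin equations: the curvature of the Chern connection and the bracket $[\varphi,\varphi^{*h'}]$ are compatible with the embedded Cartan decomposition $\fh'\oplus\fm'\hookrightarrow\fh\oplus\fm$. Uniqueness of the harmonic metric on a polystable $\G$-Higgs bundle then identifies $h'$ with the metric used to compute $f_{\G}\circ\iota$. Since any $\Ad(\rH)$-invariant bilinear form on $\fm$ restricts, on each simple ideal of $\fg'$, to a positive multiple of an $\Ad(\rH')$-invariant form on $\fm'$, one concludes $f_{\G}\circ\iota = c\,f_{\G'}$ for some constant $c>0$ in a neighborhood of $[\cE',\varphi']$.

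The main obstacle I anticipate is the verification that $h'$ actually solves the $\G$-Hitchin equations; this is a careful but essentially bookkeeping exercise tracking Lie-algebra pieces through the embedding $\G'\hookrightarrow\G$. A secondary technical point is ensuring that $\iota$ is genuinely continuous at $[\cE',\varphi']$ and lands in the polystable locus on a neighborhood; this follows from the openness of semistability and the fact that the $\rH^\C$-orbit of the image of a polystable representative is closed in the semistable locus. With these two facts in hand, the final contradiction is purely formal.
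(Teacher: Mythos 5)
Your proposal is correct and follows essentially the same route as the paper: the paper also constructs the extension-of-structure-group map $\cM(\G')\to\cM(\G)$ by observing that a solution of Hitchin's equations for $\G'$ induces one for $\G$ (so the harmonic metrics agree, making the map compatible with the two Hitchin functions), and then concludes by the same contrapositive argument. Your extra care about the positive constant relating the invariant forms and about continuity of the map is a slightly more explicit version of what the paper dispatches with the phrase ``clearly compatible with the respective Hitchin functions.''
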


\begin{proposition}
\label{prop:notS-equiv-liminfty-notmin}
  Let $(\cE_0,\varphi_0)\in \mathcal{M}(\G)$ be a fixed point of the
  $\C^*$-action. Suppose there exists a semistable $\G$-Higgs bundle
  $(\cE,\varphi)$, which is not $\cS$-equivalent to $(\cE_0,\varphi_0)$,
  and such that $\lim_{t\to\infty}(\cE,t\varphi)=(\cE_0,\varphi_0)$ in
  $\mathcal{M}(\G)$. Then $(\cE_0,\varphi_0)$ is not a local minimum of
  $f$.  
\end{proposition}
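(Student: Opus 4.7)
The plan is to use the moment-map interpretation of the Hitchin function $f$ to show that $f$ is strictly increasing along the $\R_+$-part of the orbit $t\mapsto(\cE,t\varphi)$, and then combine this with the limit hypothesis to produce a sequence of points converging to $(\cE_0,\varphi_0)$ along which $f$ is strictly smaller than $f(\cE_0,\varphi_0)$.

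First I would replace $(\cE,\varphi)$ by a polystable representative of its $\cS$-equivalence class, so that it defines a well-defined point of $\cM(\G)$ without changing any of the hypotheses. The assumption that $(\cE,\varphi)$ is not $\cS$-equivalent to $(\cE_0,\varphi_0)$ then becomes the statement that these are distinct points of $\cM(\G)$. The next preliminary observation is that no point of the $\C^*$-orbit through $(\cE,\varphi)$ can equal $(\cE_0,\varphi_0)$ in $\cM(\G)$: if $(\cE,t_0\varphi)\cong(\cE_0,\varphi_0)$ for some $t_0\in\C^*$, then since $(\cE_0,\varphi_0)$ is $\C^*$-fixed we have $(\cE_0,\varphi_0)\cong(\cE_0,t_0^{-1}\varphi_0)$, and hence $(\cE,\varphi)\cong(\cE_0,\varphi_0)$, contradicting the non-$\cS$-equivalence assumption. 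So the orbit enters every neighborhood of $(\cE_0,\varphi_0)$ but avoids the point itself.

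The central step is to prove strict monotonicity of $f$ along the $\R_+$-part of the orbit. On the smooth locus of $\cM(\G)$, the $S^1$-subaction of the $\C^*$-action is Hamiltonian, with moment map a positive multiple of $f$; the identity $\mathrm{grad}\,f = J X_{S^1}$, where $X_{S^1}$ is the fundamental vector field of the $S^1$-action and $J$ is the complex structure on the moduli space, yields
\begin{displaymath}
\frac{d}{ds}\,f(\cE,e^s\varphi) \;=\; \|X_{S^1}(\cE,e^s\varphi)\|^2 \;\geq\; 0,
\end{displaymath}
with equality if and only if $(\cE,e^s\varphi)$ is $\C^*$-fixed. Since the previous step guarantees that no point of the orbit is fixed, $s\mapsto f(\cE,e^s\varphi)$ is strictly increasing on all of $\R$. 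When $(\cE_0,\varphi_0)$ is a singular point of $\cM(\G)$ the computation is naturally carried out upstairs on a Sobolev completion of the configuration space $\cH_L(\G,P)$ from \eqref{eq configuration space}, where both the $\C^*$-action and the harmonic-metric formula for $f$ in \eqref{EQ Hitchin Function} are smooth, and then descended to the moduli space.

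To finish, by continuity and the limit hypothesis $\lim_{s\to+\infty}f(\cE,e^s\varphi) = f(\cE_0,\varphi_0)$; combined with strict monotonicity this forces $f(\cE,e^s\varphi) < f(\cE_0,\varphi_0)$ for every finite $s$. On the other hand, the same limit hypothesis implies that $(\cE,e^s\varphi)$ enters every neighborhood of $(\cE_0,\varphi_0)$ in $\cM(\G)$ for $s$ sufficiently large. Therefore every neighborhood of $(\cE_0,\varphi_0)$ contains a point distinct from $(\cE_0,\varphi_0)$ with strictly smaller $f$-value, so $(\cE_0,\varphi_0)$ is not a local minimum of $f$. The main technical obstacle is the rigorous justification of the monotonicity statement at singular points of $\cM(\G)$; this is precisely where the gauge-theoretic moment-map formalism summarized in Appendix~\ref{sec:review-gauge} enters.
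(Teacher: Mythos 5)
Your argument coincides with the paper's proof in the case where the polystable representative of $(\cE,\varphi)$ is \emph{stable}: there the moment-map identity $\grad f = JX_{S^1}$ on the K\"ahler orbifold $\cM^s(\G)$ (lifted, at orbifold points, to the smooth locus $\cC^s$ of the gauge-theoretic configuration space) gives $\tfrac{d}{ds}f(\cE,e^s\varphi)=\|X_{S^1}\|^2>0$ along the non-fixed orbit, and combining strict monotonicity with the limit hypothesis finishes the proof exactly as you say. (Your justification that no orbit point is $\C^*$-fixed is slightly elliptical --- what you actually showed is that the orbit avoids $(\cE_0,\varphi_0)$ --- but it is easily repaired: a fixed orbit would be a single point of $\cM(\G)$, forcing that point to equal its own limit $(\cE_0,\varphi_0)$, a contradiction.)

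The genuine gap is the case where the polystable representative $(\cE,\varphi)$ is \emph{strictly} polystable, which does occur in the applications of this proposition (several of the deformations constructed in Propositions \ref{Prop Upq in SO2p2q a min iff pq=1 or pq=0} and \ref{Prop U11 not min in SOpq unless p=q=2} are only polystable). Your proposed fix --- ``carry out the computation upstairs on a Sobolev completion of the configuration space and descend'' --- does not work as stated, for two reasons. First, the Hitchin function is computed with the \emph{harmonic} metric solving the self-duality equations, which varies with the point; upstairs with a fixed metric one would get the trivial identity $f(A,e^s\varphi)=e^{2s}\|\varphi\|^2$, which is not the function on the moduli space. Second, the identity $\grad f = JX_{S^1}$ lives on the K\"ahler quotient, and the quotient is only a smooth manifold or orbifold at points with finite stabilizer; at strictly polystable points the stabilizer $\Gamma_{(A,\varphi)}$ is positive-dimensional and the quotient is genuinely singular, so neither the orbifold argument nor a naive descent applies. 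The paper circumvents this with the Jordan--H\"older reduction (Proposition \ref{prop:JH}): a strictly polystable $(\cE,\varphi)$ reduces to a \emph{stable} $\G'$-Higgs bundle for a reductive subgroup $\G'\subset\G$, the reduction cannot be $\C^*$-fixed (else $(\cE,\varphi)$ would be), and the map $\cM(\G')\to\cM(\G)$ is $\C^*$-equivariant and intertwines the Hitchin functions, so strict monotonicity established in $\cM(\G')$ by the stable-case argument transfers to $\cM(\G)$. Without this (or an equivalent device) your proof does not cover all the cases in which the proposition is invoked.
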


The following result will help us show the vanishing
of $\HH^2(C^\bullet)$ for relevant Higgs bundles.  

\begin{lemma}\label{Lemma: reduction to fixed points H2=0}
If $(\cE,\varphi)$ is a polystable $L$-twisted Higgs bundle and $(\cE',\varphi')=\lim\limits_{\lambda\to0}(\cE,\lambda\varphi)$, then 
\[\dim\big(\HH^2(C^\bullet(\cE,\varphi))\big)\leq \dim\big(\HH^2(C^\bullet(\cE',\varphi'))\big).\]
\end{lemma}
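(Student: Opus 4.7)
The plan is to realize the limit $(\cE',\varphi')=\lim_{\lambda\to 0}(\cE,\lambda\varphi)$ as the special fiber of an honest flat family of polystable $L$-twisted Higgs bundles over $\C$, and then apply upper semicontinuity of hypercohomology to that family. The inequality will drop out by comparing the value of $\dim\HH^2$ at the generic fiber (where it computes $\dim\HH^2(C^\bullet(\cE,\varphi))$) with its value at the special fiber.

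The starting observation is that for $\lambda\in\C^*$ the complex $C^\bullet(\cE,\lambda\varphi)$ is obtained from $C^\bullet(\cE,\varphi)$ by scaling the differential $\ad_\varphi$ by the unit $\lambda$. The two complexes of sheaves are therefore isomorphic, so in particular
\[\dim\HH^i(C^\bullet(\cE,\lambda\varphi))=\dim\HH^i(C^\bullet(\cE,\varphi))\]
for every $i$ and every $\lambda\in\C^*$. It thus suffices to exhibit a flat family $(\tilde\cE,\tilde\Phi)\to X\times\C$ whose fiber over $\lambda\neq 0$ is isomorphic to $(\cE,\lambda\varphi)$ and whose fiber over $0$ is $(\cE',\varphi')$.

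Such a family can be produced in two essentially equivalent ways. Gauge-theoretically, the limiting construction of $(\cE',\varphi')$ (cf.\ Appendix \ref{sec:review-gauge}) provides a path of complex gauge transformations $g_\lambda\in\cG_{\rH^\C}$, $\lambda\in\C^*$, such that $g_\lambda\cdot(\dbar_\cE,\lambda\varphi)$ extends holomorphically across $\lambda=0$ with prescribed central fiber $(\dbar_{\cE'},\varphi')$; one takes this extended family. Alternatively, since the orbit map $\lambda\mapsto[\cE,\lambda\varphi]$ extends continuously to $\lambda=0$ by sending $0$ to $[\cE',\varphi']$, for small $\lambda$ these isomorphism classes lie in a Kuranishi neighborhood of $(\cE',\varphi')$ as in Section \ref{sec:complex-analytic}, and can therefore be represented as fibers of the corresponding local versal family.

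Given the family $(\tilde\cE,\tilde\Phi)$, its relative deformation complex is a bounded two-term complex of coherent sheaves on $X\times\C$, flat over $\C$, with fiber over $t$ equal to $C^\bullet(\tilde\cE_t,\tilde\Phi_t)$. Upper semicontinuity of hypercohomology for flat families of bounded complexes on a proper base (obtained from Grauert's semicontinuity theorem by running the hypercohomology spectral sequence, or equivalently by presenting $\HH^2$ as the cokernel of the map on $H^1$'s) then guarantees that $t\mapsto\dim\HH^2(C^\bullet(\tilde\cE_t,\tilde\Phi_t))$ is upper semicontinuous on $\C$. Specializing at $t=0$ and combining with the first step yields
\[\dim\HH^2(C^\bullet(\cE,\varphi))=\dim\HH^2(C^\bullet(\tilde\cE_t,\tilde\Phi_t))\leq\dim\HH^2(C^\bullet(\cE',\varphi'))\]
for small $t\neq 0$, which is the claimed inequality. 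The main obstacle I anticipate is verifying that the family produced really is a flat family of Higgs bundles (rather than just a continuous path in the moduli space), i.e.\ executing the gauge-theoretic limit or the Kuranishi lift with enough regularity to apply semicontinuity; once that is in place the rest of the argument is formal.
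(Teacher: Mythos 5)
Your proposal is correct and follows essentially the same route as the paper: the paper's proof likewise considers the $\C^*$-family $(\cE,\lambda\varphi)$, extends it across $\lambda=0$ to a family over $\mathbb{A}^1$ using the existence of the limit, and invokes semicontinuity of $\HH^2$. Your write-up simply makes explicit the two points the paper leaves implicit, namely that $\dim\HH^2$ is constant for $\lambda\neq 0$ because scaling the differential by a unit gives an isomorphic complex, and that the continuous path in the moduli space can be upgraded to an actual flat family to which semicontinuity applies.
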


\begin{proof}
    If $(\cE,\varphi)$ is fixed by the $\C^*$-action then we are done. If $(\cE,\varphi)$ is not fixed by $\C^*$, then consider the $\C^*$-family $(\cE,\lambda\varphi).$ Since $\lim\limits_{\lambda\to0}(\cE,\lambda\eta)$ exists, we can extend this to a family over $\mathbb A^1$, hence the result follows by semi-continuity of $\HH^2$.
\end{proof}

\begin{example}\label{ex: minima for GL(n,R) and Upq}
    The above minima criterion was used in \cite{BGGGLn} to classify all local minima for the group $\GL(n,\R)$, with $n\geq 2$, and in \cite{UpqHiggs} for the group $\U(p,q)$ (cf.\ Definition \ref{Def GLnR and Upq Higgs def}). For $\U(p,q)$, all minima $(E,F,\beta,\gamma)$ have either $\beta=0$ or $\gamma=0$.
    For $\GL(n,\R)$, and $n\geq 3$, the only local minima $(E,Q,\Phi)$ with non-zero Higgs field are the ones defining the Hitchin components. More precisely, they are given by 
\begin{equation}\label{eq:SLHitchin}
E=IK^{(n-1)/2}\oplus\cdots\oplus IK^{(1-n)/2},\ \ Q=\smtrx{&  &1\\ &\iddots& \\ 1&&}\text{ and }\ \Phi=\smtrx{0&&&\\1&0&&\\&\ddots&\ddots&\\&&1&0},
\end{equation} 
with $I$ a $2$-torsion line bundle. If $n=2$, the non-zero local minima are of the form
\begin{equation}\label{eq:SL2}
E=L\oplus L^{-1}\ \ Q=\smtrx{0& 1\\ 1 &0}\text{ and }\ \Phi=\smtrx{0&0\\ \Phi_1&0},
\end{equation}
 with $\Phi_1:L\to L^{-1}K$ non-zero and $0<\deg(L)\leq g-1$. 
\end{example}

\subsection{$\SO(p,q)$-fixed points}\label{section SO(p,q) fixed points}
We now focus on the details of fixed points of the $\C^*$-action on
the $L$-twisted $\SO(p,q)$-Higgs bundle moduli space. In order to get
a precise picture, the simplest approach is to analyze
these directly, following Simpson's procedure for usual Higgs (vector)
bundles \cite{localsystems}.

Let $(V,W,\eta)$ be a polystable $\SO(p,q)$-Higgs bundle with $(V,W,\eta)\cong(V,W,\lambda\eta)$ for all $\lambda\in\C^*$. If $\eta\neq0$, then for each $\lambda$ there are holomorphic orthogonal automorphisms $g_\lambda^V$ and $g^W_\lambda$ of $V$ and $W$ such that 
$(g^V_\lambda)^{-1}\cdot\eta\cdot g^W_\lambda=\lambda\eta$. Following Simpson, we take a $\lambda$ which is not a root of
unity. If we additionally take $\lambda\in S^1$ we may, using the gauge theoretic machinery of
  Appendix~\ref{sec:review-gauge}, take the automorphisms in
  the maximal compact subgroup, thus avoiding the 
  generalized eigenspaces considered by Simpson.

Let $V=\bigoplus_{\nu\in\R} V_\nu$ and
$W=\bigoplus_{\mu\in\R} W_\mu$ denote the
eigenbundle decompositions of $g^V_\lambda$ and $g^W_\lambda$ respectively, so that $g^V_\lambda|_{V_\nu}=\lambda^\nu\cdot \Id_{V_\nu}$ and $g_\lambda^W|_{W_\mu}=\lambda^\mu\cdot \Id_{W_\mu}.$
Since the gauge transformations $g_\lambda^V$ and $g_\lambda^W$ are
orthogonal, two eigenbundles $V_{\nu}$ and $V_{\nu'}$ or $W_{\mu}$ and
$W_{\mu'}$ are orthogonal if $\nu+\nu'\neq 0$ or
$\mu+\mu'\neq0$. Moreover, the quadratic forms define isomorphisms
$V_{\nu}\cong V_{-\nu}^*$ and $W_{\mu}\cong W_{-\mu}^*$.

For all weights $\mu$ and $\nu$, we have $\eta(W_\mu)\subset V_{\mu+1}\otimes L$ and $\eta^*(V_\nu)\subset W_{\nu+1}\otimes L$. Thus, $\eta=\sum\eta_\mu$ and $\eta^*=\sum\eta^*_\nu$, where
\begin{equation}\label{eq:Hodge decomposition of varphi}
\xymatrix{\eta_\mu=\eta|_{W_\mu}:W_\mu\longrightarrow V_{\mu+1}\otimes L&\text{and}&\eta^*_{-1-\nu}=\eta^*|_{V_\nu}:V_\nu\longrightarrow W_{\nu+1}\otimes L}.
\end{equation}
We may decompose $V\oplus W$ into a direct sum of minimal unbroken
chains of $V_{\nu}$'s and $W_{\mu}$'s connected by non-zero Higgs
fields. Consider such a chain
\begin{displaymath}
  V_{a} \xrightarrow{\eta^*_{-a-1}} W_{a+1} \xrightarrow{\eta_{a+1}} \dots 
\end{displaymath}
For simplicity of notation, we have suppressed the twisting by $L$ from the Higgs field. This will be done every time we use these chain representations.
We now consider two cases. (Of course similar arguments will apply for
chains starting with a $W_\mu$.)

\emph{Case (1).}
Suppose $V_{-a} \cong V_{a}^*$ is among the bundles of the chain. Then
$W_{-a-1} \cong W_{a+1}^*$ is also among the bundles of the chain,
because the non-zero map $V_{a} \to W_{a+1}$ is dual to
$W_{-a-1} \to V_{-a}$. Moreover, $V_{-a}$ is evidently the last bundle
of the chain. Thus, the weights must be integers and the restriction of the quadratic forms on
$V$ and $W$ to the
chain is non-degenerate.

\emph{Case (2).} Suppose now that
$V_{-a} \cong V_{a}^*$ is not among the bundles of the chain. Then,
arguing in a similar way to case (1), we see that $W_{-a-1}$ cannot be in the
chain either. 
In this case the chain is isotropic for the quadratic forms on
$V$ and $W$. Note that the weights are only well defined up to
overall translation on such a chain.

We summarize the above characterization of $\C^*$-fixed points in the following proposition.

\begin{proposition}\label{prop: fixed point characterization} 
  If $(V,W,\eta)$ is a polystable $L$-twisted $\SO(p,q)$-Higgs bundle which is a fixed point of the $\C^*$-action with $\eta\neq 0$, then it is a direct sum
of holomorphic chains with non-zero Higgs fields of the following two types:
\begin{equation}\label{eq: holomorphic chain int} 
\xymatrix@R=0em{\cdots\ar[r]^-{\eta_{-3}}&V_{-2}\ar[r]^-{\eta_1^*}&W_{-1}\ar[r]^-{\eta_{-1}}&V_0\ar[r]^-{\eta_{-1}^*}&W_1\ar[r]^-{\eta_1}&V_2\ar[r]^-{\eta_{-3}^*}&\cdots\\&&&\oplus&&&  \\
\cdots\ar[r]^-{\eta_2^*}&W_{-2}\ar[r]^-{\eta_{-2}}&V_{-1}\ar[r]^-{\eta^*_{0}}&W_0\ar[r]^-{\eta_0}&V_1\ar[r]^-{\eta_{-2}^*}&W_2\ar[r]^-{\eta_2}&\cdots
}
\end{equation}
or 
\begin{equation}
\label{eq: holomorphic chain half int} 
\xymatrix@R=0em{\cdots\ar[r]^-{\eta_{a-1}}&V_{a}\ar[r]^-{\eta_{-a-1}^*}&W_{a+1}\ar[r]^-{\eta_{a+1}}&V_{a+2}\ar[r]^-{\eta_{-a-3}^*}&W_{a+3}\ar[r]^-{\eta_{a+3}}&\cdots\\&&&\hspace{-1.75cm}\oplus&&&  \\
\cdots\ar[r]^-{\eta_{a+3}^*}&W_{-a-3}\ar[r]^-{\eta_{-a-3}}&V_{-a-2}\ar[r]^-{\eta^*_{a+1}}&W_{-a-1}\ar[r]^-{\eta_{-a-1}}&V_{-a}\ar[r]^-{\eta_{a-1}^*}&\cdots
}
\end{equation}
where the corresponding quadratic forms define isomorphisms $V_j\cong (V_{-j})^*$ and $W_j\cong (W_{-j})^*$.
The two chains in \eqref{eq: holomorphic chain half int} are dual to each other.
\end{proposition}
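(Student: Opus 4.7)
The plan is to develop the argument already sketched in the paragraphs leading up to the proposition. Starting from a $\C^*$-fixed polystable Higgs bundle $(V,W,\eta)$ with $\eta \neq 0$, the fixed-point condition provides, for each $\lambda \in \C^*$, orthogonal automorphisms $g_\lambda^V$ of $(V,Q_V)$ and $g_\lambda^W$ of $(W,Q_W)$ with $(g_\lambda^V)^{-1} \eta\, g_\lambda^W = \lambda \eta$. I would specialize to a $\lambda \in S^1$ which is not a root of unity, and invoke the gauge-theoretic translation from Appendix~\ref{sec:review-gauge} to arrange that the automorphisms can be chosen in the maximal compact subgroup $\rS(\rO(p) \times \rO(q))$. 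This sidesteps Simpson's generalized-eigenspace subtleties and yields honest eigenbundle decompositions $V = \bigoplus_\nu V_\nu$ and $W = \bigoplus_\mu W_\mu$.

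Next I would translate the structural data into weights. Preservation of $Q_V$ by $g_\lambda^V$ forces $Q_V(V_\nu, V_{\nu'}) = 0$ unless $\nu + \nu' = 0$, giving $Q_V \colon V_\nu \xrightarrow{\cong} V_{-\nu}^*$, and analogously $Q_W \colon W_\mu \xrightarrow{\cong} W_{-\mu}^*$. The twisted equivariance of $\eta$ imposes $\eta(W_\mu) \subset V_{\mu+1} \otimes L$ as in \eqref{eq:Hodge decomposition of varphi}, and dually $\eta^*(V_\nu) \subset W_{\nu+1} \otimes L$. Concatenating the nonzero components $\eta_\mu$ and $\eta^*_\nu$ in both directions through these eigenbundles decomposes $V \oplus W$ into minimal unbroken chains.

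The heart of the proof is the case analysis on a single minimal chain that contains some $V_a$. If the dual bundle $V_{-a}$ appears in the same chain, then duality of $\eta$ and $\eta^*$ forces $W_{-a-1}$ to lie in the chain as well; this pushes the weights into $\Z$, makes the chain terminate symmetrically about zero, and guarantees that $Q_V \oplus Q_W$ restricts non-degenerately to the chain, producing the form \eqref{eq: holomorphic chain int}. Otherwise, neither $V_{-a}$ nor $W_{-a-1}$ lies in the chain, the whole chain is isotropic, and its image under $Q_V^{-1}$, $Q_W^{-1}$ is a second minimal chain with the signs of all weights reversed; pairing the two yields the form \eqref{eq: holomorphic chain half int}, with the overall weight-shift ambiguity accounted for by the free real parameter $a$.

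The step I expect to require the most care is checking that the decomposition of $V \oplus W$ into chains (together with their Case-2 duals) really respects the orthogonal structure, so that the result is a direct sum of $\SO(p,q)$-Higgs sub-bundles and not merely of vector-bundle data. This reduces to showing that two chains whose weight-sets $S$, $S'$ satisfy $S \cap (-S') = \emptyset$ are $Q_V \oplus Q_W$-orthogonal, which is immediate from the eigenbundle orthogonality established in the second step; and that the Higgs field on a complementary chain is unobstructed, which is built into the decomposition of $\eta$ into the $\eta_\mu$. Grouping Case-2 chains with their duals then packages $(V,W,\eta)$ as a direct sum of the two types of chains listed in the proposition, completing the proof.
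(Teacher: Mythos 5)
Your proposal is correct and follows essentially the same route as the paper: the paper's own argument (laid out in the paragraphs immediately preceding the proposition) uses exactly this specialization to a non-root-of-unity $\lambda\in S^1$, the compact-gauge eigenbundle decomposition, the orthogonality relations $V_\nu\cong V_{-\nu}^*$, the decomposition into minimal unbroken chains, and the same two-case analysis on whether $V_{-a}$ lies in the chain. The extra care you flag about the chain decomposition respecting the orthogonal structure is handled implicitly in the paper by the same eigenbundle-orthogonality observation you cite.
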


Proposition \ref{prop: fixed point characterization} provides a characterization of polystable $\C^*$-fixed points with non-vanishing Higgs field. The next result shows that stability imposes further conditions on such fixed points. 

\begin{proposition}\label{prop: stable fixed points}
  Suppose $(p,q)\neq (2,2)$. If $(V,W,\eta)$ is a stable $L$-twisted $\SO(p,q)$-Higgs bundle which is a $\C^*$-fixed point, then it is represented by a chain of type \eqref{eq: holomorphic chain int}.
\end{proposition}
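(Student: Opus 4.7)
The plan is to argue by contradiction using the stability criterion of Proposition \ref{Def SO(p,q) stability and polystability}. Suppose $(V,W,\eta)$ is a stable $\C^*$-fixed $L$-twisted $\SO(p,q)$-Higgs bundle whose chain decomposition from Proposition \ref{prop: fixed point characterization} contains a summand of type \eqref{eq: holomorphic chain half int}; I will extract two dual isotropic sub-Higgs bundles from the upper and lower sub-chains and derive a numerical contradiction.

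Let $V'\subset V$ and $W'\subset W$ be the direct sums of the $V$- and $W$-weight summands occurring in the upper sub-chain of \eqref{eq: holomorphic chain half int}, and let $V''\subset V$ and $W''\subset W$ be defined analogously from the lower sub-chain. The Case~2 property that such a chain avoids its own dual weights ensures that $V', V''$ are isotropic in $V$ and $W', W''$ are isotropic in $W$, while the weight-shift identities in \eqref{eq:Hodge decomposition of varphi} yield $\eta(W')\subset V'\otimes L$, $\eta^*(V')\subset W'\otimes L$, and the analogous inclusions for the primed-primed pair. Both $(V',W')$ and $(V'',W'')$ are therefore admissible isotropic $\eta$-invariant pairs in the sense of Proposition \ref{Def SO(p,q) stability and polystability}. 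The orthogonal forms $Q_V, Q_W$ identify $V''\cong (V')^*$ and $W''\cong (W')^*$, so $\deg(V')+\deg(V'')=0$ and $\deg(W')+\deg(W'')=0$; and since each sub-chain carries a non-zero Higgs field, each sub-chain contains at least one $V$-summand and one $W$-summand, whence all four subbundles are non-zero.

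It remains to check that at least one of $V', W'$ (and likewise at least one of $V'', W''$) is a proper subbundle in the sense of Proposition \ref{Def SO(p,q) stability and polystability}. The only way a non-zero isotropic subbundle of $V$ (respectively $W$) can fail this properness is through the footnoted $\SO(2,\C)$ exception, which requires the ambient $V$ (respectively $W$) to be a rank-two hyperbolic orthogonal bundle, forcing $p=2$ (respectively $q=2$). Under the hypothesis $(p,q)\neq(2,2)$, at least one of $p,q$ is different from $2$, so at least one of $V', W'$ is proper and stability yields $\deg(V')+\deg(W')<0$; the symmetric argument gives $\deg(V'')+\deg(W'')<0$, and adding these inequalities contradicts the degree-balance identity above. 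The main subtle point, and the only reason for the exclusion $(p,q)=(2,2)$, is precisely that in that exceptional case both $V'$ and $W'$ can simultaneously be isotropic lines inside rank-two hyperbolic $\SO(2,\C)$-bundles, so the footnote exception can be invoked on both sides at once and the stability inequality is evaded.
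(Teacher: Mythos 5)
Your argument is correct and is essentially the paper's own proof: both extract the two dual isotropic, $\eta$-invariant pairs formed by the summands of the upper and lower sub-chains of \eqref{eq: holomorphic chain half int}, observe that $\deg(V')+\deg(W')=-\deg(V'^*)-\deg(W'^*)$, and conclude that stability fails. Your additional discussion of the $\SO(2,\C)$ properness exception simply makes explicit why the hypothesis $(p,q)\neq(2,2)$ is needed, a point the paper asserts without elaboration.
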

\begin{proof}
  Suppose $(V,W,\eta)$ is represented by \eqref{eq: holomorphic chain half int}. Consider the subbundles $V'\subset V$ and $W'\subset W$ formed by the summands of the first chain. This is a pair of isotropic $\eta$-invariant subbundles (at least one of which is proper because $(p,q)\neq (2,2)$), and the same is true for the pair $V'^*\subset V$ and $W'^*\subset W$ formed by the summands of the second chain. 
Since $\deg(V')+\deg(W')=-\deg(V'^*)-\deg(W'^*)$, such an $\SO(p,q)$-Higgs bundle is not stable. This argument also shows that if $(V,W,\eta)$ has a summand given by \eqref{eq: holomorphic chain half int}, then it is not stable.
\end{proof}

\subsection{Special fixed points on $\cM(\SO(2,q))$}
When $p=2$, we have 
special fixed points of the form
\begin{equation}
  \label{eq:SO2q-absolute-minimum}
  V_{-1}\xrightarrow{\eta_0^*}W_0\xrightarrow{\eta_0}V_{1}~,
\end{equation}
where $V_{-1}\cong V_1^*$ and $\eta_0\neq 0$.
Note that $\deg(V_1)<0$ by polystability. Also, such a Higgs bundle is of the form \eqref{eq:SO_0-2q-Higgs} with \emph{either} $N=V_1$,
$\gamma=\eta_0$ and $\beta=0$, \emph{or}  $N^{-1}=V_1$,
$\beta=\eta_0$ and $\gamma=0$. Conversely, an $\SO(2,q)$-Higgs bundle of the form \eqref{eq:SO_0-2q-Higgs} with exactly one of  $\beta$ or $\gamma$ zero is such a fixed point.

\begin{proposition}
  \label{prop:SO2q-absolute-minimum}
  Any $\SO(2,q)$-Higgs bundle $(V,W,\eta)$ which is a fixed point of the
  $\C^*$-action of the form \eqref{eq:SO2q-absolute-minimum} has $sw_1(V)=sw_1(W)=0$ and represents a
  local minimum of the Hitchin function.
\end{proposition}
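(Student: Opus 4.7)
The plan is to treat the two assertions separately: a short structural identification yields $sw_1(V)=sw_1(W)=0$, while a weight-decomposition argument combined with the Morse-theoretic minimum criterion yields the local-minimum claim.

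For the Stiefel-Whitney assertion, I would observe that the chain \eqref{eq:SO2q-absolute-minimum} forces $V_{-1}\cong V_1^{*}$ via $Q_V$, so as an $\rO(2,\C)$-bundle $(V,Q_V)$ is isomorphic to $(N\oplus N^{-1},\smtrx{0&1\\1&0})$ with $N:=V_1$. This is exactly the form \eqref{eq:SO_0-2q-Higgs}, a reduction of structure group to $\SO(2,\C)\cong\C^{*}$, and hence $sw_1(V)=0$. Since any $\SO(p,q)$-Higgs bundle satisfies $sw_1(V)=sw_1(W)$ by definition, this immediately forces $sw_1(W)=0$ as well.

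For the local-minimum assertion I would apply the Morse-theoretic criterion that a $\C^{*}$-fixed point is a local minimum of the Hitchin function provided the positive-weight parts $\HH^1(C^\bullet(\cE,\varphi))_k$ vanish for every $k>0$ (cf.\ Proposition~\ref{prop:mincrit1} and \cite[Section 3.4]{BGGHomotopyGroups}). Only the three weights $\{-1,0,+1\}$ appear in the decomposition $V\oplus W=V_{-1}\oplus W_0 \oplus V_1$. A direct computation using the orthogonality constraint $A^{T}Q_V+Q_V A=0$ rules out the a priori weight-$(\pm 2)$ components of $\fso(V)\subset\End(V)$, so that $\fso(V)\cong\cO$ sits entirely in weight $0$, and $\fso(W)=\fso(W_0)$ likewise sits in weight $0$. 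Consequently
\[
\cE[\fh^\C]_k=0 \ \ (k\neq 0), \qquad \cE[\fm^\C]_k\otimes K=\bigoplus_{\nu-\mu=k}\Hom(W_\mu,V_\nu)\otimes K=0 \ \ (k\notin\{-1,+1\}).
\]
For every $k\geq 1$ both $\cE[\fh^\C]_k$ and $\cE[\fm^\C]_{k+1}\otimes K$ vanish, and the weight-$k$ piece \eqref{EQ deformation complex splitting} of the long exact sequence collapses to give $\HH^1(C^\bullet_k)=0$, which is the required input.

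The main thing to be careful about is that this Higgs bundle is typically only strictly polystable, so the hypotheses $\HH^0=0=\HH^2$ appearing in Proposition~\ref{prop: minima criteria} need not hold; I would therefore invoke the sharper form of the Morse-theoretic criterion (requiring only vanishing of the positive-weight part of $\HH^1$), which is the content of the argument in \cite[Section 3.4]{BGGHomotopyGroups} and Lemma~\ref{lem:local-minima-HH-plus-vanishes} in the appendix. The only moderately delicate step is the bookkeeping of the orthogonal constraint on $\fso(V)$ to confirm that the potential weight-$(\pm 2)$ summands really do vanish; everything else is entirely formal.
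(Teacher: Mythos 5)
Your treatment of the first Stiefel--Whitney classes is correct and is the same observation the paper makes ($V=V_1^*\oplus V_1$ reduces $(V,Q_V)$ to $\SO(2,\C)\cong\C^*$, and $sw_1(W)=sw_1(V)$ by definition). Your weight-space computation is also correct: $\fso_{\pm 2}(V)=\Lambda^2V_{\pm1}=0$ since $V_{\pm1}$ are line bundles, $\fso(W)=\fso(W_0)$ lies in weight $0$, and $\Hom(W,V)=\Hom(W_0,V_{-1})\oplus\Hom(W_0,V_1)$ carries only weights $\pm1$, so $C^\bullet_k$ is the zero complex for every $k\geq 1$ and $\HH^1(C^\bullet_k)=0$ for $k>0$. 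This is a genuinely different route from the paper, which instead associates to the chain the $\U(1,q)$-Higgs bundle $(V_1,W_0,\eta_0,0)$, quotes the classification of local minima on $\cM(\U(1,q))$ from \cite{UpqHiggs}, and concludes via Proposition~\ref{prop:reducing-minima}.

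The gap is in the last step. The implication ``$\HH^1(C^\bullet)_k=0$ for all $k>0$ implies local minimum'' is only available in the paper under the hypotheses $\HH^0(C^\bullet)=0$ and $\HH^2(C^\bullet)=0$ (Proposition~\ref{prop:mincrit1}), i.e.\ stability as a $\G^\C$-Higgs bundle. The ``sharper form'' you invoke does not exist: Lemma~\ref{lem:local-minima-HH-plus-vanishes} is stated for $(A,\varphi)\in\cM^s_{\rH}(\G)$ and carries exactly the same stability hypothesis, since its proof identifies $\HH^1_+$ with the negative eigenspaces of the Hessian on the orbifold locus. But the fixed points of the form \eqref{eq:SO2q-absolute-minimum} are in general strictly polystable --- for instance when $W_0$ is a decomposable orthogonal bundle with a summand inside $\ker\eta_0$, and indeed the proposition is later applied to strictly polystable chains in the proof of Proposition~\ref{Prop Upq in SO2p2q a min iff pq=1 or pq=0} --- and even when stable as $\SO(2,q)$-Higgs bundles they need not be stable as $\SO(2+q,\C)$-Higgs bundles, so $\HH^0$ and $\HH^2$ can be non-zero. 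At such points the moduli space is singular and the vanishing of $\HH^1_+$ does not by itself control the local behaviour of the Hitchin function; this is precisely why the paper routes the argument through $\cM(\U(1,q))$, where the minima (including the strictly polystable ones) were classified in \cite{UpqHiggs} by other means. To repair your argument you would either have to restrict to fixed points that are stable as $\SO(2+q,\C)$-Higgs bundles and handle the remaining ones separately, or prove a minimum criterion at arbitrary polystable fixed points, which the paper does not supply.
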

\begin{proof}
  The vanishing of the first Stiefel-Whitney class is immediate from
  $V=V_1^*\oplus V_1$. To see that such a fixed point is a minimum,
  associate to it the $\U(1,q)$-Higgs bundle
  $(V_1,W_0,\eta_0,0)$. Since a $\U(1,q)$-Higgs bundle with $\gamma=0$
  is a minimum of the Hitchin function on its respective moduli space
  \cite{UpqHiggs} the conclusion follows by
  Proposition~\ref{prop:reducing-minima}.
\end{proof}


Fixed points of the $\C^*$-action in $\cM(\SO(2,2))$ are particularly
easy to describe using \eqref{eq: holomorphic chain int} and
\eqref{eq: holomorphic chain half int}. Let $(V,W,\eta)$ be an $\SO(2,2)$-Higgs bundle. If $sw_1(V)=sw_1(W)\neq 0$, then neither $V$ nor $W$ have holomorphic isotropic subbundles, thus $(V,W,\eta)$ is a fixed point if and only if $\eta=0.$ If $sw_1(V)=sw_1(W)=0,$ then $V=N\oplus N^{-1}$ and $W=M\oplus M^{-1}$ where $N$ and $M$ are isotropic line bundles. Up to switching the roles of $N$, $M$, $N^{-1}$ and $M^{-1}$, the holomorphic chains are given by 
\begin{equation}
  \label{EQ SO(2,2) fixed points}
  \xymatrix@R=0em{M\ar[r]^-{\smtrx{a\\b}}&N\oplus N^{-1}\ar[r]^-{\smtrx{b&a}}&M^{-1}},
\end{equation}
which are of the form \eqref{eq:SO2q-absolute-minimum}. Hence, in view
of Proposition~\ref{prop:SO2q-absolute-minimum}, we have the following result.

\begin{proposition}\label{prop SO(2,2) fixed points are minima}
Every fixed point in $\cM(\SO(2,2))$ is a local minimum.
\end{proposition}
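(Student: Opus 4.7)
The plan is to argue case-by-case according to the value of the common first Stiefel-Whitney class $a = sw_1(V) = sw_1(W) \in H^1(X,\Z_2)$, using the characterization of $\C^*$-fixed points just preceding the proposition together with Proposition~\ref{prop:SO2q-absolute-minimum}.

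First, suppose $a \neq 0$. Then the rank $2$ orthogonal bundles $(V,Q_V)$ and $(W,Q_W)$ do not reduce to $\SO(2,\C)$, so they cannot split as the direct sum of an isotropic line bundle and its dual; in particular, neither $V$ nor $W$ contains a holomorphic isotropic line subbundle. Consequently, in the chain descriptions \eqref{eq: holomorphic chain int} and \eqref{eq: holomorphic chain half int} there are no room for nontrivial weight pieces, and so any $\C^*$-fixed point $(V,W,\eta)$ must have $\eta = 0$. Since the Hitchin function $f$ of \eqref{EQ Hitchin Function} is non-negative and vanishes exactly when the Higgs field is zero, such a fixed point is an absolute (hence local) minimum of $f$ on its component.

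Next, suppose $a = 0$. Then by \eqref{eq:SO_0-2q-Higgs} (with $q=2$) we may write $V = N \oplus N^{-1}$ and $W = M \oplus M^{-1}$ for some line bundles $N, M$. By the discussion preceding the proposition, every $\C^*$-fixed point is then, up to switching the roles of $N \leftrightarrow N^{-1}$ and $M \leftrightarrow M^{-1}$, of the form \eqref{EQ SO(2,2) fixed points}. This is precisely a chain of the shape \eqref{eq:SO2q-absolute-minimum} with $V_{-1} = N^{-1}$, $V_1 = N$ and $W_0 = M \oplus M^{-1}$ (or the analogous swap). Therefore Proposition~\ref{prop:SO2q-absolute-minimum} applies directly and identifies such a fixed point as a local minimum of the Hitchin function.

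Combining the two cases exhausts all fixed points of the $\C^*$-action on $\cM(\SO(2,2))$ and shows each is a local minimum. There is essentially no technical obstacle here, because all the real work — the classification of fixed points into the two types of chains in Proposition~\ref{prop: fixed point characterization}, and the reduction of the $\SO(2,q)$ chain of type \eqref{eq:SO2q-absolute-minimum} to a $\U(1,q)$-Higgs bundle with $\gamma=0$ via Proposition~\ref{prop:reducing-minima} and the $\U(p,q)$ minima analysis of Example~\ref{ex: minima for GL(n,R) and Upq} — has already been carried out. The only mild subtlety to check carefully is that in the $a \neq 0$ case the absence of holomorphic isotropic line subbundles really does force $\eta = 0$; this follows because any nonzero piece in the chain decompositions \eqref{eq: holomorphic chain int}–\eqref{eq: holomorphic chain half int} would furnish such an isotropic subbundle of $V$ or $W$.
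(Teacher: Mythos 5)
Your proof is correct and follows essentially the same route as the paper: split into cases according to $sw_1$, observe that for nonzero $sw_1$ the absence of isotropic line subbundles forces $\eta=0$ at a fixed point (so $f$ attains its absolute minimum there), and for $sw_1=0$ identify the fixed-point chains with the form \eqref{eq:SO2q-absolute-minimum} and invoke Proposition~\ref{prop:SO2q-absolute-minimum}. The only cosmetic difference is which of $V$, $W$ you place in weight zero, which is immaterial by the $V\leftrightarrow W$ symmetry of $\SO(2,2)$.
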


\subsection{$\SO(1,n)$-fixed points and local structure of $\cM_{K^p}(\SO(1,n))$}

Recall from Definition \ref{DEF SO(p,q) Higgs bundles} that a $K^p$-twisted $\SO(1,n)$-Higgs bundle is a tuple $(I,Q_I,W,Q_W,\omega,\eta)$.
 Note that the isomorphism $(-\Id_I\oplus \Id_W):I\oplus W\to I\oplus W$ acts on such a tuple by $(I,Q_I,W,Q_W,\omega,\eta)\mapsto (I,Q_I,W,Q_W,-\omega,-\eta)$. In particular, for $\C^*$-fixed points, the isomorphism class is independent of the choice of $\omega.$ This implies that the two choices of orientation define $\SO(1,n)$-Higgs bundles which are in the same connected component. For this reason, we ignore the orientation in this section.

\begin{lemma}\label{lemma: fixed point SO(1,n)}
    If $(I,W,\eta)$ is a polystable $K^p$-twisted $\SO(1,n)$-Higgs bundle which is a $\C^*$-fixed point with $\eta\neq 0$, then it decomposes as
    \[(I,W,\eta)\cong \Big(I,W_{-1}\oplus W_0\oplus W_{1},\ \big(\eta_{-1}\ \ \ 0\ \ \ 0\big)\Big)~,\]
    where $(W_0,Q_0)$ is a polystable orthogonal bundle and $W_{1}\cong W_{-1}^*.$ 
    Furthermore, $\big(I,W_{-1}\oplus W_1, \big(\eta_{-1}\ \ 0\big)\big)$ is a stable $K^p$-twisted $\rO(1,n')$-Higgs bundle which is stable as a $K^p$-twisted $\rO(n'+1,\C)$-Higgs bundle.
    In the notation of \eqref{eq: holomorphic chain int}, such an $(I,W,\eta)$ is given by the chain
    \[\xymatrix@R=0em{W_{-1}\ar[r]^{\eta_{-1}}&I\ar[r]^{\eta_{-1}^*}&W_1\\&\oplus&\\&W_0&}.\]
 \end{lemma}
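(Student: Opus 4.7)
My strategy is to apply the characterization of $\C^*$-fixed points in Proposition \ref{prop: fixed point characterization} to the special case where the $V$-component is the rank-one orthogonal bundle $I$, and then to refine the resulting chain decomposition using the orthogonal structure together with polystability.

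Because $I$ has rank one and the non-degenerate form $Q_I$ gives $I\cong I^*$, the bundle $I$ is concentrated in a single weight space $V_\nu$. The orthogonal pairing identifies $V_\nu \cong V_{-\nu}^*$, so a non-zero weight $\nu$ would force $V$ to contain the two distinct summands $V_\nu$ and $V_{-\nu}$, contradicting $\mathrm{rk}(V)=1$; hence $V = V_0 = I$. With the $V$-component concentrated at integer weight $0$, no chain of type \eqref{eq: holomorphic chain half int} can appear, since such chains require $V$-summands at half-integer weights. Among chains of type \eqref{eq: holomorphic chain int}, the vanishing of $V_j$ for $j \neq 0$ collapses the chain through $V_0$ to
\[W_{-1} \xrightarrow{\eta_{-1}} I \xrightarrow{\eta_{-1}^*} W_1,\]
with $W_1 \cong W_{-1}^*$ via $Q_W$ and $\eta_{-1}\neq 0$ since $\eta \neq 0$.

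For any remaining weight $\mu \notin\{-1,0,1\}$, both $\eta|_{W_\mu}$ and the component of $\eta^*$ landing in $W_\mu$ are forced to vanish (there is no $V_{\mu+1}$ or $V_{\mu-1}$), so the hyperbolic pair $W_\mu \oplus W_{-\mu}$ is an orthogonal summand of $W$ on which $\eta$ is trivial. Absorbing these pairs together with the weight-zero piece produces an orthogonal subbundle $(W_0,Q_0)\subset (W,Q_W)$ with zero Higgs field and the decomposition
\[(I,W,\eta) \cong \bigl(I,\, W_{-1}\oplus W_0 \oplus W_1,\, (\eta_{-1},0,0)\bigr).\]
Polystability of $(W_0,Q_0)$ as an orthogonal bundle then follows because $(0,W_0,0)$ is a Higgs-bundle summand of the polystable $(I,W,\eta)$, and polystability is inherited by direct summands (cf.\ Proposition \ref{Prop: strictly polystable SOpq}). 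To obtain stability of $(I,W_{-1}\oplus W_1,(\eta_{-1},0))$ as an $\rO(1,n')$-Higgs bundle, I observe that any would-be destabilizing isotropic pair $(V_1',W_1')$ must have $V_1'=0$ (the only isotropic subbundle of the rank-one orthogonal $I$), so $W_1'\subset W_{-1}\oplus W_1$ is a proper isotropic $\eta$-invariant subbundle with $\deg(W_1')\geq 0$. Viewed inside the original polystable $(I,W,\eta)$, the pair $(0,W_1')$ forces $\deg(W_1')\leq 0$ by polystability, hence equality. The polystability splitting then yields a complementary isotropic $W_1''\cong (W_1')^*$ with trivial Higgs field; the hyperbolic pair $W_1'\oplus W_1''$ is transferred into $W_0$, shrinking the chain. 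Iterating this reduction produces a chain that is stable as an $\rO(1,n')$-Higgs bundle, and stability of the associated $K^p$-twisted $\rO(n'+1,\C)$-Higgs bundle from \eqref{Eq SO(p+q,C) Higgs bundle} then follows from the fact that any isotropic $\Phi$-invariant subbundle of $E = I\oplus W_{-1}\oplus W_1$ must intersect $I$ trivially, since $Q_I$ is non-degenerate on $I$, and therefore projects injectively to an isotropic $\eta$-invariant pair in the $(I,W_{-1}\oplus W_1)$ picture.

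The main obstacle I foresee is the iterative stabilization step: one must verify that absorbing all degree-zero hyperbolic sub-pairs into $W_0$ preserves the non-vanishing of $\eta_{-1}$ on the residual chain (which is indeed the case, since such sub-pairs can be arranged to lie inside $\ker(\eta_{-1})$ and its orthogonal dual), and that the final chain is simultaneously stable for both the $\rO(1,n')$- and $\rO(n'+1,\C)$-Higgs bundle structures.
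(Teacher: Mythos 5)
Your proposal is correct and follows essentially the same route as the paper: apply Proposition \ref{prop: fixed point characterization}, observe that $\rk(V)=1$ together with the duality $V_\nu\cong V_{-\nu}^*$ pins $I$ at weight zero and collapses the chain to $W_{-1}\to I\to W_1$, and then absorb degree-zero isotropic pieces of $W_{\pm1}$ (which lie in $\ker(\eta_{-1})$ by polystability) into $W_0$ to force stability of the residual $\rO(1,n')$-chain. The one place where your argument is thinner than it should be is the final claim that the associated $\rO(n'+1,\C)$-Higgs bundle is stable: an isotropic $\Phi$-invariant subbundle $F\subset I\oplus W_{-1}\oplus W_1$ does meet $I$ trivially, but since $\Phi=\smtrx{0&\eta\\\eta^*&0}$ mixes $\eta$ and $\eta^*$, the projection of $F$ to $W_{-1}\oplus W_1$ is not obviously an $\eta$-invariant isotropic pair in the real-form sense, so "projects injectively" does not by itself finish the argument; the paper sidesteps this by citing \cite[Proposition 2.7]{MartaSO(1n)}, and you should either do the same or carry out the bookkeeping with both projections.
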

 \begin{proof}
The first part of the statement follows directly from Proposition \ref{prop: fixed point characterization}.
Since the bundles $W_1$ and $W_{-1}$ are isotropic, if $W_1$ has a degree zero subbundle $U$, then $W_{-1}$ has $U^*$ as a subbundle contained in the kernel of $\eta_{-1}$ by polystability. 
We may thus assume that the invariant polystable orthogonal subbundle $U^*\oplus U$ is a summand of $W_0$. 
 Now since $(W_{-1}\oplus W_{1},I,\smtrx{\eta_{-1}&0})$ is a stable $\rO(1,n')$-Higgs bundle, the associated $\rO(n'+1,\C)$-Higgs bundle is stable by \cite[Proposition 2.7]{MartaSO(1n)}.
\end{proof}

As in \eqref{Eq fixedpoint deformation sheaf splitting} with $K$ replaced with $K^p$, at a $\C^*$-fixed point $(I,W,\eta)\cong(I,W_{-1}\oplus W_0\oplus W_{1},\ (\eta_{-1}\ \ \ 0\ \ \ 0))$ in $\cM_{K^p}(\SO(1,n))$ the deformation complex \eqref{EQ ad eta definition} splits as $C^\bullet(I,W,\eta) = \bigoplus C^\bullet_k,$ where
\begin{displaymath}
  C^\bullet_k : \xymatrix@R=0em{\fso_k(I)\oplus\fso_k(W)\ar[r]^-{\ad_\eta}&\Hom_{k+1}(W,I)\otimes K^p~~.}
\end{displaymath}
We have $\fso(I)=0$ and $\End(W_{-1}\oplus W_0\oplus W_1)=\bigoplus\limits_{j=-2}^2\End_{j}(W)$,
where
\[\xymatrix@=.2em{\End_{2}(W)^*=\End_{-2}(W)=\Hom(W_{1},W_{-1}),\\ \End_1(W)^*=\End_{-1}(W)=\Hom(W_{1},W_0)\oplus \Hom(W_0,W_{-1}),\\\End_0(W)=\End(W_{-1})\oplus \End(W_0)\oplus \End(W_{1}).}\]
This gives the grading on $\fso(W)=\bigoplus\limits_{j=-2}^{2}\fso_j(W)$, where 
\[\xymatrix@=0em{\fso_2(W)^*=\fso_{-2}(W)=\{\beta\in\Hom(W_1,W_{-1})\ |\ \beta+\beta^*=0\},\\
\fso_1(W)^*=\fso_{-1}(W)=\{(\beta,-\beta^*)\in\End_{-1}(W)\},\\
\fso_0(W)=\{(\beta_{-1},\beta_0,-\beta_{-1}^*)\in \End_0(W)\ |\ \beta_0+\beta_0^*=0\}.}\]
Notice that $\fso_0(W)\cong\fso(W_0)\oplus \End(W_{-1})$, where $\fso(W_0)$ is the bundle of skew-symmetric endomorphisms of $W_0$ with respect to $Q_0$.
Also, $\Hom(W,I)\otimes K^p=\Hom_{-1}(W,I)\otimes K^p\oplus \Hom_0(W,I)\otimes K^p\oplus \Hom_1(W,I)\otimes K^p$, where 
\[\xymatrix{\Hom_{\pm1}(W,I)\otimes K^p=\Hom(W_{\mp 1},I)\otimes K^p&\text{and}&\Hom_{0}(W,I)\otimes K^p=\Hom(W_0,I)\otimes K^p.}\]

For each $k=-2,\ldots, 2$, the above splittings give
 $\ad_\eta:\fso_k(W)\to\Hom_{k+1}(W,I)\otimes K^p$, where $\ad_\eta$ is defined by composing with $\eta_{-1}$. This yields long exact sequences in cohomology
\begin{equation}
    \label{eq graded hypercohomology SO(1,n) sequence}\xymatrix@R=1em@C=1.5em{0\ar[r]&\HH^0(C^\bullet_k)\ar[r]&H^0(\fso_k(W))\ar[r]^-{\eta_{-1}}&H^0(\Hom_{k+1}(W,IK^p))\ar[r]&\HH^1(C^\bullet_k)\ar@{->}`r/3pt [d] `/10pt[l] `^d[llll] `^r/3pt[d][dlll]\\&H^1(\fso_k(W))\ar[r]^-{\eta_{-1}}&H^1(\Hom_{k+1}(W,IK^p))\ar[r]&\HH^2(C^\bullet_k)\ar[r]&0~.}
\end{equation}

\begin{lemma}\label{lemma vanishing H2 SO1n}
    For $p>1$, if $(I,W,\eta)$ is a polystable $K^p$-twisted $\SO(1,n)$-Higgs bundle, then the second hypercohomology group $\HH^2(C^\bullet(I,W,\eta))$ vanishes. 
\end{lemma}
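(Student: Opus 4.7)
The plan is to reduce to the case where $(I,W,\eta)$ is a $\C^*$-fixed point via Lemma~\ref{Lemma: reduction to fixed points H2=0}: the upper-semicontinuity of $\dim\HH^2$ along the $\C^*$-orbit means that verifying the vanishing at $\lim_{\lambda\to 0}(I,W,\lambda\eta)$ is enough to conclude $\HH^2(C^\bullet(I,W,\eta))=0$. The limit is a polystable $\C^*$-fixed point, so it suffices to prove the statement under the extra assumption that $(I,W,\eta)$ itself is such a fixed point, and then to distinguish according to whether $\eta=0$.

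If $\eta=0$, the differential of $C^\bullet$ is zero, so $\HH^2(C^\bullet)=H^1(\Hom(W,I)\otimes K^p)$, which by Serre duality has the same dimension as $H^0(W\otimes I^{-1}\otimes K^{1-p})$. A nonzero section would saturate to a line subbundle $U\subset W$ with $\deg U\geq(p-1)(2g-2)>0$, forcing $U$ to be isotropic (otherwise the restriction of $Q_W$ gives $U\cong U^*$ and $\deg U=0$); polystability of the orthogonal bundle $(W,Q_W)$ then contradicts $\deg U>0$, so the cohomology vanishes.

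If $\eta\neq 0$, Lemma~\ref{lemma: fixed point SO(1,n)} provides a weight decomposition $W=W_{-1}\oplus W_0\oplus W_1$ with $W_1\cong W_{-1}^*$ and a single nonzero component $\eta_{-1}\colon W_{-1}\to I\otimes K^p$, which induces a splitting $\HH^2(C^\bullet)=\bigoplus_k\HH^2(C^\bullet_k)$. For $k\notin\{-2,-1,0\}$ the target $\Hom_{k+1}(W,I)\otimes K^p$ is zero, so $\HH^2(C^\bullet_k)=0$ automatically. For $k=-1$ and $k=0$ I would dualize the required surjectivity of $\ad_\eta$ on $H^1$ via Serre duality; the resulting map has the form $s\mapsto s\otimes\eta_{-1}$, landing in $H^0(W_0\otimes W_1\otimes K)$ and $H^0(\End(W_{-1})\otimes K)$ respectively. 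Since the two tensor factors sit in different direct summands of $W$ (or in a bundle and its dual), $s(x)\otimes \eta_{-1}(x)=0$ with $\eta_{-1}(x)\neq 0$ forces $s(x)=0$, and holomorphicity of $s$ yields injectivity.

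The case $k=-2$ is where the hypothesis $p>1$ enters, and I expect it to be the only real obstacle. Here the Serre-dual map reads $s\mapsto s\wedge\eta_{-1}$ from $H^0(W_1\otimes I^{-1}\otimes K^{1-p})$ to $H^0(\Lambda^2 W_1\otimes K)$. If $s\wedge\eta_{-1}=0$, then $s$ is pointwise proportional to $\eta_{-1}$ and hence factors through the saturated line subbundle $U\subset W_1$ generated by $\eta_{-1}$. Since $U\subset W_1$ is isotropic in $W$ and killed by $\eta$, Proposition~\ref{Def SO(p,q) stability and polystability} yields $\deg U\leq 0$; then $\deg(U\otimes I^{-1}\otimes K^{1-p})\leq (1-p)(2g-2)<0$ for $p>1$, so $s=0$. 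The edge case $\rk W_{-1}=1$ is handled identically upon taking $U=W_1$, since then $\Lambda^2 W_1=0$ and the argument reduces to showing that the source $H^0(W_1\otimes I^{-1}\otimes K^{1-p})$ itself vanishes.
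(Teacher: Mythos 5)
Your argument is correct and follows the same skeleton as the paper's proof: reduce to a $\C^*$-fixed point via the semicontinuity of $\dim\HH^2$ (Lemma \ref{Lemma: reduction to fixed points H2=0}), use the structure of fixed points from Lemma \ref{lemma: fixed point SO(1,n)}, kill each graded piece $\HH^2(C^\bullet_k)$, and let the hypothesis $p>1$ enter through the positivity of $\deg(IK^{p-1})$. The difference lies in how the individual weights are treated. For $k=-1,-2$ the paper shows the entire target $H^1(\Hom_{k+1}(W,IK^p))$ vanishes, since its Serre dual $H^0(\Hom(IK^{p-1},W_0))$ resp.\ $H^0(\Hom(IK^{p-1},W_1))$ is zero because $W_0$ and $W_1$ admit no nonnegative-degree subbundles; your dual-map injectivity arguments at these weights are therefore valid but vacuous --- the source of your dual map is already zero, so the detour through $s\wedge\eta_{-1}$ and the saturated line subbundle $U$ is not needed. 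The genuine divergence is at $k=0$: the paper notes that $\fso(W_0)$ is the kernel of $\ad_\eta$ on $\fso_0(W)$ and then appeals to the stability of the associated $\rO(n'+1,\C)$-Higgs bundle together with the duality $\HH^2\cong(\HH^0)^*$ for complex groups (Remark \ref{remark duality of H0 and H2 for complex groups}), whereas you prove surjectivity of $H^1(\End(W_{-1}))\to H^1(\Hom(W_{-1},IK^p))$ directly, by observing that the Serre-dual map $t\mapsto t\circ\eta_{-1}$ is injective because $\eta_{-1}$ is generically surjective onto the line bundle $IK^p$. That route is more elementary and self-contained, and it is correct. Your separate treatment of the $\eta=0$ limit is also fine; the paper absorbs that case into the same degree count. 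I see no gaps.
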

\begin{proof}
By Lemma \ref{Lemma: reduction to fixed points H2=0}, to show that $\HH^2(C^\bullet(I,W,\eta))$ vanishes it suffices to show the vanishing of each graded piece of \eqref{eq graded hypercohomology SO(1,n) sequence} at a fixed point of the $\C^*$-action. Such fixed points are given by Lemma \ref{lemma: fixed point SO(1,n)}.
    
    First note that $\HH^2(C^\bullet_k)=0$ for $k\geq 1$ since $\Hom_{k+1}(W,I)=0$ for $k\geq1.$ 
    Stability implies $W_1$ and $W_0$ have no positive degree subbundles, and, by Serre duality, we have 
\[H^1(\Hom_{k+1}(W,IK^p))\cong \begin{dcases}
        H^0(\Hom(IK^{p-1},W_1))^* &k=-2\\
        H^0(\Hom(IK^{p-1}, W_0))^* &k=-1~.
    \end{dcases}\]
    Thus, since $p>1,$ $H^1(\Hom_{k+1}(W,IK^p))=0$ for $k\leq -1.$
    
Finally, the form of the Higgs field implies the kernel of $\ad_\eta:\fso_0(W)\to \Hom_1(W,I)\otimes K^p$ is $\fso( W_0)$. Hence, $\HH^2(C^\bullet_0)$ injects into the second hypercohomology group of the stable $\rO(1,n')$-Higgs bundle $\big(I,W_{-1}\oplus W_1, \big(\eta_{-1}\ \ 0\big)\big).$
The associated $\rO(n'+1,\C)$-Higgs bundle is stable by Lemma \ref{lemma: fixed point SO(1,n)}, so this hypercohomology group vanishes 
by Remark \ref{remark duality of H0 and H2 for complex groups}.  
\end{proof}

\begin{lemma}\label{Lemma: H1 of SO(1,n) fixed point}
    If $p>1$ and $(I,W,\eta)=\big(I,W_{-1}\oplus W_0\oplus W_1,\big(\eta_{-1}\ \ 0\ \ 0\big)\big)$ is a polystable $K^p$-twisted $\SO(1,n)$-Higgs bundle which is a $\C^*$-fixed point, then  
  \[\HH^0(C^\bullet)\cong H^0(\fso(W_0))\ \ \text{ and }\ \ \HH^1(C^\bullet)=\bigoplus\limits_{k=-2}^2 \HH^1(C^\bullet_k).\] Moreover,
     \begin{itemize}
         \item $\HH^1(C^\bullet_2)\cong H^1(\fso_{2}(W))$,
         \item $\HH^1(C^\bullet_1)\cong H^1(\Hom(W_{-1},W_0))$,
            \item $\HH^1(C^\bullet_0)\cong H^1(\fso(W_0))\oplus \HH^1_0$, where $\HH^1_0$ is defined by the sequence
    \[\xymatrix@R=1em{0\ar[r]& H^0(\End(W_{-1}))\ar[r]^-{\eta_{-1}}&H^0(\Hom(W_{-1},I K^p))\ar[r]&\HH^1_0\ar@{->}`r/3pt [d] `/10pt[l] `^d[lll] `^r/3pt[d][dll]\\& H^1(\End(W_{-1}))\ar[r]^-{\eta_{-1}}& H^1(\Hom(W_{-1},IK^p))\ar[r]&0~,}\]
    \item $\HH^1(C^\bullet_{-1})$ is defined by the sequence
    \[\xymatrix@C=1.5em{0\ar[r]& H^0(\Hom(W_0,W_{-1}))\ar[r]^-{\eta_{-1}}&H^0(\Hom(W_0,I K^p))\ar[r]&\HH^1(C^\bullet_{-1})\ar[r]& H^1(\Hom(W_0,W_{-1}))\ar[r]&0~,}\]
    \item $\HH^1(C^\bullet_{-2})$ is defined by the sequence
    \[\xymatrix@C=1.5em{0\ar[r]& H^0(\fso_{-2}(W))\ar[r]^-{\eta_{-1}}&H^0(\Hom(W_1,IK^p))\ar[r]&\HH^1(C^\bullet_{-2})\ar[r]& H^1(\fso_{-2}(W))\ar[r]& 0~.}\]
     \end{itemize}
\end{lemma}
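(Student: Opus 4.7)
The plan is to analyze the deformation complex $C^\bullet = C^\bullet(I,W,\eta)$ weight by weight via the splitting \eqref{Eq fixedpoint deformation sheaf splitting} and the associated graded long exact sequence \eqref{eq graded hypercohomology SO(1,n) sequence}. The crucial input is Lemma \ref{lemma vanishing H2 SO1n}, which gives $\HH^2(C^\bullet)=0$ and hence $\HH^2(C^\bullet_k)=0$ for every $k\in\{-2,\dots,2\}$; this truncates each long exact sequence on the right, so that $\HH^1(C^\bullet_k)$ sits in a six-term exact sequence readable off the identifications of $\fso_k(W)$ and $\Hom_{k+1}(W,I)\otimes K^p$ given just before the lemma.

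First I would dispatch the weights $k=1,2$. Since $W$ has weights only in $\{-1,0,1\}$, the target $\Hom_{k+1}(W,I)\otimes K^p$ vanishes whenever $k\ge 1$, so $\ad_\eta$ is the zero map and the sequence collapses to $\HH^i(C^\bullet_k)\cong H^i(\fso_k(W))$ for $i=0,1$. Together with the identification $\fso_1(W)\cong\Hom(W_{-1},W_0)$ coming from the skew-symmetry relation $b=-a^*$, this yields the stated formulas for $\HH^1(C^\bullet_2)$ and $\HH^1(C^\bullet_1)$.

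Next I would treat the richest weight, $k=0$. The splitting $\fso_0(W)\cong \fso(W_0)\oplus\End(W_{-1})$ is decisive: $\ad_\eta$ annihilates $\fso(W_0)$ (since $\eta_{-1}$ involves only $W_{-1}$ and $I$) and restricts on $\End(W_{-1})$ to the left-composition $\beta\mapsto \eta_{-1}\circ\beta$ into $\Hom(W_{-1},I)\otimes K^p$. Consequently the long exact sequence splits as a direct sum of two exact sequences, the second of which is exactly the one defining $\HH^1_0$ in the statement; this produces $\HH^1(C^\bullet_0)\cong H^1(\fso(W_0))\oplus \HH^1_0$. The sequences for $k=-1$ and $k=-2$ are then read off directly from \eqref{eq graded hypercohomology SO(1,n) sequence} after identifying $\fso_{-1}(W)\cong\Hom(W_0,W_{-1})$ via $Q_0$ and recalling the explicit description of $\fso_{-2}(W)$, giving exactly the sequences stated.

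The last and most delicate step is the identification $\HH^0(C^\bullet)\cong H^0(\fso(W_0))$, which requires showing $\HH^0(C^\bullet_k)=0$ for every $k\ne 0$ and also the vanishing of the $\End(W_{-1})$ contribution to $\HH^0(C^\bullet_0)$. My plan is to extract each of these vanishings from the stability of the auxiliary $\rO(n'+1,\C)$-Higgs bundle $(I,W_{-1}\oplus W_1,(\eta_{-1}\ 0))$ provided by Lemma \ref{lemma: fixed point SO(1,n)}: its deformation complex has vanishing $\HH^0$ by Remark \ref{remark duality of H0 and H2 for complex groups}, and the graded pieces of that complex at the same $\C^*$-fixed point match precisely the summands one needs to kill. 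Matching these pieces carefully is where I expect the main bookkeeping hurdle to lie, but no idea beyond the eigenbundle analysis already deployed in this section should be required.
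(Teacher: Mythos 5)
Your overall architecture (weight decomposition, collapse at $k=1,2$, the splitting $\fso_0(W)\cong\fso(W_0)\oplus\End(W_{-1})$ at $k=0$) matches the paper's proof. However, there is a genuine gap in your plan for the identification $\HH^0(C^\bullet)\cong H^0(\fso(W_0))$. You propose to obtain \emph{all} the required vanishings of $\HH^0(C^\bullet_k)$, $k\neq 0$, by matching graded pieces with the deformation complex of the stable $\rO(n'+1,\C)$-Higgs bundle built from $(I,W_{-1}\oplus W_1,(\eta_{-1}\ \ 0))$. That works for $k=\pm 2$ and for the $\End(W_{-1})$ summand at $k=0$, but it cannot work for $k=\pm 1$: the relevant bundles there are $\Hom(W_{-1},W_0)$ and $\Hom(W_0,W_{-1})$, and $W_0$ is simply not a summand of the auxiliary bundle, so these spaces do not inject into any graded piece of its deformation complex. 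The paper kills them by separate degree arguments that genuinely use the polystability of $W_0$: for $k=1$, a nonzero element of $H^0(\Hom(W_{-1},W_0))\cong H^0(\Hom(W_0,W_1))$ would produce a non-negative degree subbundle of $W_1$, contradicting stability; for $k=-1$, one identifies $\HH^0(C^\bullet_{-1})\cong H^0(\Hom(W_0,N))$ with $N=\ker(\eta_{-1}:W_{-1}\to IK^p)$ and shows a nonzero section forces either a non-negative degree subbundle of $N$ or a positive degree subbundle of $W_0$, both impossible. This is an extra idea beyond the eigenbundle bookkeeping you anticipate.

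A secondary point: the exact sequences you state for $k=-1$ and $k=-2$ terminate in $H^1(\Hom(W_0,W_{-1}))\to 0$ and $H^1(\fso_{-2}(W))\to 0$, and this does \emph{not} follow from $\HH^2(C^\bullet_k)=0$ alone — that only truncates the sequence after the terms $H^1(\Hom_{k+1}(W,IK^p))$. You additionally need $H^1(\Hom(W_0,IK^p))=0$ and $H^1(\Hom(W_1,IK^p))=0$, which hold by Serre duality together with $p>1$ and the fact that $W_0$ and $W_1$ have no subbundles of positive (respectively non-negative) degree; this is exactly the intermediate computation inside the proof of Lemma \ref{lemma vanishing H2 SO1n}, so you should cite that fact explicitly rather than only its conclusion $\HH^2=0$.
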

\begin{proof}
By Lemma \ref{lemma: fixed point SO(1,n)}, a $\C^*$-fixed point is given by $(I,W,\eta)=\big(I,W_{-1}\oplus W_0\oplus W_1,\big(\eta_{-1}\ \ 0\ \ 0\big)\big)$, where  $W_0$ is a polystable orthogonal bundle and $(I,W_{-1}\oplus W_1,\smtrx{\eta_{-1}&0})$ is a stable $\rO(1,n')$-Higgs bundle such that the associated $\rO(n'+1,\C)$-Higgs bundle is also stable. In particular, $W_1$ has no non-negative degree subbundles and $W_0$ has no positive degree subbundles. Recall that in the proof of Lemma \ref{lemma vanishing H2 SO1n} it was shown that $H^1(\Hom_{k+1}(W,IK^p))=0$ for $k\leq -1.$

For $k=2,$ we have $C_2^\bullet: \fso_2(W)\to 0$, thus, $\HH^0(C_2^\bullet)=H^0(\fso_2(W))$ and $\HH^1(C_2^\bullet)=H^1(\fso_2(W))$. In particular, $\HH^0(C_2^\bullet)$ injects into the zeroth hypercohomology group of the deformation complex of the $\rO(1,n')$-Higgs bundle $(I,W_{-1}\oplus W_1,\smtrx{\eta_{-1}&0}),$ which vanishes by stability. 

For $k=1,$ $\fso_1(W)\cong\Hom(W_{-1},W_0)$ and $C_1^\bullet:\fso_1(W)\to 0$ imply $\HH^0(C_1^\bullet)=H^0(\Hom(W_{-1},W_0))$ and $\HH^1(C_1^\bullet)=H^1(\Hom(W_{-1},W_0))$. The vanishing of $H^0(\Hom(W_{-1},W_0))\cong H^0(\Hom(W_0,W_1))$ follows from stability. Namely, any non-zero homomorphism $f:W_0\to W_1$ defines a non-negative degree subbundle of $W_1,$ contradicting the stability of $(I,W_{-1}\oplus W_1,\smtrx{\eta_{-1}&0})$. 

For  $k=0$, $C_0^\bullet:\fso_0(W)\to\Hom_1(W,I)\otimes K^p$ is given by
\[C_0^\bullet:\End(W_{-1})\oplus\fso(W_0)\to\Hom(W_{-1},I)\otimes K^p,\ \ \ \ (\beta_{-1},\beta_0)\mapsto\eta_{-1}\beta_{-1}.\]
Thus, we can split $C_0^\bullet$ as $C_0^\bullet=C_0^{\bullet,\prime}\oplus C_0^{\bullet,\prime\prime}$ with $C_0^{\bullet,\prime}:\End(W_{-1})\xrightarrow{\eta_{-1}}\Hom(W_{-1},I)\otimes K^p$ and $C_0^{\bullet,\prime\prime}:\fso(W_0)\to 0$. 
The hypercohomology groups split accordingly, hence
\[\xymatrix{\HH^0(C_0^{\bullet,\prime\prime})=H^0(\fso(W_0))&\text{and}&\HH^1(C_0^{\bullet,\prime\prime})\cong H^1(\fso(W_0))}~.\] 
For $C_0^{\bullet,\prime}$, $\HH^0(C_0^{\bullet,\prime})=0$ by stability of $\big(I,W_{-1}\oplus W_1, \big(\eta_{-1}\ \ 0\big)\big)$. Thus, if $\HH^1_0=\HH^1(C_0^{\bullet,\prime})$, we have
\[\xymatrix@R=1em{0\ar[r]& H^0(\End(W_{-1}))\ar[r]^-{\eta_{-1}}&H^0(\Hom(W_{-1},I K^p))\ar[r]&\HH^1_0\ar@{->}`r/3pt [d] `/10pt[l] `^d[lll] `^r/3pt[d][dll]\\& H^1(\End(W_{-1}))\ar[r]^-{\eta_{-1}}& H^1(\Hom(W_{-1},IK^p))\ar[r]&0~.}\]

For $k=-1,$ we have $H^1(\Hom_{0}(W,IK^p))=0$ and $C_{-1}^\bullet: \Hom(W_0,W_{-1})\xrightarrow{\eta_{-1}}\Hom(W_0,I)\otimes K^p$. Thus, \[\xymatrix@R=1em{0\ar[r]&\HH^0(C_{-1}^\bullet)\ar[r]& H^0(\Hom(W_0,W_{-1}))\ar[r]^-{\eta_{-1}}&H^0(\Hom(W_0,I K^p))\ar@{->}`r/3pt [d] `/10pt[l] `^d[lll] `^r/3pt[d][dll]\\ &\HH^1(C^\bullet_{-1})\ar[r]& H^1(\Hom(W_0,W_{-1}))\ar[r]&0~.}\]
It remains to show that $\HH^0(C_{-1}^\bullet)=0$. If $N$ is the kernel of $\eta_{-1}:W_{-1}\to IK^p$, then $\HH^0(C_{-1}^\bullet)\cong H^0(\Hom(W_0,N))$. If $N=0$ we are done so suppose $N\neq 0$. Stability of $\big(I,W_{-1}\oplus W_1, \big(\eta_{-1}\ \ 0\big)\big)$ implies $\deg(N)<0$ and moreover $N$ has no non-negative degree subbundles. A non-zero section $\beta\in H^0(\Hom(W_0,N))$ must have a non-trivial kernel since otherwise $\beta(W_0)\subset N$ would define a non-negative degree subbundle. However, this implies that $\deg(\ker(\beta))>0$, contradicting the polystability of $W_0$. We conclude that $H^0(\Hom(W_0,N))=0,$ and thus $\HH^0(C_{-1}^\bullet)=0$.

Finally consider the case of $C_{-2}^\bullet: \fso_{-2}(W)\xrightarrow{\ad_\eta}\Hom(W_1,I)\otimes K^p$. As in the case $k=2,$ stability of the $\rO(1,n')$-Higgs bundle $(I,W_{-1}\oplus W_1,\smtrx{\eta_{-1}&0})$ implies $\HH^0(C_{-2}^\bullet)=0$. The group $\HH^1(C_{-2}^\bullet)$ is defined by the exact sequence in the statement of the lemma since $H^1(\Hom(W_1,IK^p))=0$. 
\end{proof}

\section{Existence of exotic components of $\cM(\SO(p,q))$}\label{sec:exoticcomp}

In this section we will prove the following theorem exhibiting connected components of $\cM(\SO(p,q))$ which are not distinguished by primary characteristic classes for $p\geq 2$.  
\begin{theorem}\label{Thm Psi open and closed}
Let $X$ be a compact Riemann surface with genus $g\geq2$ and canonical bundle $K$. Denote the moduli space of $K^p$-twisted $\SO(1,q-p+1)$-Higgs bundles on $X$ by $\cM_{K^p}(\SO(1,q-p+1))$ and the moduli space of $K$-twisted $\SO(p,q)$-Higgs bundles on $X$ by $\cM(\SO(p,q)).$ For $1\leq p\leq q,$ there is a well defined map
\begin{equation}\label{Eq Psi}
\Psi:\xymatrix{\cM_{K^p}(\SO(1,q-p+1))\times\displaystyle\bigoplus\limits_{j=1}^{p-1}H^0(K^{2j})\ar[r]&\cM(\SO(p,q))}
\end{equation}
which is an isomorphism onto its image and has an open and closed image. Furthermore, if $p\geq 2$, then every Higgs bundle in the image of $\Psi$ has a nowhere vanishing Higgs field. 
 \end{theorem}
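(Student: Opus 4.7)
The plan is to define $\Psi$ by an explicit construction that grafts the Hitchin section for $\SO(p,p-1)$ onto a $K^p$-twisted $\SO(1,q-p+1)$-Higgs bundle, and then successively verify (i) well-definedness and injectivity, (ii) openness of the image via deformation theory, (iii) closedness via properness of the Hitchin fibration, and (iv) the nowhere vanishing statement for $p\geq 2$. Given data $\big((I,Q_I,W',Q_{W'},\omega',\eta'),(q_2,\ldots,q_{2p-2})\big)$, I set
\[
V=I\otimes\cK_{p-1},\qquad W=(I\otimes\cK_{p-2})\oplus W',
\]
with orthogonal structures $Q_I\otimes Q_{p-1}$ and $(Q_I\otimes Q_{p-2})\oplus Q_{W'}$, and orientation induced by $\omega'$ via the canonical identifications $\Lambda^pV\cong I^p$ and $\Lambda^qW\cong I^{p-1}\otimes\Lambda^{q-p+1}W'$ (using $\Lambda^p\cK_{p-1}\cong\cO\cong\Lambda^{p-1}\cK_{p-2}$ and $I^2\cong\cO$). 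The Higgs field $\eta:W\to V\otimes K$ equals $\Id_I\otimes\eta(q_2,\ldots,q_{2p-2})$ from \eqref{eq choice of Hitchin section O(p-1,p)} on $I\otimes\cK_{p-2}$, and on $W'$ equals $\eta':W'\to I\otimes K^p\subset V\otimes K$ into the top component. Polystability reduces block by block: any $\eta$-invariant pair of isotropic subbundles contributes a piece in the Hitchin block (stable by Remark~\ref{Remark auto of Hitchin component}) and a piece in the $\SO(1,q-p+1)$ block (polystable by assumption). Injectivity of $\Psi$ follows from the intrinsic recovery $I\cong\Lambda^pV$, the weight filtration on $V$ recovering the $\cK_{p-1}$-decomposition, the $q_{2j}$ being read off from the diagonal blocks of $\eta$, and $W'$ being the orthogonal complement of $I\otimes\cK_{p-2}\subset W$.

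The openness of the image hinges on the vanishing $\HH^2(C^\bullet(V,W,\eta))=0$ at every point in the image. I would first verify this at $\C^*$-fixed points, where the chain description of Proposition~\ref{prop: fixed point characterization} and the graded splitting \eqref{EQ deformation complex splitting} reduce the vanishing of each $\HH^2(C^\bullet_k)$ to an explicit computation relying on stability of the Hitchin block, the stability of the $\rO(1,n')$ component, and Serre duality, in the spirit of Lemma~\ref{lemma vanishing H2 SO1n}. Lemma~\ref{Lemma: reduction to fixed points H2=0} then propagates the vanishing to every point of the image via the $\C^*$-flow. With $\HH^2=0$ in hand, the Kuranishi construction of Section~\ref{sec:complex-analytic} models a neighborhood of $\Psi(y)$ in $\cM(\SO(p,q))$ on an open neighborhood of $0\in\HH^1(C^\bullet)$ modulo the finite automorphism group. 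A dimension count matches $\dim\HH^1(C^\bullet)$ with the tangent space of the domain at $y$, and combined with the injectivity of $d\Psi$ this makes $\Psi$ a local analytic isomorphism, so the image is open.

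For closedness, consider a sequence $(V_n,W_n,\eta_n)=\Psi(y_n)$ converging to a polystable $(V_\infty,W_\infty,\eta_\infty)$. Since $\Lambda^pV_n\cong I_n$ takes values in the finite set of square roots of $\cO$, I pass to a subsequence on which $I_n=I$ is constant, so the summands $I\otimes\cK_{p-1}$ and $I\otimes\cK_{p-2}$ are fixed holomorphic bundles. Properness of the Hitchin fibration $h:\cM(\SO(p,q))\to\bigoplus_iH^0(K^{2i})$ forces the Hitchin invariants to converge; since the subdiagonal identity blocks of $\eta(q_{2,n},\ldots,q_{2p-2,n})$ are nowhere vanishing sections of trivial line bundles, the block structure persists in the limit. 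Invoking the intrinsic recovery from the first paragraph on $(V_\infty,W_\infty,\eta_\infty)$ exhibits it as $\Psi$ applied to the limit data, and polystability of the limit $K^p$-twisted $\SO(1,q-p+1)$-Higgs bundle is inherited from the polystability of $(V_\infty,W_\infty,\eta_\infty)$ via the block decomposition. Finally, the nowhere vanishing of $\eta$ for $p\geq 2$ is immediate, since the subdiagonal of $\eta(q_2,\ldots,q_{2p-2})$ consists of identity maps between line bundle summands of $I\otimes\cK_{p-2}$ and $I\otimes\cK_{p-1}\otimes K$. I expect the main obstacle to be the closedness step: beyond the clean convergence argument sketched here, a careful control of the block structure in the limit is needed to rule out polystable degenerations lacking the required Hitchin-section form, and this is where the vanishing $\HH^2=0$ and the properness of $h$ must work in concert with the intrinsic recovery procedure.
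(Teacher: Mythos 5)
Your overall strategy coincides with the paper's: the same grafting construction for $\Psi$, openness via vanishing of $\HH^2$ at $\C^*$-fixed points propagated by semicontinuity and the Kuranishi model, and closedness via properness of the Hitchin fibration. Two steps, however, have genuine gaps. In the openness argument you model a neighborhood of $\Psi(y)$ on $\HH^1(C^\bullet)$ ``modulo the finite automorphism group'' and conclude by a dimension count plus injectivity of $d\Psi$. But the image of $\Psi$ contains strictly polystable points---for instance whenever the orthogonal bundle $\widehat W$ in the $\SO(1,q-p+1)$-factor is strictly polystable---and there $\Aut$ is positive-dimensional, so the local model is the GIT quotient $\HH^1\sslash\Aut$ and a raw dimension count does not suffice. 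What is actually needed (and what the paper establishes in Propositions \ref{prop: open nbhd of imPsi} and \ref{proposition H1 iso at fixed points}) is that $\Psi$ induces an isomorphism $\HH^1(C^\bullet(I,\widehat W,\hat\eta))\times\bigoplus_j H^0(K^{2j})\cong\HH^1(C^\bullet(\Psi(\cdot)))$ compatible with the identification of automorphism groups from Lemma \ref{Lemma Gauge group fixing im Psi}, so that the two GIT quotients agree; without this equivariance you cannot conclude openness at such points.

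For closedness, you analyze the limit of a convergent sequence in the image and yourself flag that ruling out degenerations of the block structure is the unresolved difficulty. That difficulty is avoidable: since $\Psi$ intertwines the Hitchin fibrations, $h(\Psi(I,\widehat W_i,\hat\eta_i,q_2^i,\ldots,q_{2p-2}^i))=(q_2^i,\ldots,q_{2p-2}^i,h_p(I,\widehat W_i,\hat\eta_i))$, so properness of $h_p$ and $h$ gives either that divergent sequences in the domain have divergent image (the paper's formulation, making $\Psi$ proper and hence closed) or, equivalently, lets you extract a convergent subsequence in the domain whenever the image sequence converges, after which continuity of $\Psi$ identifies the limit as a point of the image with no analysis of limiting block structure required. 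Relatedly, your ``intrinsic recovery'' of the data from an abstract isomorphism class---the filtration on $V$, the summand $I\otimes\cK_{p-2}\subset W$, the differentials $q_{2j}$---needs justification, since the weight decomposition is not intrinsic away from the $\C^*$-fixed points; the paper's Lemma \ref{Lemma Gauge group fixing im Psi} supplies this by classifying the gauge transformations carrying one point of the image to another.
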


\begin{remark}\label{remark:lowerboundconncomp}
As a direct corollary of the above theorem, we have that, for $p>2$,
    \[\left| \pi_0\big(\cM(\SO(p,q))\big)\right|\ \geq\  2^{2g+2}+\left|\pi_0\big(\cM_{K^p}(\SO(1,q-p+1))\big)\right|.\]
    In Theorem \ref{Theorem: component count p>2}  we will show that the above inequality is in fact an equality.
 \end{remark}
\begin{remark}
The space of holomorphic differentials $H^0(K^{2j})$ can be identified with the moduli space $\cM_{K^{2j}}(\R^+)$. This will be used in Section \ref{section Cayley partner}, to interpret Theorem \ref{Thm Psi open and closed} as a generalized Cayley correspondence.
\end{remark}

\subsection{Defining the map $\Psi$}

Recall that a $K^p$-twisted $\SO(1,n)$-Higgs bundle is a triple $(I,\widehat W,\hat\eta)$, where $\widehat W$ is a rank $n$ vector bundle with an orthogonal structure $Q_{\widehat W}$, $I\cong\Lambda^n\widehat W$ and $\hat\eta\in H^0(\Hom(\widehat W,I)\otimes K^p)$.

Let $\cH_{K^p}(\SO(1,q-p+1))$ denote the configuration space of all $K^p$-twisted $\SO(1,q-p+1)$-Higgs bundles and let $\cH(\SO(p,q))$ denote the configuration space of all $\SO(p,q)$-Higgs bundles. That is, $\cH_{K^p}(\SO(1,q-p+1))$ consists of pairs $(\dbar_{\widehat W},\hat\eta)$ where $\dbar_{\widehat W}$ is a Dolbeault operator on $\widehat W$, $\hat\eta\in \Omega^{1,0}(\Hom(\widehat W,\Lambda^{q-p+1}\widehat W))$ such that $\dbar_{\widehat W}\hat\eta=0$ and $\dbar_{\widehat W}Q_{\widehat W}=0$. The space $\cH(\SO(p,q))$ is defined analogously.

Recall that the Hitchin section $s_H^I:\bigoplus\limits_{j=1}^{p-1}H^0(K^{2j})\to\cM(\SO(p,p-1))$ is given by \eqref{EQ Hitchin section O(p,C)}, and that 
\[(I\otimes \cK_n,Q_n)=\left(I\otimes(K^{n}\oplus K^{n-2}\oplus\cdots\oplus K^{2-n}\oplus K^{-n}),\smtrx{&&1\\&\iddots&\\1&&}\right).\]
Recall that the Higgs field in the image of $s_H^I$ is given by $\eta(q_2,\ldots,q_{2p-2}):I\otimes \cK_{p-2}\to I\otimes \cK_{p-1}\otimes K$, as in \eqref{eq choice of Hitchin section O(p-1,p)}.

Define the map 
\begin{equation}\label{EQ tildePsi-domain}
\widetilde\Psi:\xymatrix{\cH_{K^p}(\SO(1,q-p+1))\times\displaystyle\bigoplus\limits_{j=1}^{p-1}H^0(K^{2j})\ar[r]&\cH(\SO(p,q))}
\end{equation}
by
\begin{equation}
    \label{EQ tildePsi}
    \widetilde\Psi((I, \widehat W,\hat\eta),q_2,\ldots,q_{2p-2})=\left(I\otimes \cK_{p-1},\widehat W\oplus I\otimes \cK_{p-2},\Big(\eta_{\widehat W}\ \ \eta(q_2,\ldots,q_{2p-2})\Big)\right)
\end{equation}
where   
\[\eta_{\widehat W}=\smtrx{\hat\eta\\0\\\vdots\\0}:\xymatrix{\widehat W \ar[r]&I\otimes(K^{p}\oplus K^{p-2}\oplus\cdots\oplus K^{2-p})=I\otimes \cK_{p-1}\otimes K}.\]
It is clear that the map $\widetilde \Psi$ is continuous.
\begin{remark}
When defining the map $\widetilde \Psi,$ we have ignored the orientations of the $\SO(1,n)$ and $\SO(p,q)$-Higgs bundles. An orientation $\hat\omega:I\otimes \Lambda^{q-p+1}\widehat W\to\cO$ clearly induces an orientation $\omega:I^{p}\otimes I^{p-1}\otimes \Lambda^{q-p+1}\widehat W\to \cO$ on the image. Moreover, the two choices of orientation will not define different components of the moduli space (see Remark \ref{rem: orientation doesn't change component count}). 
\end{remark}

\begin{lemma}
    For $(I,\widehat W,\hat\eta,q_2,\ldots,q_{2p-2})\in \cH_{K^p}(\SO(1,q-p+1))\times\bigoplus\limits_{j=1}^{p-1}H^0(K^{2j})$, the $\SO(p,q)$-Higgs bundle $\widetilde\Psi(I,\widehat W,\hat\eta,q_2,\ldots,q_{2p-2})$ is (poly)stable  if and only if the $K^p$-twisted $\SO(1,q-p+1)$-Higgs bundle $(I,\widehat W,\hat\eta)$ is (poly)stable.  
\end{lemma}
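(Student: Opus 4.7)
Let $W' := I \otimes \cK_{p-2}$, so that $W = \widehat W \oplus W'$, the quadratic form splits as $Q_W = Q_{\widehat W} \oplus Q_{p-2}$, and the Higgs field decomposes as $\eta = (\eta_{\widehat W}, \eta')$ with $\eta' := \eta(q_2,\ldots,q_{2p-2})$ the Hitchin section Higgs field. Two structural facts anchor the argument: $\eta_{\widehat W}$ maps $\widehat W$ into the single top-weight summand $IK^p \subset V \otimes K$; and the triple $(V, W', \eta')$ is a \emph{stable} $\SO(p,p-1)$-Higgs bundle, because it lies in the smooth Hitchin component (see Remark \ref{Remark auto of Hitchin component}).

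The direction ``$\widetilde\Psi(\cdots)$ (poly)stable $\Rightarrow (I, \widehat W, \hat\eta)$ (poly)stable'' I would handle by direct transport of destabilizing data. A destabilizing isotropic $\hat\eta$-invariant subbundle $\widehat W_1 \subset \widehat W$ yields the pair $(V_1, W_1) = (0, \widehat W_1 \hookrightarrow W)$, which is isotropic in $W$ (since $Q_W|_{\widehat W} = Q_{\widehat W}$), $\eta$-invariant (as $\eta(W_1) = \eta_{\widehat W}(\widehat W_1) = 0$ and $\eta^*(V_1) = 0$), and has the same total degree. For the polystability clause, intersecting with $\widehat W$ any $\eta$-invariant complement of $(0, \widehat W_1)$ furnished by polystability of $\widetilde\Psi(\cdots)$ produces the required complement; the relevant identity $\eta_{\widehat W}^*|_{IK^{1-p}} = \hat\eta^*$ (up to the natural twist) then ensures the splitting for $(I, \widehat W, \hat\eta)$.

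For the converse, I would assume $(I, \widehat W, \hat\eta)$ is polystable and argue by contradiction, supposing $(V_1, W_1)$ is an invariant isotropic pair with $\deg V_1 + \deg W_1 > 0$. Decomposing $W_1$ via $W = \widehat W \oplus W'$ produces saturated subsheaves $W_1^{(1)} := W_1 \cap \widehat W$ and $W_1^{(2)} := \pi_{W'}(W_1)$, so that $\deg W_1 \leq \deg W_1^{(1)} + \deg W_1^{(2)}$. The plan is to show that the $W_1^{(1)}$-contribution is controlled by polystability of $(I, \widehat W, \hat\eta)$ (noting that the $\eta$-invariance of $W_1$ constrains $\hat\eta(W_1^{(1)})$ to land inside $V_1 \cap IK^{p-1}$, which is zero unless $V_1$ itself contains top-weight material); and that, after a controlled enlargement $V_1 \subset \widetilde V_1 \subset V$ inside the isotropic summand $IK^{p-1}$, the pair $(\widetilde V_1, W_1^{(2)})$ is an invariant isotropic pair for the \emph{stable} Hitchin $\SO(p,p-1)$-Higgs bundle $(V, W', \eta')$, so that its stability forces $\deg \widetilde V_1 + \deg W_1^{(2)} \leq 0$. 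Combined with $\deg V_1 \leq \deg \widetilde V_1$, these bounds yield $\deg V_1 + \deg W_1 \leq 0$, a contradiction.

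The main obstacle is constructing the enlargement $\widetilde V_1$. The projection $W_1^{(2)}$ need not be isotropic in $W'$ on its own --- its defect is compensated by $Q_{\widehat W}$ on $\pi_{\widehat W}(W_1)$ --- and $\eta'|_{W_1^{(2)}}$ typically overflows $V_1 \otimes K$ into the top-weight component $IK^p$ owing to the $\eta_{\widehat W}$-contribution from the $\widehat W$-part of $W_1$. Resolving this requires exploiting the precise triangular form of the Hitchin section Higgs field in \eqref{eq choice of Hitchin section O(p-1,p)}: the overflow is confined to the isotropic subbundle $IK^{p-1} \subset V$, so an enlargement of $V_1$ inside $IK^{p-1}$ restores $\eta'$-invariance without violating isotropy of $\widetilde V_1$, with the degree of the enlargement controlled by $\deg W_1^{(1)}$ via the rank of $\eta_{\widehat W}$ on $\widehat W / W_1^{(1)}$. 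The stability of the Hitchin section Higgs bundle then supplies the key degree estimate that closes the argument.
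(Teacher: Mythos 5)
Your forward direction (transporting an isotropic $\widehat W_1\subset\ker\hat\eta$ to the pair $(0,\widehat W_1\hookrightarrow W)$) is fine, and it matches the easy half of the argument. The converse, however, has a genuine gap precisely at the point you flag as "the main obstacle," and the patch you sketch does not work as stated. First, $W_1^{(2)}=\pi_{W'}(W_1)$ is not isotropic: since $Q_W=Q_{\widehat W}\oplus Q_{p-2}$ is block diagonal, isotropy of $W_1$ only forces $Q_{\widehat W}(a,a)+Q_{p-2}(b,b)=0$ for $(a,b)\in W_1$, so $Q_{p-2}$ is generically nonzero on $W_1^{(2)}$; the stability inequality of Proposition \ref{Def SO(p,q) stability and polystability} for the Hitchin-component Higgs bundle $(V,W',\eta')$ therefore cannot be invoked for any pair $(\widetilde V_1,W_1^{(2)})$. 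Second, the enlargement $\widetilde V_1=V_1+(\text{subsheaf of }IK^{p-1})$ need not be isotropic, because $V_1$ may have a nonzero component along $IK^{1-p}$, which pairs nontrivially with $IK^{p-1}$ under $Q_{p-1}$. Third, an invariant pair must also satisfy $\eta'^*(\widetilde V_1)\subset W_1^{(2)}\otimes K$, a condition that enlarging $V_1$ only makes harder and which you never verify. Finally, your control of the $\widehat W$-contribution via polystability of $(I,\widehat W,\hat\eta)$ only applies when $V_1\cap IK^{p-1}=0$ (so that $W_1^{(1)}\subset\ker\hat\eta$); the remaining case, where $W_1$ genuinely mixes the two summands through the top weight space, is exactly the hard one and is left unresolved. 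These are not bookkeeping issues: the whole difficulty of the lemma is that invariant isotropic pairs can interleave $\widehat W$ with the chain $I\otimes\cK_{p-2}$, and a naive intersect-and-project strategy does not decouple them.

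The paper avoids this entirely by a different route. For $q_2=\cdots=q_{2p-2}=0$ the associated $\SL(p+q,\C)$-Higgs bundle is cyclic, and by \cite[Proposition 6.3]{KatzMiddleInvCyclicHiggs} (poly)stability of a cyclic Higgs bundle is detected by invariant subbundles of the individual cycle members alone --- so one only has to test subbundles of $\widehat W$ in $\ker\hat\eta$, which is exactly (poly)stability of $(I,\widehat W,\hat\eta)$, and no mixing can occur. The general case is then reduced to this one by the explicit gauge identity $(g_V,g_W)\cdot(V,W,\lambda\eta)=\widetilde\Psi(I,\widehat W,\lambda^{p}\hat\eta,\lambda^2 q_2,\ldots,\lambda^{2p-2}q_{2p-2})$ together with openness of stability (Remark \ref{remark stable open in polystable}), which shows $(V,W,\eta)$ is stable if and only if $(V,W,\lambda\eta)$ is stable for small $\lambda$, where the differentials are negligible. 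If you want to salvage a direct argument you should work with the $\SL(p+q,\C)$-stability of Proposition \ref{Prop G polystable iff SLn Polystable} (no isotropy conditions) and exploit the cyclic or triangular structure of the chain, rather than trying to verify the orthogonal stability inequalities pair by pair.
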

\begin{proof}
Fix $(I,\widehat W,\hat\eta,q_2,\ldots,q_{2p-2})\in \cH_{K^p}(\SO(1,q-p+1))\times\bigoplus\limits_{j=1}^{p-1}H^0(K^{2j})$.
Recall that an $\SO(p,q)$-Higgs bundle is polystable if and only if the associated $\SL(p+q,\C)$-Higgs bundle is polystable. 
Suppose first that $q_{2j}=0$ for all $j$. Then the $\SL(p+q,\C)$-Higgs bundle associated to the image of $\widetilde\Psi(I,\widehat W,\hat\eta,0,\ldots,0)$ is represented by
    \[\xymatrix@R=1em{IK^{p-1}\ar[r]^-1&IK^{p-2}\ar[r]^-1&\cdots\ar[r]^-1&IK^{2-p}\ar[r]^-1&IK^{1-p}\ar[dll]^-{\hat\eta^*}\\ &&\widehat W\ar[ull]^-{\hat\eta}&&}.\]
To check (poly)stability for such a ``cyclic'' Higgs bundle, it suffices to show that each of the bundles in the above cycle do not contain an invariant destabilizing subbundle (see Proposition 6.3 of \cite{KatzMiddleInvCyclicHiggs}). 
Thus $\widetilde\Psi(I,\widehat W,\hat\eta,0,\ldots,0)$ is polystable if and only if there are no destabilizing subbundles of $\widehat W$ in the kernel of $\hat\eta$, that is, if and only if $(I,\widehat W,\hat\eta)$ is polystable. 
Furthermore, since $\widetilde\Psi(I,\widehat W,\hat\eta,0,\ldots,0)$ is strictly polystable if and only if $\widehat W$ contains a degree zero isotropic subbundle in the kernel of $\hat\eta$, we conclude that $\widetilde\Psi(I,\widehat W,\hat\eta,0,\ldots,0)$ is stable if and only if $(I,\widehat W,\hat\eta)$ is stable.

Now suppose $(q_2,\ldots,q_{2p-2})\neq (0,\ldots,0)$ and let $(V,W,\eta)=\widetilde\Psi(I,\widehat W,\hat\eta,q_2,\ldots,q_{2p-2})$ be given by \eqref{EQ tildePsi}. For $\lambda\in\C^*,$ consider the following holomorphic orthogonal gauge transformations of $V$ and $W$
\[g_V=\smtrx{\lambda^{1-p}&&&\\&\lambda^{3-p}&&\\&&\ddots&\\&&&\lambda^{p-1}}\ \ \ \text{and}\ \ \ g_W=\smtrx{\Id_{\widehat W}&&&&\\&\lambda^{2-p}&&&\\&&\lambda^{4-p}&&\\&&&\ddots&\\&&&&\lambda^{p-2}}.\]
Using the description of $s_H^I$ from \eqref{EQ Hitchin section O(p,C)} and \eqref{eq choice of Hitchin section O(p-1,p)}, a straightforward computation shows that 
\begin{equation}\label{EQ scaling gauge action}
    (g_V,g_W)\cdot(V,W,\lambda\eta)=\widetilde\Psi(I,\widehat W,\lambda^{p}\hat\eta,\lambda^2 q_2,\lambda^4q_4,\ldots,\lambda^{2p-2} q_{2p-2}).
\end{equation}
 Assume $(I,\widehat W,\hat\eta)$ is stable. In particular,  $(I,\widehat W,\lambda^{p}\hat\eta)$ is a stable $K^p$-twisted $\SO(1,q-p+1)$-Higgs bundle for all $\lambda\in\C^*$. 
 By the above argument, $\widetilde\Psi(I,\widehat W,\lambda^{p}\hat\eta,0,\ldots,0)$ is also stable for all $\lambda\in\C^*.$ 
 Hence, by the continuity of $\widetilde\Psi$ and since stability is an open condition (cf.\ Remark \ref{remark stable open in polystable}), there is a neighborhood $U$ of $(0,\ldots,0)\in\bigoplus\limits_{j=1}^{p-1}H^0(K^{2j})$ such that $\widetilde\Psi(I,\widehat W,\lambda^p\hat\eta,\lambda^2 q_2,\lambda^4q_4,\ldots,\lambda^{2p-2} q_{2p-2})$ is stable for $(\lambda^2q_2,\ldots,\lambda^{2p-2}q_{2p-2})\in U$ i.e.\ for small $\lambda$.
 From \eqref{EQ scaling gauge action}, $(V,W,\lambda\eta)$ is stable, and thus, $(V,W,\eta)$ is also stable.
 This argument is reversible, so $(V,W,\eta)$ is stable if and only if $(I,\widehat W,\hat\eta)$ is stable. 
 
Assume now that $(I,\widehat W,\hat\eta)$ is strictly polystable. By Proposition \ref{Prop: strictly polystable SOpq}, there is $q'$ satisfying $p-1\leq q'<q$, such that 
 \[(I,\widehat W,\hat\eta)=\left(\widehat W'\oplus \widehat W'', \big(\hat\eta'\ \ 0\big)\right),\]
 where $(I,\widehat W',\hat\eta')$ is a stable $K^p$-twisted $\rO(1,q'-p+1)$-Higgs bundle and $\widehat W''$ is a polystable orthogonal bundle of rank $q-q'$. In this case, we have 
 \[\widetilde\Psi(I,\widehat W,\hat\eta,q_2,\ldots,q_{2p-2})=\left(V, \widehat W'\oplus \widehat W'',\big(\hat\eta' \ \ 0\big)\right)\]
 where
\begin{equation}\label{eq:Psi-moduli - -q'-p+1}
(V,W',\hat\eta')=\widetilde\Psi(I,\widehat W',\hat\eta',q_2,\ldots,q_{2p-2}), 
\end{equation}
and the map $\widetilde\Psi$ in \eqref{eq:Psi-moduli - -q'-p+1} is defined as in \eqref{EQ tildePsi-domain} and \eqref{EQ tildePsi}, but with $q$ replaced by $q'$.
 By the above argument, $\widetilde\Psi(I,\widehat W',\hat \eta',q_2,\ldots,q_{2p-2})$ is a stable $\rO(p,q')$-Higgs bundle. Since $\widehat W''$ is a polystable orthogonal bundle, we conclude that $\widetilde\Psi(I,\widehat W,\hat\eta,q_2,\ldots,q_{2p-2})$ is a strictly polystable $\SO(p,q)$-Higgs bundle. Again, the argument is reversible, hence the converse also holds.
\end{proof}

The next lemma shows that $\widetilde\Psi$ both respects isomorphism classes of the corresponding objects and is injective on such classes.

\begin{lemma}\label{Lemma Gauge group fixing im Psi}
Two $\SO(p,q)$-Higgs bundles $\widetilde\Psi(I,\widehat W,\hat\eta,q_2,\ldots,q_{2p-2})$ and $\widetilde\Psi(I',\widehat W',\hat\eta',q_2',\ldots,q_{2p-2}')$ are in the same $\rS(\rO(p,\C)\times\rO(q,\C))$-gauge orbit if and only if $(I,\widehat W,\hat\eta)$ and $(I',\widehat W',\hat\eta')$ are in the same $\rS(\rO(1,\C)\times\rO(q-p+1,\C))$-gauge orbit and $q_{2j}=q_{2j}'$ for all $1\leq j\leq p-1$. Furthermore, each $\rS(\rO(1,\C)\times\rO(q-p+1,\C))$-gauge transformation between $(I,\widehat W,\eta)$ and $(I',\widehat W',\hat\eta')$ uniquely determines an $\rS(\rO(p,\C)\times\rO(q,\C))$-gauge transformation between the Higgs bundles $\widetilde\Psi(I,\widehat W,\hat\eta,q_2,\ldots,q_{2p-2})$ and $\widetilde\Psi(I',\widehat W',\hat\eta',q_2,\ldots,q_{2p-2}).$
\end{lemma}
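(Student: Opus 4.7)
The plan is to handle the two directions of the equivalence separately, extracting the uniqueness statement along the way; the reverse direction is the only nontrivial part.

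For the forward direction, I will construct $(g_V,g_W)$ by the explicit formulas
\begin{equation*}
g_V=\alpha_I\otimes\Id_{\cK_{p-1}},\qquad g_W=\alpha_{\widehat W}\oplus\bigl(\alpha_I\otimes\Id_{\cK_{p-2}}\bigr),
\end{equation*}
and then verify that these satisfy all the required properties. Orthogonality holds because the quadratic forms on $V$ and $W$ split block-wise as $Q_{p-1}$ and $Q_{\widehat W}\oplus Q_{p-2}$ on the prescribed decompositions, and $\alpha_I$, $\alpha_{\widehat W}$, and the identity preserve these individually. The intertwining $g_V^{-1}\eta'g_W=\eta$ splits block-wise into the hypothesis $\alpha_I^{-1}\hat\eta'\alpha_{\widehat W}=\hat\eta$ on the $\widehat W$-summand and into the identity $\eta(q_2,\dots,q_{2p-2})=\eta(q_2',\dots,q_{2p-2}')$ on the Hitchin factor; the latter forces $q_{2j}=q_{2j}'$. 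The $\rS$-determinant condition reduces, using $\det\cK_{p-1}=\det\cK_{p-2}=\cO$, $\det\widehat W\cong I$, and $I^2\cong\cO$, to $\det(\alpha_I)\det(\alpha_{\widehat W})=1$, which is the $\rS$-condition on $(\alpha_I,\alpha_{\widehat W})$. This construction is manifestly injective in its input, which will ultimately give the uniqueness assertion.

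For the reverse direction, I plan to use the rigidity of the Hitchin-section bundle $I\otimes\cK_{p-1}$ to recover $(\alpha_I,\alpha_{\widehat W})$ from $(g_V,g_W)$. Since $I$ is $2$-torsion, $\deg I=0$ and the line-bundle summands $IK^{p-1-2i}$ of $V$ have strictly decreasing degrees $(p-1-2i)(2g-2)$; in particular $IK^{p-1}\subset V$ is the unique maximal-slope line subbundle. Hence $g_V(IK^{p-1})=I'K^{p-1}$, and the restriction furnishes an isomorphism $\alpha_I\colon I\to I'$ compatible with the orthogonal forms. Iterating on the orthogonal-complement filtration $IK^{p-1}\subset(IK^{p-1})^\perp\subset V$, whose successive quotients are again sums of line bundles of strictly decreasing degrees with antidiagonal forms, I conclude that $g_V$ is block upper-triangular with respect to $V=\bigoplus_iIK^{p-1-2i}$, with each diagonal block forced to equal $\alpha_I\otimes\Id$.

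The hard step will be to remove the upper-triangular corrections in $g_V$ and to rule out mixing between $\widehat W$ and $I\otimes\cK_{p-2}$ in $g_W$. For this I exploit the subdiagonal of $1$'s in the Hitchin-section Higgs field $\eta(q_2,\ldots,q_{2p-2})$: matching the intertwining $g_V^{-1}\eta'g_W=\eta$ against the bottom entry $1\colon IK^{2-p}\to IK^{2-p}$ pins down both the bottom row of $g_V$ and the image $g_W(IK^{2-p})$, forcing them to equal $\alpha_I\otimes\Id$ and to land in $I'K^{2-p}$ with no $\widehat W'$-component. Propagating this inductively up the ladder of $1$'s, I obtain $g_V=\alpha_I\otimes\Id_{\cK_{p-1}}$ and $g_W|_{I\otimes\cK_{p-2}}=\alpha_I\otimes\Id_{\cK_{p-2}}$, with no $\widehat W$-Hitchin mixing. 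Matching the remaining upper-triangular entries of $\eta$ and $\eta'$ then yields $q_{2j}=q_{2j}'$. Setting $\alpha_{\widehat W}:=g_W|_{\widehat W}\colon\widehat W\to\widehat W'$, the $\widehat W$-column of the intertwining becomes $\alpha_I^{-1}\hat\eta'\alpha_{\widehat W}=\hat\eta$, orthogonality of $\alpha_{\widehat W}$ follows from orthogonality of $g_W$, and the $\rS$-condition on $(\alpha_I,\alpha_{\widehat W})$ is recovered by the same book-keeping as in the forward direction. Since $(\alpha_I,\alpha_{\widehat W})$ is explicitly reconstructed from $(g_V,g_W)$, and the forward construction is its inverse, the "uniquely determines" statement follows at once.
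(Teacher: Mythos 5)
Your forward direction and the uniqueness statement are fine and coincide with the paper's construction (the paper writes $\alpha_I=\det(g_{\widehat W})$, but that is the same map). The problem is in the reverse direction, specifically in the mechanism you propose for ruling out mixing between $\widehat W$ and $I\otimes\cK_{p-2}$ inside $g_W$. Write $g_W=\smtrx{g_{\widehat W}&A\\ B&g_{\cK_{p-2}}}$ with $A\colon I\otimes\cK_{p-2}\to\widehat W'$ and $B\colon\widehat W\to I'\otimes\cK_{p-2}$. Your plan is to kill both blocks by matching the intertwining relation against the subdiagonal of $1$'s in the Hitchin columns, starting from the bottom entry. This cannot work for $A$: in the equation $\eta'\circ g_W=g_V\circ\eta$ restricted to a Hitchin summand $IK^{p-2-2j}$, the $\widehat W'$-component $A_j$ of $g_W|_{IK^{p-2-2j}}$ enters only through $\eta'_{\widehat W'}(A_j)=(\hat\eta'(A_j),0,\dots,0)^T$, i.e.\ only in the top component $I'K^{p-1}\otimes K$. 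The bottom entry of the intertwining, and indeed all entries below the top one, are literally blind to $A_j$, so no amount of propagation up the ladder forces $A_j=0$; at best you constrain $\hat\eta'(A_j)$, which does not determine $A_j$ since $\hat\eta'$ may have a kernel. Nor does holomorphy save you: $A_j\in H^0(\Hom(IK^{p-2-2j},\widehat W'))$ is a section of a bundle of positive degree for $j$ close to $p-2$, so it need not vanish a priori. Relatedly, your claim after the filtration step that each diagonal block of $g_V$ already equals $\alpha_I\otimes\Id$ is an overstatement at that point: the filtration only gives nonzero scalars $c_i$ on the diagonal, constrained by orthogonality to satisfy $c_ic_{p-1-i}=1$.

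The paper closes exactly this hole by using the \emph{first} ($\widehat W$-) column of the intertwining rather than the Hitchin columns: since $\eta'_{\widehat W'}$ and $g_V^{-1}\eta_{\widehat W}g_{\widehat W}$ are both supported in the top summand (here upper-triangularity of $g_V$ is what you need), the term $g_V^{-1}\eta(q_2,\dots,q_{2p-2})B$ must vanish below the top component, and the unitriangular shape of $\eta(q_2,\dots,q_{2p-2})$ then forces $B=0$ column by column from the bottom. Only then does orthogonality of $g_W$ (with $Q_W$ block diagonal and $g_{\widehat W}$ invertible) give $A=0$. After that the problem decouples into the $\widehat W$-block and a gauge equivalence $\eta(q_2',\dots,q_{2p-2}')=g_V^{-1}\eta(q_2,\dots,q_{2p-2})g_{\cK_{p-2}}$ inside the $\rO(p,p-1)$-Hitchin component, where the paper simply quotes that points of the Hitchin section are pairwise non-isomorphic with automorphism group $\pm(\Id_V,\Id_{\cK_{p-2}})$ (Remark \ref{Remark auto of Hitchin component}); your ladder induction is in effect a by-hand reproof of that fact and is fine for this last step, but it cannot substitute for the $B=0$, $A=0$ argument that must come first.
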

\begin{proof}
Let $(I,\widehat W,\hat\eta)$ and $(I',\widehat W',\hat\eta')$ be two points in $\cH_{K^p}(\SO(1,q-p+1))$, and $(q_2,\dots,q_{2p-2})$ and $(q_2',\dots,q_{2p-2}')$ be two points in $\bigoplus\limits_{j=1}^{p-1}H^0(K^{2j})$. Denote the associated points in the image of the map $\widetilde \Psi$ from \eqref{EQ tildePsi} by 
\[(V,W,\eta)=\xymatrix{\widetilde\Psi(I,\widehat W,\hat\eta,q_2,\ldots,q_{2p-2})& \text{and}&(V',W',\eta')=\widetilde\Psi(I',\widehat W',\hat\eta',q_2',\ldots,q_{2p-2}'),}\]
and recall that $V=I\otimes \cK_{p-1}$ and $W=\widehat W\oplus I\otimes \cK_{p-2}.$ 

First suppose $(\det(g_{\widehat W}),g_{\widehat W})$ is an $\rS(\rO(1,\C)\times \rO(q-p+1,\C))$-gauge transformation with 
\[(\det(g_{\widehat W}),g_{\widehat W})\cdot(I,\widehat W,\hat\eta)=(I',\widehat W',\hat\eta').\]
A straightforward computation shows that the $\rS(\rO(p,\C)\times\rO(q,\C))$-gauge transformation 
\begin{equation}
    \label{eq the gauge equiv}(g_V,g_W)=\Big(\det(g_{\widehat W})\Id_V,\smtrx{g_{\widehat W}&0\\0&\det(g_{\widehat W})\Id_{\cK_{p-2}})}\Big)
\end{equation}
acts on $(V,W,\eta)$ as 
\[(g_V,g_W)\cdot(V,W,\eta)=\widetilde\Psi(I',\widehat W',\hat\eta',q_2,\ldots,q_{2p-2}).\]
Thus, if $(I,W,\eta)$ and $(I',W',\eta')$ are in the same $\rS(\rO(1,\C)\times \rO(q-p+1,\C))$-gauge orbit, then $\widetilde \Psi(I,W,\eta,q_2,\dots,q_{2p-2})$ and $\widetilde \Psi(I',W',\eta',q_2,\dots,q_{2p-2})$ are in the same $\rS(\rO(p,\C)\times \rO(q,\C))$-gauge orbit.

Now suppose $(V,W,\eta)$ and $(V',W',\eta')$ are in the same $\rS(\rO(p,\C)\times \rO(q,\C))$-gauge orbit. 
The action of $(g_V,g_W)$ on $(V,W,\eta)$ is given by
\[(g_V,g_W)\cdot(\dbar_V,\dbar_W,\eta)=(g_V^{-1}\dbar_V g_V\ ,\  g_W^{-1}\dbar_W g_W\ ,\ g_V^{-1}\eta g_W)~.\]
With respect to the decompositions $W= \widehat W\oplus I\otimes\cK_{p-2}$ and $W'= \widehat W'\oplus I'\otimes\cK_{p-2}$, write
    \[g_W=\smtrx{g_{\widehat W}&A\\
                B&g_{\cK_{p-2}}}
                \ \ \ \ \ \ \ \ \text{and} \ \ \ \ \ \ \ \eta=\Big(\eta_{\widehat W}\ \ \ \eta(q_2,\ldots,q_{2p-2})\Big)~.\]
The gauge transformation $(g_V,g_W)$ acts on the Higgs field by 
\[g_V^{-1}\eta g_W=g_V^{-1}\cdot\Big(\eta_{\widehat W}g_{\widehat W}+\eta(q_2,\dots,q_{2p-2})B\ \ \ \eta_{\widehat W}A+\eta(q_2,\ldots,q_{2p-2})g_{\cK_{p-2}}\Big)~,
\]
and hence
\begin{equation}\label{eq:gauge Higgsfield}
\Big(\eta_{\widehat W}'\ \ \ \eta(q_2',\ldots,q_{2p-2}')\Big)=g_V^{-1}\cdot\Big(\eta_{\widehat W}g_{\widehat W}+\eta(q_2,\dots,q_{2p-2})B\ \ \ \eta_{\widehat W}A+\eta(q_2,\ldots,q_{2p-2})g_{\cK_{p-2}}\Big).
\end{equation}

We now use the description of $\eta(q_2,\dots,q_{2p-2})$ from \eqref{eq choice of Hitchin section O(p-1,p)}.
Since $g_V^{-1}$ is invertible and holomorphic, its matrix representation in the decompositions $V=I\otimes \cK_{p-1}$ and $V'=I'\otimes \cK_{p-1}$ is upper triangular with non-zero diagonal entries. 
A straightforward computation, using the form of $\eta(q_2',\ldots,q_{2p-2}')$ and the fact that $g_V^{-1}\eta_{\widehat W}g_{\widehat W}$ has the form $\smtrx{*\\0\vspace{-.1cm}\\ \vdots\\0}$, shows that $B=0$. 
By orthogonality of $g_W$ we conclude also that $A=0$, $g_{\widehat W}$ is an $Q_{\widehat W}$-orthogonal gauge transformation and $g_{\cK_{p-2}}$ is a $Q_{\cK_{p-2}}$-orthogonal gauge transformation. 

We now have $\eta(q_{2}',\ldots,q_{2p-2}')=g_V^{-1}\eta(q_{2},\dots,q_{2p-2})g_{\cK_{p-2}}$. Since $(I\otimes \cK_{p-1},I\otimes\cK_{p-2},\eta(q_2,\dots,q_{2p-2}))$ and $(I'\otimes \cK_{p-1},I'\otimes \cK_{p-2},\eta(q_2',\dots,q_{2p-2}'))$ define gauge equivalent Higgs bundle in an $\rO(p,p-1)$-Hitchin component, we have $(q_2,\dots,q_{2p-2})=(q_2',\dots,q_{2p-2}').$ 
By Remark \ref{Remark auto of Hitchin component}, this implies 
\[(g_V,g_{\cK_{p-2}})=\pm(\Id_V,\Id_{\cK_{p-2}})~.\] 
Finally, the determinant of $g_{\widehat W}$ determines the above sign since $\det(-\Id_V)\det(-\Id_{\cK_{p-2}})=-1$ and
\[1=\det(g_V)\det(g_W)=\det(g_V)\det(g_{\cK_{p-2}})\det(g_{\widehat W}).\] Thus, the gauge transformation $g_{\widehat W}$ uniquely determines $g_{\cK_{p-2}}$ and $g_V$. This shows that $(g_V,g_W)$ is given by \eqref{eq the gauge equiv}, completing the proof.
\end{proof}

As a consequence of the two previous lemmas, we have the following proposition.
\begin{proposition}\label{prop def of map Psi}
    The map $\widetilde\Psi$
        from \eqref{EQ tildePsi} descends to a continuous map of moduli spaces
    \begin{equation}
        \label{EQ PsiOnModuli}
        \Psi:\cM_{K^p}(\SO(1,q-p+1))\times \displaystyle\bigoplus\limits_{j=1}^{p-1}H^0(K^{2j})\longrightarrow\cM(\SO(p,q)),
     \end{equation} 
     which is a homeomorphism onto its image.
\end{proposition}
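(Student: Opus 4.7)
The plan is to assemble the two preceding lemmas, which between them do essentially all the work. The first lemma shows that $\widetilde\Psi$ sends polystable tuples in the source to polystable $\SO(p,q)$-Higgs bundle configurations, hence restricts to a map on the relevant polystable loci. The ``if'' direction of Lemma~\ref{Lemma Gauge group fixing im Psi} says that gauge-equivalent source data are sent to gauge-equivalent targets, so $\widetilde\Psi$ descends to a well-defined set-theoretic map $\Psi$ on isomorphism classes. Continuity of $\Psi$ is then immediate from the manifest continuity of $\widetilde\Psi$ in \eqref{EQ tildePsi} together with the fact that both moduli spaces carry the quotient topology from their respective configuration spaces. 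Injectivity of $\Psi$ is the ``only if'' direction of the same lemma: if $\widetilde\Psi(I,\widehat W,\hat\eta,q_\bullet)$ and $\widetilde\Psi(I',\widehat W',\hat\eta',q'_\bullet)$ are $\rS(\rO(p,\C)\times\rO(q,\C))$-gauge equivalent then $q_\bullet=q'_\bullet$ and the two $\SO(1,q-p+1)$-data are $\rS(\rO(1,\C)\times\rO(q-p+1,\C))$-gauge equivalent.

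The main remaining task is to upgrade the continuous injection $\Psi$ to a homeomorphism onto its image by producing a continuous inverse on $\im(\Psi)$. The plan is to recover the two pieces of data separately. For the polynomial coordinates, one reads off the $q_{2j}$ as the evaluations of the basic $\SO(p+q,\C)$-invariant polynomials on the associated Higgs field $\Phi$ of \eqref{Eq SO(p+q,C) Higgs bundle}; these are continuous functions on $\cM(\SO(p,q))$ and, given the explicit form of the Hitchin section in \eqref{EQ Hitchin section O(p,C)}--\eqref{eq choice of Hitchin section O(p-1,p)}, determine the $q_{2j}$ unambiguously. With the $q_{2j}$'s pinned down, the rigidity of the admissible gauge transformations recorded in the proof of Lemma~\ref{Lemma Gauge group fixing im Psi}, specifically the block form \eqref{eq the gauge equiv}, guarantees that the decomposition $W\cong\widehat W\oplus I\otimes\cK_{p-2}$ and the residual Higgs field $\hat\eta$ are canonically determined up to $\rS(\rO(1,\C)\times\rO(q-p+1,\C))$-gauge equivalence. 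This is precisely what is needed to produce a continuous local section of $\Psi$ over $\im(\Psi)$.

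I expect the only subtle point to be the continuity of this extraction step: one must check that picking off $(I,\widehat W,\hat\eta)$ from a representative of the image class behaves well with respect to the quotient topology. However, the rigidity of \eqref{eq the gauge equiv} reduces this to a direct continuity check on the configuration spaces before quotienting, which is routine given that all the ingredients---the splitting of $W$, the identification of $V$ with $I\otimes\cK_{p-1}$, and the extraction of $\hat\eta$ from the first column of $\eta$---are algebraic in the holomorphic data. Combining these three items yields the proposition.
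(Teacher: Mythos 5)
Your proposal matches the paper's route: the paper derives this proposition verbatim ``as a consequence of the two previous lemmas,'' exactly the assembly you describe (the polystability lemma gives well-definedness on moduli spaces, Lemma~\ref{Lemma Gauge group fixing im Psi} gives descent and injectivity, and continuity is inherited from $\widetilde\Psi$). The one place you go beyond the paper is the homeomorphism-onto-image claim, which the paper does not justify at all: your plan of recovering the $q_{2j}$ from the $\SO(p+q,\C)$-Hitchin map (which the paper itself uses later, in the closedness argument, where $h\circ\Psi$ is computed explicitly) and then invoking the rigidity of the block form \eqref{eq the gauge equiv} to recover $(I,\widehat W,\hat\eta)$ up to gauge is a sensible way to produce the continuous inverse, and the remaining continuity check on configuration spaces is indeed routine since $\widetilde\Psi$ is an affine embedding and the gauge-group correspondence of Lemma~\ref{Lemma Gauge group fixing im Psi} is bijective. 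An alternative, slightly slicker way to close this last gap is to note that $h\circ\Psi=(q_2,\dots,q_{2p-2},h_p)$ makes $\Psi$ proper onto its image, and a proper continuous injection of locally compact Hausdorff spaces is a homeomorphism onto its image; either argument is fine.
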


\begin{remark}
From Remark \ref{rmk:dimension}, one can check that the dimension of $\cM_{K^p}(\SO(1,q-p+1))\times\bigoplus\limits_{j=1}^{p-1}H^0(K^{2j})$ is the expected dimension of $\cM(\SO(p,q)).$ In particular, the map $\Psi$ is open on the smooth locus. Since the spaces $\cM(\SO(p,q))$ and $\cM_{K^p}(\SO(1,q-p+1))$ are singular, we have to examine the local structures of each space to prove openness of $\Psi$ at singular points.  
\end{remark}

\subsection{Local structure of fixed points in the image of $\Psi$}\label{sec: Local structure of fixed points} 
We will now analyze the local structure of fixed points of the $\C^*$-action in $\cM(\SO(p,q))$ which lie in the image of the map $\Psi$. The following lemma follows immediately from Lemma \ref{lemma: fixed point SO(1,n)} and Proposition \ref{prop def of map Psi}.
\begin{lemma}\label{Lem: Fixed points in the image of Psi}  
An $\SO(p,q)$-Higgs bundle $(V,W,\eta)$ in the image of $\Psi$ is a fixed point of the $\C^*$-action if and only if $(V,W,\eta)=\Psi(I,\widehat W,\hat\eta,0,\ldots,0)$, where $(I,\widehat W,\hat\eta)$ is a fixed point of the $\C^*$-action in $\cM_{K^p}(\SO(1,q-p+1))$. In particular, such a fixed point is given by
\footnote{The notation from Lemma \ref{lemma: fixed point SO(1,n)} has changed slightly, $(W_{-1},\eta_{-1},W_0)$ is now represented by $(W_{-p},\eta_{-p},W_0')$.}
\[(I,\widehat W,\hat\eta)=\big(I,W_{-p}\oplus W_0'\oplus W_p,\big(\eta_{-p}\ \ \ 0\ \ \ 0\big)\big)~,\]
where $W_0'$ is a polystable orthogonal bundle of rank $q-p+1-2\rk(W_p)$ and $\det(W_0')=I$, $W_p$ is either zero or a negative degree vector bundle with no non-negative degree subbundles, $W_{-p}\cong W_{p}^*$ and $\eta_{-p}$ is non-zero if $W_{-p}$ is non-zero.
The associated $\SO(p,q)$-Higgs bundle will be represented by 
\begin{equation}\label{EQ: Higgs field at singularity}
    \xymatrix@C=1.5em@R=-.3em{W_{-p}\ar[r]^-{\eta_{-p}}&IK^{p-1}\ar[r]^-1&IK^{p-2}\ar[r]^-1&\cdots\ar[r]^-1&I\ar[r]^-1&\cdots\ar[r]^-1&IK^{2-p}\ar[r]^-1&IK^{1-p}\ar[r]^-{\eta_{-p}^*}&W_{p}\\&&&&\oplus&&&&\\&&&&W_0'&&&&}.
\end{equation}
 \end{lemma}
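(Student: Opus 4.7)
The plan is to reduce the $\C^*$-action on the image of $\Psi$ to an explicit twisted $\C^*$-action on the domain of $\Psi$, and then apply the already-established classification of $\C^*$-fixed points in $\cM_{K^p}(\SO(1,q-p+1))$.

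First, I would recall the scaling identity \eqref{EQ scaling gauge action}, which exhibits for every $\lambda\in\C^*$ an explicit $\rS(\rO(p,\C)\times\rO(q,\C))$-gauge transformation $(g_V,g_W)$ with
\[
(g_V,g_W)\cdot\widetilde\Psi(I,\widehat W,\hat\eta,q_2,\ldots,q_{2p-2})_{\lambda} = \widetilde\Psi\big(I,\widehat W,\lambda^{p}\hat\eta,\lambda^{2}q_2,\ldots,\lambda^{2p-2}q_{2p-2}\big),
\]
where the subscript $\lambda$ means that the Higgs field has been scaled by $\lambda$. Passing to moduli via Proposition \ref{prop def of map Psi}, this says that pulling back the $\C^*$-action on $\cM(\SO(p,q))$ along the homeomorphism $\Psi$ onto its image yields the weighted action
\[
\lambda\cdot \big((I,\widehat W,\hat\eta),q_2,\ldots,q_{2p-2}\big)=\big((I,\widehat W,\lambda^{p}\hat\eta),\lambda^{2}q_2,\ldots,\lambda^{2p-2}q_{2p-2}\big)
\]
on $\cM_{K^p}(\SO(1,q-p+1))\times\bigoplus_{j=1}^{p-1}H^0(K^{2j})$.

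Second, since $\Psi$ is injective on isomorphism classes, a point in the image is $\C^*$-fixed if and only if its preimage is fixed by this weighted action. The component functions $q_{2j}$ are scaled by $\lambda^{2j}$ with $j\geq 1$, so requiring invariance for all $\lambda\in\C^*$ forces $q_{2j}=0$ for every $j=1,\ldots,p-1$. Moreover, because $\lambda\mapsto\lambda^{p}$ is surjective onto $\C^*$, the condition $(I,\widehat W,\lambda^{p}\hat\eta)\cong(I,\widehat W,\hat\eta)$ for all $\lambda$ is equivalent to $(I,\widehat W,\hat\eta)$ being a fixed point of the standard $\C^*$-action on $\cM_{K^p}(\SO(1,q-p+1))$. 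This proves the ``if and only if'' part of the lemma.

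Finally, for the explicit description, I would invoke Lemma \ref{lemma: fixed point SO(1,n)} (applied with twisting bundle $K^p$ in place of $K$, which does not affect the derivation of the chain structure), giving
\[
(I,\widehat W,\hat\eta)\cong\big(I,W_{-p}\oplus W_0'\oplus W_p,\,\big(\eta_{-p}\ \ 0\ \ 0\big)\big)
\]
with the stated polystability and stability properties. The index shift from $\pm 1$ to $\pm p$ is the natural one once these summands are attached to the Hitchin chain $IK^{p-1}\to\cdots\to IK^{1-p}$ of $\widetilde\Psi(I,\widehat W,\hat\eta,0,\ldots,0)$; substituting directly into \eqref{EQ tildePsi} then produces the diagram \eqref{EQ: Higgs field at singularity}. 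The lemma is essentially a direct consequence of \eqref{EQ scaling gauge action} and Lemma \ref{lemma: fixed point SO(1,n)}, with no real obstacle beyond carefully tracking the index shift and the degenerate case $W_{-p}=W_p=0$ (in which $\eta_{-p}$ is absent and the Higgs field on the $\widehat W$-summand is zero).
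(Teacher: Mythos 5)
Your argument is correct and is exactly the intended one: the paper gives no written proof beyond asserting that the lemma ``follows immediately from Lemma \ref{lemma: fixed point SO(1,n)} and Proposition \ref{prop def of map Psi},'' and your use of the scaling identity \eqref{EQ scaling gauge action} together with the injectivity of $\Psi$ on isomorphism classes (Lemma \ref{Lemma Gauge group fixing im Psi}) is the natural fleshing-out of that citation. The only detail you leave implicit is that $\det(W_0')=I$, which follows at once from $I\cong\Lambda^{q-p+1}\widehat W$ and $W_p\cong W_{-p}^*$.
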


Let $(V,W,\eta)$ be a polystable $\SO(p,q)$-Higgs bundle in the image of $\Psi$ of the form \eqref{EQ: Higgs field at singularity}.  This will be fixed until the end of Section \ref{sec: Local structure of fixed points}. If $W_p$ is zero, some of the  considerations below simplify.

We will repeatedly use the following bundle decompositions of $V$ and $W$ from \eqref{EQ: Higgs field at singularity}:
\begin{equation}\label{eq:bundle-decomp-fixed}
\begin{gathered}
V= V_{1-p}\oplus V_{3-p}\oplus\cdots\oplus V_{p-3}\oplus V_{p-1},\\
W=W_{-p}\oplus W_{2-p}\oplus\cdots\oplus W_0\oplus\cdots\oplus W_{p-2}\oplus W_{p},\\
V_j=IK^{-j}\ \text{for all }j,\ \ \ W_j=IK^{-j}\ \text{if}\ 0<|j|<p,\ \ \ \text{ and }\ \ \ W_0=\begin{dcases}W_0' &\text{if } p\text{ odd}\\ I\oplus W_0'&\text{if } p\text{ even.} \end{dcases}
\end{gathered}
\end{equation}
In particular, even though $(V,W,\eta)$ is not assumed to be stable, we get a weight decomposition like \eqref{Eq fixedpoint deformation sheaf splitting} of the deformation complex \eqref{EQ ad eta definition} as
\begin{math}
  C^\bullet(V,W,\eta) = \bigoplus C^\bullet_k,
\end{math}
where
\begin{displaymath}
  C^\bullet_k : \xymatrix@R=0em{\fso_k(V)\oplus\fso_k(W)\ar[r]^-{\ad_\eta}&\Hom_{k+1}(W,V)\otimes K~~.}
\end{displaymath}

In terms of the above splittings, we have $\End(V)=\displaystyle\bigoplus\limits_{k=2-2p}^{2p-2}\End_k(V)$, where $\End_{2k+1}(V)=0$ and
\begin{equation}
\label{eq EndV2k}
\End_{2k}(V)=\begin{dcases}
        \bigoplus\limits_{j=0}^{p-1-k}\Hom(V_{1-p+2j},V_{1-p+2j+2k})&k\geq0\\
        \bigoplus\limits_{j=0}^{p-1+k}\Hom(V_{p-1-2j},V_{p-1-2j+2k})&k<0.
    \end{dcases}
\end{equation}
Similarly, $\End(W)=\displaystyle\bigoplus\limits_{k=-2p}^{2p}\End_k(W)$, where
\begin{equation}\label{eq EndW2k}
    \End_{2k}(W)=\begin{dcases}
        \End(W_0)\oplus\bigoplus\limits_{j=0}^p \End(W_{p-2j}) &k=0 \text{\ and $p$ odd}\\
        \bigoplus\limits_{j=0}^{p-k}\Hom(W_{-p+2j},W_{-p+2j+2k})&k>0\text{ or }k=0 \text{ and $p$ even}\\
        \bigoplus\limits_{j=0}^{p+k}\Hom(W_{p-2j},W_{p-2j+2k})&k<0
    \end{dcases}
\end{equation}
and
\begin{equation}
    \label{eq EndW2k+1}
    \End_{2k+1}(W)=\begin{dcases}
        \Hom(W_{-2k-1},W_{0})\oplus \Hom(W_0,W_{2k+1})&\text{$2k+1\leq p$ and $p$ odd}\\
        0&\text{otherwise}.
    \end{dcases}
\end{equation}
Finally, $\Hom(W,V)=\displaystyle\bigoplus\limits_{k=1-2p}^{2p-1}\Hom_k(W,V)$, where 
\begin{equation}
    \label{eq HomWV2k+1}
\Hom_{2k+1}(W,V)=\begin{dcases}
        \bigoplus\limits_{j=0}^{p-1-k}\Hom(W_{-p+2j},V_{1-p+2j+2k})&2k+1\geq0\\
        \bigoplus\limits_{j=0}^{p+k}\Hom(W_{p-2j},V_{p-2j+1+2k})&2k+1<0,
    \end{dcases}
\end{equation}
and
\begin{equation}
    \label{eq HomWV2k}
\Hom_{2k}(W,V)=\begin{dcases}
        \Hom(W_{0},V_{2k})&\text{$1-p\leq 2k\leq p-1$ and $p$ odd}\\
        0&\text{otherwise}.
    \end{dcases}
\end{equation}
Note that the Higgs field $\eta$ is a holomorphic section of $\Hom_1(W,V)\otimes K$.

The Lie algebra bundle $\fso(V)\oplus\fso(W)\subset\End(V)\oplus\End(W)$ with fiber $\fso(p,\C)\oplus\fso(q,\C)$ consists of $Q_V$ and $-Q_W$ skew symmetric endomorphisms of $V$ and $W$ respectively.
The decompositions \eqref{eq EndV2k}, \eqref{eq EndW2k} and \eqref{eq EndW2k+1} induce the following decomposition of $\fso(V)\oplus\fso(W)\subset\End(V)\oplus\End(V)$:
\[\fso(V)=\bigoplus\limits_{k=2-2p}^{2p-2}\fso_k(V)\hspace{1cm} \text{ and }\hspace{1cm} \fso(W)=\bigoplus\limits_{k=-2p}^{2p}\fso_k(W).\]
Here $\fso_{2k+1}(V)=0$ and, using \eqref{eq EndV2k},
\begin{equation}
    \label{eq fsoV2k}
\fso_{2k}(V)=\begin{dcases}\{(\alpha_{1-p},\alpha_{3-p},\ldots,\alpha_{p-1-2k})\in\End_{2k}(V)\ |\ \alpha_i=-\alpha^*_{-2k-i}\}& k\geq0\\
\{(\alpha_{p-1},\alpha_{p-3},\ldots,\alpha_{1-p-2k})\in\End_{2k}(V)\ |\ \alpha_i=-\alpha^*_{-2k-i}\}& k<0,
\end{dcases}
\end{equation}
where the index of each homomorphism corresponds to the index of its domain, i.e., \[\alpha_i:V_i\to V_{i+2k}.\]
For $\fso(W)$,  using \eqref{eq EndW2k} we have
\begin{equation}\label{eq fsoW2k}
\fso_{2k}(W)=\begin{dcases}
\{(\beta',\beta_p,\beta_{p-2},\ldots,\beta_{-p})\in\End_{0}(W) |\ \beta'=-(\beta')^*,\ \beta_i=-\beta^*_{-i}\} &k=0 \text{\ and $p$ odd}\\
\{(\beta_{-p},\beta_{2-p},\ldots,\beta_{p-2k})\in\End_{2k}(W)\ |\ \beta_i=-\beta^*_{-2k-i}\}& k>0\text{ or }k=0 \text{ and $p$ even}\\
\{(\beta_p,\beta_{p-2},\ldots,\beta_{-p+2k})\in\End_{2k}(W)\ |\ \beta_i=-\beta^*_{-2k-i}\}& k<0,
\end{dcases}
\end{equation}
where $\beta':W_0\to W_0$ 
and, as above, 
$\beta_i:W_i\to W_{i+2k}.$ 
For odd weights, using \eqref{eq EndW2k+1} we have
\begin{equation}
    \label{eq fsoW2k+1}
    \fso_{2k+1}(W)=\begin{dcases}
        \{(\beta_{-2k-1},-\beta_{-2k-1}^*)\in\Hom(W_{-2k-1},W_{0})\oplus \Hom(W_0,W_{2k+1})\}&\text{$2k+1\leq p$ and $p$ odd}\\
        0&\text{otherwise}.
    \end{dcases}
\end{equation}

Since $\eta\in H^0(\Hom_1(W,V)\otimes K)$, the map $\ad_\eta$ restricts to $\fso_k(V)\oplus\fso_k(W)\to \Hom_{k+1}(W,V)\otimes K$, yielding the subcomplex $C^\bullet_k$ of $C^\bullet$ of weight $k$ as in \eqref{Eq fixedpoint deformation sheaf splitting}
\[C^\bullet_k=C^\bullet(V,W,\eta)_k:\fso_k(V)\oplus\fso_k(W)\xrightarrow{\ad_\eta}\Hom_{k+1}(W,V)\otimes K,\hspace{1.25cm} (\alpha,\beta)\mapsto \eta \circ \beta-\alpha\circ\eta.\] 
This gives rise to a splitting of the hypercohomology sequence associated to $C^\bullet$:
\begin{equation}
    \label{eq SOpq graded hypercohomoly sequence}
        \xymatrix@R=1em@C=1em{0\ar[r]&\HH^0(C^\bullet_{k})\ar[r]&H^0(\fso_k(V)\oplus\fso_k(W))\ar[r]^-{\ad_\eta}&H^0(\Hom_{k+1}(W,V)\otimes K)\ar[r]&\HH^1(C^\bullet_k)\ar@{->}`r/3pt [d] `/10pt[l] `^d[llll] `^r/3pt[d][dlll]\\&H^1(\fso(V)_{k+1}\oplus \fso_{k+1}(W))\ar[r]^-{\ad_\eta}&H^1(\Hom_{k+1}(W,V)\otimes K)\ar[r]&\HH^2(C^\bullet_k)\ar[r]&0.}  
\end{equation}

For all $k,$ we will compute $\HH^1(C^\bullet_k)$ and show $\HH^2(C^\bullet_k)$ vanishes in a series of lemmas.
Using \eqref{eq:bundle-decomp-fixed} and the decomposition of $\Hom_1(W,V)\otimes K$ from \eqref{eq HomWV2k+1}, we write
\begin{equation}\label{eq eta decomposition}
\eta=(\eta_{-p},\eta_{2-p},\ldots,\eta_{p-2})\in\bigoplus_{j=0}^{p-1}H^0(\Hom(W_{-p+2j},V_{1-p+2j})\otimes K),
\end{equation}
where 
\begin{equation}\label{eq eta decomposition2}
\begin{dcases}\eta_{-p}:W_{-p}\to V_{1-p}\otimes K &\text{ is defined in Lemma \ref{Lem: Fixed points in the image of Psi}},\\
\eta_0=\smtrx{1&0}:I\oplus W_0'\to V_1\otimes K &\text{ if }p\text{ even},\\
\eta_i=1:W_i\to V_{i+1}\otimes K &\text{ otherwise}.
\end{dcases}
\end{equation}
\begin{lemma}\label{lemma k>0 H1 vanishing H2 not p 2p}
     The map $\ad_\eta:\fso_k(V)\oplus\fso_k(W)\to\Hom_{k+1}(W,V)\otimes K$ is an isomorphism for each positive weight $k\notin\{p,2p\}$. In particular, 
    \[\xymatrix{\HH^0(C^\bullet_k)=0,&\HH^1(C^\bullet_k)=0&\text{and}&\HH^2(C^\bullet_k)=0}.\]
\end{lemma}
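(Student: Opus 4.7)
The plan is to show that $\ad_\eta:\fso_k(V)\oplus\fso_k(W)\to\Hom_{k+1}(W,V)\otimes K$ is in fact an isomorphism \emph{of sheaves} for positive $k\notin\{p,2p\}$. Once this is established, the two-term complex $C^\bullet_k$ is exact as a complex of sheaves, and the long exact sequence \eqref{eq SOpq graded hypercohomoly sequence} immediately yields $\HH^0(C^\bullet_k)=\HH^1(C^\bullet_k)=\HH^2(C^\bullet_k)=0$. So the crux is the sheaf-level isomorphism statement.

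First I would unwind the source and target using the explicit descriptions \eqref{eq fsoV2k}, \eqref{eq fsoW2k}, \eqref{eq fsoW2k+1}, \eqref{eq HomWV2k+1} and \eqref{eq HomWV2k}. Writing a general $(\alpha,\beta)\in\fso_k(V)\oplus\fso_k(W)$ in components $\alpha_i:V_i\to V_{i+k}$ and $\beta_j:W_j\to W_{j+k}$ (with the skew-symmetry constraints $\alpha_i=-\alpha_{-k-i}^*$, $\beta_j=-\beta_{-k-j}^*$), and using the decomposition \eqref{eq eta decomposition}--\eqref{eq eta decomposition2} of the Higgs field, the map $\ad_\eta$ becomes the collection of maps
\[
(\ad_\eta(\alpha,\beta))_{j,j+k+1}=\eta_{j+k}\circ\beta_j-\alpha_{j+1}\circ\eta_j : W_j\to V_{j+k+1}\otimes K.
\]
The point is that for $0<|j|<p$, the factor $\eta_j$ is the identity isomorphism of the line bundle $IK^{-j}$, so the component equation becomes, up to signs, the difference $\beta_j-\alpha_{j+1}$ of two sections of the \emph{same} line bundle.

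Next I would separate even and odd $k$ (recalling $\fso_{2k'+1}(V)=0$) and check index-by-index that the skew-symmetry relations on $\alpha,\beta$ cut down the free parameters to a collection of line bundles that matches $\Hom_{k+1}(W,V)\otimes K$ rank-by-rank. The restrictions $k\notin\{p,2p\}$ are exactly those which prevent any component of $\ad_\eta$ from being forced to factor through the potentially non-invertible maps $\eta_{\pm p}$ (which involve the higher-rank bundles $W_{\pm p}$) or the projection piece of $\eta_0$ when $p$ is even. Once the ranks match and every component of $\eta$ in play is an isomorphism of line bundles, inverting $\ad_\eta$ can be done triangularly: solve for $\beta_{-p+2}$ from the $(-p, 1-p+k)$-component, then $\alpha_{3-p}$, propagate upwards, and close the chain using the skew-symmetry relations.

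The main obstacle will be bookkeeping: one has to keep track of the parity of $k$, the parity of $p$, the range of allowed indices, and the constraints coming from $\alpha_i=-\alpha_{-k-i}^*$, $\beta_j=-\beta_{-k-j}^*$. The cleanest presentation is probably a separate verification for the four combinations of parities, carefully checking that the exclusion $k\neq p,2p$ is precisely what is needed to keep every matching component of $\eta$ in the isomorphism range $0<|j|<p$. Once this bundle-level isomorphism is in hand the cohomological vanishing is automatic.
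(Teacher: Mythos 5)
Your proposal follows essentially the same route as the paper's proof: unwind $\fso_k(V)\oplus\fso_k(W)$ and $\Hom_{k+1}(W,V)\otimes K$ into components via \eqref{eq fsoV2k}--\eqref{eq HomWV2k}, check the ranks match using that $\eta_j$ is the identity on line bundles for $0<|j|<p$, and invert $\ad_\eta$ triangularly using the skew-symmetry constraints, with the parity case analysis carrying the bookkeeping. The only cosmetic imprecision is that $\eta_{-p}$ does appear in the component $W_{-p}\to V_{1-p+2k}\otimes K$ (as $\beta_{-p}-\alpha_{1-p}\eta_{-p}$), but the triangular structure kills $\alpha_{1-p}$ first so its non-invertibility is harmless, exactly as in the paper.
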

\begin{proof}
    We start by considering the case $C^\bullet_{2k+1}$ with $0<2k+1$ and $2k+1\neq p$. If $p$ is even or $p<2k+1,$ the result is immediate since $\fso_{2k+1}(V)$, $\fso_{2k+1}(W)$ and $\Hom_{2k+2}(W,V)\otimes K$ are all zero by \eqref{eq EndW2k+1} and \eqref{eq HomWV2k}.
    For $p$ odd and $2k+1< p$, we have $\fso_{2k+1}(V)=0$, $\fso_{2k+1}(W)=\{(\beta_{-2k-1},-\beta_{-2k-1}^*)\in\Hom(W_{-2k-1},W_0)\oplus \Hom(W_0,W_{2k+1})\}$ and $\Hom_{2k+2}(W,V)\otimes K=\Hom(W_0,V_{2k+2})\otimes K$. Using \eqref{eq eta decomposition}, the map $\ad_\eta$ is the isomorphism sending $\beta_{-2k-1}$ to the composition of $-\beta_{-2k-1}^*$ with $1=\eta_{2k+1}$: 
    \[\xymatrix@R=1em{W_0\ar[dr]_{-\beta_{-2k-1}^*}\ar[rr]&&V_{2k+2}\otimes K\\&\ \ W_{2k+1}\ar[ur]_{1}&}\]

    Now consider the case $C^\bullet_{2k}$ with $0<2k$ and $2k\notin\{p,2p\}$. We first show $\fso_{2k}(V)\oplus\fso_{2k}(W)$ and $\Hom_{2k+1}(W,V)\otimes K$ are isomorphic. 
    Using \eqref{eq:bundle-decomp-fixed} and  \eqref{eq HomWV2k+1}, we have 
    \begin{equation}
    \label{eq hom2k+1 Kpowers}\Hom_{2k+1}(W,V)\otimes K\cong \begin{dcases}
        \Hom(W_{-p},IK^{p-2k})\oplus \underbrace{K^{-2k}\oplus\cdots\oplus K^{-2k}}_{p-k-1\text{ times}} &\hspace{-2em}{\text{$2k> p$ or $p$ odd,}}\\
        \Hom(W_{-p},IK^{p-2k})\oplus \Hom(W_0',IK^{-2k})\oplus \underbrace{K^{-2k}\oplus\cdots\oplus K^{-2k}}_{p-k-1\text{ times}} &\text{otherwise}.
    \end{dcases}
    \end{equation}
    On the other hand, by \eqref{eq fsoV2k} and since the weight is positive, we have 
    \begin{equation}
        \label{eq powers of K soV}\fso_{2k}(V)\cong\bigoplus\limits_{j=0}^{\floor{\frac{p-k}{2}}-1} \Hom(V_{2j-p+1},V_{2j-p+1+2k})\cong \underbrace{K^{-2k}\oplus\cdots\oplus K^{-2k}}_{\floor{\frac{p-k}{2}}\text{ times}}~.
    \end{equation} 
    Similarly, by \eqref{eq fsoW2k}, $\fso_{2k}(W)\cong\bigoplus\limits_{j=0}^{\floor{\frac{p-k-1}{2}}} \Hom(W_{2j-p},W_{2j-p+2k})$, and thus,
\begin{equation}
    \label{eq powers of K soW}
    \fso_{2k}(W)\cong \begin{dcases}
        \Hom(W_{-p},IK^{p-2k})\oplus\underbrace{K^{-2k}\oplus\cdots\oplus K^{-2k}}_{\floor{\frac{p-k-1}{2}} \text{ times}} & \text{$2k> p$ or $p$ odd}\\
         \Hom(W_{-p}, IK^{p-2k})\oplus \Hom(W_0',IK^{-2k})\oplus\underbrace{K^{-2k}\oplus\cdots\oplus K^{-2k}}_{\floor{\frac{p-k-1}{2}} \text{ times}}&\text{otherwise}.
    \end{dcases}
\end{equation}
From \eqref{eq hom2k+1 Kpowers}, \eqref{eq powers of K soV} and \eqref{eq powers of K soW}, we see that $\fso_{2k}(V)\oplus \fso_{2k}(W)$ is isomorphic to $\Hom_{2k+1}(W,V)\otimes K$.

Now we will show  
\[C_{2k}^\bullet:\fso_{2k}(V)\oplus \fso_{2k}(W)\xrightarrow{\ad_\eta}\Hom_{2k+1}(W,V)\otimes K,\hspace{1cm}\ad_\eta(\alpha,\beta)=\eta\circ\beta-\alpha\circ\eta\] is an isomorphism.
Using the notations of \eqref{eq fsoV2k}, \eqref{eq fsoW2k} (for positive weight) and \eqref{eq eta decomposition}, if 
\[\xymatrix@=.5em{\alpha=(\alpha_{1-p},\alpha_{3-p},\ldots,\alpha_{p-1-2k}),& \beta=(\beta_{-p},\beta_{2-p},\ldots,\beta_{p-2k})&\text{and}&\eta=(\eta_{-p},\eta_{2-p},\ldots,\eta_{p-2})~,}\]
then
\[\ad_\eta(\alpha,\beta)=(\eta_{-p+2k}\beta_{-p}-\alpha_{1-p}\eta_{-p},\eta_{2-p+2k}\beta_{2-p}-\alpha_{3-p}\eta_{2-p},\ldots,\eta_{p-2}\beta_{p-2-2k}-\alpha_{p-1-2k}\eta_{p-2-2k}).\]

First assume $p-k$ is even. In this case we have
\begin{equation*}
\xymatrix@=.5em{\alpha=(\alpha_{1-p},\ldots,\alpha_{-k-1},-\alpha_{-k-1}^*,\ldots,-\alpha_{1-p}^*) &\text{and}&
\beta=(\beta_{-p},\ldots,\beta_{-k-2},0,-\beta_{-k-2}^*,\ldots,-\beta_{-p}^*)~.}
\end{equation*}
For $p$ odd or $2k>p$, we have $\eta_i=1$ for all $i\neq -p$ by \eqref{eq eta decomposition2}. Hence $\ad_\eta(\alpha,\beta)$ is given by 
\begin{equation}\label{eq ad eta 2kgeq0}
(\beta_{-p}-\alpha_{1-p}\eta_{-p},\beta_{2-p}-\alpha_{3-p},\ldots,\beta_{-k-2}-\alpha_{-k-1},\alpha_{-k-1}^*,-\beta_{-k-2}^*+\alpha_{-k-3}^*,\ldots,-\beta_{2-p}^*+\alpha_{1-p}^*).
\end{equation}
This vanishes if and only if $\alpha$ and $\beta$ are both identically zero, so $\ad_\eta$ is an isomorphism.
For $p$ even and $2k\leq p$, the only difference is that $W_0=I\oplus W_0'$. Therefore, if we write 
\[\beta_0=\Big(\beta_0^I\ \ \ \beta_0'\Big):I\oplus W_0'\to W_{2k},\] then the terms $W_0\to V_{2k+1}\otimes K$ and $W_{-2k}\to V_1\otimes K$ of $\ad_\eta$ are given by 
\begin{equation}
    \label{Eq p even p/2 map}
    \Big(\beta_0^I-\alpha_1\ \ \ \beta_0'\Big):I\oplus W_0'\to V_{2k+1}\otimes K\ \ \ \ \ \text{and}\ \ \ \ \ \ \ -\beta_0^{I\ast}+\alpha_1^*:W_{-2k}\to V_1\otimes K.
\end{equation}
Again, $\ad_\eta$ vanishes if and only if $\alpha$ and $\beta$ both vanish, and is therefore an isomorphism.

Now suppose $p-k$ is odd. In this case, $\eqref{eq fsoV2k}$ and \eqref{eq fsoW2k} imply that 
\[\xymatrix@=.5em{\alpha=(\alpha_{1-p},\ldots,\alpha_{-k-2},0,-\alpha_{-k-2}^*,\ldots,-\alpha_{1-p}^*)&\text{and}&
\beta&=(\beta_{-p},\ldots,\beta_{-k-1},-\beta_{-k-1}^*,\ldots,-\beta_{-p}^*).}\]
For $p$ odd or $2k>p$, $ad_\eta(\alpha,\beta)$ is given by
\[(\beta_{-p}-\alpha_{1-p}\eta_{-p},\beta_{2-p}-\alpha_{3-p},\ldots,\beta_{-k-3}-\alpha_{-k-2},\beta_{-k-1},-\beta_{-k-1}^*+\alpha_{-k-2}^*,\ldots,-\beta_{2-p}^*+\alpha_{1-p}^*).\]
Since this vanishes if and only if $\alpha$ and $\beta$ both vanish, $\ad_\eta$ is an isomorphism. The case of $p$ even and $2k\leq p$ follows from a similar calculation as the one done above.

Since $\fso_k(V)\oplus \fso_{k}(W)\xrightarrow{\ad_\eta} \Hom_{k+1}(W,V)\otimes K$ is an isomorphism for all positive weights $k$ different than $p$ and $2p$, we conclude that the hypercohomology groups $\HH^*(C^\bullet_k)$ all vanish for such $k$.
\end{proof}

Next we consider the subcomplexes of weight $p$ and $2p$.

\begin{lemma}\label{lemma p, 2p H1 vanishing H2}
The hypercohomology groups $\HH^*(C^\bullet_p)$ and $\HH^*(C^\bullet_{2p})$ are given by 
\[\xymatrix@R=0em{\HH^0(C^\bullet_p)=0,&\HH^1(C^\bullet_p)\cong H^1(\Hom(W_{-p},W_0'))&\text{and}&\HH^2(C^\bullet_p)=0,\\\HH^0(C^\bullet_{2p})=0,&\HH^1(C^\bullet_{2p})\cong H^1(\fso_{2p}(W))&\text{and}&\HH^2(C^\bullet_{2p})=0,}\]
where $\fso_{2p}(W)=\{\beta\in \Hom(W_{-p},W_p)| \beta+\beta^*=0\}.$
\end{lemma}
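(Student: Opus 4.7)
The plan is to identify the kernel and cokernel of the sheaf map
\[\ad_\eta:\fso_k(V)\oplus\fso_k(W)\longrightarrow \Hom_{k+1}(W,V)\otimes K\]
for $k=p$ and $k=2p$, and then read off the hypercohomology from the long exact sequence \eqref{eq SOpq graded hypercohomoly sequence}.

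\emph{Weight $2p$.} From \eqref{eq fsoV2k}--\eqref{eq HomWV2k+1} both $\fso_{2p}(V)$ and $\Hom_{2p+1}(W,V)\otimes K$ vanish (their weight exceeds the weight-spread of $V$ and $\Hom(W,V)$), so $C^\bullet_{2p}$ reduces to the single-term complex $\fso_{2p}(W)[0]$. The formulas for $\HH^1$ and $\HH^2$ are then immediate. For $\HH^0=H^0(\fso_{2p}(W))=0$, let $\beta:W_{-p}\to W_p$ be a non-zero antisymmetric map. Via $W_p\cong W_{-p}^*$, view $\beta$ as a skew form on $W_{-p}$. If $U\subset W_p$ is the saturation of $\im\beta$ and $K\subset W_{-p}$ the saturation of $\ker\beta$, antisymmetry identifies $K$ with the annihilator of $U$, hence $K\cong(W_p/U)^*$, and a direct computation yields $\deg(W_{-p}/K)=-\deg U$. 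Since $W_{-p}/K$ injects as a sheaf into $U$ with the same rank, $\deg(W_{-p}/K)\leq\deg U$, forcing $\deg U\geq 0$, contrary to the property from Lemma \ref{Lem: Fixed points in the image of Psi} that every non-zero subbundle of $W_p$ has negative degree.

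\emph{Weight $p$, $p$ odd.} The parity conventions in \eqref{eq EndW2k+1} and \eqref{eq HomWV2k} give $\fso_p(V)=0$ and $\Hom_{p+1}(W,V)\otimes K=0$, while $\fso_p(W)\cong\Hom(W_{-p},W_0')$ via $(\beta,-\beta^*)\mapsto\beta$. The claims on $\HH^1$ and $\HH^2$ are then immediate. For $\HH^0$, dualizing the ``no non-negative-degree subbundle'' property of $W_p$ shows that every non-zero quotient of $W_{-p}$ has positive degree; hence the saturated image in $W_0'$ of any non-zero $f:W_{-p}\to W_0'$ would be a positive-degree subbundle of the polystable degree-zero bundle $W_0'$, a contradiction.

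\emph{Weight $p$, $p$ even.} Now $W_0=I\oplus W_0'$ and $\eta_0=(1\ 0)$, so the summand $\Hom(W_{-p},W_0')\subset\fso_p(W)$ (the $W_0'$-component of $\beta_{-p}$) lies in $\ker(\ad_\eta)$. All other summands of $\fso_p(V)\oplus\fso_p(W)$ and of $\Hom_{p+1}(W,V)\otimes K$ are line bundles isomorphic to $K^{-p}$, except for a single $\Hom(W_{-p},I)$ summand on each side (from $\beta^I_{-p}$ and from the $W_{-p}\to V_1\otimes K$ term, respectively); the would-be self-paired components vanish automatically because antisymmetric endomorphisms of a line bundle are zero. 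The main technical check is that on the quotient by $\Hom(W_{-p},W_0')$ the map $\ad_\eta$ is a sheaf isomorphism. Writing $\ad_\eta|_{W_i}=\eta_{i+p}\beta_i-\alpha_{i+1}\eta_i$ for each relevant $i\in\{-p,2-p,\ldots,-2\}$, and using the skew-relations $\alpha_i=-\alpha^*_{-p-i}$ and $\beta_i=-\beta^*_{-p-i}$ together with the identifications $\eta_i=1$ for $-p<i<p$, one obtains a square linear system whose fiberwise determinant is $\pm 1$, exactly analogous to the computations in \eqref{eq ad eta 2kgeq0}--\eqref{Eq p even p/2 map}. Hence as sheaves $\ker(\ad_\eta)\cong\Hom(W_{-p},W_0')$ and $\coker(\ad_\eta)=0$, and combined with $H^0(\Hom(W_{-p},W_0'))=0$ (proved as in the odd case), the long exact sequence yields the stated formulas.

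The only substantial obstacle is the $p$-even case: one has to carefully bookkeep the skew-symmetry pairings of the $\alpha_i$'s and $\beta_i$'s (in particular, checking that the putative self-paired contributions vanish and that dimensions match) in order to confirm the fiberwise invertibility of $\ad_\eta$ modulo $\Hom(W_{-p},W_0')$.
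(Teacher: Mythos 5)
Your proof is correct and follows essentially the same route as the paper: for each of the weights $p$ and $2p$ you work with the graded pieces, identify $\Hom(W_{-p},W_0')$ (resp.\ $\fso_{2p}(W)$) as the kernel of the sheaf map $\ad_\eta$ with the complementary summand mapped isomorphically, and treat the two parities of $p$ separately. The only divergence is that you establish the vanishing of $H^0(\fso_{2p}(W))$ and $H^0(\Hom(W_{-p},W_0'))$ by direct degree/saturation arguments from the properties of $W_p$ and $W_0'$ recorded in Lemma \ref{Lem: Fixed points in the image of Psi}, whereas the paper simply cites these vanishings from the proof of Lemma \ref{Lemma: H1 of SO(1,n) fixed point}, where they follow from stability of the associated $\rO(1,n')$- and $\rO(n'+1,\C)$-Higgs bundles; both arguments are valid.
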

\begin{proof}
First note that $\fso_{2p}(V)=0$, $\fso_{2p}(W)=\{\beta\in \Hom(W_{-p},W_p)| \beta+\beta^*=0\}$ and $\Hom_{2p+1}(W,V)=0$, hence
\[\xymatrix{\HH^0(C^\bullet_{2p})\cong H^0(\fso_{2p}(W)), & \HH^1(C^\bullet_{2p})\cong H^1(\fso_{2p}(W))&\text{and}&\HH^2(C^\bullet_{2p})=0~.}\]
If $p$ is odd, then $W_0=W_0',$ $\fso_{p}(W)\cong\Hom(W_{-p},W_0'),$ $\fso_p(V)=0$ and $\Hom_{p+1}(W,V)=0$,  thus 
    \[\xymatrix{\HH^0(C^\bullet_p)\cong H^0(\Hom(W_{-p},W_0')), & \HH^1(C^\bullet_p)\cong H^1(\Hom(W_{-p},W_0'))&\text{and}&\HH^2(C^\bullet_p)=0.}\]
    Moreover, $H^0(\fso_{2p}(W))$ and $H^0(\Hom(W_{-p},W_0'))$ were shown to vanish in the proof of Lemma \ref{Lemma: H1 of SO(1,n) fixed point}, completing the proof for the case $2p$ and when $p$ is odd.
    
Now suppose $p$ is even, then $W_0=I\oplus W_0'$ and, from \eqref{eq EndV2k}, \eqref{eq EndW2k} and \eqref{eq HomWV2k+1}, we have
\[\fso_{p}(V)\cong\underbrace{K^{-p}\oplus\cdots\oplus K^{-p}}_{\floor{\frac{p}{4}} \text{ times}}~,\]
\[\fso_p(W)\cong\Hom(W_{-p},I)\oplus\Hom(W_{-p},W_0')\oplus\underbrace{K^{-p}\oplus\cdots\oplus K^{-p}}_{\floor{\frac{p-2}{4}} \text{ times}}\oplus \Hom(W_0',K^{-p})~\]
and
\[\Hom_{p+1}(W,V)\otimes K\cong \Hom(W_{-p},I)\oplus \underbrace{K^{-p}\oplus\cdots\oplus K^{-p}}_{\frac{p}{2}-1\text{ times}}~.\]
Thus, $\fso_p(V)\oplus\fso_p(W)\cong\Hom(W_{-p},W_0')\oplus \Hom_{p+1}(W,V)\otimes K.$

If $\frac{p}{2}$ is even and $(\alpha,\beta)\in\fso_p(V)\oplus\fso_p(W)$, then
\[\xymatrix@=.5em{\alpha=(\alpha_{1-p},\ldots,\alpha_{-\frac{p}{2}-1},-\alpha_{-\frac{p}{2}-1}^*,\ldots,-\alpha_{1-p}^*)&\text{and}&
\beta=(\beta_{-p},\ldots,\beta_{-\frac{p}{2}-2},0,-\beta_{-\frac{p}{2}-2}^*,\ldots,-\beta_{-p}^*)}.\]
Using the decomposition of $\eta$ from \eqref{eq eta decomposition} and \eqref{eq eta decomposition2}, we see that $\ad(\alpha,\beta)$ is given by
\[(\eta_0\beta_{-p}-\alpha_{1-p}\eta_{-p},\beta_{2-p}-\alpha_{3-p},\ldots,\beta_{-\frac{p}{2}-2}-\alpha_{-\frac{p}{2}-1},\alpha_{-\frac{p}{2}-1}^*,-\beta_{-\frac{p}{2}-2}^*+\alpha_{-\frac{p}{2}-3}^*,\ldots,-\beta_{2-p}^*+\alpha_{1-p}^*).\]
If we write $\beta_{-p}=\smtrx{\beta_{-p}^I\\ \beta_{-p}'}:W_{-p}\to I\oplus W_0'$, then $\eta_0\beta_{-p}=\smtrx{1&0}\smtrx{\beta_{-p}^I\\ \beta_{-p}'}=\beta_{-p}^I$. Hence $\Hom(W_{-p},W_0')$ is in the kernel of $\ad_\eta$ and $\eta_0\beta_{-p}-\alpha_{1-p}\eta_{-p}=\beta_{-p}^I-\alpha_{1-p}\eta_{-p}$. 
We conclude that the map induced by $\ad_\eta$ on $(\fso_p(V)\oplus\fso_p(W))/\Hom(W_{-p},W_0')\to\Hom_{p+1}(W,V)\otimes K$ is given by 
\[\ad_\eta:\Hom(W_{-p},W_0')\oplus(\fso_p(V)\oplus\fso_p(W))/\Hom(W_{-p},W_0')\xrightarrow{\smtrx{0&\delta}} \Hom_{p+1}(W,V)\otimes K\]
    with $\delta$ an isomorphism. In particular, this implies that 
    \[\xymatrix{\HH^0(C^\bullet_{p})\cong H^0(\Hom(W_{-p},W_0')), & \HH^1(C^\bullet_{p})\cong H^1(\Hom(W_{-p},W_0'))&\text{and}&\HH^2(C^\bullet_{p})=0.}\]
    Moreover, $H^0(\Hom(W_{-p},W_0'))$ was shown to vanish in the proof of Lemma \ref{Lemma: H1 of SO(1,n) fixed point}.
The proof for $\frac{p}{2}$ odd follows from similar arguments.  
\end{proof}

Now we consider negative odd weights different from $-p$.

\begin{lemma}\label{lemma 2k+1<0 H1 vanishing H2 not -p -2p}
     The map $\ad_\eta:\fso_{2k+1}(V)\oplus\fso_{2k+1}(W)\to\Hom_{2k+2}(W,V)\otimes K$ is an isomorphism for $2k+1<0$ and $2k+1\neq-p$. In particular,
    \[\xymatrix{\HH^0(C^\bullet_{2k+1})=0,&\HH^1(C^\bullet_{2k+1})=0&\text{and}&\HH^2(C^\bullet_{2k+1})=0.}\]
\end{lemma}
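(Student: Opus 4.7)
The strategy is to mirror the proof of Lemma~\ref{lemma k>0 H1 vanishing H2 not p 2p}: I will show that the sheaf map $\ad_\eta$ is a fiberwise isomorphism, from which the vanishing of $\HH^0$, $\HH^1$, and $\HH^2$ of $C^\bullet_{2k+1}$ follows immediately via the long exact sequence~\eqref{eq SOpq graded hypercohomoly sequence}.

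The first reduction is to observe that $\fso_{2k+1}(V)=0$ for every $k$, by~\eqref{eq fsoV2k}, since the weights appearing in $V$ all share the parity of $1-p$. The rest of the argument then splits on the parity of $p$. When $p$ is even, the weights of $W$ are all even, so $\fso_{2k+1}(W)=0$ by~\eqref{eq fsoW2k+1} and $\Hom_{2k+2}(W,V)=0$ by~\eqref{eq HomWV2k}; the map is trivially an isomorphism.

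The substantial case is $p$ odd. I further split on whether $2k+1<-p$ or $-p<2k+1<0$. In the former range the index $-2k-1$ exceeds $p$, so $W_{-2k-1}$ is absent from $W$ and $V_{2k+2}$ is absent from $V$; hence both source and target of $\ad_\eta$ vanish. In the central range $-p<2k+1<0$, formulas \eqref{eq fsoW2k+1} and \eqref{eq HomWV2k} give
\[\fso_{2k+1}(W)\cong \Hom(W_{-2k-1},W_0'), \qquad \Hom_{2k+2}(W,V)\otimes K\cong \Hom(W_0',V_{2k+2})\otimes K.\]
Writing an element as $(\beta,-\beta^*)$ with $\beta\in \Hom(W_{-2k-1},W_0')$, the map $\ad_\eta$ yields $\eta\circ(-\beta^*)$, because $\hat\eta|_{W_0'}=0$ forces the $W_{-2k-1}\to W_0'$ piece of $\beta$ to contribute zero to $\eta\circ\beta$. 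By~\eqref{eq eta decomposition2} the hypothesis $2k+1\neq -p$ ensures $\eta_{2k+1}=1$, and after the natural identification $W_{2k+1}\cong V_{2k+2}\otimes K$ the map $\ad_\eta$ agrees up to sign with the dualization $\beta\mapsto \beta^*$. This is an isomorphism by the self-duality $W_0'\cong (W_0')^*$ induced by $Q_{W_0'}$ and the duality $W_{-2k-1}\cong W_{2k+1}^*$ induced by $Q_W$.

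I do not expect any serious obstacle; the only delicate point is keeping track of the orthogonal identifications and of which weight pieces of $V$ and $W$ are absent in each subrange. The exclusion $2k+1\neq -p$ is precisely what rules out the role of $\eta_{-p}$, which is merely a nonzero section rather than an isomorphism, and this is the one place where the hypothesis must be used.
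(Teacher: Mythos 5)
Your proposal is correct and follows essentially the same route as the paper: observe $\fso_{2k+1}(V)=0$, note that for $p$ even or $2k+1<-p$ both source and target vanish, and for $p$ odd with $-p<2k+1<0$ identify $\ad_\eta$ on $(\beta,-\beta^*)$ with post-composition of $-\beta^*$ by the identity map $\eta_{2k+1}=1$, hence an isomorphism. The extra remarks about which weight pieces are absent and why $2k+1\neq -p$ matters are consistent with the paper's computation.
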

\begin{proof}
First, note that $\fso_{2k+1}(V)=0$. Also, if $p$ is even or $2k+1<-p$, then  $\fso_{2k+1}(W)=0$ and $\Hom_{2k+2}(W,V)=0$. For $p$ odd and $2k+1>-p$, 
\[\fso_{2k+1}(W)=\{(\beta_{-2k-1},-\beta_{-2k-1}^*)\in\Hom(W_{-2k-1},W_0)\oplus\Hom(W_0,W_{2k+1})\}\] and $\Hom_{2k+2}(W,V)\otimes K=\Hom(W_0,V_{2k+2})\otimes K$. 
Moreover, $\ad_\eta:\fso_{2k+1}(W)\to\Hom_{2k+2}(W,V)\otimes K$ is given by
\[\xymatrix@R=.5em{W_0\ar[rr]\ar[dr]_-{-\beta_{-2k-1}^*}&&V_{2k+2}\otimes K\\&W_{2k+1}\ar[ur]_1&}\]
which is an isomorphism.
\end{proof}

Next we deal with negative even weights different from $-p$ and $-2p$.

\begin{lemma}\label{lemma 2k<0 H1 vanishing H2 not -p -2p}
    For $2k<0$ and $2k\notin\{-p,-2p\}$, $\Hom_{2k+1}(W,V)\otimes K\cong \fso_{2k}(W)\oplus\fso_{2k}(V)\oplus K^{-2k}$ and $\ad_\eta$ decomposes as \[\ad_\eta=\smtrx{a\\b}:\fso_{2k}(W)\oplus\fso_{2k}(V)\to \big(\fso_{2k}(W)\oplus\fso_{2k}(V)\big)\oplus K^{-2k},\] where $a$ is an isomorphism.
In particular, 
    \[\xymatrix{\HH^0(C^\bullet_{2k})=0,&\HH^1(C^\bullet_{2k})\cong H^0(K^{-2k})&\text{and}&\HH^2(C^\bullet_{2k})=0.}\]
\end{lemma}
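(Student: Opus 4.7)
The approach is to follow the template of the positive-weight computation in Lemma~\ref{lemma k>0 H1 vanishing H2 not p 2p}, with the key difference that for negative even weights $2k\notin\{-p,-2p\}$ the sheaf map $\ad_\eta$ is injective but has a rank-one locally free cokernel isomorphic to $K^{-2k}$.

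First I would use the explicit decompositions \eqref{eq fsoV2k}, \eqref{eq fsoW2k}, and \eqref{eq HomWV2k+1} to enumerate the bundle summands of source and target. The ``boundary'' summands involving $W_p$ (namely $\Hom(W_p, IK^{p+2k})$) and, when $p$ is even and $k\ge -p/2$, also $\Hom(W_0', IK^{-2k})$, appear with the same multiplicity on both sides. All remaining summands are generic copies of $K^{-2k}$, and a direct count yields $\lfloor (p+k)/2\rfloor+\lfloor (p+k-1)/2\rfloor=p+k-1$ of these in the source versus $p+k$ in the target. This produces the asserted direct sum decomposition $\Hom_{2k+1}(W,V)\otimes K\cong \fso_{2k}(V)\oplus\fso_{2k}(W)\oplus K^{-2k}$.

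Next, I would analyze $\ad_\eta(\alpha,\beta)=\eta\beta-\alpha\eta$ componentwise using \eqref{eq eta decomposition} and \eqref{eq eta decomposition2}. All components of $\eta$ except $\eta_{-p}$ act as identities (or, for $p$ even, as the canonical projection $\smtrx{1&0}$). The same telescoping argument as in the positive-weight case then shows that the projection of $\ad_\eta$ onto the $\fso_{2k}(V)\oplus\fso_{2k}(W)$ factor of the target is an isomorphism of sheaves, playing the role of $a$; the extra $K^{-2k}$ summand sits at the ``far end'' of the chain and has no source component for $\eta$ to hit, accounting for the residual map $b$. Hence $\ad_\eta$ is injective with locally free cokernel $K^{-2k}$, so $C^\bullet_{2k}$ is quasi-isomorphic to $K^{-2k}$ placed in degree one. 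This yields $\HH^0(C^\bullet_{2k})=0$, $\HH^1(C^\bullet_{2k})\cong H^0(K^{-2k})$, and $\HH^2(C^\bullet_{2k})\cong H^1(K^{-2k})$. Since $2k\le -2$ (as $2k$ is negative and even), Serre duality gives $H^1(K^{-2k})\cong H^0(K^{1+2k})^*=0$ because $\deg(K^{1+2k})=(2g-2)(1+2k)<0$.

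The principal obstacle is the combinatorial bookkeeping: identifying precisely which generic $K^{-2k}$ summand of the target is the ``extra'' one, and matching the boundary summands on both sides around the positions $W_p$ and (for $p$ even) $W_0'$. Once this is set up, the verification that the remaining block $a$ of $\ad_\eta$ is an isomorphism reduces to the same identity-component argument used in the positive-weight case.
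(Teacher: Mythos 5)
Your proposal follows essentially the same route as the paper: the same rank count from \eqref{eq fsoV2k}, \eqref{eq fsoW2k} and \eqref{eq HomWV2k+1} giving $p+k-1$ generic $K^{-2k}$ summands in the source versus $p+k$ in the target, the same block decomposition $\ad_\eta=\smtrx{a\\b}$ with $a$ an isomorphism by the telescoping argument, and the same conclusion $\HH^2\cong H^1(K^{-2k})=0$. One correction to your bookkeeping: the distinguished extra summand is not at the ``far end'' and it \emph{is} hit by the source --- the paper takes the middle summand $\Hom(W_{-k},V_{k+1})\otimes K$, which is singled out because the diagonal entry of $\fso_{2k}(V)\oplus\fso_{2k}(W)$ at the self-paired index is killed by skew-symmetry on a line bundle; quotienting by a boundary summand such as $\Hom(W_p,V_{p+1+2k})\otimes K$ would break the two-ended telescoping that makes $a$ an isomorphism. (Also, the boundary summand should read $\Hom(W_p,IK^{-p-2k})$, not $\Hom(W_p,IK^{p+2k})$.)
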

\begin{proof}
    Using \eqref{eq HomWV2k+1}, we have that
\[\Hom_{2k+1}(W,V)\otimes K\cong\begin{dcases}
\Hom(W_p, IK^{-p-2k})\oplus  \underbrace{K^{-2k}\oplus\cdots\oplus K^{-2k}}_{p+k \text{\ times}}&\hspace{-4em}{\text{if $p$ odd or }2k<-p}\\
\Hom(W_p, IK^{-p-2k})\oplus \Hom(W_0',IK^{-2k})\oplus \underbrace{K^{-2k}\oplus\cdots\oplus K^{-2k}}_{p+k \text{\ times}}&\text{otherwise~.}
\end{dcases}
\]
    If $p+k$ is even, then by \eqref{eq fsoV2k} and \eqref{eq fsoW2k} we have  
    \[\fso_{2k}(V)\cong\Big\{(\alpha_{p-1},\ldots,\alpha_{-k+1},-\alpha_{-k+1}^*,\ldots,-\alpha_{p-1}^*)\in\bigoplus\limits_{j=0}^{p-1+k}\Hom(V_{p-1-2j},V_{p-1-2j+2k})\Big\}\]
    \[\fso_{2k}(W)\cong\Big\{(\beta_p,\ldots,\beta_{-k+2},0,-\beta_{-k+2}^*,\ldots,-\beta_p^*)\in\bigoplus\limits_{j=0}^{p+k}\Hom(W_{p-2j},W_{p-2j+2k})\Big\}.\]
    Thus, 
\[\fso_{2k}(V)\cong \underbrace{K^{-2k}\oplus\cdots\oplus K^{-2k}}_{\frac{p+k}{2}\text{ times}}\]
and
\[\fso_{2k}(W)\cong \begin{dcases}
        \Hom(W_p,IK^{-p-2k})\oplus \underbrace{K^{-2k}\oplus\cdots\oplus K^{-2k}}_{\frac{p+k}{2}-1\text{ times}}&\text{if $p$ odd or }2k<-p\\
\Hom(W_p,IK^{-p-2k})\oplus\Hom(W_0',IK^{-2k})\oplus \underbrace{K^{-2k}\oplus\cdots\oplus K^{-2k}}_{\frac{p+k}{2}-1\text{ times}}&\text{otherwise~.}        
    \end{dcases}
\]
Hence we conclude that $\Hom_{2k+1}(W,V)\otimes K\cong\fso_{2k}(W)\oplus\fso_{2k}(V)\oplus K^{-2k}$. By a similar argument, the  conclusion also holds for the case $p+k$ odd.

For the form  of $\ad_\eta$ in this splitting, first assume $p$ is odd or $2k<-p.$ If $p+k$ is even, then the map $\ad_\eta:\fso_{2k}(V)\oplus\fso_{2k}(W)\to \Hom_{2k+1}(W,V)\otimes K$ is given by 
\begin{equation}\label{eq:ad_eta 2k k<0}
\begin{split}
\ad_\eta(\alpha,\beta)&=(\beta_p,\beta_{p-2}-\alpha_{p-1},\ldots,\beta_{-k+2}-\alpha_{-k+3},-\alpha_{-k+1},-\beta_{-k+2}^*+\alpha_{-k+1}^*,\ldots,\alpha_{p-1}^*-\eta_{-p}\beta_p^*).
\end{split}
\end{equation}
Consider the summand $K^{-2k}\cong\Hom(W_{-k},V_{k+1})\otimes K$ of $\Hom_{2k+1}(W,V)\otimes K$ and take the corresponding quotient $(\Hom_{2k+1}(W,V)\otimes K)/K^{-2k}$. Then $\Hom_{2k+1}(W,V)\otimes K=(\Hom_{2k+1}(W,V)\otimes K)/K^{-2k}\oplus K^{-2k}$ and, from \eqref{eq:ad_eta 2k k<0}, we conclude that $\ad_\eta$ can be written as 
\[\ad_\eta=\smtrx{a\\b}:\fso_{2k}(V)\oplus\fso_{2k}(W)\to \big(\Hom_{2k+1}(W,V)\otimes K)/K^{-2k}\oplus K^{-2k}\] where $a$ is an isomorphism.
If $p+k$ is odd, a similar conclusion holds.

If $p$ is even and $-p<2k$, the only difference is that we have the following decompositions 
\[\beta_0=\Big(\beta_0^I\ \ \ \beta_0'\Big):I\oplus W_0'\to W_{2k}\ \ \ \ \  \text{and}\ \ \ \ \ \ \beta_0^*=\smtrx{(\beta_0^I)^*\\(\beta_0')^*}:W_{-2k}\to I\oplus W_0'.\] 
With these decompositions, the terms of $\ad_\eta$ which involve $\beta_0$ and $\beta_0^*$ are given by 
\begin{equation}\label{eq p even case of map}\xymatrix@=1em{&V_1\otimes K\ar[dr]^-{\alpha_1\otimes\Id_K}&\\ I\oplus W_0'\ar[ur]^-{\smtrx{1& 0}}\ar[dr]_-{\beta_0}&&V_{2k+1}\otimes K\\&W_{2k}\ar[ur]_-1&}\ \ \ \ \ \ \ \ \xymatrix@=1em{\\ \text{and}\\}\ \ \ \ \ \ \ \ \ \ \ \xymatrix@=1em{&V_{-2k+1}\ar[dr]^-{\ \ \ -\alpha_{-1}^*\otimes\Id_K}\otimes K&\\W_{-2k}\ar[ur]^-1\ar[dr]_-{-\beta_0^*}&&V_1\otimes K.\\& I\oplus W_0'\ar[ur]_-{\smtrx{1&0}}&}\end{equation}
The map $I\oplus W_0'\to V_{2k+1}\otimes K$ is given by $\smtrx{\beta_0^I-\alpha_1\,&\,\beta_0'}$ and the map $W_{-2k}\to W_1\otimes K$ is given by $-(\beta_0^I)^*+\alpha_{-1}$. In particular, we have $\ad_\eta=\smtrx{a\\b}:\fso_{2k}(W)\oplus\fso_{2k}(V)\to \big(\fso_{2k}(W)\oplus\fso_{2k}(V)\big)\oplus K^{-2k}$ with $a$ an isomorphism.
 
This implies that in the long exact sequence \eqref{eq SOpq graded hypercohomoly sequence}, for $2k<0$ and $2k\notin\{-p,-2p\},$ we have 
\[\xymatrix{\HH^0(C^\bullet_{2k})=0,&\HH^1(C^\bullet_{2k})\cong H^0(K^{-2k})& \text{and} &\HH^2(C^\bullet_{2k})=H^1(K^{-2k})=0,}\]
completing the proof. 
\end{proof}

The next lemma deals with $\HH^*(C^\bullet_{-p})$ and $\HH^*(C^\bullet_{-2p})$.

\begin{lemma}
\label{lemma -p  H1 vanishing H2}
In weight $-2p$ we have $\HH^0(C^\bullet_{-2p})=0,$ $\HH^2(C^\bullet_{2k})=0$ and
\begin{equation}
    \label{eq H1-2p}\xymatrix@=1.5em{0\ar[r]& H^0(\fso_{-2p}(W))\ar[r]^-{\eta_{-p}}&H^0(\Hom(W_p,K^p))\ar[r]&\HH^1(C^\bullet_{-2p})\ar[r]&H^1(\fso_{-2p}(W))\ar[r]&0,}
\end{equation}
where $\fso_{-2p}(W)=\{\beta\in\Hom(W_p,W_{-p})|\beta+\beta^*=0\}.$ 
For $p$ odd, we have 
\[\HH^0(C^\bullet_{-p})=0,\hspace{1cm}\HH^1(C^\bullet_{-p})\cong \HH^1_{-p}\hspace{1cm}\text{and}\hspace{1cm}\HH^2(C^\bullet_{-p})=0,\] 
where
\begin{equation}\label{eq H1-p}
\xymatrix@=1.5em{0\ar[r]& H^0(\Hom(W_p,W_0'))\ar[r]^-{\eta_{-p}}&H^0(\Hom(W_0',K^p))\ar[r]&\HH^1_{-p}\ar[r]&H^1(\Hom(W_p,W_0'))\ar[r]&0.}
\end{equation}
For $p$ even, 
\[\xymatrix@=.5em{\fso_{-p}(V)\oplus \fso_{-p}(W)\cong\Hom(W_p,W_0')\oplus A&\text{and}&\Hom_{1-p}(W,V)\otimes K= K^p\oplus \Hom(W_0', K^p)\oplus A},\] and with respect to this splitting $\ad_\eta=\smtrx{0&b\\\eta_{-p}&0\\0&a}$, where $a:A\to A$ is an isomorphism. In particular, for $p$ even, 
\[\HH^0(C^\bullet_{-p})=0,\hspace{1cm}\HH^1(C^\bullet_{-p})\cong H^0(K^p)\oplus \HH^1_{-p}\hspace{1cm}\text{and}\hspace{1cm}\HH^2(C^\bullet_{-p})=0.\]
\end{lemma}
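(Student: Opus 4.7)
The plan is to mimic the proofs of Lemmas~\ref{lemma k>0 H1 vanishing H2 not p 2p}--\ref{lemma 2k<0 H1 vanishing H2 not -p -2p}: decompose the weight $-2p$ and weight $-p$ subcomplexes of $C^\bullet$ by means of the explicit descriptions \eqref{eq fsoV2k}, \eqref{eq fsoW2k}, \eqref{eq fsoW2k+1}, \eqref{eq HomWV2k+1} and \eqref{eq HomWV2k}, isolate the part of $\ad_\eta$ built from the $\eta_i=1$ entries of $\eta$ (on which it is an isomorphism), and then read off the hypercohomology groups from \eqref{eq SOpq graded hypercohomoly sequence}.

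For the weight $-2p$ complex, since $V$ carries only weights in $\{\pm 1,\pm 3,\ldots,\pm(p-1)\}$, the bundle $\fso_{-2p}(V)$ vanishes and all components of $\Hom_{1-2p}(W,V)\otimes K$ other than $\Hom(W_p, V_{1-p})\otimes K$ vanish. The subcomplex thus reduces to $\fso_{-2p}(W)\xrightarrow{\eta_{-p}\circ(\cdot)}\Hom(W_p, V_{1-p})\otimes K$, whose long exact sequence is precisely \eqref{eq H1-2p} provided $\HH^0=\HH^2=0$. The first vanishing is obtained exactly as in the proof of Lemma~\ref{Lemma: H1 of SO(1,n) fixed point} for the weight $-1$ piece: a nonzero $\beta$ in the kernel would force $\beta(W_p)\subseteq\ker(\eta_{-p})$, contradicting the fact that $W_p$ has no non-negative degree subbundle while $\ker(\eta_{-p})$ has strictly negative slope by the stability built into Lemma~\ref{lemma: fixed point SO(1,n)}. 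The second vanishing reduces by Serre duality to $H^0(W_p\otimes I\otimes K^{1-p})=0$, which holds since $W_p$ has only strictly negative subbundles and $I\otimes K^{1-p}$ has non-positive degree for $p\geq 2$.

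The weight $-p$ complex with $p$ odd is handled identically: $\fso_{-p}(V)=0$ since $-p$ is odd, $\fso_{-p}(W)$ reduces via \eqref{eq fsoW2k+1} to $\Hom(W_p,W_0')$ paired with its skew-dual, and $\Hom_{1-p}(W,V)\otimes K$ reduces via \eqref{eq HomWV2k} to $\Hom(W_0', V_{1-p})\otimes K$. The induced $\ad_\eta$ is composition with $\eta_{-p}$, so its long exact sequence yields \eqref{eq H1-p}, and the vanishings in degrees $0$ and $2$ follow by the same stability/Serre duality arguments.

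The $p$ even case of weight $-p$ is the most delicate step and is where I expect the main difficulty, because the extra summand in $W_0=I\oplus W_0'$ couples the weight-zero block $\beta_0=(\beta_0^I\ \beta_0')$ of $\beta\in\fso_{-p}(W)$ with the $\alpha_1$ block of $\fso_{-p}(V)$ via the mechanism displayed in \eqref{Eq p even p/2 map}. Tracking this coupling through \eqref{eq fsoV2k}, \eqref{eq fsoW2k} and \eqref{eq HomWV2k+1}, every pair $(\alpha,\beta)$ whose components neither lie in the $\Hom(W_p,W_0')$ factor of $\fso_{-p}(W)$ nor involve the coupled pair $(\alpha_1,\beta_0^I)$ is matched bijectively by the $\eta_i=1$ entries of $\eta$ to a summand of $\Hom_{1-p}(W,V)\otimes K$. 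This produces the decompositions $\fso_{-p}(V)\oplus\fso_{-p}(W)\cong\Hom(W_p,W_0')\oplus A$ and $\Hom_{1-p}(W,V)\otimes K\cong K^p\oplus\Hom(W_0',K^p)\oplus A$ stated in the lemma, with $\ad_\eta$ in the claimed block form: composition with $\eta_{-p}$ from $\Hom(W_p,W_0')$ to $\Hom(W_0',K^p)$, an isomorphism $a$ on $A$, and the $\beta_0^I$--$\alpha_1$ pairing contributing the entry $b:A\to K^p$. Since $a$ is an isomorphism, a change of basis eliminates $b$ and reduces $C^\bullet_{-p}$ to the direct sum of the contractible piece $A\xrightarrow{\sim}A$, the composition complex $\Hom(W_p,W_0')\xrightarrow{\eta_{-p}}\Hom(W_0',K^p)$, and the trivial piece $0\to K^p$; the hypercohomology then splits as $H^0(K^p)\oplus\HH^1_{-p}$ in degree $1$, with vanishing in degrees $0$ and $2$ (the degree $2$ vanishing using $H^1(K^p)=0$ for $p\geq 2$ and $g\geq 2$, and the arguments of the odd case for the $\HH^1_{-p}$ block).
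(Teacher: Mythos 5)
Your overall route is the paper's: you decompose the weight $-2p$ and weight $-p$ subcomplexes using \eqref{eq fsoV2k}, \eqref{eq fsoW2k}, \eqref{eq fsoW2k+1}, \eqref{eq HomWV2k+1} and \eqref{eq HomWV2k}, identify $\ad_\eta$ block by block, split off the part on which it is an isomorphism (including the coupled $(\alpha_1,\beta_0^I)$ block in the even case, which is exactly the paper's bookkeeping), and read the remaining hypercohomology from \eqref{eq SOpq graded hypercohomoly sequence}; your Serre-duality argument for the $\HH^2$ vanishings is the same one used in Lemma \ref{lemma vanishing H2 SO1n}.

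The one genuine gap is your justification of $\HH^0(C^\bullet_{-2p})=0$. You claim that a nonzero $\beta\in H^0(\fso_{-2p}(W))$ with $\eta_{-p}\beta=0$ would give $\beta(W_p)\subseteq\ker(\eta_{-p})$, and that this contradicts ``$W_p$ has no non-negative degree subbundle while $\ker(\eta_{-p})$ has strictly negative slope.'' That is not a contradiction: a nonzero holomorphic map from a negative-degree bundle into another negative-degree bundle with no non-negative subbundles is not excluded by any degree count (both $\ker\beta$ and the saturation of $\im\beta$ are merely required to have negative degree, which is perfectly consistent, and the skew-symmetry $\beta+\beta^*=0$ only forces $\beta=0$ for free when $\rk(W_p)=1$). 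The degree argument you are importing from the weight $-1$ piece of Lemma \ref{Lemma: H1 of SO(1,n) fixed point} works there only because the source of the map is the degree-zero \emph{polystable} bundle $W_0'$; for the same reason your ``same argument'' claim is fine for $\HH^0(C^\bullet_{-p})$ in both parities, where the relevant map is $\beta\mapsto-\eta_{-p}\beta^*$ with $\beta^*$ defined on $W_0'$. For weight $-2p$ the correct argument is the one the paper invokes via Lemma \ref{Lemma: H1 of SO(1,n) fixed point}: such a $\beta$ is a nonzero holomorphic section of $\fso$ of $W_{-p}\oplus I\oplus W_p$ commuting with the Higgs field $\smtrx{0&0&0\\\eta_{-p}&0&0\\0&\eta_{-p}^*&0}$, i.e.\ a nonzero element of $\HH^0$ of the deformation complex of the $\rO(2\rk(W_p)+1,\C)$-Higgs bundle of Lemma \ref{lemma: fixed point SO(1,n)}, which vanishes because that Higgs bundle is stable.
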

\begin{proof}
For weight $-2p$ we have $\fso_{-2p}(V)=0,$ $\fso_{-2p}(W)=\{\beta\in\Hom(W_p,W_{-p})|\beta+\beta^*=0\}$ and $\Hom_{-2p+1}(W,V)\otimes K\cong\Hom(W_{p},IK^p).$
The map $\ad_\eta:\fso_{-2p}(W)\to \Hom(W_{p},K^p)$ is given by $\ad_\eta(\beta)=\eta_{-p}\beta$. 
The result now follows from Lemmas \ref{lemma vanishing H2 SO1n} and  \ref{Lemma: H1 of SO(1,n) fixed point}. 

If $p$ is odd, then by \eqref{eq fsoW2k+1} and \eqref{eq HomWV2k} we have 
        \[\xymatrix@=1em{\fso_{-p}(V)=0,&\fso_{-p}(W)\cong\Hom(W_p,W_0')&\text{and}&\Hom_{1-p}(W,V)\otimes K=\Hom(W_0',IK^p).}\]
The map $\ad_{\eta}:\Hom(W_p,W_0')\to \Hom (W_0',IK^p)$ is given by $\ad_\eta(\beta_p)=-\eta_{-p}\beta_p^*$. Again, the result now follows from Lemmas \ref{lemma vanishing H2 SO1n} and  \ref{Lemma: H1 of SO(1,n) fixed point}.

If $p$ is even, then
\begin{equation*}
\begin{split}
\fso_{-p}(V)&\cong\underbrace{K^p\oplus\cdots\oplus K^p}_{\floor{\frac{p}{4}} \text{ times}},\\
\fso_{-p}(W)&\cong\Hom(W_p,I)\oplus\Hom(W_p,W_0')\oplus \underbrace{K^p\oplus\cdots\oplus K^p}_{\floor{\frac{p-2}{4}}\text{ times}},\\
\Hom_{1-p}(W,V)\otimes K&\cong\Hom(W_p, I)\oplus \Hom(W_0',IK^p)\oplus \underbrace{K^p\oplus\cdots\oplus K^p}_{\frac{p}{2}\text{ times}}.
\end{split}
\end{equation*}
Setting $A=\Hom(W_p,I)\oplus\underbrace{K^p\oplus\cdots\oplus K^p}_{\frac{p}{2}-1\ \text{times}}$ we have $\fso_{-p}(V)\oplus\fso_{-p}(W)\cong \Hom(W_p,W_0')\oplus A$ and $\Hom_{-p}(W,V)\otimes K\cong  K^{p}\oplus \Hom(W_0',IK^p)\oplus A.$ 
The map $\ad_\eta$ is analogous to the one in the proof of Lemma \ref{lemma 2k<0 H1 vanishing H2 not -p -2p} 
 except that \eqref{eq p even case of map} is given by 
\[\xymatrix@=1em{&V_1\otimes K\ar[dr]^{\alpha_1\otimes\Id_K}&\\ I\oplus W_0'\ar[ur]^{\smtrx{1& 0}}\ar[dr]_-{-\beta_p^*}&&V_{-p+1}\otimes K\\&W_{-p}\ar[ur]_-{\eta_{-p}}&}\ \ \ \ \ \ \ \ \ \ \  \xymatrix@=1em{\\\text{and}\\}\ \ \ \ \ \ \ \ \ \ \ \ \xymatrix@=1em{&&\\W_p\ar[dr]_-{\beta_p}&&V_1\otimes K.\\&I\oplus W_0'\ar[ur]_{\smtrx{1&0}}&~}\]
Thus, $\ad_\eta$ restricted to $\Hom(W_p,W_0')$ is given by $\beta_p'\mapsto -\eta_{-p}\beta_p^{\prime\ast}.$ Hence, 
\[\ad_\eta=\smtrx{0&b\\\eta_{-p}&0\\0&a}:\Hom(W_p,W_0')\oplus A\longrightarrow K^{p}\oplus \Hom(W_0',K^p)\oplus A\] where $a:A\to A$ is an isomorphism.

Since $H^1(K^p)=0$, we have $\HH^2(C^\bullet_{-p})=0$. As in the odd case, we also find that $\HH^0(C^\bullet_{-p})=0$. Moreover, $\HH^1(C^\bullet_{-p})\cong H^0(K^p)\oplus \HH^1_{-p}$ where $\HH^1_{-p}$ is given by \eqref{eq H1-p}.
\end{proof}

The final case concerns the weight zero subcomplex.

\begin{lemma}
    \label{lemma C0 HH10 and HH20} There is a bundle $A$ so that
    \[\xymatrix@=1.5em{\fso_0(V)\oplus \fso_0(W)\cong\fso(W_0')\oplus \End(W_{-p})\oplus A&\text{and}&\Hom_1(W,V)\otimes K\cong \Hom(W_{-p},IK^p)\oplus A},\]
    where $\fso(W_0')$ is the bundle of skew-symmetric endomorphisms of $W_0'$ (with respect to to $Q_{W_0'}$). With respect to this splitting,
    \[\ad_\eta=\smtrx{0&\eta_{-p}&0\\0&b&a}:\xymatrix{\fso(W_0')\oplus \End(W_{-p})\oplus A\ar[r]&\Hom(W_{-p},IK^p)\oplus A},\]
    where $a:A\to A$ is an isomorphism. 
    In particular, 
    \[\xymatrix{\HH^2(C^\bullet_0)=0,&\HH^0(C^\bullet_0)=H^0(\fso(W_0'))& \text{and} &\HH^1(C^\bullet_0)=H^1(\fso(W_0'))\oplus \HH^1_{0,p}~,}\] where
    \[\xymatrix@=.8em{0\ar[r]&H^0(\End(W_{-p}))\ar[r]^-{\eta_{-p}}&H^0(\Hom(W_{-p},IK^p))\ar[r]&\HH^1_{0,p}\ar[r]&H^1(\End(W_{-p}))\ar[r]&H^1(\Hom(W_{-p},IK^p)\ar[r]&0.}\]
\end{lemma}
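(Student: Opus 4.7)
My plan mirrors the arguments in Lemmas \ref{lemma 2k<0 H1 vanishing H2 not -p -2p} and \ref{lemma -p  H1 vanishing H2}. I would first use \eqref{eq fsoV2k}, \eqref{eq fsoW2k}, \eqref{eq fsoW2k+1}, and \eqref{eq HomWV2k+1} to expand $\fso_0(V)$, $\fso_0(W)$, and $\Hom_1(W,V)\otimes K$ into line-bundle summands plus the distinguished non-line-bundle pieces $\fso(W_0')$ and $\End(W_{-p})$ on the source side (the latter embedded in $\fso_0(W)$ via the skew constraint $\beta_p=-\beta_{-p}^*$) and $\Hom(W_{-p},IK^p)$ on the target. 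When $p$ is even, $W_0=I\oplus W_0'$ contributes an extra off-diagonal summand $\Hom(I,W_0')$ to $\fso(W_0)\subset\fso_0(W)$, and the $j=p/2$ summand of \eqref{eq HomWV2k+1} contributes a matching $\Hom(W_0',I)\cong\Hom(I,W_0')$ to the target. The bundle $A$ is then defined as the direct sum of the remaining line-bundle pieces together with $\Hom(I,W_0')$ when $p$ is even; a direct rank count verifies that source and target both contain $A$ as claimed.

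I would then compute $\ad_\eta$ componentwise using the explicit Higgs field \eqref{eq eta decomposition2}. On $\fso(W_0')$ the image vanishes because $\eta_0=(1,0)$ annihilates $W_0'$ for $p$ even, while for $p$ odd $W_0'$ is disjoint from the chain. On $\End(W_{-p})$ the image is $\beta_{-p}\mapsto \eta_{-p}\circ \beta_{-p}\in\Hom(W_{-p},IK^p)$, with no contribution to other target factors because $\eta$ vanishes on $W_p$. On the $A$-summand, the scalar components $(\alpha_{1-p+2j},\beta_{-p+2j})$ enter a system of the same shape as \eqref{eq ad eta 2kgeq0}, and the off-diagonal $\Hom(I,W_0')$ piece contributes only to $\Hom(W_0',I)$ via the adjoint isomorphism $c\mapsto -c^*$. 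The sole subtlety is that the outermost summand $\alpha_{p-1}$ of $A$ produces a spurious term $\alpha_{p-1}^*\eta_{-p}$ in $\Hom(W_{-p},IK^p)$; I would absorb this by modifying its embedding in $A$ to $\alpha_{p-1}-\alpha_{p-1}^*\cdot \Id_{W_{-p}}$, which adds a compensating element of $\End(W_{-p})$ and cancels the unwanted contribution. After this change of basis $\ad_\eta$ acquires the claimed block form $\smtrx{0&\eta_{-p}&0\\0&b&a}$, and a recursive argument analogous to \eqref{eq ad eta 2kgeq0} shows that the determinant of $a$ is a nonzero constant, so $a:A\to A$ is an isomorphism of sheaves.

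Once this block structure is established, the hypercohomology claims are formal. Since $a$ is a sheaf isomorphism, the subcomplex $A\xrightarrow{a}A$ has vanishing hypercohomology, and $\HH^*(C^\bullet_0)$ coincides with the hypercohomology of the reduced complex
\[\xymatrix{\fso(W_0')\oplus\End(W_{-p})\ar[r]^-{(0,\eta_{-p})}&\Hom(W_{-p},IK^p).}\]
This splits as $\fso(W_0')\to 0$, contributing $H^0(\fso(W_0'))$ and $H^1(\fso(W_0'))$, plus $\End(W_{-p})\xrightarrow{\eta_{-p}}\Hom(W_{-p},IK^p)$, whose hypercohomology is by definition $\HH^1_{0,p}$. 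The required vanishings $\HH^0(C^\bullet_0)=H^0(\fso(W_0'))$ and $\HH^2(C^\bullet_0)=0$ then follow from the injectivity of $H^0(\End(W_{-p}))\xrightarrow{\eta_{-p}}H^0(\Hom(W_{-p},IK^p))$ and the surjectivity of $H^1(\End(W_{-p}))\xrightarrow{\eta_{-p}}H^1(\Hom(W_{-p},IK^p))$, both of which were already established for this very complex in the proof of Lemma \ref{Lemma: H1 of SO(1,n) fixed point}.

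The hardest step will be the block-form computation: tracking the parity-dependent decompositions of $W_0$, handling the off-diagonal $\Hom(I,W_0')$ summand when $p$ is even, and verifying that the basis modification used to absorb the $\alpha_{p-1}$-contribution to $\Hom(W_{-p},IK^p)$ does not break the isomorphism property of $a$.
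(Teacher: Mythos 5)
Your proposal is correct and follows essentially the same route as the paper: the same weight-zero decompositions from \eqref{eq fsoV2k}, \eqref{eq fsoW2k} and \eqref{eq HomWV2k+1}, the same choice of $A$ (including the extra $\Hom(W_0',I)$ block for $p$ even), reduction modulo the acyclic subcomplex $A\xrightarrow{a}A$, and the same appeal to the $\SO(1,n)$ fixed-point analysis for the vanishing of $\HH^0_{0,p}$ and $\HH^2_{0,p}$. The only difference is that you make explicit the change of basis absorbing the cross-term $\alpha_{p-1}^*\eta_{-p}=-\alpha_{1-p}\eta_{-p}$ into $\End(W_{-p})$, a point the paper's proof passes over silently when asserting the block form $\smtrx{\eta_{-p}&0\\b&a}$; your verification that this does not disturb the isomorphism $a$ is correct.
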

\begin{proof}
    By $\eqref{eq HomWV2k+1}$ we have
    \[\Hom_1(W,V)\otimes K\cong\begin{dcases}
        \Hom(W_{-p},IK^p)\oplus \underbrace{\cO\oplus\cdots\oplus\cO}_{p-1\ \text{times}} &\text{$p$ odd}\\
        \Hom(W_{-p},IK^p)\oplus \underbrace{\cO\oplus\cdots\oplus\cO}_{p-1\ \text{times}}\oplus \Hom(W_0',I)&\text{$p$ even}
    \end{dcases} \]
and by \eqref{eq fsoV2k} and \eqref{eq fsoW2k}, 
    \[\fso_0(V)\oplus \fso_0(W)\cong \begin{dcases}
        \End(W_{-p}) \oplus \underbrace{\cO\oplus\cdots\oplus\cO}_{p-1\ \text{times}}\oplus\,\fso(W_0')&\text{$p$ odd} \\
        \End(W_{-p})\oplus \underbrace{\cO\oplus\cdots\oplus\cO}_{p-1\ \text{times}}\oplus \Hom(W_0',I)\oplus \fso(W_0')&\text{$p$ even.}
    \end{dcases}\]
Hence, setting $A$ to be 
\[A=\begin{dcases}
    \underbrace{\cO\oplus\cdots\oplus\cO}_{p-1\ \text{times}}&\text{$p$ odd}\\ 
    \underbrace{\cO\oplus\cdots\oplus\cO}_{p-1\ \text{times}}\oplus \Hom(W_0',I)&\text{$p$ even}
\end{dcases}\]
yields $\fso_0(V)\oplus \fso_0(W)=\fso(W_0')\oplus \End(W_{-p})\oplus A$ and $\Hom(W,V)_1\otimes K= \Hom(W_{-p},IK^p)\oplus A.$

Since, $W_0'$ is an invariant bundle, the restriction of the map $\ad_\eta:\fso_0(W)\oplus \fso_0(V)\to\Hom_1(W,V)\otimes K$ to $\fso(W_0')$ is identically zero. The restriction of the map $\ad_\eta$ to $\End(W_{-p})\oplus A$ is similar to \eqref{eq ad eta 2kgeq0} with the exception that the term $W_{-p}\to V_{1-p}\otimes K$ is given by
\[\xymatrix@=.8em{&V_{1-p}\otimes K\ar[dr]^-{\alpha_{1-p}\otimes\Id_K}&\\W_{-p}\ar[ur]^-{\eta_{-p}}\ar[dr]_-{\beta_{-p}}&&V_{1-p}\otimes K.\\&W_{-p}\ar[ur]_-{\eta_{-p}}&}\] 
In particular, it is given by $\smtrx{\eta_{-p}&0\\b&a}:\End(W_{-p})\oplus A\to \Hom(W_{-p},IK^p)\oplus A$ where $a$ is an isomorphism. 

The hypercohomology complex for $C^\bullet$ splits as a direct sum of the following two complexes 
\[\xymatrix{0\ar[r]&\HH^0_{0,'}\ar[r]&H^0(\fso(W_0'))\ar[r]&0\ar[r]&\HH^1_{0,'}\ar[r]&H^1(\fso(W_0'))\ar[r]&0,}\]
and
\[\xymatrix@R=1em{0\ar[r]&\HH^0_{0,p}\ar[r]&H^0(\End(W_{-p}))\ar[r]&H^0(\Hom(W_{-p}, IK^p))\ar[r]&\HH^1_{0,p}\ar@{->}`r/3pt [d] `/10pt[l] `^d[llll] `^r/3pt[d][dlll]&\\&H^1(\End(W_{-p}))\ar[r]&H^1(\Hom(W_{-p},IK^p)) \ar[r]&\HH^2_{0,p}\ar[r]&0.}\]
By Lemma \ref{lemma: fixed point SO(1,n)}, $\left(W_p\oplus I\oplus  W_{-p},\smtrx{0&0&0\\\eta_{-p}&0&0\\0&\eta_{-p}^*&0}\right)$ is a stable $K^p$-twisted $\rO(2\rk(W_p)+1,\C)$-Higgs bundle, so the hypercohomology groups $\HH^0_{0,p}$ and $\HH^2_{0,p}$ both vanish and $\HH^1(C^\bullet_0)=H^1(\fso(W_0'))\oplus \HH^1_{0,p}.$
\end{proof}

 \subsection{Proof of Theorem \ref{Thm Psi open and closed}}

 We are now set up to prove Theorem \ref{Thm Psi open and closed}. We start by describing a neighborhood of the image of the map $\Psi$ which is open in $\cM(\SO(p,q)).$
 \begin{proposition}
    \label{prop: open nbhd of imPsi}
    For each $(I,\widehat W,\hat\eta,q_2,\ldots,q_{2p-2})$ in $\cM_{K^p}(\SO(1,q-p+1))\times \bigoplus\limits_{j=1}^{p-1}H^0(K^{2j})$, the second hypercohomology group for the associated $\SO(p,q)$-Higgs bundle vanishes
    \[\HH^2(C^\bullet(\Psi(I,\widehat W,\hat\eta,q_2,\ldots,q_{2p-2})))=0.\]
     In particular, an open neighborhood of $\Psi(I,\widehat W,\hat\eta,q_2,\ldots,q_{2p-2})$ in $\cM(\SO(p,q))$ is isomorphic to an open neighborhood of zero in
    \[\HH^1(C^\bullet(\Psi(I,\widehat W,\hat\eta,q_2,\ldots,q_{2p-2}))\sslash \Aut(\Psi(I,\widehat W,\hat\eta,q_2,\ldots,q_{2p-2})).\]
\end{proposition}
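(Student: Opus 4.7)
The strategy is to establish the vanishing $\HH^2(C^\bullet)=0$ at every point in the image of $\Psi$ first, after which the statement about the Kuranishi-style local model is a direct application of the general deformation theory reviewed in Section \ref{sec:complex-analytic}. Together with the previously established fact that $\Psi$ is a homeomorphism onto its image, this will then give the claimed description of open neighborhoods.

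For the vanishing, the plan is to exploit the $\C^*$-action on $\cM(\SO(p,q))$ together with the upper semicontinuity of $\HH^2$ provided by Lemma \ref{Lemma: reduction to fixed points H2=0}. Fix a point $(I,\widehat W,\hat\eta,q_2,\ldots,q_{2p-2})$ in the domain of $\Psi$, let $(V,W,\eta)=\Psi(I,\widehat W,\hat\eta,q_2,\ldots,q_{2p-2})$, and consider the $\C^*$-family $\{(V,W,\lambda\eta)\}_{\lambda\in\C^*}$. The explicit gauge transformations in \eqref{EQ scaling gauge action} give, at the level of the moduli space,
\[\lambda\cdot(V,W,\eta) \;=\; \Psi\bigl(I,\widehat W,\lambda^p\hat\eta,\lambda^2 q_2,\ldots,\lambda^{2p-2}q_{2p-2}\bigr).\]
As $\lambda\to 0$ each differential $\lambda^{2j}q_{2j}$ tends to zero, while $(I,\widehat W,\lambda^p\hat\eta)$ flows inside $\cM_{K^p}(\SO(1,q-p+1))$ to a $\C^*$-fixed point $(I,\widehat W_0,\hat\eta_0)$, which exists by properness of the Hitchin fibration. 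Continuity of $\Psi$ (Proposition \ref{prop def of map Psi}) identifies $\lim_{\lambda\to 0}(V,W,\lambda\eta)$ with $\Psi(I,\widehat W_0,\hat\eta_0,0,\ldots,0)$.

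By Lemma \ref{Lem: Fixed points in the image of Psi}, this limit point is precisely of the explicit chain form analyzed throughout Section \ref{sec: Local structure of fixed points}. The deformation complex splits with respect to the weight grading as $\HH^2(C^\bullet)=\bigoplus_k\HH^2(C^\bullet_k)$, and the vanishing of each summand is exactly what Lemmas \ref{lemma k>0 H1 vanishing H2 not p 2p}, \ref{lemma p, 2p H1 vanishing H2}, \ref{lemma 2k+1<0 H1 vanishing H2 not -p -2p}, \ref{lemma 2k<0 H1 vanishing H2 not -p -2p}, \ref{lemma -p  H1 vanishing H2}, and \ref{lemma C0 HH10 and HH20} collectively establish. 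Therefore $\HH^2(C^\bullet(\Psi(I,\widehat W_0,\hat\eta_0,0,\ldots,0)))=0$, and Lemma \ref{Lemma: reduction to fixed points H2=0} propagates this back to the original point $\Psi(I,\widehat W,\hat\eta,q_2,\ldots,q_{2p-2})$.

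The main obstacle is really packaged into the preceding section; assuming the weight-graded vanishings, the argument here is essentially an assembly of the $\C^*$-limit reduction. The one subtlety that should be checked carefully is that the scaling identity \eqref{EQ scaling gauge action} really realizes the $\C^*$-action at the moduli level (so that the limit it produces is the genuine $\C^*$-limit in $\cM(\SO(p,q))$), and that the $\SO(1,q-p+1)$-limit $(I,\widehat W_0,\hat\eta_0)$ is indeed captured by the fixed-point classification of Lemma \ref{lemma: fixed point SO(1,n)}. Once $\HH^2=0$ is in hand, the Kuranishi model described in Section \ref{sec:complex-analytic}, together with the $\Aut$-invariance of the local slice and the identification of an open neighborhood with $U\sslash\Aut(\cE,\varphi)$ for $U$ an $\Aut(\cE,\varphi)$-invariant open neighborhood of zero in $\HH^1(C^\bullet)$, completes the proof.
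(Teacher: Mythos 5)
Your proposal is correct and follows essentially the same route as the paper: reduce to $\C^*$-fixed points in the image of $\Psi$ via the semicontinuity of $\HH^2$ from Lemma \ref{Lemma: reduction to fixed points H2=0}, identify those fixed points using Lemma \ref{Lem: Fixed points in the image of Psi}, and invoke the weight-by-weight vanishing lemmas of Section \ref{sec: Local structure of fixed points} before applying the Kuranishi model. The extra care you take in using \eqref{EQ scaling gauge action} to verify that the $\C^*$-limit of a point in the image stays in the image is a detail the paper leaves implicit, but it is not a different argument.
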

\begin{proof}
    By Lemma \ref{Lemma: reduction to fixed points H2=0}, it suffices to prove the above proposition at the fixed points of the $\C^*$-action in the image of $\Psi.$ These are the Higgs bundles given in Lemma \ref{Lem: Fixed points in the image of Psi}. In Lemmas \ref{lemma k>0 H1 vanishing H2 not p 2p}, \ref{lemma p, 2p H1 vanishing H2}, \ref{lemma 2k+1<0 H1 vanishing H2 not -p -2p}, \ref{lemma 2k<0 H1 vanishing H2 not -p -2p}, \ref{lemma -p  H1 vanishing H2} and \ref{lemma C0 HH10 and HH20} it is shown that if $(W,V,\eta)$ is a fixed point of the $\C^*$-action in the image of $\Psi$, then each of the graded pieces of $\HH^2(C^\bullet(W,V,\eta))$ vanish.
\end{proof}
\begin{proposition}
    \label{proposition H1 iso at fixed points}
    For all $\Psi((I,\widehat W,\hat\eta),0,\ldots,0)$ which are fixed points of the $\C^*$-action we have an isomorphism induced by $\Psi$.
    \[\HH^1(C^\bullet(\Psi(I,\widehat W,\hat\eta,0,\ldots,0))\sslash\Aut(\Psi(I,\widehat W,\hat\eta,0,\ldots,0))\cong \Big(\HH^1(C^\bullet(I,\widehat W,\hat\eta))\sslash\Aut(\widehat W)\Big)\times \bigoplus\limits_{j=1}^{p-1}H^0(K^{2j}).\] 
\end{proposition}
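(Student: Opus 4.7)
The plan is to exploit the weight decomposition
\[\HH^1\bigl(C^\bullet(\Psi(I,\widehat W,\hat\eta,0,\ldots,0))\bigr)=\bigoplus_k\HH^1(C^\bullet_k)\]
established piece-by-piece in Lemmas \ref{lemma k>0 H1 vanishing H2 not p 2p}--\ref{lemma C0 HH10 and HH20}, and match its summands against the analogous weight decomposition of $\HH^1(C^\bullet(I,\widehat W,\hat\eta))$ from Lemma \ref{Lemma: H1 of SO(1,n) fixed point} together with the quadratic-differential factor $\bigoplus_{j=1}^{p-1}H^0(K^{2j})$.

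First I would isolate the graded pieces which will correspond to the Hitchin-section factor. By Lemmas \ref{lemma k>0 H1 vanishing H2 not p 2p} and \ref{lemma 2k+1<0 H1 vanishing H2 not -p -2p}, the only positive-weight pieces with nontrivial $\HH^1$ are $k=p,2p$, and all negative-odd-weight pieces except $k=-p$ vanish. Lemma \ref{lemma 2k<0 H1 vanishing H2 not -p -2p} gives $\HH^1(C^\bullet_{2k})\cong H^0(K^{-2k})$ for each negative even weight $2k\notin\{-p,-2p\}$, and when $p$ is even, Lemma \ref{lemma -p  H1 vanishing H2} supplies an additional $H^0(K^p)$ summand inside $\HH^1(C^\bullet_{-p})$. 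Summed over the relevant weights, these sections give exactly $\bigoplus_{j=1}^{p-1}H^0(K^{2j})$. That the differential $d\Psi$ maps a variation $(\dot q_2,\ldots,\dot q_{2p-2})$ isomorphically onto this collection of weight spaces follows from the explicit matrix form \eqref{eq choice of Hitchin section O(p-1,p)}: each $q_{2j}$ appears as an off-diagonal entry whose $\C^*$-weight on the Higgs field is precisely one of the values $-2j+1$ contributing to these summands.

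Next I would match the remaining pieces $k\in\{0,\pm p,\pm 2p\}$ with $\HH^1(C^\bullet(I,\widehat W,\hat\eta))$. Under the relabeling $W_{\pm p}\leftrightarrow W_{\pm 1}$, $W_0'\leftrightarrow W_0$ from Lemma \ref{Lem: Fixed points in the image of Psi}, a direct comparison of the lemmas yields: weight $2p$ (resp.\ $-2p$) in $\SO(p,q)$ corresponds to weight $2$ (resp.\ $-2$) in $\SO(1,q-p+1)$ since both give $H^1(\fso_{\pm 2}(W))$ fitting into the same exact sequence; weight $p$ in $\SO(p,q)$ matches the weight-$1$ piece $H^1(\Hom(W_{-p},W_0'))$; weight $-p$ (after removing the $H^0(K^p)$ summand assigned above to the Hitchin-section factor) matches the weight-$(-1)$ piece via the duality $\Hom(W_p,W_0')\cong\Hom(W_0',W_{-p})$ induced by $Q_{W_0'}$; and the weight-$0$ pieces agree verbatim, as both split as $H^1(\fso(W_0'))\oplus\HH^1_{0,p}$ with the same defining sequence for $\HH^1_{0,p}$. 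To see that this identification is induced by $d\Psi$, I would invoke the naturality of the deformation complex under the block embedding \eqref{EQ tildePsi}: it yields a morphism of complexes $C^\bullet(I,\widehat W,\hat\eta)\to C^\bullet(\Psi(\cdot))$ whose image lands precisely in the $\widehat W$-block weight components, and under which the connecting maps $\ad_{\eta_{-p}}$ on the two sides correspond termwise.

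Finally, for the GIT quotients, I would apply Lemma \ref{Lemma Gauge group fixing im Psi} to canonically identify $\Aut(\Psi(I,\widehat W,\hat\eta,0,\ldots,0))$ with $\Aut(I,\widehat W,\hat\eta)$. Under the weight-decomposition identification above this group acts by its natural action on $\HH^1(C^\bullet(I,\widehat W,\hat\eta))$ and trivially on $\bigoplus_{j=1}^{p-1}H^0(K^{2j})$; the latter triviality follows because the $q_{2j}$ are $\SO(2p-1,\C)$-invariant polynomials in the Hitchin-section Higgs field (recovered from the Hitchin fibration \eqref{EQ Hitchin Fibration}) and hence are untouched by any gauge transformation originating from the $\SO(1,q-p+1)$-block. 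Passing to GIT quotients on each side then gives the required isomorphism.

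The main obstacle I anticipate is the verification in the second paragraph that the dimension-matching of the remaining weight pieces is genuinely induced by $d\Psi$ rather than an abstract coincidence: this requires carefully tracing the $\ad_{\eta_{-p}}$-maps appearing in the defining sequences for $\HH^1_{0,p}$, $\HH^1_{-p}$, and $\HH^1(C^\bullet_{\pm 2p})$ through the block embedding \eqref{EQ tildePsi} and checking naturality weight by weight, taking care with the dualities between $\Hom(W_p,W_0')$ and $\Hom(W_0',W_{-p})$ and the sign conventions that arise from the transpose structure of the $\SO$-Higgs field.
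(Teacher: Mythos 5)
Your proposal is correct and follows essentially the same route as the paper's proof: both match the weight-graded pieces of $\HH^1(C^\bullet(\Psi(I,\widehat W,\hat\eta,0,\ldots,0)))$ computed in Lemmas \ref{lemma k>0 H1 vanishing H2 not p 2p}--\ref{lemma C0 HH10 and HH20} against Lemma \ref{Lemma: H1 of SO(1,n) fixed point} plus the factor $\bigoplus_{j=1}^{p-1}H^0(K^{2j})$, and then use Lemma \ref{Lemma Gauge group fixing im Psi} to identify the automorphism groups and observe that they act trivially on the holomorphic differentials. Your write-up actually supplies more detail (the explicit weight bookkeeping and the naturality of the identification under $\widetilde\Psi$) than the paper, which disposes of these points with ``it is clear from our constructions.''
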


\begin{proof}
   Let $\Psi(I, \widehat W,\hat\eta,0,\ldots,0)$ be a fixed point of the $\C^*$-action. For the $\SO(1,q-p+1)$-Higgs bundle $(I,\widehat W,\hat\eta),$ the first hypercohomology group $\HH^1(C^\bullet(I,\widehat W,\hat\eta))$ of the deformation complex was computed in Lemma \ref{Lemma: H1 of SO(1,n) fixed point}. 
  In Lemmas \ref{lemma k>0 H1 vanishing H2 not p 2p}, \ref{lemma p, 2p H1 vanishing H2}, \ref{lemma 2k+1<0 H1 vanishing H2 not -p -2p}, \ref{lemma 2k<0 H1 vanishing H2 not -p -2p}, \ref{lemma -p  H1 vanishing H2} and \ref{lemma C0 HH10 and HH20} it was shown that the first hypercohomology group of the deformation complex of the $\SO(p,q)$-Higgs bundle is given by
 \[\HH^1(C^\bullet(\Psi(I,\widehat W,\hat\eta,0,\ldots,0)))\cong \HH^1(C^\bullet(I,\widehat W,\hat\eta))\times \bigoplus\limits_{j=1}^{p-1}H^0(K^{2j}).\]
 It is clear from our constructions that the isomorphism is induced by $\Psi$.

 By Lemma \ref{Lemma Gauge group fixing im Psi}, every $\rS(\rO(1,\C)\times \rO(q-p+1,\C))$ automorphism $(\det(g_{\widehat W}),g_{\widehat W})$ of $(I,\widehat W,\hat \eta)$ determines a unique automorphism of $\Psi(I,\widehat W,\hat\eta,0\dots, 0)
 $
 \[(g_V,g_W)=\left(\det(g_{\widehat W})\Id_{\cK_{p-1}},\smtrx{g_{\widehat W}&0\\0&\det(g_{\widehat W})\Id_{\cK_{p-2}}}\right)~.\] 
 Moreover, the action of such an automorphism on the holomorphic differentials in the above description of $\HH^1(C^\bullet(\Psi(I,\widehat W,\hat\eta,0,\ldots,0))$ is trivial. Thus,
\[\HH^1(C^\bullet(\Psi(I,\widehat W,\hat\eta,0,\ldots,0))\sslash\Aut(\Psi(I,\widehat W,\hat\eta,0,\ldots,0))\cong \Big(\HH^1(C^\bullet(I,\widehat W,\hat\eta))\sslash\Aut(\widehat W)\Big)\times \bigoplus\limits_{j=1}^{p-1}H^0(K^{2j})\]
as claimed. 
\end{proof}

\begin{theorem}
The image of the map $\Psi$ from \eqref{Eq Psi} is open and closed.  
\end{theorem}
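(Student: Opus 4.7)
The image $\mathrm{Im}(\Psi)$ is already known to be a continuous and injective image of the domain (Proposition \ref{prop def of map Psi}), so I will prove separately that it is open and that it is closed.

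\medskip

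\emph{Openness.} Proposition \ref{prop: open nbhd of imPsi} shows that $\HH^2(C^\bullet) = 0$ at every image point, so every such point admits a Kuranishi neighborhood in $\cM(\SO(p,q))$ modeled on $\HH^1(C^\bullet) \sslash \Aut$. Proposition \ref{proposition H1 iso at fixed points} identifies this local model at a $\C^*$-fixed image point with the analogous quotient on the domain side, whence openness of the image holds at every $\C^*$-fixed point of $\mathrm{Im}(\Psi)$. To deduce openness at an arbitrary $y$ in the domain, I will use the $\C^*$-equivariance from \eqref{EQ scaling gauge action}: the codomain action $\eta \mapsto \lambda \eta$ corresponds under $\Psi$ to the domain action $(I, \widehat W, \hat\eta, q_2, \ldots, q_{2p-2}) \mapsto (I, \widehat W, \lambda^p \hat\eta, \lambda^2 q_2, \ldots, \lambda^{2p-2} q_{2p-2})$. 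By properness of the Hitchin fibration, the $\lambda \to 0$ limit exists on both sides and gives a fixed point $\Psi(y_0) \in \mathrm{Im}(\Psi)$. Since openness already holds at $\Psi(y_0)$, there is an open neighborhood $U \subset \mathrm{Im}(\Psi)$ of this fixed point; for $|\lambda|$ sufficiently small, $\lambda \cdot \Psi(y) \in U$, and since $\mathrm{Im}(\Psi)$ is $\C^*$-invariant and the $\C^*$-action is by homeomorphisms, the translate $\lambda^{-1} \cdot U$ is an open neighborhood of $\Psi(y)$ contained in $\mathrm{Im}(\Psi)$.

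\medskip

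\emph{Closedness.} I will show that $\Psi$ is proper; combined with the homeomorphism property of Proposition \ref{prop def of map Psi}, this gives closedness of the image. Let $h: \cM(\SO(p,q)) \to \mathcal{B}$ and $\hat h: \cM_{K^p}(\SO(1,q-p+1)) \to \hat{\mathcal{B}}$ denote the Hitchin fibrations for $\SO(p+q,\C)$ and for $K^p$-twisted $\SO(q-p+2,\C)$ pairs; both are proper by Nitsure's theorem. The plan is to evaluate $h \circ \Psi$ on the explicit block form of the Higgs field, $\eta = (\eta_{\widehat W}, \eta(q_2, \ldots, q_{2p-2}))$, and verify that the $\SO(p+q,\C)$-invariants of the associated field $\Phi = \smtrx{0 & \eta \\ \eta^* & 0}$ split into the contribution of the Hitchin-section block (which recovers $q_2, \ldots, q_{2p-2}$) and the contribution involving $\hat\eta^* \hat\eta$ (which recovers $\hat h(I, \widehat W, \hat\eta)$). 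Granted such a decomposition, if $K \subset \cM(\SO(p,q))$ is compact then $h(K)$ is compact, which bounds both the tuple $(q_2, \ldots, q_{2p-2})$ in the finite-dimensional space $\bigoplus_{j=1}^{p-1} H^0(K^{2j})$ and the image $\hat h(I, \widehat W, \hat\eta)$ in $\hat{\mathcal{B}}$. Properness of $\hat h$ then forces $\Psi^{-1}(K)$ to lie in a compact set; being also closed (as the preimage of a closed set under the continuous map $\Psi$), it is compact.

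\medskip

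\emph{Main obstacle.} The decisive technical step is the explicit invariant-theoretic computation separating the two contributions to $h \circ \Psi$. Because the two blocks of $\eta$ share the common target $I \otimes \cK_{p-1} \otimes K$, the splitting into ``Hitchin-section invariants'' and ``$\hat\eta$-invariants'' is not automatic: it requires a direct calculation of the traces $\tr(\Phi^{2k})$ (and, when $p+q$ is even, the Pfaffian), in which the precise nilpotent structure of $\eta(q_2, \ldots, q_{2p-2})$ given by \eqref{eq choice of Hitchin section O(p-1,p)} must be exploited to control the cross-terms between the two blocks. The openness part, by contrast, is essentially formal once the hypercohomology vanishing and the $\C^*$-equivariance are in hand.
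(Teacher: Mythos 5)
Your proposal is correct and follows essentially the same route as the paper: openness via the vanishing of $\HH^2$ and the identification of local models at $\C^*$-fixed points (Propositions \ref{prop: open nbhd of imPsi} and \ref{proposition H1 iso at fixed points}), propagated to arbitrary image points by flowing toward a fixed point with the $\C^*$-action; closedness via properness of the two Hitchin fibrations together with the compatibility $h\circ\Psi=(q_2,\ldots,q_{2p-2},h_p(I,\widehat W,\hat\eta))$. The only divergence is one of emphasis: the "main obstacle" you isolate (the invariant-theoretic splitting of $h\circ\Psi$, including control of cross-terms between the $\eta_{\widehat W}$ and Hitchin-section blocks) is dispatched in the paper as immediate from the construction of $\Psi$ and the choice of basis in \eqref{eq choice of Hitchin section O(p-1,p)}, and in any case the closedness argument only needs $h\circ\Psi$ to differ from $(q,h_p)$ by a proper (e.g.\ triangular polynomial) change of coordinates on the Hitchin base.
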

\begin{proof}
    By Propositions \ref{prop: open nbhd of imPsi} and \ref{proposition H1 iso at fixed points}, the map $\Psi$ is open at all fixed points of the $\C^*$-action. For $(V,W,\eta)$ in the image of $\Psi$, there is $\lambda$ sufficiently close to zero such that $(V,W,\lambda\eta)$ is in a sufficiently small open neighborhood of a fixed point of the $\C^*$-action. Thus, $\Psi$ is open at all points.

    To show the image of $\Psi$ is closed, we use the properness of the Hitchin fibration. Namely, suppose $(I,\widehat W_i,\hat\eta_i,q_2^i,\dots,q_{2p-2}^i)$ is a sequence of points in $\cM_{K^p}(\SO(1,q-p+1))\times \bigoplus\limits_{j=1}^{p-1} H^0(K^{2j})$ which diverges. 
    Denote the associated Hitchin fibrations by 
    \[\xymatrix{h_p:\cM_{K^p}(\SO(1,q-p+1))\to H^0(K^{2p})&\text{and}&h:\cM(\SO(p,q))\to \displaystyle\bigoplus\limits_{j=1}^p H^0(K^{2j})}~.\]
    By the properness of $h_p$, $(q_2^i,\ldots,q^i_{2p-2},h_p(I,\widehat W_i,\hat\eta_i))$ diverges in $\bigoplus\limits_{j=1}^pH^0(K^{2j})$. Moreover, by the definition of the map $\Psi,$ applying the $\SO(p,q)$-Hitchin fibration to the image sequence  yields
    \[h(\Psi(I,\widehat W_i,\hat \eta_i,q_2^i,\ldots,q^i_{2p-2}))=(q_2^i,\ldots,q_{2p-2}^i,h_p(I,\widehat W_i,\hat \eta_i))~.\]
Since $h$ is proper, we conclude that $\Psi(I,\widehat W_i,\hat \eta_i,q_2^i,\ldots,q^i_{2p-2})$ also diverges in $\cM(\SO(p,q)).$
    \end{proof} 

The following direct consequence of the construction of the map $\Psi$ will be used in Section \ref{section positive}. 
\begin{corollary}\label{Cor: Higgs irr.}
    Consider the subgroup $\GL(n,\R)\times \SO(p-n,q-n)\subset\SO(p,q)$ defined by the embedding 
    \[(A,B)\mapsto \smtrx{A&\\&B\\&&A^{-1}}.\]
    Then no Higgs bundle in the image of $\Psi$ reduces to such a subgroup.
\end{corollary}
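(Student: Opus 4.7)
My plan is to reduce the problem to $\C^*$-fixed points of $\text{Im}(\Psi)$ via the $\C^*$-flow, and then to extract a contradiction from their explicit chain structure. Both $\text{Im}(\Psi)$ and the image of $\cM(\GL(n,\R)\times\SO(p-n,q-n))\to\cM(\SO(p,q))$ are $\C^*$-invariant: the first by \eqref{EQ scaling gauge action}, the second because scaling the Higgs field preserves the block form required by a reduction. Hence, if some $(V,W,\eta)\in\text{Im}(\Psi)$ (with $p\geq 2$) reduces to $\GL(n,\R)\times\SO(p-n,q-n)$ with $n\geq 1$, properness of the Hitchin fibration yields a $\C^*$-fixed limit $\lim_{\lambda\to 0}[V,W,\lambda\eta]\in\text{Im}(\Psi)$ that still admits such a reduction. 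The preimage of this limit in $\cM(\GL(n,\R)\times\SO(p-n,q-n))$ is a nonempty $\C^*$-invariant set whose $\C^*$-flow produces a $\C^*$-fixed lift, so the reduction itself may be chosen $\C^*$-equivariantly.

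Such a $\C^*$-equivariant reduction amounts to a rank $n$ orthogonal bundle $\tilde E$ with graded orthogonal embeddings $\tilde E\hookrightarrow V$ and $\tilde E\hookrightarrow W$ with respect to which $\eta$ is block diagonal, relative to orthogonal decompositions $V=\tilde E\oplus V'$ and $W=\tilde E\oplus W'$. The weights of $\tilde E$ must therefore lie in the intersection of the weights of $V$ and $W$ at the fixed point. Reading Lemma \ref{Lem: Fixed points in the image of Psi}, at any fixed point in $\text{Im}(\Psi)$ the weights of $V=I\otimes\cK_{p-1}$ are $\{1-p,3-p,\ldots,p-1\}$ (all of parity $p-1$), while those of $W=\widehat W\oplus I\otimes\cK_{p-2}$ are $\{-p,2-p,\ldots,p-2,p\}$ (all of parity $p$) together with the weight-$0$ contribution from $W_0'\subset\widehat W$.

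When $p$ is even, the $V$-weights are odd and the $W$-weights are even, so the intersection is empty, forcing $\tilde E=0$ and contradicting $n\geq 1$. When $p$ is odd, the intersection equals $\{0\}$, and since the weight-$0$ summand $V_0=I\subset V$ has rank one we must have $n=1$ with $\tilde E\cong I$ embedded as $V_0$ and as an orthogonal line subbundle of $W_0'\subset W$. The block form of $\eta$ would then require $\eta(W')\subset V'\otimes K$; however, the weight-$(-1)$ summand $W_{-1}=IK$ of $I\otimes\cK_{p-2}$ is orthogonal to $\tilde E$ and thus lies in $W'$, while the subdiagonal $1$ in the Hitchin matrix \eqref{eq choice of Hitchin section O(p-1,p)} realizes $\eta|_{W_{-1}}$ as the nonvanishing identity $IK\to V_0\otimes K=IK$, which lands in $V_0\otimes K$ and not in $V'\otimes K$. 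This contradicts the block form, completing the proof. The most delicate ingredient is the $\C^*$-equivariant choice of reduction above a fixed point, which I would justify by applying the properness argument for the $\rH$-Hitchin fibration to the $\C^*$-invariant preimage.
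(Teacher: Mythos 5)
The paper offers no proof of this corollary (it is asserted as a ``direct consequence of the construction of $\Psi$''), so your argument is a genuine attempt to supply one, and the overall strategy is viable: flow to a $\C^*$-fixed point using closedness and $\C^*$-invariance of the image of $\Psi$ together with equivariance and properness for the moduli space of the subgroup, then derive a contradiction from the explicit chain \eqref{EQ: Higgs field at singularity} of Lemma \ref{Lem: Fixed points in the image of Psi}. Your endgame for $p$ odd (the subdiagonal $1\colon W_{-1}=IK\to V_0\otimes K$ violating $\eta(W')\subset V'\otimes K$) is exactly the right kind of computation.

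The gap is the sentence ``the weights of $\tilde E$ must therefore lie in the intersection of the weights of $V$ and $W$ at the fixed point.'' The weight decomposition of a $\C^*$-fixed point is canonical only up to the identity component of its automorphism group: two grading elements normalizing $\eta$ to weight one differ by an element of $\HH^0(C^\bullet(V,W,\eta))$, which for these fixed points equals $H^0(\fso(W_0'))$ by Lemma \ref{lemma C0 HH10 and HH20} together with Lemmas \ref{lemma k>0 H1 vanishing H2 not p 2p}--\ref{lemma -p  H1 vanishing H2}. When $W_0'$ is decomposable this is nonzero, and the grading induced by your $\C^*$-fixed reduction may regrade $W_0'$ with arbitrary symmetric integer weights (e.g.\ a hyperbolic summand $L\oplus L^{-1}$ of $W_0'$ can be placed in weights $\pm j$). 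Hence for $p$ even the weight intersection need not be empty, and for $p$ odd the common weight need not be $0$, so the parity argument as written does not close either case. The fix is short and uniform: since the two gradings do agree on $V$ and on $W\ominus W_0'$, any nonzero graded piece of $\tilde E$ is forced to have $\tilde E_W\subset W_0'\subset\ker\eta$; then $\Phi_{\tilde E}=\eta|_{\tilde E_W}=0$, so $\eta^*|_{\tilde E_V}=\Phi_{\tilde E}^*=0$, whereas $\eta^*$ restricted to $V$ is fiberwise injective off the summand $V_{p-1}=IK^{1-p}$, which is isotropic in $(V,Q_V)$ and therefore cannot contain the nonzero orthogonal direct summand $\tilde E_V$. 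With this supplement (and the implicit, necessary hypotheses that $p\geq 2$ and that ``reduces'' means induced from a polystable $\GL(n,\R)\times\SO(p-n,q-n)$-Higgs bundle, so that properness upstairs applies), your proof is complete.
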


\section{Classification of local minima of the Hitchin function for $\cM(\SO(p,q))$}

In this section we will prove Theorem \ref{thm:MINIMA CLASSIFICATION} which classifies all local minima of the Hitchin function \eqref{EQ Hitchin Function} on $\cM(\SO(p,q))$. 
The strategy of proof is to divide the objects into the following three families:
\begin{enumerate}
\item stable $\SO(p,q)$-Higgs bundles with $\HH^2(C^\bullet(V,W,\eta))=0$,
\item stable $\SO(p,q)$-Higgs bundles whose corresponding $\SO(p+q,\C)$-Higgs bundle is strictly polystable,
\item strictly polystable $\SO(p,q)$-Higgs bundles.
\end{enumerate}
The first family consists of points which are either smooth or orbifold points of $\cM(\SO(p,q))$. For these points we can use Proposition \ref{prop: minima criteria} to classify such local minimum. The local minima in the other two families will be described by a direct study of their deformations.

Recall from \eqref{Eq fixedpoint deformation sheaf splitting} that the deformation complex of an $\SO(p,q)$-Higgs bundle $(V,W,\eta)$ which is a $\C^*$-fixed point decomposes as 
\begin{equation}\label{eq def complex splitting minima section}
C^\bullet_k:\fso_k(V)\oplus\fso_k(W)\xrightarrow{\ \ \ad_\eta\ \ }\Hom_{k+1}(W,V)\otimes K.
\end{equation}
Each graded piece gives rise to the long exact sequence \eqref{EQ deformation complex splitting} in hypercohomology.

\subsection{Stable minima with vanishing $\HH^2(C^\bullet)$}
\label{sec:minimal-subvarieties}
By Proposition \ref{prop: stable fixed points}, stable $\C^*$-fixed points are given by \eqref{eq: holomorphic chain int}. 
We start by studying the constraints on these chains imposed by the local minima condition for stable $\SO(p,q)$-Higgs bundles with vanishing $\HH^2(C^\bullet)$. This will be done by first proving two lemmas.

\begin{lemma}\label{lemma:structureoftwochains}
Let $(V,W,\eta)$ be a stable $\SO(p,q)$-Higgs bundle with $\eta\neq0$ and $\HH^2(C^\bullet(V,W,\eta))=0$. If $(V,W,\eta)$ is a local minimum of $f$, then the chain given by \eqref{eq: holomorphic chain int} must have one of the following forms (with $\eta_i\neq 0$ for all $i$):
\begin{equation}\label{eq: s<r minimal hol chain 1 chain}
    \xymatrix@C=1.8em@R=0em{V_{-s}\ar[r]^-{\eta_{s-1}^*}&W_{1-s}\ar[r]^-{\eta_{1-s}}&\cdots\ar[r]^-{\eta_{-2}}&V_{-1}\ar[r]^-{\eta_{0}^*}&W_0\ar[r]^-{\eta_{0}}&V_1\ar[r]^-{\eta_{-2}^*}&\cdots\ar[r]^-{\eta^*_{1-s}}&W_{s-1}\ar[r]^-{\eta_{s-1}}&V_s}
\end{equation} 
\begin{equation}\label{eq: r<s minimal hol chain 1 chain}
\xymatrix@C=1.8em@R=0em{W_{-r}\ar[r]^-{\eta_{-r}}&V_{1-r}\ar[r]^-{\eta_{r-2}^*}&\cdots\ar[r]^-{\eta_1^*}&W_{-1}\ar[r]^-{\eta_{-1}}&V_0\ar[r]^-{\eta^*_{-1}}&W_1\ar[r]^-{\eta_1}&\cdots\ar[r]^-{\eta_{r-2}}&V_{r-1}\ar[r]^-{\eta_{-r}^*}&W_r} 
\end{equation} 
\begin{equation}\label{eq: s<r minimal hol chain 2 chains}
\xymatrix@C=1.8em@R=0em{V_{-r}\ar[r]^-{\eta_{r-1}^*}&W_{1-r}\ar[r]^-{\eta_{1-r}}&\cdots\ar[r]^-{\eta_1^*}&W_{-1}\ar[r]^-{\eta_{-1}}&V_0\ar[r]^-{\eta_{-1}^*}&W_1\ar[r]^-{\eta_1}&\cdots\ar[r]^-{\eta^*_{1-r}}&W_{r-1}\ar[r]^-{\eta_{r-1}}&V_r,\\&&&&\oplus&&&&\\&&&&W_0&&&&}
\end{equation} 
\begin{equation}\label{eq: r<s minimal hol chain 2 chains}
\xymatrix@C=1.8em@R=0em{W_{-s}\ar[r]^-{\eta_{-s}}&V_{1-s}\ar[r]^-{\eta_{s-2}^*}&\cdots\ar[r]^-{\eta_{-2}}&V_{-1}\ar[r]^-{\eta_{0}^*}&W_0\ar[r]^-{\eta_{0}}&V_1\ar[r]^-{\eta_{-2}^*}&\cdots\ar[r]^-{\eta_{s-2}}&V_{s-1}\ar[r]^-{\eta_{-s}^*}&W_s\\&&&&\oplus&&&&\\&&&&V_0&&&&} 
\end{equation} 
\end{lemma}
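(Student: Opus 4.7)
The starting point is the minima criterion of Proposition~\ref{prop: minima criteria}: since $(V,W,\eta)$ is stable with $\HH^2(C^\bullet)=0$, one also has $\HH^0(C^\bullet)=0$, so $(V,W,\eta)$ is a local minimum precisely when the sheaf map
\[
\ad_\eta:\fso_k(V)\oplus\fso_k(W)\to\Hom_{k+1}(W,V)\otimes K
\]
of \eqref{eq def complex splitting minima section} is an isomorphism for every positive weight $k$. Combined with Proposition~\ref{prop: stable fixed points}, which says the fixed point decomposes as a direct sum of chains of the form \eqref{eq: holomorphic chain int} (possibly including length-one weight-zero summands), the problem reduces to a combinatorial analysis of which chain configurations allow $\ad_\eta$ to be an isomorphism of sheaves at every positive weight. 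The chains \eqref{eq: holomorphic chain int} come in two parities: those whose $V$-components sit in even weights and $W$-components in odd weights (``type-$E$''), and those with the opposite parity (``type-$O$'').

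My first step would be to verify the forward implication: each of the four configurations \eqref{eq: s<r minimal hol chain 1 chain}--\eqref{eq: r<s minimal hol chain 2 chains} satisfies the isomorphism criterion. This amounts to an explicit computation mirroring the one carried out in Lemmas~\ref{lemma k>0 H1 vanishing H2 not p 2p} and \ref{lemma p, 2p H1 vanishing H2}, in which the matrix of $\ad_\eta$ at each positive weight becomes upper-triangular with invertible diagonal entries given by pre- or post-composition with the nowhere-vanishing maps $\eta_j$ along the chain. The explicit descriptions \eqref{eq fsoV2k}--\eqref{eq HomWV2k} allow one to match ranks and degrees on both sides of $\ad_\eta$ directly.

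The substance of the proof is the converse: ruling out all other chain decompositions. I would proceed in two stages. First, I would show that at most one chain of each parity can appear, by exhibiting, in the presence of two chains of the same parity, a non-zero off-diagonal skew-symmetric endomorphism at some positive weight $k$ whose image under $\ad_\eta$ lies in a proper subsheaf of $\Hom_{k+1}(W,V)\otimes K$, contradicting surjectivity. Second, assuming at most one chain of each parity, I would compare the two chains weight-by-weight starting from their outermost weights and working inward: the isomorphism condition forces the shorter chain to be contained in the weight range of the longer chain, save for a single possible weight-zero summand ($W_0$ or $V_0$) whose presence distinguishes the cases \eqref{eq: s<r minimal hol chain 2 chains} and \eqref{eq: r<s minimal hol chain 2 chains} from the single-chain cases \eqref{eq: s<r minimal hol chain 1 chain} and \eqref{eq: r<s minimal hol chain 1 chain}.

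The main obstacle is the weight-by-weight bookkeeping in the mixed-parity case: the inter-chain components of $\ad_\eta$ entangle the two chains thoroughly, and the rank- and degree-matching conditions that determine the allowed configurations must be tracked simultaneously for every positive weight. The rigidity of the chain structure (each weight space being a line bundle of a specific degree determined by $K$ together with the line bundles appearing at weight zero) is what ultimately forces the configurations to take one of the four listed forms.
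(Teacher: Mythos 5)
Your starting point coincides with the paper's: stability gives $\HH^0(C^\bullet)=0$, so Proposition \ref{prop: minima criteria} reduces the local-minimum condition to $\ad_\eta\colon\fso_k(V)\oplus\fso_k(W)\to\Hom_{k+1}(W,V)\otimes K$ being a sheaf isomorphism for every $k>0$, applied to the decomposition of Proposition \ref{prop: stable fixed points}. Two remarks on the framing before the main issue. Your ``first stage'' is redundant: Proposition \ref{prop: stable fixed points} already says a stable fixed point is the direct sum of the \emph{two} chains displayed in \eqref{eq: holomorphic chain int}, one of each parity, so there is no multiplicity left to exclude. And the forward implication you propose to verify is not part of this lemma (sufficiency is handled later, in Theorem \ref{thm:stablenon-zerominH2=0}). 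The genuine gap is in your ``second stage'': the conclusion you announce --- that the shorter chain is \emph{contained in the weight range} of the longer one, up to a weight-zero summand --- is strictly weaker than what the lemma asserts, namely that the shorter chain is either empty or concentrated entirely in weight zero. A chain running from weight $-2$ to $2$ inside one running from $-4$ to $4$ satisfies your containment statement but is none of the four listed forms, so an argument that only establishes containment does not prove the lemma.

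The missing mechanism needs only three specific weights, not a full inward induction. Write $r,s\geq 0$ for the maximal weights of the two chains and assume both chains are non-zero. (i) If $r\not\equiv s\pmod 2$, both chains terminate in summands of the same bundle ($V$ or $W$), so $\Hom_{r+s+1}(W,V)=0$ while $\fso_{r+s}(V)\oplus\fso_{r+s}(W)\neq 0$, and $C^\bullet_{r+s}$ cannot be an isomorphism; hence $r$ and $s$ have the same parity. (ii) The isomorphism $C^\bullet_{2\max(r,s)}\colon\Lambda^2V_r\oplus\Lambda^2W_r\to 0$ forces the top weight spaces to be line bundles. (iii) If $s>0$, the weight-$(r+s-1)$ isomorphism produces a rank identity of the shape $\rk(V_s)\rk(V_{r-1})+\rk(W_{s-1})=\rk(V_s)$, which forces $V_{r-1}$ or $W_{s-1}$ to vanish --- contradicting the unbroken-chain structure guaranteed by Proposition \ref{prop: stable fixed points}. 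Hence $\min(r,s)=0$, i.e.\ the shorter chain is either absent or a single weight-zero orthogonal summand, which is exactly the case division \eqref{eq: s<r minimal hol chain 1 chain}--\eqref{eq: r<s minimal hol chain 2 chains}. Without step (iii), or something equivalent that collapses the shorter chain all the way to weight zero, the proof does not close.
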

\begin{proof}
If one of the chains in \eqref{eq: holomorphic chain int} vanishes, we are done. Assume both chains are non-zero chains. Let $r\geq 0$ be the maximal weight of the first chain and $s\geq 0$ be the maximal weight of the second chain. We have $r>0$ or $s>0$ since $\eta\neq0$. 
Since $(V,W,\eta)$ is a stable local minimum of the Hitchin function with $\HH^2(C^\bullet)=0$, the subcomplexes from \eqref{eq def complex splitting minima section} are isomorphisms for $k\geq 1$ by Proposition \ref{prop: minima criteria}.

 If $r$ and $s$ have different parity, then both of the chains start and end with a summand of $W$ if $r$ is odd and start and end with a summand of $V$ if $r$ is even. In either case, $\Hom_{r+s+1}(W,V)\otimes K=0$ but $\fso_{r+s}(W)\oplus \fso_{r+s}(V)$ is non-zero. Hence, the subcomplex $C^\bullet_{r+s}$ from \eqref{eq def complex splitting minima section} is not an isomorphism for $k=r+s$, contradicting $(V,W,\eta)$ being a stable minima with $\HH^2(C^\bullet)=0.$

Now assume $r$ and $s$ have the same parity, so the first chain starts and ends with a summand of $W$ if and only if $r$ is odd and the second chain starts and ends with a summand of $W$ if only only if $s$ is even. 
If $r\geq s,$ then $\Hom_{2r+1}(W,V)\otimes K=0$ and $\fso_{2r}(V)\oplus\fso_{2r}(W)=\Lambda^2V_r\oplus \Lambda^2W_r$. So the isomorphism of $C^\bullet_{2r}:\Lambda^2V_r\oplus \Lambda^2W_r\to 0$ implies that, whenever $V_r$ and $W_r$ are non-zero, they must be line bundles; more precisely we must have: (i) if $r$ is odd, $\rk(W_r)=1$ and, if $s=r$, $\rk(V_r)=1$ (if $s<r$, $V_r=0$), or (ii) if $r$ is even, $\rk(V_r)=1$ and, if $s=r$, $\rk(W_r)=1$ (if $s<r$, $W_r=0$). 
Since $r+s-1$ is odd, we have:
 \[\fso_{r+s-1}(V)=\begin{dcases}\{(\alpha,-\alpha^*)\in\Hom(V_{-s},V_{r-1})\oplus\Hom(V_{1-r},V_{s})\}
    &\text{if $r$ is odd}\\
    \{(\alpha,-\alpha^*)\in\Hom(V_{-r},V_{s-1})\oplus\Hom(V_{1-s},V_{r})\}&\text{if $r$ is even}
 \end{dcases}
\]
 \[\fso_{r+s-1}(W)=\begin{dcases}
    \{(\beta,-\beta^*)\in\Hom(W_{-r},W_{s-1})\oplus\Hom(W_{1-s},W_{r})\}&\text{if $r$ is odd}\\
    \{(\beta,-\beta^*)\in\Hom(W_{-s},W_{r-1})\oplus\Hom(W_{1-r},W_{s})\}&\text{if $r$ is even}
 \end{dcases}
\]
\[\Hom_{r+s}(W,V)\otimes K\cong \begin{dcases}
    \Hom(W_{-r},V_s)\otimes K&\text{if $r$ is odd}\\
    \Hom(W_{-s},V_r)\otimes K&\text{if $r$ is even~.}
 \end{dcases}\]
If $s>0$, then $r+s-1\geq 1$ so the isomorphism $C^\bullet_{r+s-1}:\fso_{r+s-1}(V)\oplus\fso_{r+s-1}(W)\to \Hom_{r+s}(W,V)\otimes K$ gives
\[\begin{dcases}
    \rk(V_{s})\rk(V_{r-1})+\rk(W_{s-1})=\rk(V_s)&\text{if $r$ is odd}\\
    \rk(W_{s})\rk(W_{r-1})+\rk(V_{s-1})=\rk(W_s)&\text{if $r$ is even~.}
\end{dcases}\]
This implies either $\rk(W_{s-1})=0$ or $\rk(V_{r-1})=0$ if $r$ is odd, and that either $\rk(V_{s-1})=0$ or $\rk(W_{r-1})=0$ if $r$ is even. Any of these conclusions contradicts Proposition \ref{prop: stable fixed points}. Thus, we conclude that $s=0$ and thus $r$ is even, so the holomorphic chain is given by \eqref{eq: s<r minimal hol chain 2 chains}. A similar argument shows that the holomorphic chain is of the form \eqref{eq: r<s minimal hol chain 2 chains} for $s> r$.
\end{proof}

\begin{lemma}\label{lemma:linebundles and powers of K}
Let $(V,W,\eta)$ be a stable $\SO(p,q)$-Higgs bundle which is a local minimum of the Hitchin function with $\eta\neq0$ and $\HH^2(C^\bullet(V,W,\eta))=0$; the associated holomorphic chain is given by \eqref{eq: s<r minimal hol chain 1 chain}, \eqref{eq: r<s minimal hol chain 1 chain}, \eqref{eq: s<r minimal hol chain 2 chains} or \eqref{eq: r<s minimal hol chain 2 chains}. For all $j\neq0,$ we have $\rk(W_j)=1$ and $\rk(V_j)=1.$ Moreover:
\begin{itemize}
\item In case \eqref{eq: s<r minimal hol chain 1 chain}, $V_j\cong V_{-1} K^{-j-1}$ and $W_j\cong V_{-1}K^{-j-1}$ for $0<|j|<s$.
\item In case \eqref{eq: r<s minimal hol chain 1 chain}, $V_j\cong W_{-1}K^{-j-1}$ and $W_j\cong W_{-1}K^{-j-1}$ for $0<|j|<r.$
\item In case \eqref{eq: s<r minimal hol chain 2 chains}, $\rk(V_0)=1$, and $V_j\cong V_0K^{-j}$ and $W_j\cong V_0K^{-j}$ for $0<|j|<r$.
\item In case \eqref{eq: r<s minimal hol chain 2 chains}, $\rk(W_0)=1$, and $V_j\cong V_0K^{-j}$ and $W_j\cong V_0K^{-j}$ for $0<|j|<s$.
\end{itemize}
\end{lemma}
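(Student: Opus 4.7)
The plan is to apply Proposition \ref{prop: minima criteria}, which, under the standing hypotheses ($\HH^0(C^\bullet)=0$ by stability, $\HH^2(C^\bullet)=0$ by assumption), guarantees that the sheaf map
$\ad_\eta: \fso_k(V) \oplus \fso_k(W) \to \Hom_{k+1}(W,V) \otimes K$
is an isomorphism for every $k > 0$. From these isomorphisms I will extract the claimed rank and tensor-power constraints by analyzing each weight in turn, proceeding from the top weight downward along the chains described in Lemma~\ref{lemma:structureoftwochains}.

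The base of the induction is the top weight. In case \eqref{eq: s<r minimal hol chain 1 chain}, for instance, the top weight is $2s$; one computes directly from the chain that $\fso_{2s}(W) = 0$ and $\Hom_{2s+1}(W,V)\otimes K = 0$, while $\fso_{2s}(V) = \Lambda^2 V_s$. The isomorphism condition therefore forces $\Lambda^2 V_s = 0$, i.e.\ $\rk(V_s) = 1$, and hence $\rk(V_{-s}) = 1$ via the duality $V_{-s} \cong V_s^*$. The other three cases admit analogous top-weight analyses; in the two-chain cases \eqref{eq: s<r minimal hol chain 2 chains} and \eqref{eq: r<s minimal hol chain 2 chains} the isolated summand is itself a trivial chain, so an analogous $\Lambda^2$ argument at its own extremal weight produces the line-bundle claim for $V_0$ or $W_0$.

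Next I proceed by downward induction on even weights (the odd weights are forced to vanish by the parity pattern of the chains). At weight $2j$, the source and target of $\ad_\eta$ decompose as direct sums indexed by pairs of chain components, with self-dual contributions arising whenever a pairing $i \leftrightarrow -i-2j$ is fixed, and the matrix entries of $\ad_\eta$ are compositions of the form $\beta \mapsto \eta\,\beta$ and $\alpha \mapsto -\alpha\,\eta$. Using the rank-one conditions already established at higher weights, together with the sheaf-level isomorphism requirement for $\ad_\eta$, one propagates the rank-one condition to the next bundles along the chain: if any $V_j$ or $W_j$ had rank greater than one, the appearance of a $\Lambda^2$ self-dual summand in $\fso_{2j}$ together with the explicit composition formulas would yield either a rank mismatch or a failure of pointwise injectivity, contradicting the sheaf isomorphism. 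Iterating this step from the top weight down to weight $2$ establishes $\rk(V_j) = \rk(W_j) = 1$ for every $j \neq 0$ in the chain. Once all relevant bundles are line bundles, the sheaf isomorphism $\ad_\eta$ at each positive weight reduces to a system of line-bundle maps whose non-degeneracy forces each Higgs component $\eta_j: W_j \to V_{j+1}\otimes K$ (and its conjugate $\eta_j^*$) to be a nowhere vanishing section, hence a bundle isomorphism. Chaining these isomorphisms outward from the central bundle (which is $V_{-1}$ in case \eqref{eq: s<r minimal hol chain 1 chain}, $W_{-1}$ in case \eqref{eq: r<s minimal hol chain 1 chain}, and $V_0$ in the two-chain cases) yields the tensor-power relations stated in the lemma.

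The main obstacle will be the intermediate-weight rank analysis, since rank counting alone does not suffice: the rank identity produced by the isomorphism (e.g.\ $v + \binom{w}{2} = w$ arising at weight $2s-2$ in case \eqref{eq: s<r minimal hol chain 1 chain}) admits small-integer solutions with $w > 1$. Excluding these requires a pointwise argument, using that a non-injective Higgs composition at a generic point of $X$ would produce a persistent kernel or cokernel, violating the sheaf isomorphism of $\ad_\eta$. The two-chain cases \eqref{eq: s<r minimal hol chain 2 chains} and \eqref{eq: r<s minimal hol chain 2 chains} additionally require handling the coupling between the main chain and the isolated summand through the central-weight pairings in $\fso(V) \oplus \fso(W)$, which is where the relation of the isolated $V_0$ (respectively $W_0$) to the central bundle of the main chain is established.
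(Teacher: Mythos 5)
Your skeleton matches the paper's: invoke Proposition \ref{prop: minima criteria} to get that $\ad_\eta\colon\fso_k(V)\oplus\fso_k(W)\to\Hom_{k+1}(W,V)\otimes K$ is a sheaf isomorphism for all $k>0$, start at the top weight where $\Lambda^2 V_r\cong 0$ forces $\rk(V_r)=1$, and induct downward. You also correctly flag the real difficulty: the rank identity $v+\binom{w}{2}=w$ admits the solution $(v,w)=(1,2)$, so rank counting alone cannot finish the job. The gap is in how you propose to exclude it. Your ``pointwise'' argument does not work: in the $(1,2)$ case both sides of $\ad_\eta$ at weight $2r-2$ have rank two, the component $\Lambda^2W_{r-1}\to\Hom(W_{1-r},V_r)\otimes K$, $\beta\mapsto\eta_{r-1}\beta$, is generically nonzero (a nonzero skew form on a rank-two bundle is generically nondegenerate, so $\operatorname{im}\beta\not\subset\ker\eta_{r-1}$ generically), and there is no a priori pointwise reason the two line-bundle images fail to be transverse. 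The paper's mechanism is global, not pointwise: since $\ad_\eta$ is an isomorphism of bundles of equal rank, its \emph{determinant} is an isomorphism of determinant line bundles, which (after simplifying with the line-bundle identities already established at higher weights) forces a relation such as $V_{r-2}\cong V_rK^2$, hence $\deg(V_{r-2})=\deg(V_r)+2(2g-2)$; on the other hand \emph{stability} of $(V,W,\eta)$ forces the kernels of $\eta_{r-1}\colon W_{r-1}\to V_r\otimes K$ and $\eta_{1-r}\colon W_{1-r}\to V_{2-r}\otimes K$ to have negative degree, giving $\deg(V_{r-2})-2g+2<\deg(W_{r-1})<\deg(V_r)+2g-2$, which is incompatible. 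Without the determinant-plus-degree step your induction cannot close.

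Two smaller points. First, your plan to establish all the rank-one statements first and only afterwards derive the tensor-power relations is not structurally viable: the determinant computation that excludes $w=2$ at a given weight needs the line-bundle identities $V_{i}\cong W_{i+1}K\cong V_{i+2}K^2$ from the \emph{previous} induction steps to simplify, so the rank determination and the isomorphisms $V_j\cong V_0K^{-j}$ must be established simultaneously, as in \eqref{EQ induction hypothesis}. Second, in the two-chain cases the rank-one claim concerns the weight-zero bundle sitting in the \emph{main} chain ($V_0$ in \eqref{eq: s<r minimal hol chain 2 chains}, $W_0$ in \eqref{eq: r<s minimal hol chain 2 chains}), which is reached by the downward induction; the isolated summand is a polystable orthogonal bundle of rank $q-p+1$ and is not claimed to be a line bundle, so no ``extremal weight $\Lambda^2$ argument on the isolated summand'' is needed or available.
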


\begin{proof}
The proof involves an inductive argument on the weights. We first consider the case where $(V,W,\eta)$ is the holomorphic chain \eqref{eq: s<r minimal hol chain 2 chains}. We have the following decompositions 
\[\xymatrix@=.5em{\End(V)=\displaystyle\bigoplus\limits_{j=-2r}^{2r}\End_k(V)~,&\End(W)=\displaystyle\bigoplus\limits_{k=2-2r}^{2r-2}\End_k(W)&\text{and}&\Hom(W,V)=\displaystyle\bigoplus\limits_{k=1-2r}^{2r-1}\Hom_k(W,V).}\]
For $2k>0$ we have $\Hom_{2k+1}(W,V)=\displaystyle\bigoplus\limits_{j=0}^{r-k-1}\Hom(W_{1-r+2j},V_{2-r+2j+2k})$, 
\begin{equation}
    \label{Eq End2k splittings section minima}\xymatrix@=.5em{\End_{2k}(V)=\displaystyle\bigoplus\limits_{j=0}^{r-k}\Hom(V_{2j-r},V_{2j+2k-r})&\text{and} &\End_{2k}(W)=\displaystyle\bigoplus\limits_{j=0}^{r-k-1}\Hom(W_{1-r+02j},W_{1-r+2j+2k}).}
\end{equation}
With respect to these splittings, $\fso(V)=\displaystyle\bigoplus\fso_k(V)$ and $\fso(W)=\displaystyle\bigoplus\fso_k(W)$ where, for $k>0$
\begin{equation}\label{Eq so2k splittings section minima}
    \xymatrix@=0em{\fso_{2k}(V)=\{(\alpha_0,\ldots,\alpha_{r-k})\in\End_{2k}(V)\ |\ \alpha_i+\alpha_{r-k-i}^*=0 \},\\
\fso_{2k}(W)=\{(\beta_0,\ldots,\beta_{r-k-1})\in\End_{2k}(V)\ |\ \beta_i+\beta_{r-k-1-i}^*=0 \}~.}
\end{equation}

Since $(V,W,\eta)$ is a stable minima of the Hitchin function with $\HH^2(C^\bullet)=0,$ for all $k>0$ we have 
$\fso_{2k}(V)\oplus \fso_{2k}(W)\cong\Hom_{2k+1}(W,V)\otimes K$.
Note that $r$ is even and non-zero. 
The isomorphism for $k=2r$ implies $\Lambda^2V_r\cong 0,$ hence $\rk(V_r)=1$. 

The isomorphism for $k=2r-2$ implies $\Hom(V_{-r},V_{r-2})\oplus \Lambda^2W_{r-1}\cong\Hom(W_{1-r},V_r)\otimes K.$ Thus,
\[\rk(V_{r-2})+\rk(\Lambda^2W_{1-r})=\rk(W_{1-r}),\]
which implies $\rk(W_{1-r})$ is either one or two. If $\rk(W_{1-r})=2$, taking the determinant of the isomorphism $C^\bullet_{2r-2}$ implies $V_rK^2=V_{r-2}$.  
Also, the kernels of the maps $\eta_{r-1}:W_{r-1}\to V_r\otimes K$ and $\eta_{1-r}:W_{1-r}\to V_{2-r}\otimes K$ have negative degree by stability. 
Using $V_j^*\cong V_{-j}$ and $W_j^*\cong W_{-j}$, we have 
\[\deg(V_{r-2})-2g+2<\deg(W_{r-1})<\deg(V_{r})+2g-2,\]
which contradicts $V_rK^2=V_{r-2}$. So rank $W_{r-1}=1$ and the isomorphism for $C^\bullet_{2r-2}$ gives the base case of our induction:
\[\xymatrix{1=\rk(V_{-r})=\rk(W_{1-r})=\rk(V_{2-r})&\text{and}&W_{1-r}\cong V_{2-r}K}~.\] 

If $r=2$ we are done, so assume $r\geq 4$ and that for an integer $k\in[1,\frac{r}{2}-1]$ we have
\begin{equation}
    \label{EQ induction hypothesis}W_{1-r}\cong V_{2-r}K\cong W_{3-r}K^2\cong \cdots\cong W_{2k-1-r}K^{2k-2}\cong V_{2k-r}K^{2k-1}~.
\end{equation}
We will prove that $V_{2k-r}\cong W_{2k+1-r}K\cong V_{2k+2-r}K^2$. 

The isomorphism $C^\bullet_{2r-2-2k}$ gives 
\begin{equation}\label{eq:2r-2k-2 iso}
    \xymatrix@=0em{\displaystyle\bigoplus\limits_{j=0}^{\left\lfloor\frac{k}{2}\right\rfloor}\Hom(V_{2j-r},V_{r+2j-2-2k})\oplus\displaystyle\bigoplus\limits_{j=0}^{\left\lfloor \frac{k-1}{2}\right\rfloor}\Hom(W_{2j+1-r},W_{r+2j-1-2k})\\\cong\ \ \ \displaystyle\bigoplus\limits_{j=0}^{k}\Hom(W_{2j+1-r},V_{r+2j-2k})\otimes K.}
\end{equation}
since $\Lambda^2V_{r-k-1}=0$ for $k$ odd and $\Lambda^2W_{r-k-1}=0$ for $k$ even by \eqref{EQ induction hypothesis}. Using \eqref{EQ induction hypothesis}, computing the ranks of both sides gives $\rk(V_{2k+2-r})+\left\lfloor\frac{k}{2}\right\rfloor+\rk(W_{2k+1-r})+\left\lfloor\frac{k-1}{2}\right\rfloor=k+\rk(W_{2k+1-r})$. Thus,
\[\rk(V_{2k+2-r})=1.\]

The isomorphism $C^\bullet_{2r-2-4k}$ implies
\[\displaystyle\bigoplus\limits_{j=0}^k\Hom(V_{2j-r},V_{r+2j-2-4k})\oplus\displaystyle\bigoplus\limits_{j=0}^{k-1}\Hom(W_{2j+1-r},W_{r+2j-1-4k})\oplus\Lambda^2W_{r-1-2k}\]\[\ \cong\ \ \ \displaystyle\bigoplus_{j=0}^{2k}\Hom(W_{2j+1-r},V_{r+2j-4k})\otimes K. \]
Using \eqref{EQ induction hypothesis}, this gives the following equality on ranks
\[\sum\limits_{j=0}^k \rk(V_{r+2j-2-4k})+\sum\limits_{j=0}^{k-1}\rk(W_{r+2j-1-4k})+\rk(\Lambda^2 W_{r-1-2k})= \sum\limits_{j=0}^{k-1}\rk(V_{r+2j-4k})+\sum\limits_{j=k}^{2k}\rk(W_{2j+1-r}).\]
Simplifying, yields $\rk(V_{4k+2-r})+\rk(\Lambda^2W_{2k+1-r})=\rk(W_{2k+1-r})$. Thus, $\rk(W_{2k+1-r})$ is one or two.  

If $\rk(W_{2k+1-r})=2,$ then the determinant of the isomorphism in \eqref{eq:2r-2k-2 iso} gives
\begin{equation}\label{eq:detC2r-2-2k}
\xymatrix@=0em{
\displaystyle\bigotimes\limits_{j=0}^{\left\lfloor\frac{k}{2}\right\rfloor}V_{r-2j}V_{r+2j-2-2k} \otimes W_{r-1}^2\Lambda^2W_{r-1-2k}\otimes\displaystyle\bigotimes\limits_{j=1}^{\left\lfloor\frac{k-1}{2}\right\rfloor}W_{r-2j-1}W_{r+2j-1-2k}\\
\cong\ \ \ \ \  \displaystyle\bigotimes\limits_{j=0}^{k-1}W_{r-2j-1}V_{r+2j-2k}K\otimes V_r^2K^2\otimes \Lambda^2W_{r-1-2k}~.}
\end{equation}
By \eqref{EQ induction hypothesis}, the above terms satisfy 
\begin{equation}\label{eq:determinantC2r-2-2k}
V^2_{r-2k}K^{2-2k}\cong \begin{dcases}
    V_{r-2j}V_{r+2j-2-2k},&\text{ for }j=1,\ldots,\lfloor \frac{k}{2}\rfloor \\
    W_{r-2j-1}W_{r+2j-1-2k},&\text{ for }j=1,\ldots,\lfloor \frac{k-1}{2}\rfloor\\
    W_{r-2j-1}V_{r+2j-2k}K,&\text{ for }j=0,\ldots,k-1.
\end{dcases}
\end{equation}
Hence, simplifying \eqref{eq:detC2r-2-2k} yields $V_{r-2k-2}\cong V_rK^{2+2k}.$ 
The Higgs field gives rise to non-zero maps $V_{r-2k-2}\rightarrow V_{r-2k}K^2$ and $V_{r-2k}\rightarrow V_{r}K^{2k}$ by Proposition \ref{prop: stable fixed points}. Thus, $\deg(V_{r-2k-2})-\deg(V_{r-2k})=4g-4$. As in the base case, this leads to a contradiction of stability. 
Namely, stability implies that the kernels of $\eta_{2k+1-r}:W_{2k+1-r}\rightarrow V_{2k+2-r} K$ and of $\eta_{r-1-2k}:W_{r-1-2k}\rightarrow V_{r-2k} K$ have negative degree, so that
$\deg(V_{2k-r})-2g+2<\deg(W_{2k+1-r})<\deg(V_{2k+2-r})+2g-2$. So $\rk(W_{2k+1-r})=1.$

 Using $\rk(W_{2k+1-r})=1,$ \eqref{EQ induction hypothesis} and \eqref{eq:determinantC2r-2-2k}, the determinant of  \eqref{eq:2r-2k-2 iso} gives
\begin{equation*}
\begin{split}
V_rV_{r-2k-2}\otimes\displaystyle\bigotimes\limits_{j=1}^{\left\lfloor\frac{k}{2}\right\rfloor}(V_{r-2k}^2K^{2-2k}) \otimes V_{r-2k}K^{1-2k}W_{r-1-2k}\otimes\displaystyle\bigotimes\limits_{j=1}^{\left\lfloor\frac{k-1}{2}\right\rfloor}(V_{r-2k}^2K^{2-2k})\\
\cong\ \ \ \ \ \displaystyle\bigotimes\limits_{j=0}^{k-1} (V_{r-2k}^2 K^{2-2k}) \otimes W_{r-1-2k}V_rK~,\ \ \ \ \ \ \ \ \ \ \ \ \ \ \ 
 \ \ \ \ \ \ \ 
\end{split}
\end{equation*}
which simplifies to $V_{2k-r}\cong V_{2k+2-r}K^2.$ 
The Higgs field defines a non-zero map $V_{2k-r}\to W_{2k+1-r}K\to V_{2k+2-r}K^2$. Thus,
\begin{equation}\label{eq:finalstep}
V_{2k-r}\cong W_{2k+1-r}K\cong V_{2k+2-r}K^2~.
\end{equation}
Recall that $k$ was an integer between $1$ and $\frac{r-2}{2}$. Since $r$ is even, we can take $k=(r-2)/2$, and hence \eqref{eq:finalstep} gives $V_{-2}\cong W_{-1}K\cong V_0K^2$. This completes the proof for the chain \eqref{eq: s<r minimal hol chain 2 chains}.

The difference for the chain \eqref{eq: r<s minimal hol chain 1 chain} is that $r$ is odd and instead of \eqref{EQ induction hypothesis} we must assume 
\[V_{1-r}\cong W_{2-r}K\cong V_{3-r}K^2\cong \cdots\cong V_{2k-1-r}K^{2k-2}\cong W_{2k-r}K^{2k-1},\]
where $k$ is an integer satisfying $1\leq k\leq (r-3)/2$. The same proof as above shows that $W_{2k-r}\cong V_{2k+1-r}K\cong W_{2k+2-r}K^2$. By taking $k=(r-3)/2$ we have $W_{-3}\cong V_{-2}K\cong W_{-1}K^2$, and no condition on $V_0$ is imposed. 
Switching the roles of $V$ and $W$ gives the proof for the chains \eqref{eq: s<r minimal hol chain 1 chain} and \eqref{eq: r<s minimal hol chain 2 chains}.
\end{proof}
We can now complete the classification of the stable minima with  vanishing $\HH^2(C^\bullet)$.
\begin{theorem}\label{thm:stablenon-zerominH2=0}
A stable $\SO(p,q)$-Higgs bundle $(V,W,\eta)$ with $p\leq q$, $\eta\neq0$ and $\HH^2(C^\bullet(V,W,\eta))=0$ defines a local minimum of the Hitchin function if and only if it is a holomorphic chain of the form \eqref{eq: s<r minimal hol chain 1 chain}, \eqref{eq: r<s minimal hol chain 1 chain}, \eqref{eq: s<r minimal hol chain 2 chains} or \eqref{eq: r<s minimal hol chain 2 chains} which satisfies one of the following:
\begin{enumerate}
    \item The chain is given by \eqref{eq: s<r minimal hol chain 1 chain} with $p=2$ and $0<\deg(V_{-1})<2g-2$.
    \item The chain is given by \eqref{eq: s<r minimal hol chain 1 chain} with $p\geq 2$, $s=p-1$ and the bundle $W_0$ decomposes as $W_0=I\oplus W_0'$, where $W_0'$ is a stable $\rO(q-p+1,\C)$-bundle with $\det(W_0')=I$. Moreover, $V_j=IK^{-j}$ and $W_j=IK^{-j}$ for all $j\neq 0,$ and with respect to the splitting of $W_0$, the chain is given by 
    \begin{equation}
        \label{eq holomorphic chain stable min thm}\xymatrix@C=2.3em@R=-.4em{&&&&I&\\V_{-s}\ar[r]^-{\eta_{s-1}^*}&W_{1-s}\ar[r]^-{\eta_{1-s}}&\cdots\ar[r]^-{\eta_{-2}}&V_{-1}\ar[r]^-{\smtrx{\eta_{0}^*\\0}}&\oplus\ar[r]^-{\smtrx{\eta_{0}&0}}&V_1\ar[r]^-{\eta_{-2}^*}&\cdots\ar[r]^-{\eta^*_{1-s}}&W_{s-1}\ar[r]^-{\eta_{s-1}}&V_s~,\\&&&&W_0'&}
    \end{equation}
    \item The chain is of the form \eqref{eq: r<s minimal hol chain 1
        chain} with $q=p$, and for some $2$-torsion line bundle $I$, $V_0=I\oplus I$, $V_j=IK^{-j}$ and $W_{j}=IK^{-j}$ for all $j\neq0,$ and the chain is given by 
    \begin{equation}
        \label{eq holomorphic chain 2 stable min thm}\xymatrix@C=2.3em@R=-.4em{&&&&I&\\W_{-r}\ar[r]^-{\eta_{-r}}&V_{1-r}\ar[r]^-{\eta_{r-2}^*}&\cdots\ar[r]^-{\eta_{1}^*}&W_{-1}\ar[r]^-{\smtrx{\eta_{-1}\\0}}&\oplus\ar[r]^-{\smtrx{\eta_{-1}^*&0}}&W_1\ar[r]^-{\eta_{1}}&\cdots\ar[r]^-{\eta_{r-2}}&V_{r-1}\ar[r]^-{\eta_{-r^*}}&W_r~,\\&&&&I&}
    \end{equation}

    \item The chain is of the form \eqref{eq: r<s minimal hol chain 1 chain} with $q=p+1$, $V_j=K^{-j}$ and $W_j=K^{-j}$ for all $|j|<p$ and $W_{-p}$ is a line bundle satisfying $\deg(W_{-p})\in(0,p(2g-2)].$

    \item The chain is of the form \eqref{eq: s<r minimal hol chain 2 chains} where $W_0$ is a stable $\rO(q-p+1,\C)$-bundle with $\det(W_0)=I$, and $V_j=IK^{-j}$ and $W_j=IK^{-j}$ for all $j\neq0.$

    \item The chain is of the form \eqref{eq: r<s minimal hol chain 2 chains} with $q=p+1$, $V_0=0$, $W_0\cong\cO$, $V_j=K^{-j}$ and $W_{j}=K^{-j}$ for $0<|j|<p$ and $W_{-p}$ is a line bundle satisfying $\deg(W_{-p})\in(0,p(2g-2)].$

    \item The chain is of the form \eqref{eq: r<s minimal hol chain 2 chains} with $q=p$, and for some 2-torsion line bundle $I$, $V_j=IK^{-j}$ and $W_{j}=IK^{-j}$ for all $j.$
 \end{enumerate} 
\end{theorem}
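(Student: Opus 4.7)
My plan is to proceed case-by-case, using the four chain shapes that Lemma \ref{lemma:structureoftwochains} has already singled out, together with the line-bundle structure in the positive-weight pieces from Lemma \ref{lemma:linebundles and powers of K}. The remaining degrees of freedom at that stage are: (i) the rank and structure of the weight-zero pieces $V_0$ and $W_0$, (ii) the length parameter ($s$ or $r$) of the chain, and (iii) for the chains \eqref{eq: r<s minimal hol chain 1 chain} and \eqref{eq: r<s minimal hol chain 2 chains} the degree of the extremal line bundle $W_{\pm r}$. The minimum condition from Proposition \ref{prop: minima criteria} --- that $\ad_\eta\colon\fso_k(V)\oplus\fso_k(W)\to\Hom_{k+1}(W,V)\otimes K$ is an isomorphism of sheaves for every $k>0$ --- will pin these down, and stability will bound the extremal degree.

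Concretely, I would first examine the weight-one complex $C^\bullet_1$. Using the decompositions \eqref{eq EndV2k}--\eqref{eq HomWV2k+1}, the isomorphism $C^\bullet_1$ decouples into pieces, and on the piece meeting the middle of the chain it reads as a map involving $\Hom(V_{-1},W_0)$, $\fso(W_0)$ (or $\fso(V_0)$) and $\Hom(W_0,V_1)\otimes K$ (or the analogous homomorphisms). For chain \eqref{eq: s<r minimal hol chain 1 chain} with $p\geq 3$, a rank count forces $\rk(W_0)$ to be either $1$ (yielding $s<p-1$ impossible unless the chain terminates at $p=2$) or exactly to split as $I\oplus W_0'$ with $W_0'$ a stable orthogonal bundle and $\det(W_0')=I$; this latter case only occurs for $s=p-1$ and gives case (2), which is precisely the image of $\Psi$ in Theorem~\ref{Thm Psi open and closed}. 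For $p=2$ the only possibility is case (1), and $0<\deg(V_{-1})$ follows from stability applied to $V_{-1}\subset V$ and $\deg(V_{-1})<2g-2$ from the non-vanishing of the map $V_{-1}\to W_0\otimes K$ together with the polystability of $W_0$.

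For chain \eqref{eq: r<s minimal hol chain 1 chain} I would apply the same analysis around $W_{-1}\to V_0\to W_1$: since $W_{\pm 1}$ are line bundles, the weight-one isomorphism forces $\rk V_0\leq 2$, splitting into the subcase $\rk V_0=2$ (which, by stability and the identifications from Lemma~\ref{lemma:linebundles and powers of K}, forces $q=p$ and $V_0\cong I\oplus I$ with $I$ a square root of $\cO$, giving case (3)) and the subcase $\rk V_0=1$ (forcing $q=p+1$, giving case (4)); the degree bound $\deg(W_{-p})\in(0,p(2g-2)]$ comes from stability of the kernel of $\eta_{-p}$ and the fact that $\eta_{-p}\colon W_{-p}\to V_{1-p}\otimes K=K^{1-p}\otimes K=K^{2-p}$ is a nonzero section. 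Chains \eqref{eq: s<r minimal hol chain 2 chains} and \eqref{eq: r<s minimal hol chain 2 chains} are treated in parallel, with the ``isolated'' weight-zero summand providing either a stable orthogonal bundle (cases (5), (7)) or, when $q=p+1$, the trivial bundle $\cO$ (case (6)); the two-torsion condition $I^2\cong\cO$ in case (7) comes from the isomorphism of determinants that the orthogonal structure imposes on $V_j\cong IK^{-j}$ with $j$ running through all integers of a fixed parity.

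The converse is comparatively easy: for each listed chain one verifies polystability directly (in each case the only possible destabilizing subbundles are invariant subbundles ruled out by the concrete line-bundle degrees), checks that $\HH^2(C^\bullet)=0$ by confirming the relevant $\ad_\eta$ maps are isomorphisms in every positive weight (a direct rank and degree check analogous to Lemmas~\ref{lemma k>0 H1 vanishing H2 not p 2p} and \ref{lemma p, 2p H1 vanishing H2}), and concludes via Proposition \ref{prop: minima criteria}. The genuine obstacle is the forward direction at the weight-zero piece: the rank and degree bookkeeping for $\fso(V_0)\oplus\fso(W_0)$ interacts with $\ad_\eta$ at weight $1$ in a slightly subtle way, and distinguishing the rank $2$ versus rank $1$ subcase of $V_0$ (respectively $W_0$) in chain \eqref{eq: r<s minimal hol chain 1 chain} (respectively \eqref{eq: s<r minimal hol chain 1 chain}) is where the dichotomy between $q=p$ and $q=p+1$ first forces itself.
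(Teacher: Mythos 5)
Your overall skeleton matches the paper's: take the four chain shapes from Lemma \ref{lemma:structureoftwochains} and the line-bundle identifications from Lemma \ref{lemma:linebundles and powers of K} as given, pin down the weight-zero summand using the sheaf-isomorphism criterion of Proposition \ref{prop: minima criteria}, bound the extremal degree by stability, and verify the converse by checking the positive-weight isomorphisms directly (as in Lemmas \ref{lemma k>0 H1 vanishing H2 not p 2p} and \ref{lemma p, 2p H1 vanishing H2}). However, there is a genuine gap in your forward direction, concentrated exactly where you acknowledge the "genuine obstacle" lies.

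First, a structural error: for the chains \eqref{eq: s<r minimal hol chain 1 chain} and \eqref{eq: r<s minimal hol chain 1 chain} the summands of $V$ and of $W$ occupy opposite parities of weights, so $\fso_k(V)\oplus\fso_k(W)$ and $\Hom_{k+1}(W,V)$ both vanish for every odd $k$; the complex $C^\bullet_1$ you propose to "first examine" is identically zero and carries no information (the objects $\Hom(V_{-1},W_0)$ and $\fso(W_0)$ you list do not even live in weight one --- the first is not part of the deformation complex at all, and the second sits in weight zero). The nontrivial positive-weight complexes for these chains are all at even weights. Second, and more seriously: a rank count at any weight cannot produce the conclusion you need. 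The decomposition $W_0=I\oplus W_0'$ in case (2) is a holomorphic \emph{orthogonal} splitting, and it exists precisely because $\eta_0\eta_0^*$ is a \emph{nowhere vanishing} section of $V_1^2K^2$, which is equivalent to the degree equality $\deg(V_{-1})=2g-2$. Stability and the kernel analysis of $\eta_0$ only give the inequality $\deg(V_{-1})\leq 2g-2$; for $p=2$ the strict inequality is allowed (case (1)), but for $p>2$ one must prove equality. The paper does this by taking the determinant of the sheaf isomorphism $C^\bullet_{s-1}$, using the identifications of Lemma \ref{lemma:linebundles and powers of K} to reduce it to $(V_sV_{-1}K^{s-1})^{p-q-1}\cong (V_1K)^2$, and combining this with the degree inequalities forced by the nonvanishing components of the Higgs field to get $\deg(V_1)\leq 2-2g$, hence equality. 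Without this determinant-plus-stability step you cannot rule out stable fixed points of type \eqref{eq: s<r minimal hol chain 1 chain} with $p>2$ and $0<\deg(V_{-1})<2g-2$ --- which are legitimate $\C^*$-fixed points with $\HH^2=0$ but are \emph{not} local minima --- and you cannot pin down $V_{\pm s}\cong IK^{\mp s}$ or the splitting \eqref{eq holomorphic chain stable min thm}. The same omission propagates to your treatment of case (3) (where the analogous argument with $V$ and $W$ exchanged is needed to force $V_0\cong I\oplus I$ compatibly with the Higgs field) and to the identification of $I$ in cases (5) and (7). A smaller slip: in case (4) you compute $V_{1-p}\otimes K=K^{2-p}$, but with the convention $V_j=K^{-j}$ one has $V_{1-p}\otimes K=K^{p}$, which is what yields the bound $\deg(W_{-p})\leq p(2g-2)$; your version would give a negative bound.
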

\begin{remark}\label{remark replacing invariant stable with polystable}
    Cases (2)-(7) are special cases of the fixed points considered in Lemma \ref{Lem: Fixed points in the image of Psi}. 
   In case (2), the Higgs bundle is still a local minimum of the Hitchin function if the invariant bundle $W_0'$ is strictly polystable.
   Similarly, replacing the stable orthogonal bundle $W_0$ in case (5) with a strictly polystable orthogonal bundle still defines a local minimum. We will prove that these are the only local minima apart from $\eta=0.$ 
    Note also that none of the above cases have $p=1$ and $q>2$. 
 \end{remark}

\begin{proof}
We first show that cases (1) and (2) are sufficient for the chain \eqref{eq: s<r minimal hol chain 1 chain} to be a stable minima with $\HH^2(C^\bullet)=0$ by invoking Proposition \ref{prop: minima criteria}. For case (1), $C^\bullet_{2}$ is the only isomorphism to consider. We have $\fso_2(V)\oplus\fso_2(W)=\Lambda^2 V_1$ and $\Hom_3(W,V)\otimes K=0,$ which is an isomorphism since $\rk(V_{-1})=1.$ For case (2), the holomorphic chain \eqref{eq holomorphic chain stable min thm} is a fixed point considered in Lemma \ref{Lem: Fixed points in the image of Psi} with $W_p=0.$ By Lemma \ref{lemma k>0 H1 vanishing H2 not p 2p}, $C^\bullet_k:\fso_k(V)\oplus\fso_k(W)\xrightarrow{\ad_\eta}\Hom_{k+1}(W,V)\otimes K$ is an isomorphism for all $k>0.$ 
 
We now show that cases (1) and (2) are necessary for chains of the form \eqref{eq: s<r minimal hol chain 1 chain}. We have a chain 
    \[\xymatrix{V_{-s}\ar[r]^-{\eta_{s-1}^*}&W_{1-s}\ar[r]^-{\eta_{1-s}}&\cdots\ar[r]^-{\eta_{-2}}&V_{-1}\ar[r]^-{\eta_0^*}&W_0\ar[r]^-{\eta_0}&V_1\ar[r]^-{\eta_{-2}^*}&\cdots\ar[r]^-{\eta_{1-s}^*}&W_{s-1}\ar[r]^-{\eta_{s-1}}&V_s},\] with $s\geq 1$ odd.
    By Lemma \ref{lemma:linebundles and powers of K} each of the bundles in the chain is a line bundle except $W_0$. So $p=s+1$ is even and $\rk(W_0)=q-p+2\geq 2$. Note that $\cO=\det(V)=\det(W)=\det(W_0).$

If $N=\ker(\eta_0)$, then $\eta_0^*$ maps $V_{-1}$ to  $N^\perp K\subset W_0\otimes K$. By Proposition \ref{prop: stable fixed points}, $\eta_0^*$ is non-zero, hence $\deg(N^\perp)-\deg(V_{-1})+2g-2\geq 0$. If $N$ is coisotropic then $N^\perp$ is isotropic, and  stability implies $\deg(V_{-1})+\deg(N^\perp)<0$, which implies $\deg(V_{-1})<g-1$.
If $N$ is not coisotropic, then $\eta_0\eta_0^*$ is a non-zero section of the line bundle $V_1^2K^2$. Thus, 
\begin{equation}\label{eq:degV_1} 
\deg(V_{-1})\leq 2g-2~.
 \end{equation} 

If $p=2$ and $\deg(V_{-1})< 2g-2$ we are done. If $\deg(V_{-1})=2g-2,$ then $\eta_0\eta_0^*$ is a nowhere vanishing section of the line bundle $V_1^2K^2,$ and hence the kernel of $\eta_0$ is a holomorphic orthogonal bundle $W_0'\subset W_0$ of rank $q-p+1.$ Furthermore, stability of $(V,W,\eta)$ forces $W_0'$ to be stable. Taking orthogonal complements gives a decomposition $W_0=W_0'\oplus I$ where $KV_{1}=I=\det(W_0')$ since $\cO=\det(W_0).$ By Lemma \ref{lemma:linebundles and powers of K}, the holomorphic chain is given by \eqref{eq holomorphic chain stable min thm}. Thus, for $p=2$ we are done. For $p>2$ we will show that stability forces $\deg(V_{-1})=2g-2$ and $V_{-s}=K^sI.$

For $p\geq 4$ and even, we have $s\geq 3$ and odd. Using decompositions analogous to \eqref{Eq End2k splittings section minima} and \eqref{Eq so2k splittings section minima} and $\rk(V_j)=\rk(W_j)=1$ for $j\neq0$, the isomorphism of $C^\bullet_{s-1}$ gives
\[\fso_{s-1}(V)\oplus\fso_{s-1}(W)\cong \displaystyle\bigoplus\limits_{j=0}^{\left\lfloor \frac{s-1}{4}\right\rfloor}\Hom(V_{2j-s},V_{2j-1})\oplus\displaystyle\bigoplus_{j=0}^{\left\lfloor \frac{s-3}{4}\right\rfloor}\Hom(W_{2j+1-s},W_{2j})\]\[\cong\ \Hom_{s}(W,V)\otimes K\ \cong\ \displaystyle\bigoplus\limits_{j=0}^{\frac{s-1}{2}} \Hom(W_{2j+1-s},V_{2j+1})\otimes K~.\]
Since $\det(W_0)=\cO$, the determinant of both sides of the isomorphism $C^\bullet_{s-1}$ is given by
    \begin{equation}\label{eq:proof1 determinant}
    V_sV_{-1}\otimes\bigotimes_{j=1}^{\left\lfloor \frac{s-1}{4}\right\rfloor}V_{s-2j}V_{2j-1}\otimes W_{s-1}^{\rk(W_0)}\otimes\bigotimes\limits_{j=1}^{\left\lfloor \frac{s-3}{4}\right\rfloor} W_{s-1-2j}W_{2j}\cong \bigotimes_{j=0}^\frac{s-3}{2}W_{s-1-2j}V_{2j+1}K\otimes (V_sK)^{\rk(W_0)}.
    \end{equation}
From Lemma \ref{lemma:linebundles and powers of K}, we have $W_{s-1}\cong V_1K^{2-s}$ and
\[V_1^2K^{3-s}\cong \begin{dcases}
    V_{s-2j}V_{2j-1}, &\text{ for }j=1,\ldots,\left\lfloor(s-1)/4\right\rfloor\\ 
    W_{s-1-2j}W_{2j}, &\text{ for }j=1,\ldots,\left\lfloor(s-3)/4\right\rfloor\\
    W_{s-1-2j}V_{2j+1}K, &\text{ for }j=0,\ldots,(s-3)/2.
\end{dcases}\]
This simplifies \eqref{eq:proof1 determinant} to $(V_sV_{-1}K^{s-1})^{p-q-1}\cong (V_1K)^2.$ As in the proof of Lemma \ref{lemma:linebundles and powers of K}, the Higgs field gives a non-zero map $V_1\rightarrow V_s K^{s-1}$. Therefore, 
\[0\geq (p-q-1)(\deg(V_s)-\deg(V_1)+(s-1)(2g-2))=2(\deg(V_1)+2g-2),\]
and hence $\deg(V_1)\leq 2-2g$. By \eqref{eq:degV_1}, we conclude that $\deg(V_{-1})=2g-2$ and $\deg(V_s)=-s(2g-2).$ As above, since $\deg(V_{-1})=2g-2$, the bundle $W_0$ decomposes as $W_0'\oplus I,$ where $W_0'$ is the kernel of $\eta_0$ and $\det(W_0)'=I=V_1K.$ Moreover, we have $V_s=IK^{-s}$ since, by Lemma \ref{lemma:linebundles and powers of K}, $W_{s-1}=IK^{1-s}$ and $\eta_{s-1}:W_{s-1}\to V_{s}\otimes K$ is non-zero. This completes the proof of (2).

Case (3) is similar to case (2), and cases (4) and (6) are almost identical. By Lemma \ref{lemma:linebundles and powers of K}, the holomorphic chain \eqref{eq: r<s minimal hol chain 1 chain} is given by 
\[\xymatrix@C=1.8em@R=0em{W_{-r}\ar[r]^-{\eta_{-r}}&V_{1-r}\ar[r]^-{\eta_{r-2}^*}&\cdots\ar[r]^-{\eta_1^*}&W_{-1}\ar[r]^-{\eta_{-1}}&V_0\ar[r]^-{\eta^*_{-1}}&W_1\ar[r]^-{\eta_1}&\cdots\ar[r]^-{\eta_{r-2}}&V_{r-1}\ar[r]^-{\eta_{-r}^*}&W_r~,}\]
where $\rk(W_j)=1$ for all $j.$ Thus, $r=q-1$ and either $\rk(V_0)=1$ and $q=p+1$ or $\rk(V_0)=2$ and $q=p$. 
If $q=p$, then, the holomorphic chain is identical to case (2) with the roles of $V$ and $W$ switched. By the same argument as case (2), we conclude that the holomorphic chain must be of the form \eqref{eq holomorphic chain 2 stable min thm}.

We now assume $\rk(V_0)=1$ and $q=p+1.$  Moreover, $V_0=\cO$ since $\cO\cong \det(V)\cong V_0$. Since the Higgs field defines a non-zero maps $W_{-1}\to\cO\otimes K$ and $W_{-1}\to W_1\otimes K^2,$ we conclude that $W_{-1}\cong K$. Thus, $W_j=K^{-j}$ and $V_j=K^{-j}$ for all $|j|<r$ by Lemma \ref{lemma:linebundles and powers of K}. Since $W_p$ is an invariant isotopic subbundle and the Higgs field $\eta_{-p}:W_{-p}\to V_{-p+1}K$ is non-zero, we conclude 
\[0<\deg(W_{-p})\leq p(2g-2).\] Thus, the conditions in case (4) are necessary.

The holomorphic chain from case (4) is a fixed point considered in Lemma \ref{Lem: Fixed points in the image of Psi} with $W_0'=0$ and $\rk(W_{-p})=1.$ By Lemmas \ref{lemma k>0 H1 vanishing H2 not p 2p} and \ref{lemma p, 2p H1 vanishing H2}, $C^\bullet_k:\fso_k(V)\oplus\fso_k(W)\xrightarrow{\ad_\eta}\Hom_{k+1}(W,V)\otimes K$ is an isomorphism for all $k>0$. Thus, the conditions in case (4) are also sufficient.

The holomorphic chain from case (5) is a fixed point considered in Lemma \ref{Lem: Fixed points in the image of Psi} with $W_{-p}=0$. By Lemma \ref{lemma k>0 H1 vanishing H2 not p 2p}, $C^\bullet_k:\fso_k(V)\oplus\fso_k(W)\xrightarrow{\ad_\eta}\Hom_{k+1}(W,V)\otimes K$ is an isomorphism for all $k>0$. Thus, the conditions in case (5) are sufficient. 

To show the conditions of (5) are necessary, note that the holomorphic chain \eqref{eq: s<r minimal hol chain 2 chains} is given by 
\[\xymatrix@C=1.8em@R=0em{V_{-r}\ar[r]^-{\eta_{-r}}&W_{1-r}\ar[r]^-{\eta_{r-2}^*}&\cdots\ar[r]^-{\eta_1^*}&W_{-1}\ar[r]^-{\eta_{-1}}&V_0\ar[r]^-{\eta^*_{-1}}&W_1\ar[r]^-{\eta_1}&\cdots\ar[r]^-{\eta_{r-2}}&W_{r-1}\ar[r]^-{\eta_{-r}^*}&V_r~.\\&&&&\oplus\\&&&&W_0}\]
By Lemma \ref{lemma:linebundles and powers of K}, $\rk(V_j)=1$ for all $j$, thus $r=p-1$ and $\rk(W_0)\geq1$. Also, if $V_0=I$, then $I=\det(V)=\det(W)=\det(W_0)$, and $V_j=IK^{-j}$ for all $|j|<p-1$ and $W_j=K^{-1}I$ for all $j\neq0.$ 
Since $W_0\neq0$, $\fso_{p-2}(V)\oplus\fso_{p-2}(W)\cong\Hom(W_{1-r},W_0)$ and $\Hom_{p-1}(W,V)\otimes K\cong \Hom(W_0,V_{p-1}K).$ Taking the determinant of this isomorphism and using $W_{2-p}=K^{p-2}I$ we conclude that $V_{1-p}=IK^{p-1}$, finishing the proof of case (5).

Finally, for case (7) the holomorphic chain \eqref{eq: r<s minimal hol chain 2 chains} is given by 
\[\xymatrix@C=1.8em@R=0em{W_{-s}\ar[r]^-{\eta_{-s}}&V_{1-s}\ar[r]^-{\eta_{s-2}^*}&\cdots\ar[r]^-{\eta_{-2}}&V_{-1}\ar[r]^-{\eta_{0}^*}&W_0\ar[r]^-{\eta_{0}}&V_1\ar[r]^-{\eta_{-2}^*}&\cdots\ar[r]^-{\eta_{s-2}}&V_{s-1}\ar[r]^-{\eta_{-s}^*}&W_s\\&&&&\oplus&&&&\\&&&&V_0&&&&}.\]
By Lemma \ref{lemma:linebundles and powers of K}, $\rk(W_j)=1$ for all $j.$ Thus $s=q-1=p-1$ and $V_0$ is a rank one orthogonal bundle $I$ with $I=\det(V)=\det(W)=W_0,$ $V_j=IK^{-j}$ for all $j$ and $W_j=IK^{-j}$ for all $|j|<s.$ Similar to case (4), we have $\fso_{p-2}(V)\oplus\fso_{p-2}(W)\cong\Hom(V_0,V_{p-2})$ and $\Hom_{p-1}(W,V)\otimes K\cong\Hom(W_{1-p},V_0K).$ Thus, the isomorphism $C^\bullet_{p-2}$ implies $W_{1-p}\cong IK^{p-1}.$ Thus, the conditions of (7) are necessary.  
As with the other cases, the conditions of case (7) are sufficient by Lemmas \ref{Lem: Fixed points in the image of Psi} and \ref{lemma k>0 H1 vanishing H2 not p 2p}.
\end{proof}

\subsection{Stable minima with non-vanishing $\HH^2(C^\bullet)$}
We now classify stable $\SO(p,q)$-Higgs bundles such that the associated $\SO(p+q,\C)$-Higgs bundle is strictly polystable. By Remark \ref{remark duality of H0 and H2 for complex groups}, these are exactly the stable $\SO(p,q)$-Higgs bundles which may have $\HH^2(C^\bullet)\neq0$. 
We will prove that such $\SO(p,q)$-Higgs bundles define minima of the Hitchin function if and only if the Higgs field $\eta$ is zero. 

The $\SO(p+q,\C)$-Higgs bundle associated to an $\SO(p,q)$-Higgs bundle $(V,Q_V,W,Q_W,\eta)$ is
\begin{equation}
    \label{eq:SO(p+q,C),quad,Higgs} (E,Q,\Phi)=\Big(V\oplus W,\smtrx{Q_V&0\\0&-Q_W},\smtrx{0&\eta\\\eta^*&0}\Big)~.
 \end{equation} 
Recall that a $\GL(p,\R)$-Higgs bundle is defined by a triple $(V,Q_V,\eta)$ where $(V,Q_V)$ is a rank $p$ orthogonal vector bundle and $\eta:V\to V\otimes K$ is a holomorphic map satisfying $\eta^*=Q_V^{-1}\eta^T Q_V=\eta$. 
Given such a $\GL(p,\R)$-Higgs bundle, we construct the $\SO(p,p)$-Higgs bundle $(V,Q_V,V,Q_V,\eta)$. Using the symmetry $\eta^*=\eta$, the corresponding $\SO(2p,\C)$-Higgs bundle is 
\[(E,Q,\Phi)=\Big(V\oplus V,\smtrx{Q_V&0\\0&-Q_V},\smtrx{0&\eta\\\eta&0}\Big)~.\]
Even if the $\SO(p,p)$-Higgs bundle $(V,V,\eta)$ is stable, the above $\SO(2p,\C)$-Higgs bundle is strictly polystable. Indeed, the following pair of disjoint degree zero isotropic subbundles are both $\Phi$-invariant
\[\xymatrix@R=.1em{V\ar[r]^{i_1\ \ }&V\oplus V&&\text{and}&&V\ar[r]^{i_2\ \ }&V\oplus V\\v\ar@{|->}[r]&(v,v)&&&&v\ar@{|->}[r]&(v,-v)}.\]
The following proposition shows that this example characterizes the stable $\SO(p,q)$-Higgs bundles which are not stable as $\SO(p+q,\C)$-Higgs bundles. 

\begin{proposition}\label{prop:comparison stability SO(p,q) SO(p+q,C)}
Let $(V,W,\eta)$ be a stable $\SO(p,q)$-Higgs bundle. The associated $\SO(p+q,\C)$-Higgs bundle \eqref{eq:SO(p+q,C),quad,Higgs} is strictly polystable if and only if 
\begin{equation}\label{eq:splittingstable-SOnotstable}
(V,Q_V,W,Q_W,\eta)\cong \Big(V_1\oplus V_2, \smtrx{Q_{V_1}&0\\0&Q_{V_2}}, V_1\oplus W_2, \smtrx{Q_{V_1}&0\\0&Q_{W_2}},\smtrx{\eta_1&0\\0&\eta_2}\Big)~,
\end{equation} 
where $(V_1,Q_{V_1},V_1,Q_{V_1},\eta_1)$ is a stable $\SO(p_1,p_1)$-Higgs bundle with $\eta_1^*=\eta_1$ and $(V_2,Q_{V_2},W_2,Q_{W_2},\eta_2)$ is a stable $\SO(p_2,q_2)$-Higgs bundle.
\end{proposition}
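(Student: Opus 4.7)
The ``if'' direction is immediate from the example constructed just before the proposition: the $\SO(2p_1,\C)$-Higgs bundle associated to a $\GL(p_1,\R)$-type $\SO(p_1,p_1)$-Higgs bundle contains the diagonal $\{(v,v)\}$ as a $\Phi$-invariant isotropic subbundle of degree zero, so any direct sum involving such a factor is strictly polystable as an $\SO(p+q,\C)$-Higgs bundle.

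For the converse, the $\SO(p,q)$-structure on $(E,Q,\Phi)$ is equivalent to the involution $\sigma=\Id_V\oplus(-\Id_W)$ satisfying $\sigma^TQ\sigma=Q$ and $\sigma\Phi=-\Phi\sigma$, with $V,W$ being the $\pm 1$-eigenbundles. I will exploit this in conjunction with a $\sigma$-equivariant polystable decomposition of $(E,Q,\Phi)$ as an $\SO(p+q,\C)$-Higgs bundle,
\[(E,Q,\Phi)=(E_0,Q_0,\Phi_0)\oplus\bigoplus_i(U_i\oplus U_i^*),\]
where $(E_0,Q_0,\Phi_0)$ is a stable $\SO$-summand and each $U_i$ is a stable $\GL$-Higgs bundle; the existence of such a $\sigma$-equivariant decomposition follows from the semisimplicity of the endomorphism algebra of a polystable Higgs bundle. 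Stability of $(V,W,\eta)$ then forces $\sigma$ to exchange $U_i\leftrightarrow U_i^*$ within each pair. Indeed, if $\sigma(U_i)=U_i$, then $U_i=(U_i\cap V)\oplus(U_i\cap W)$ yields a pair of isotropic $\eta$-invariant subbundles of $V,W$ of total degree $\deg U_i=0$, contradicting stability unless $U_i=0$; and if $\sigma$ carried $U_i$ into a summand of a different pair, then $U_i\oplus\sigma U_i$ would be isotropic (distinct pairs being mutually orthogonal under $Q$) and $\sigma$-invariant, again contradicting stability through its $V,W$-decomposition.

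Pick any one such pair $(U_1\oplus U_1^*)$: it is $\sigma$- and $\Phi$-invariant and non-degenerate under $Q$, and its $\sigma$-eigenbundles $V_1:=(U_1\oplus U_1^*)\cap V$ and $W_1:=(U_1\oplus U_1^*)\cap W$ have the same rank $p_1:=\rk U_1$. Both are canonically isomorphic to $U_1$ via $u\mapsto\tfrac{1}{2}(u\pm\sigma u)$, and composing yields $\psi:V_1\to W_1$. A direct computation using $Q|_{U_1}=0$ and $\sigma$-invariance of $Q$ shows $\psi^*Q_{W_1}=Q_{V_1}$, so $\psi$ is an isomorphism of orthogonal bundles; and the $\Phi$-invariance of $U_1$ translates into $\eta^*|_{V_1}=\psi\circ\eta|_{W_1}\circ\psi$, which via $\Id\oplus\psi^{-1}$ identifies $(V_1,W_1,\eta|_{V_1\oplus W_1})$ with a stable $\GL(p_1,\R)$-type $\SO(p_1,p_1)$-Higgs bundle $(V_1,V_1,\eta_1)$ for $\eta_1=\eta\circ\psi$ satisfying $\eta_1^*=\eta_1$ (stability following from that of the $\GL$-Higgs bundle $U_1$). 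Finally, setting $V_2:=V_1^\perp\subset V$ and $W_2:=W_1^\perp\subset W$, the adjunction identity $Q_V(v,\eta w)=Q_W(\eta^*v,w)$ of $K$-valued pairings combined with $\eta^*(V_1)\subset W_1\otimes K$ immediately gives $\eta(W_2)\subset V_2\otimes K$ and $\eta^*(V_2)\subset W_2\otimes K$. This yields the splitting \eqref{eq:splittingstable-SOnotstable}; the factor $(V_2,W_2,\eta_2)$ corresponds to $(E_0,Q_0,\Phi_0)$ together with the remaining pairs and inherits stability from $(V,W,\eta)$.

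The main obstacle will be pinning down the action of $\sigma$ on the polystable decomposition in the second step: one must ensure the existence of a $\sigma$-equivariant decomposition and then carefully leverage the stability of $(V,W,\eta)$ to rule out the two ``bad'' behaviours (where $\sigma$ preserves a single $U_i$ setwise, or where $\sigma$ mixes distinct pairs). Once this constraint is in place, the remaining construction of the $\GL(p_1,\R)$-factor and the splitting off of its orthogonal complement are essentially a linear algebra computation.
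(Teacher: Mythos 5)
Your overall strategy is genuinely different from the paper's, which invokes no global decomposition of $(E,Q,\Phi)$ at all. The paper takes a single degree-zero isotropic $\Phi$-invariant subbundle $U\subset V\oplus W$ witnessing strict polystability, and compares the kernels $U^v=U\cap V$, $U^w=U\cap W$ of the two projections with their images $V_1\subset V$, $W_1\subset W$: the pair $(U^v,U^w)$ is isotropic and $\eta$-invariant, so $\SO(p,q)$-stability forces $\deg U^v+\deg U^w<0$ unless both vanish, while $V_1\oplus W_1$ is $\Phi$-invariant, so $\deg V_1+\deg W_1\le 0$; since $\deg V_1+\deg W_1=2\deg U-\deg U^v-\deg U^w$ and $\deg U=0$, the only consistent option is $U^v=U^w=0$, i.e.\ $U\cong V_1\cong W_1$. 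A second application of the same stability argument to $V_1^{\perp_V}\cap V_1$ and $W_1^{\perp_W}\cap W_1$ gives nondegeneracy, and the remaining identification of $(V_1,W_1,\eta|_{W_1})$ with a symmetric $(V_1,V_1,\eta_1)$ is the same linear algebra you perform with $\psi$. Your route instead passes through a full hyperbolic decomposition of the polystable complex orthogonal Higgs bundle and an analysis of the induced action of $\sigma=\Id_V\oplus(-\Id_W)$; the stability arguments you use to exclude the two bad cases are essentially the paper's degree argument in disguise.

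The genuine gap is the one you flag yourself: the existence of a $\sigma$-equivariant decomposition $(E_0,Q_0,\Phi_0)\oplus\bigoplus_i(U_i\oplus U_i^*)$. The justification you offer (semisimplicity of the endomorphism algebra of a polystable Higgs bundle) does not apply as stated, because $\sigma$ anti-commutes with $\Phi$: it is an isomorphism $(E,\Phi)\to(E,-\Phi)$, not an automorphism of $(E,\Phi)$, so it does not act on $\End(E,\Phi)$ in the way that argument requires. What is canonical, and what $\sigma$ genuinely permutes, is the isotypic decomposition; but the splitting of each isotypic piece into stable summands and the choice of hyperbolic pairing between dual pieces are not canonical, and making these choices $\sigma$-compatible is exactly where the work lies. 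Without that, your case analysis is also incomplete: a priori $\sigma U_i$ need not coincide with any single summand of your chosen decomposition. This is repairable, but the most economical repair is to drop the global decomposition and run the degree argument directly on one isotropic invariant $U$, as the paper does. Two smaller points: stability of $(V_1,V_1,\eta_1)$ and of $(V_2,W_2,\eta_2)$ is most cleanly deduced from stability of $(V,W,\eta)$ itself (a destabilizing isotropic pair of a nondegenerate orthogonal summand destabilizes the whole), rather than from stability of the $\GL$-Higgs bundle $U_1$; and in your first bad case you should note that $U_i\cap V$, being isotropic, cannot equal $V$, so the destabilizing pair is indeed proper.
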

\begin{proof}
By the above discussion, the condition \eqref{eq:splittingstable-SOnotstable} is sufficient. We now show that it is necessary. 
Let $(V,W,\eta)$ be a stable $\SO(p,q)$-Higgs bundle and suppose the associated $\SO(p+q,\C)$-Higgs bundle $(E,Q,\Phi)$ given by \eqref{eq:SO(p+q,C),quad,Higgs} is strictly polystable, so that there is a degree zero proper subbundle $U\subset V\oplus W$, which is isotropic with respect to $Q$ and satisfies $\Phi(U)\subset U\otimes K$.
Let $V_1\subset V$ and $W_1\subset W$ be the respective image sheaves of the projection of $U$ onto each summand of $V\oplus W.$ The subsheaf $V_1\oplus W_1$ is preserved by $\Phi$, thus $\deg(V_1)+\deg(W_1)\leq 0$ by polystability of the associated $\SL(p+q,\C)$-Higgs bundle $(V\oplus W,\Phi).$ 

Consider the sequences
\[\xymatrix{0\ar[r]&U^w\ar[r]&U\ar[r]&V_1\ar[r]&0&\text{and}&0\ar[r]&U^v\ar[r]&U\ar[r]&W_1\ar[r]&0}~,\]
where the subsheaf $U^v\subset V$ is $Q_V$-isotropic, the subsheaf $U^w\subset W$ is $Q_W$-isotropic, $\eta(U^w)\subset U^v\otimes K$ and $\eta^*(U^v)\subset U^w\otimes K.$ Stability of the $\SO(p,q)$-Higgs bundle gives $\deg(U^v)+\deg(U^w)<0$, which implies $\deg(V_1)+\deg(W_1)>0.$ But, since $V_1\oplus W_1$ is preserved by $\Phi$, $\deg(V_1)+\deg(W_1)\leq 0$ by polystability of the Higgs bundle $(V\oplus W,\Phi).$ This contradiction implies
\[V_1\cong U\cong W_1.\] 

We claim that $V_1$ and $W_1$ are both orthogonal subbundles. Let $Q_{V_1}$ and $Q_{W_1}$ be the restrictions of $Q_V$ and $Q_W$ to $V_1$ and $W_1$ respectively. Consider the following sequences
\[\xymatrix@=1.5em{0\ar[r]&V_1^{\perp_{V_1}}\ar[r]&V_1\ar[r]&V_1/V_1^{\perp_{V_1}}\ar[r]&0&\text{and}&0\ar[r]&W_1^{\perp_{W_1}}\ar[r]&W_1\ar[r]&W_1/W_1^{\perp_{W_1}}\ar[r]&0},\]
where, by definition, $V_1^{\perp_{V_1}}=V_1^{\perp_V}\cap V_1$ and $W_1^{\perp_{W_1}}=W_1^{\perp_W}\cap W_1$. 
Since $V_1^{\perp_{V_1}}$ and $W_1^{\perp_{W_1}}$ are maximally isotropic subbundles of $V_1$ and $W_1$ respectively, both $V_1/V_1^{\perp_{V_1}}$ and $W_1/W_1^{\perp_{W_1}}$ are orthogonal bundles. 
In particular, $V_1^{\perp_{V_1}}$ and $W_1^{\perp_{W_1}}$ are degree zero isotropic subbundles of $V$ and $W$ respectively. Moreover, since $\eta^*=Q_W^{-1}\eta^TQ_V$, we have $Q_V(\eta(-),-)=Q_W(-,\eta^*(-))$. This, together with the fact that $\eta(W_1)\subset V_1\otimes K$ and $\eta^*(V_1)\subset W_1\otimes K$ (by $\Phi$-invariance of $U$), shows that 
\[\xymatrix{\eta(W_1^{\perp_{W_1}})\subset V_1^{\perp_{V_1}}\otimes K&\text{and}&\eta^*(V_1^{\perp_{V_1}})\subset W_1^{\perp_{W_1}}\otimes K.}\]
Again, stability of the $\SO(p,q)$-Higgs bundle $(V,W,\eta)$ implies both $V_1^{\perp_{V_1}}$ and $W_1^{\perp_{W_1}}$ are zero, which implies $V_1\subset V$ and $W_1\subset W$ are both orthogonal subbundles.

If $p_1=\rk(W_1)=\rk(V_1)$, then $(V_1,W_1,\eta|_{W_1})$ is an $\SO(p_1,p_1)$-Higgs bundle. Note that isomorphism between $V_1$ and $W_1$ is given by including $V_1$ into $V\oplus W$ and projecting onto $W.$ 
Denoting this isomorphism by $g:V_1\to W_1$, we have $\eta|_{W_1} g=(g^{-1}\otimes \Id_K)\eta|_{W_1}^*$. Moreover, $g$ is orthogonal since for any $x,y\in V_1$ we have $(x,g(x)),(y,g(y))\in U$, and   
\[0=Q((x,g(x)),(y,g(y)))=Q_{V_1}(x,y)-Q_{W_1}(g(x),g(y))\]
since $U$ is isotropic.
Therefore the pair $(\Id_V,g^{-1})$ defines an isomorphism between $(V_1,W_1,\eta|_{W_1})$ and $(V_1,V_1,\eta_1)$ with $\eta_{1}=\eta|_{W_1}g$. In particular, $\eta_1=\eta_1^*$.

Let $V_2$ and $W_2$ be the orthogonal complements of $V_1$ and $W_1$ respectively and let $\eta_2:W_2\to V_2\otimes K$ be the restriction of $\eta$ to $W_2$. By the above discussion, we obtain the desired decomposition \eqref{eq:splittingstable-SOnotstable} of the $\SO(p,q)$-Higgs bundle $(V,W,\eta)$.
\end{proof}

If a stable $\SO(p,q)$-Higgs bundle
\[(V,Q_V,W,Q_W,\eta)\cong \Big(V_1\oplus V_2, \smtrx{Q_{V_1}&0\\0&Q_{V_2}}, V_1\oplus W_2, \smtrx{Q_{V_1}&0\\0&Q_{W_2}},\smtrx{\eta_1&0\\0&\eta_2}\Big),\]
with $\eta_1^*=\eta_1$, is a local minimum of the Hitchin function, then $(V_1,Q_{V_1},\eta_1)$ is a local minimum of the Hitchin function on the $\GL(p_1,\R)$-Higgs bundle moduli space and $(V_2,Q_{V_2},W_2,Q_{W_2},\eta_2)$ is a local minimum of the Hitchin function on the $\SO(p_2,q_2)$-Higgs bundle moduli space.

For $p>1$, the local minima in the $\GL(p,\R)$-Higgs bundle moduli space with non-zero Higgs field are described in Example \ref{ex: minima for GL(n,R) and Upq}. When $p=2$, such local minima are of the form \eqref{eq:SL2}, and hence the holomorphic chain of the corresponding $\SO(2,2)$-Higgs bundle is of the form
\begin{equation}\label{eq:minS022}
\vcenter{   \xymatrix@R=0em{L\ar[r]^-{\Phi_1}&L^{-1}K\\
\hspace{2.5cm}\oplus&\\
L\ar[r]_-{\Phi_1}&L^{-1}K.}}
\end{equation}
When a stable $\SO(p_2,q_2)$-Higgs bundle is added to \eqref{eq:minS022}, the resulting $\SO(2+p_2,2+q_2)$-Higgs bundle is not stable since $L$ and $L^{-1}$ are a pair of proper isotropic subbundles exchanged by the Higgs field. 

For $p\geq 3$, the holomorphic chain associated to a $\GL(p,\R)$ local minimum is given by 
\[\xymatrix{V_{\frac{1-p}{2}}\ar[r]^-{1}&V_{\frac{3-p}{2}}\ar[r]^-{1}&\cdots\ar[r]^-{1}&V_{\frac{p-3}{2}}\ar[r]^-{1}&V_{\frac{p-1}{2}}},\]
where $V_j=IK^{-j}$ for all $j$ and some $2$-torsion line bundle $I$. The holomorphic chain of the associated $\SO(p,p)$-Higgs bundle is 
\begin{equation}
    \label{eq so(p,p) of an GL(p) minima}
\vcenter{   \xymatrix@R=0em{V_{\frac{1-p}{2}}\ar[r]^-{1}&V_{\frac{3-p}{2}}\ar[r]^-{1}&\cdots\ar[r]^-{1}&V_{\frac{p-3}{2}}\ar[r]^-{1}&V_{\frac{p-1}{2}}\\
&&\oplus&\\
V_{\frac{1-p}{2}}\ar[r]^-{1}&V_{\frac{3-p}{2}}\ar[r]^-{1}&\cdots\ar[r]^-{1}&V_{\frac{p-3}{2}}\ar[r]^-{1}&V_{\frac{p-1}{2}}.}}
\end{equation}
By Proposition \ref{prop: stable fixed points}, such an $\SO(p,p)$-Higgs bundle is not stable if $p\geq 4 $ is even.

By the above discussion, the potential stable
  $\SO(p,q)$ local minima which are not stable as $\SO(p+q,\C)$-Higgs bundles have the form \eqref{eq:splittingstable-SOnotstable}, where $(V_2,W_2,\eta_2)$ is a stable $\SO(p_2,q_2)$ local minimum and $(V_1,V_1,\eta_1)$ is either a stable $\SO(p_1,p_1)$-Higgs bundle with $\eta_1=0$ or $p_1>2$ is odd and $(V_1,V_1,\eta_1)$ is a holomorphic chain of the form \eqref{eq so(p,p) of an GL(p) minima}. The next two propositions address these cases.
\begin{proposition}\label{prop H^2 not zero not a min}
    For $p\geq 3$ odd, the stable $\SO(p,p)$-Higgs bundle given by \eqref{eq so(p,p) of an GL(p) minima} with $V_j=IK^{-j}$ for all $j$ and some $2$-torsion line bundle $I$ is not a minimum of the Hitchin function. 
\end{proposition}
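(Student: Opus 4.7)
I would apply Proposition~\ref{prop:notS-equiv-liminfty-notmin}: exhibit a semistable $\SO(p,p)$-Higgs bundle $(V',W',\eta')$, not $\cS$-equivalent to $(V,V,\Phi)$, such that $\lim_{t\to\infty}(V',W',t\eta')=(V,V,\Phi)$ in $\cM(\SO(p,p))$.  The strategy is to find such a bundle as the integration of a non-trivial class in the weight-$k$ piece $\HH^1(C^\bullet_k)$ of the deformation complex at $(V,V,\Phi)$ for some $k>0$, since positive-weight deformations produce one-parameter families whose $\C^*$-orbits converge to the fixed point as $t\to\infty$ after the scaling is absorbed by gauge transformations.

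The decisive computation is in weight $k=1$.  Using the weight decomposition of the deformation complex (as in \eqref{Eq fixedpoint deformation sheaf splitting}) together with $V=W=\bigoplus_{j=(1-p)/2}^{(p-1)/2}IK^{-j}$ and $I^2\cong\cO$, one obtains
\[
\fso(V)_1\oplus\fso(W)_1\cong(K^{-1})^{\oplus(p-1)}\qquad\text{and}\qquad \Hom(W,V)_2\otimes K\cong(K^{-1})^{\oplus(p-2)}.
\]
The map $\ad_\eta(\alpha,\beta)=\Phi\beta-\alpha\Phi$ reduces on each component $W_\mu$ to $\beta_\mu-\alpha_{\mu+1}$, as a section of $K^{-1}$, because $\Phi$ is the identity on every link of the chain.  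Expressing the non-free components via the skew-symmetry relations $\alpha_{-\nu-1}=-\alpha_\nu^*$ and $\beta_{-\nu-1}=-\beta_\nu^*$, a direct parameter count shows that this map is surjective at the sheaf level with one-dimensional kernel isomorphic to $K^{-1}$.  Hence $C^\bullet_1$ is quasi-isomorphic to the single sheaf $K^{-1}$ placed in degree $0$, and
\[
\HH^1(C^\bullet_1)\cong H^1(K^{-1})\cong H^0(K^2)^*\neq 0,\qquad \HH^2(C^\bullet_1)=0.
\]

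With a non-trivial, first-order unobstructed deformation of positive weight in hand, one integrates it to produce an actual one-parameter family $(V_\epsilon,W_\epsilon,\eta_\epsilon)$ of polystable $\SO(p,p)$-Higgs bundles with $(V_0,W_0,\eta_0)=(V,V,\Phi)$ and $(V_\epsilon,W_\epsilon,\eta_\epsilon)$ not $\cS$-equivalent to $(V,V,\Phi)$ for $\epsilon\ne 0$.  Since the deformation has strictly positive $\C^*$-weight, the $\C^*$-limit $\lim_{t\to\infty}(V_\epsilon,W_\epsilon,t\eta_\epsilon)$ returns to $(V,V,\Phi)$ in $\cM(\SO(p,p))$, and Proposition~\ref{prop:notS-equiv-liminfty-notmin} then yields that $(V,V,\Phi)$ is not a local minimum of the Hitchin function.

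The main obstacle in executing this plan is verifying the integrability of the weight-$1$ deformation, i.e., the vanishing of the higher-order Kuranishi obstructions lying in $\HH^2(C^\bullet_k)$ for $k\in\{2,3,\ldots\}$.  These are handled by weight-graded computations analogous to those of Lemmas~\ref{lemma k>0 H1 vanishing H2 not p 2p}--\ref{lemma C0 HH10 and HH20}, exploiting the especially simple chain structure of $(V,V,\Phi)$, in which every node is a line bundle of the form $IK^{-j}$.
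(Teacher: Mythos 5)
Your proposal is correct, and at bottom it is the same argument as the paper's: deform only the holomorphic structures by a class of strictly positive $\C^*$-weight lying in $\ker(\ad_\eta)$, so that $\eta$ stays holomorphic; then conjugating by $\diag(t^{r},\dots,t^{-r})$ multiplies the weight-$k$ part of the deformation by $t^{-k}$ and absorbs the scaling of $\eta$, so the family flows back to the fixed point as $t\to\infty$ and Proposition~\ref{prop:notS-equiv-liminfty-notmin} applies. The only substantive difference is the choice of weight: the paper works in the top odd weight $k=p-2$, where $\ker(\ad_\eta)\cong K^{2-p}$ sits in a single skew-symmetric pair of matrix entries and the deformation $\beta\in H^1(K^{2-p})$ is written out explicitly, whereas you work in weight $k=1$; your count $\fso_1(V)\oplus\fso_1(W)\cong (K^{-1})^{\oplus(p-1)}$, $\Hom_2(W,V)\otimes K\cong (K^{-1})^{\oplus(p-2)}$, with $\ad_\eta$ surjective and kernel the alternating line $\alpha_j=(-1)^j\alpha_0$, $\beta_j=(-1)^{j+1}\alpha_0$, is correct, so $\HH^1(C^\bullet_1)\cong H^1(K^{-1})\neq 0$ and both choices work. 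One caution about your last paragraph: invoking Kuranishi integration is both unnecessary and slightly delicate here, because the full $\HH^2(C^\bullet)$ does \emph{not} vanish at this fixed point (it is nonzero in weight $-1$, which is why this subsection exists), and the paper's local model in Section~2.3 is only set up under the hypothesis $\HH^2=0$. Since $\HH^1(C^\bullet_1)\cong H^1\bigl(\ker(\ad_\eta|_{\fso_1})\bigr)$, every class is represented by a $(0,1)$-form $\dot A$ valued in the kernel subsheaf with zero Higgs-field component; then $\dbar+\epsilon\dot A$ is automatically a holomorphic structure on a curve, keeps the quadratic forms holomorphic because $\dot A$ is $\fso$-valued, and keeps $\eta$ holomorphic because $\ad_\eta(\dot A)=0$ identically, so the family exists for all $\epsilon$ with no obstruction analysis at all --- this is exactly what the paper's explicit matrices achieve in weight $p-2$. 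You should also add a sentence, as the paper does, explaining why the deformed Higgs bundle is not isomorphic (hence, being stable, not $\cS$-equivalent) to \eqref{eq so(p,p) of an GL(p) minima} for $\epsilon\neq 0$, which follows from the non-vanishing of the class in cohomology.
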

\begin{proof}
    By assumption $r=\frac{p-1}{2}$ is a positive integer. Set $V=\bigoplus\limits_{j=0}^{2r}V_{j-r}$ and $W=\bigoplus\limits_{j=0}^{2r}W_{j-r}$ with $V_j=IK^{-j}$ and $W_j=IK^{-j}$ for all $j$ and some $2$-torsion line bundle $I$. The holomorphic chain \eqref{eq so(p,p) of an GL(p) minima} is given by
    \[  \xymatrix@R=0em{V_{-r}\ar[r]^-{1}&W_{1-r}\ar[r]^-{1}&\cdots\ar[r]^-{1}&V_{-1}\ar[r]^-{1}&W_0\ar[r]^-{1}&V_1\ar[r]^-{1}&\cdots\ar[r]^-{1}&V_{r-1}\ar[r]^-{1}&W_r\\
&&&&\oplus&\\
W_{-r}\ar[r]^-{1}&V_{1-r}\ar[r]^-{1}&\cdots\ar[r]^-{1}&W_{-1}\ar[r]^-{1}&V_0\ar[r]^-{1}&W_1\ar[r]^-{1}&\cdots\ar[r]^-{1}&W_{r-1}\ar[r]^-{1}&V_{r}}\]


Let $\beta\in\Omega^{0,1}(K^{1-2r})$ which is non-zero in cohomology and, with respect to the above splittings of $V$ and $W,$ consider the deformed orthogonal holomorphic structures:
\[\xymatrix{\dbar_V^\beta=\smtrx{\dbar_{K^{r}}&&&&\\0&\dbar_{K^{r-1}}&&&\\&&\ddots&\\-\beta&0&\cdots&\dbar_{K^{1-r}}&\\0&\beta^*&\cdots&0&\dbar_{K^{-r}}}
&\text{and}&\dbar_W^\beta=\smtrx{\dbar_{K^{r}}&&&&\\0&\dbar_{K^{r-1}}&&&\\&&\ddots&\\\beta&0&\cdots&\dbar_{K^{1-r}}&\\0&-\beta^*&\cdots&0&\dbar_{K^{-r}}}}~.\]
In the above splittings of $V$ and $W$, the Higgs field is given by 
\[\eta=\smtrx{0&&&\\1&0&&\\&\ddots&\ddots&\\&&1&0}:W\to V\otimes K,\]
and a calculation shows that $\eta$ is still holomorphic with respect $\dbar_W^\beta$ and $\dbar_V^\beta$. So $\bar\partial^\beta_V$, $\bar\partial^\beta_W$, together with the corresponding orthogonal structures, and $\eta$, define an $\SO(p,p)$-Higgs bundle $(\mathbb{V},\mathbb{W},\eta)$. 
Since \eqref{eq so(p,p) of an GL(p) minima} is stable, and stability is an open condition,  $(\mathbb{V},\mathbb{W},\eta)$ is also stable. Moreover, since $\beta$ is non-zero in cohomology, $(\mathbb{V},\mathbb{W},\eta)$ is not isomorphic to \eqref{eq so(p,p) of an GL(p) minima}, so it is not $\cS$-equivalent to it. 

Consider the following orthogonal gauge transformations of $V$ and $W$
\[g_t^V=g_t^W=\smtrx{t^r &&&\\ &t^{r-1}&&\\ &&\ddots&\\&&&t^{-r}}.\] 
For each $t\in\mathbb{C}^*$, we see that 
\[g^V_t\bar\partial^\beta_V(g^V_t)^{-1}=\smtrx{\dbar_{K^{r}}&&&&\\0&\dbar_{K^{r-1}}&&&\\&&\ddots&\\-t^{1-2r}\beta&0&\cdots&\dbar_{K^{1-r}}&\\0&t^{1-2r}\beta^*&\cdots&0&\dbar_{K^{-r}}}=\dbar^{t^{1-2r}\beta}_V,\]
$g^W_t\bar\partial^\beta_W(g^W_t)^{-1}=\dbar^{t^{1-2r}\beta}_W$ and $g^V_t(t\eta )(g_t^V)^{-1}=\eta$. Thus $\lim_{t\to\infty}(\mathbb{V},\mathbb{W},t\eta)$ is equal to the Higgs bundle given by \eqref{eq so(p,p) of an GL(p) minima}. By Proposition \ref{prop:notS-equiv-liminfty-notmin} we conclude that \eqref{eq so(p,p) of an GL(p) minima} is not a local minimum.
\end{proof}


\begin{proposition}\label{prop:stableH2neq0-zeroGL(p)}
Let $(V,W,\eta)$ be a stable $\SO(p,q)$-Higgs bundle of the form
\[(V,Q_V,W,Q_W,\eta)\cong \Big(U\oplus V', \smtrx{Q_{U}&0\\0&Q_{V'}}, U\oplus W', \smtrx{Q_{U}&0\\0&Q_{W'}},\smtrx{0&0\\0&\eta'}\Big)\] 
where $(U,Q_{U},U,Q_{U},0)$ is a stable $\mathrm{S}(\mathrm{O}(p_1)\times\mathrm{O}(p_1))$-Higgs bundle and $(V',Q_{V'},W',Q_{W'},\eta')$ is a stable $\SO(p_2,q_2)$  local minimum from Theorem \ref{thm:stablenon-zerominH2=0}. Then $(V,W,\eta)$ is not a local minimum.
\end{proposition}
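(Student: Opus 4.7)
The approach is to invoke Proposition \ref{prop:notS-equiv-liminfty-notmin} by constructing a stable $\SO(p,q)$-Higgs bundle, distinct from $(V,W,\eta)$ in the moduli space, whose limit under the $\C^*$-scaling $\eta\mapsto t\eta$ as $t\to\infty$ is $(V,W,\eta)$. Since $\eta'\neq 0$, the fixed-point Higgs bundle $(V',W',\eta')$ admits a nontrivial weight decomposition, and we let $k_0>0$ denote the maximal weight occurring in $V'\oplus W'$. Depending on whether this maximum is achieved in $V'$ or in $W'$, the construction below is performed by deforming the orthogonal holomorphic structure of $V$ or of $W$; we describe the former case, the latter being entirely analogous.

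Since $V_0=U\oplus V'_0$, the weight-$k_0$ piece $\fso_{k_0}(V)$ contains pairs $(\alpha,-\alpha^*)$ with $\alpha\colon V'_{-k_0}\to U$. I would choose $\alpha\in\Omega^{0,1}(\Hom(V'_{-k_0},U))$ representing a nonzero cohomology class; this is possible because in each case (1)--(7) of Theorem \ref{thm:stablenon-zerominH2=0}, $V'_{-k_0}$ has strictly positive degree while $U$ is polystable of degree zero, so Riemann--Roch gives $\chi(\Hom(V'_{-k_0},U))<0$ and hence $H^1(\Hom(V'_{-k_0},U))\neq 0$. Set $\alpha_1=(\alpha,-\alpha^*)\in\Omega^{0,1}(\fso_{k_0}(V))$ and $\dbar_V^{\alpha_1}=\dbar_V+\alpha_1$, endowing $V$ with a new orthogonal holomorphic structure. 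The crucial check is that $\eta$ remains holomorphic, i.e., that $\alpha_1\cdot\eta=0$ as a smooth section; by the maximality of $k_0$ we have $W'_{-k_0-1}=0$, so $\eta$ has no component mapping into $V'_{-k_0}\otimes K$, forcing $\alpha_1\cdot\eta=0$. For small $\alpha_1$ with nonzero class $[\alpha_1]\in\HH^1(C^\bullet(V,W,\eta))$, the deformed Higgs bundle $(V^{\alpha_1},W,\eta)$ is stable (by openness of stability) and defines a point of the moduli space distinct from $(V,W,\eta)$, since the Kuranishi description locally identifies this class with an actual direction of deformation (the integrability obstruction vanishes by the same computation).

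It then remains to show that $\lim_{t\to\infty}(V^{\alpha_1},W,t\eta)=(V,W,\eta)$ in the moduli space. I would use the complex gauge transformations realizing $(V',W',\eta')$ as a $\C^*$-fixed point, extended by the identity on the $U$-summand: $\tilde g_V|_U=\Id$, $\tilde g_V|_{V'_k}=t^{-k}\Id$, and analogously for $\tilde g_W$. These are genuine orthogonal automorphisms (diagonal on pairs $V'_k\oplus V'_{-k}$ with reciprocal scalars), they preserve the underlying Dolbeault operators on $U$, $V'$, $W'$, and a direct calculation shows they conjugate $\eta$ to $t\eta$ on the $V'$-$W'$ block while leaving the vanishing $U$-block unchanged, and rescale $\alpha_1$ by $t^{k_0}$ by weight reasons. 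Hence $(V^{\alpha_1},W,t\eta)$ is gauge-equivalent to $(V^{t^{-k_0}\alpha_1},W,\eta)$, whose limit as $t\to\infty$ is $(V,W,\eta)$. Proposition \ref{prop:notS-equiv-liminfty-notmin} then yields that $(V,W,\eta)$ is not a local minimum. The main technical subtlety is the compatibility $\alpha_1\cdot\eta=0$, for which the choice of the maximal weight $k_0$ is essential; all other ingredients are either standard openness/Riemann--Roch arguments or direct bookkeeping with the weight decomposition.
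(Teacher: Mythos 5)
Your proposal is correct and follows essentially the same route as the paper: you deform the holomorphic structure by a nonzero class mixing the degree-zero summand $U$ with the extremal-weight summand of the chain (the paper writes this class as $\alpha\in H^1(\Hom(U,V_1))$ together with $-\alpha^*$, which is the same datum as your $(\alpha,-\alpha^*)\in\Omega^{0,1}(\fso_{k_0}(V))$), observe that maximality of the weight forces $\ad_\eta$ of this class to vanish so $\eta$ stays holomorphic, and then use the weight-scaling gauge transformations (identity on $U$) to exhibit the original chain as the $t\to\infty$ limit, concluding via Proposition \ref{prop:notS-equiv-liminfty-notmin}. The only cosmetic difference is that you treat all cases of Theorem \ref{thm:stablenon-zerominH2=0} uniformly through the maximal weight $k_0$, whereas the paper spells out case (1) and notes the remaining cases are handled by placing the deformation in $V$ or $W$ according to where the highest-weight summand lies.
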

\begin{proof}
Suppose that $(V',Q_{V'},W',Q_{W'},\eta')$ is a minimum of type (1) from Theorem \ref{thm:stablenon-zerominH2=0}. Then $(V,W,\eta)$ can be represented by 
\[\xymatrix@R=-.3em{&U&\\&\oplus&\\&U&\\&\oplus&\\V_{-1}\ar[r]^-{\eta_{0}^*}&W_0\ar[r]^-{\eta_0}&V_1.}\]
Since, $\deg(V_{-1})>0,$ we have $H^1(\Hom(U,V_1))\neq0$ by Riemann-Roch. Hence, $\alpha\in H^1(\Hom(U,V_1)) 0$ defines a rank $p$ holomorphic orthogonal bundle $\mathbb{V}$. In the $C^\infty$ splitting $V_{-1}\oplus U\oplus V_1$ the $\dbar$-operator is
\[\bar{\partial}_{\mathbb{V}}=\smtrx{\bar\partial_{V_{-1}}&&\\-\alpha^*&\bar\partial_U&\\&\alpha&\bar\partial_{V_{1}}}.\] 

Analogously to the previous proposition, $(\mathbb{V},W,\eta)$, with $\eta|_U=0$ and $\eta|_{W_0}:W_0\to V_1\otimes K\hookrightarrow\mathbb{V}\otimes K$ is a stable $\SO(p,q)$-Higgs bundle which is not isomorphic to $(V,W,\eta)$. Using the gauge transformations $g^V_t=\smtrx{t&&\\&\Id_U&\\&&t^{-1}}$ and $g^W_t=\Id_W$, one computes $\lim_{t\to\infty}(\mathbb{V},W,t\eta)=(V,W,\eta)$. By Proposition \ref{prop:notS-equiv-liminfty-notmin} $(V,W,\eta)$ is not a local minimum. For the other types of local minima from Theorem \ref{thm:stablenon-zerominH2=0}, the argument is similar. Namely, one can take the summand of $U\oplus U$ in $V$ or in $W$ according to where the highest weight summand of the minimum $(V',Q_{V'},W',Q_{W'},\eta')$ lies.
\end{proof}
We conclude that the only stable $\SO(p,q)$-local minima with $\HH^2(C^\bullet)\neq 0$ have vanishing Higgs field, and that the stable local minima with non-zero Higgs field are classified by Theorem \ref{thm:stablenon-zerominH2=0}.

\subsection{Strictly polystable minima}
Recall from Proposition \ref{Prop: strictly polystable SOpq} that a strictly polystable $\SO(p,q)$-Higgs bundle is isomorphic to 
\[\Big(E\oplus E^*\oplus V,\smtrx{0&\Id&0\\ \Id&0&0\\0&0&Q_V},F\oplus F^*\oplus W,\smtrx{0&\Id&0\\ \Id&0&0\\0&0&Q_W},\smtrx{\beta&0&0\\0&\gamma^T&0\\0&0&\eta}\Big),\]
where $(E,F,\beta,\gamma)$ is a polystable $\U(p_1,q_1)$-Higgs bundle with $\deg(E)+\deg(F)=0$, and $(V,W,\eta)$ is a stable $\SO(p-2p_1,q-2q_1)$-Higgs bundle. Here $0\leq p_1\leq p/2$, $0\leq q_1\leq q/2$, and $(p_1,q_1)\neq (0,0)$.

\begin{proposition}\label{Prop Upq in SO2p2q a min iff pq=1 or pq=0}
    Let $(E,F,\beta,\gamma)$ be a polystable $\U(p,q)$-Higgs bundle with $\deg(E)+\deg(F)=0$ which is a local minimum in $\cM(\U(p,q))$. The associated strictly polystable $\SO(2p,2q)$-Higgs bundle 
    \begin{equation}
        \label{eq SO(2p,2q) of U(p,q)}
        \Big(E\oplus E^*,\smtrx{0&\Id\\ \Id&0},F\oplus F^*,\smtrx{0&\Id\\ \Id&0},\smtrx{\beta&0\\0&\gamma^T}\Big)
    \end{equation}
    is a local minimum of the Hitchin function if and only if $\beta=\gamma=0$ or $p\leq 1$ or $q\leq 1.$
\end{proposition}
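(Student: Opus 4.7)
The plan is to prove both implications separately, with the reverse implication (non-minimality) being the substantial one and relying on an explicit deformation argument via Proposition \ref{prop:notS-equiv-liminfty-notmin}.

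For the sufficient direction, when $\beta=\gamma=0$ the Higgs field of the associated $\SO(2p,2q)$-Higgs bundle vanishes identically, placing it at the absolute minimum (the zero level set) of the Hitchin function. For $p\leq 1$ (the case $q\leq 1$ is symmetric via the isomorphism $\rO(p,q)\cong\rO(q,p)$), Example \ref{ex: minima for GL(n,R) and Upq} reduces us to assuming $\gamma=0$ with $\beta$ possibly non-zero. With $E$ a line bundle, the resulting $\SO(2,2q)$-Higgs bundle takes the form $V_{-1}\xrightarrow{\eta_0^*}W_0\xrightarrow{\eta_0}V_1$ with $V_1=E$, $V_{-1}=E^*\cong V_1^*$, $W_0=F\oplus F^*$, $\eta_0=(\beta\;\;0)$, and $\eta_0^*=\smtrx{0\\\beta^T}$. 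This is exactly the shape \eqref{eq:SO2q-absolute-minimum}, so Proposition \ref{prop:SO2q-absolute-minimum} identifies it as a local minimum.

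For the necessary direction, assume $p,q\geq 2$ and $(\beta,\gamma)\neq(0,0)$. By Example \ref{ex: minima for GL(n,R) and Upq} and symmetry, we may assume $\gamma=0$ and $\beta\neq 0$. The strategy is to construct a semistable $\SO(2p,2q)$-Higgs bundle $(V,W^C,\eta)$ which is not $\cS$-equivalent to $(V,W,\eta)$ but whose $t\to\infty$ limit (under Higgs field rescaling) equals $(V,W,\eta)$; then Proposition \ref{prop:notS-equiv-liminfty-notmin} forces non-minimality. Since $q\geq 2$, the bundle $\Lambda^2 F^*$ is non-zero, and Riemann--Roch (together with the degree constraint $\deg F\geq 0$ forced on the given strictly polystable $\SO(2p,2q)$-Higgs bundle by Proposition \ref{Def SO(p,q) stability and polystability}) shows $H^1(\Lambda^2 F^*)\neq 0$. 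A non-zero class $C\in H^1(\Lambda^2 F^*)$ represents a non-split orthogonal extension
\[0\to F^*\to W^C\to F\to 0,\]
giving an orthogonal bundle $W^C$ of the same topological type as $W$, with $\dbar_{W^C}$ smoothly of the form $\smtrx{\dbar_F&0\\C&\dbar_{F^*}}$. Because $\eta=\smtrx{\beta&0\\0&0}$ annihilates the $F^*$-component, $\eta$ remains holomorphic for $\dbar_{W^C}$, yielding an $\SO(2p,2q)$-Higgs bundle $(V,W^C,\eta)$; it is semistable since the only invariant isotropic subbundle pairs (notably $V_1=0$, $W_1=F^*$) have non-positive total degree. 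The orthogonal gauge transformation $g_t^W=\smtrx{t^{-1}\Id_F&0\\0&t\Id_{F^*}}$ acts as $(g_t^W)^{-1}\dbar_{W^C}g_t^W=\smtrx{\dbar_F&0\\t^{-2}C&\dbar_{F^*}}$ and $(t\eta)\,g_t^W=\eta$, so $\lim_{t\to\infty}(V,W^C,t\eta)=(V,W,\eta)$ in $\cM(\SO(2p,2q))$.

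The main obstacle is the boundary situation $\deg F=0$: then $F^*\subset W^C$ has degree zero, so the polystable representative of the $\cS$-equivalence class of $(V,W^C,\eta)$ becomes $(V,F^*\oplus F,\eta)=(V,W,\eta)$, destroying the non-$\cS$-equivalence needed for Proposition \ref{prop:notS-equiv-liminfty-notmin}. In this degenerate case one must work with a different deformation direction---either a class in $H^1(\Lambda^2 E)$ (since $p\geq 2$ guarantees $\Lambda^2 E\neq 0$) deforming $V$ instead of $W$, or a combined deformation of both $V$ and $W$ chosen so that the resulting isotropic destabilizing subbundles do not split off in the associated graded. Verifying that at least one of these two deformation directions produces a non-trivial $\cS$-equivalence class in every possible degree configuration of a $\U(p,q)$-minimum with $\gamma=0$ and $\beta\neq 0$ is the principal technical point; it ultimately rests on the fact that when $p,q\geq 2$, the quotient $\fso(W)/\fu(F)$ (and correspondingly for $V$) is non-trivial and contributes genuine $\SO$-deformations that are not accounted for by the $\U(p,q)$-deformation complex.
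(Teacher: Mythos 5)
Your sufficiency direction ($\beta=\gamma=0$, and the reduction of $p\leq 1$ or $q\leq 1$ to Proposition \ref{prop:SO2q-absolute-minimum}) matches the paper and is fine. The necessity direction, however, has a genuine gap at its central step. Your deformation replaces $W=F\oplus F^*$ by the extension $0\to F^*\to W^C\to F\to 0$ while leaving $V=E\oplus E^*$ untouched. But the pair $(E^*,F^*)$ is then still a pair of holomorphic isotropic subbundles of $(V,W^C)$ which is invariant ($\eta(F^*)=0\subset E^*\otimes K$ and $\eta^*(E^*)=\beta^T(E^*)\subset F^*\otimes K$) and has total degree $\deg(E^*)+\deg(F^*)=-(\deg E+\deg F)=0$ \emph{for every} degree configuration, not only when $\deg F=0$. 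Since $F^*$ is maximal isotropic in $W^C$, the associated graded of $(V,W^C,\eta)$ with respect to this pair is $(E\oplus E^*,\,F^*\oplus F,\,\eta)=(V,W,\eta)$. Hence your deformed object is always $\cS$-equivalent to the original, and Proposition \ref{prop:notS-equiv-liminfty-notmin} never applies. Your list of destabilizing pairs (``$V_1=0$, $W_1=F^*$'') misses exactly this pair, which is why you located the failure only at the boundary $\deg F=0$. The fix is the one you gesture at as a fallback but do not carry out: one must deform $V$ and $W$ \emph{simultaneously}, using $\sigma\in H^1(\Lambda^2E)\setminus 0$ and $\alpha\in H^1(\Lambda^2F^*)\setminus 0$ so that $E^*$ ceases to be a holomorphic subbundle of the deformed $V$ and $F$ ceases to be one of the deformed $W$; then the only surviving invariant isotropic pair is $(E,F^*)$, of strictly negative degree, and non-$\cS$-equivalence holds. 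This is what the paper does, and it requires $\deg(E)<0$ and $\deg(F^*)<0$ strictly, which holds in the stable case.

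A second, smaller omission: when $(E,F,\beta,0)$ is only strictly polystable as a $\U(p,q)$-Higgs bundle, the claim that $(E,F)$ and $(E^*,F^*)$ are the only destabilizing pairs fails, so even the corrected deformation argument does not directly apply. The paper treats this case separately, using the results of \cite{chains-2018} and \cite{UpqHiggs} on the Toledo invariant to deform $(E,F,\beta,0)$ within the locus of $\U(p,q)$-minima to a stable one (or to split off a summand reducing to a polystable $\U(p,p)$-minimum), and then concludes by openness of the non-minimum condition. You would need to add an argument of this kind to complete the proof.
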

\begin{proof}
If $\beta=\gamma=0$, the Higgs field is identically zero and we have a minimum. In particular, if $p=0$ or $q=0$ we have $\beta=\gamma=0.$
    Now suppose $p,q>0$ and that the $\SO(2p,2q)$-Higgs bundle \eqref{eq SO(2p,2q) of U(p,q)} is a local minimum with non-zero Higgs field. Then the $\U(p,q)$-Higgs bundle $(E,F,\beta,\gamma)$ is a local minimum in $\cM(\U(p,q))$. Thus, either $\beta=0$ or $\gamma=0$ (cf. Example \ref{ex: minima for GL(n,R) and Upq}). Up to switching the roles of $E$, $F$, $E^*$ and $F^*$, the relevant holomorphic chain for the $\SO(2p,2q)$-Higgs bundle is 
        \begin{equation}
    \label{eq chain for Upq min}
    \xymatrix@R=-.2em{&E&\\F\ar[r]^{\smtrx{\beta\\0}}&\oplus\ar[r]^{\smtrx{0&\beta^T}}&F^*.\\&E^*&}
\end{equation}
    Since the $\U(p,q)$-Higgs bundle $(E,F,\beta,0)$ is polystable and $\beta\neq 0$, we must have $\deg(E)<0<\deg(F)$. For $p=1$ or $q=1$, the associated $\SO(2p,2q)$-Higgs bundle is a local minimum by Proposition \ref{prop:SO2q-absolute-minimum}.

We now show that \eqref{eq SO(2p,2q) of U(p,q)} is not a local minimum if $p,q>1$ and $\beta,\gamma$ not both zero. First assume $(E,F,\beta,0)$ is a stable $\U(p,q)$-Higgs bundle. Consider the chain \eqref{eq chain for Upq min}. By stability, $\deg(F^*)$ and $\deg(E)$ are both negative.
When $p,q>1,$ Riemann-Roch implies there exist $\alpha\in H^1(\Lambda^2F^*)\setminus 0$ and $\sigma\in H^1(\Lambda^2E)\setminus 0$. These classes define holomorphic orthogonal bundles with $\dbar$-operators
\[\xymatrix{\bar\partial_V=\smtrx{\bar\partial_E & \sigma\\ 0 & \bar\partial_{E^*}}&\text{and}&\bar\partial_W=\smtrx{\bar\partial_{F^*} & \alpha\\ 0 & \bar\partial_F}}.\]
Define the Higgs field $\eta:W\to V\otimes K$ by the composition $W\to F\xrightarrow{\beta} E\otimes K\to V\otimes K$. Since semistability is an open condition, $(V,W,\eta)$ is a semistable $\SO(2p,2q)$-Higgs bundle. Furthermore, since $(E,F,\beta,0)$ is stable, the pairs of isotropic subbundles $(E,F)$ and $(E^*,F^*)$ are the only destabilizing pairs of \eqref{eq chain for Upq min}. However, these are not destabilizing pairs of $(V,W,\eta)$ because $F$ and $E^*$ are not subbundles, hence $(V,W,\eta)$ and \eqref{eq chain for Upq min} are not $\cS$-equivalent. For each $t\in\C^*$, the gauge transformations 
\[\xymatrix{g^V_t=\smtrx{t^{-\frac{1}{2}} \Id_E&\\&t^{\frac{1}{2}}\Id_{E^*}}&\text{and}&g_t^W=\smtrx{t^{-\frac{1}{2}}\Id_{F^*} &\\&t^{\frac{1}{2}}\Id_F}}\] act as 
\[(\bar\partial_V,\bar\partial_W,t\eta)\mapsto\left(\smtrx{\bar\partial_E& t^{-1}\sigma\\ 0 &\bar\partial_{E^*}},\smtrx{\bar\partial_{F^*}& t^{-1}\alpha\\ 0 &\bar\partial_F},\eta\right).\] Thus the limit $\lim_{t\to\infty}(V,W,t\eta)$ is isomorphic to \eqref{eq chain for Upq min}. By Proposition \ref{prop:notS-equiv-liminfty-notmin}, \eqref{eq chain for Upq min} is not a minimum when $(E,F,\beta,0)$ is a stable $\U(p,q)$-Higgs bundle.


Finally, let $(E,F,\beta,0$) be a strictly polystable $\U(p,q)$-Higgs bundle. The integer $\tau=|\rk(F)\deg(E)-\rk(E)\deg(F)|$ satisfies $\tau\leq p(2g-2)$ \cite[Theorem A]{UpqHiggs}. For $p\leq q$ and $\tau<p(2g-2)$ or $p=q$ and $\tau=p(2g-2)$, it follows from \cite[Theorem 5.1]{chains-2018} that there is a path $\varepsilon:[0,1]\to \cM(\U(p,q))$ such that $\varepsilon(0)=(E,F,\beta,0)$ and $\varepsilon(t)$ is a stable $\U(p,p)$-Higgs bundle which is a minimum in $\cM(\U(p,q))$ for all $t>0$. By the previous argument, the $\SO(2p,2p)$-Higgs bundle corresponding to $\varepsilon(t)$ is not a minimum for $t> 0$. Hence, it is also not a minimum when $t=0$. 
For $p<q$ and $\tau=p(2g-2)$, \cite[Theorem B]{UpqHiggs} implies that $(E,F,\beta,0)$ is isomorphic to $(E,F',\beta',0)\oplus F''$, where $(E,F',\beta',0)$ is a polystable $\U(p,p)$-local minimum. By the previous arguments, we conclude that \eqref{eq SO(2p,2q) of U(p,q)} is not a local minimum in $\cM(\SO(2p,2q))$ when $p,q>1$.
    \end{proof}
    

The next proposition shows that adding a stable
  $\SO(p,q)$ local minimum from Theorem
  \ref{thm:stablenon-zerominH2=0} to a certain local minimum from Proposition \ref{Prop Upq in SO2p2q a min iff pq=1 or pq=0} is not a local minimum.

\begin{proposition}\label{Prop U11 not min in SOpq unless p=q=2}
    Let $(E,F,\beta,\gamma)$ be a polystable $\U(m,n)$-Higgs bundle with $\deg(E)+\deg(F)=0$. Suppose that either $m=1$, $\beta=0$ and $\gamma\neq 0$, or $n=1$, $\gamma=0$ and $\beta\neq 0$. If $(V',W',\eta')$ is a stable $\SO(p,q)$-local minimum with $\eta'\neq0$, then the $\SO(p+2m,q+2n)$-Higgs bundle
    \[(V,Q_V,W,Q_W,\eta)=\Big(E\oplus E^*\oplus V', \smtrx{0&\Id&0\\ \Id&0&0\\0&0&Q_{V'}},F\oplus F^*\oplus W', \smtrx{0&\Id&0\\ \Id&0&0\\0&0&Q_{W'}}, \smtrx{\beta&0&0\\0&\gamma^T&0\\0&0&\eta'}\Big)\]
    is not a local minimum. 
  \end{proposition}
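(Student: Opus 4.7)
The plan is to apply Proposition \ref{prop:notS-equiv-liminfty-notmin} by constructing a semistable $\SO(p+2m,q+2n)$-Higgs bundle $(\mathbb{V},\mathbb{W},\eta)$ (with the same Higgs field $\eta$) that is not $\cS$-equivalent to $(V,W,\eta)$ but whose limit under the $\C^*$-action equals $(V,W,\eta)$ in the moduli space. Without loss of generality I will treat the case $n=1$, $\gamma=0$, $\beta\neq 0$; the remaining case follows by the symmetry interchanging the roles of $V$ and $W$ (equivalently, of $\beta$ and $\gamma$). In this situation $F$ is a line bundle, polystability of $(E,F,\beta,0)$ forces $\deg F>0$, and the non-vanishing of $\beta:F\to E\otimes K$ further imposes $\deg F\leq g-1$.

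The deformation is built by coupling the $F^*$ summand of $W$ to an extremal line bundle summand $L$ of the chain of $(V',W',\eta')$ through a non-trivial extension class, while using its dual $-\alpha^*$ to simultaneously deform the opposite summands so that the orthogonal forms $Q_V,Q_W$ and the orientation $\omega$ are preserved, exactly as in the proofs of Propositions \ref{prop H^2 not zero not a min} and \ref{prop:stableH2neq0-zeroGL(p)}. Each of the seven types of stable $\SO(p,q)$-local minima listed in Theorem \ref{thm:stablenon-zerominH2=0} has a distinguished ``top'' summand $L$, whose degree is explicitly computable. A short Riemann--Roch estimate, combined with $\deg F\leq g-1$, shows that in every case one of the cohomology groups
\[
H^1\bigl(\Hom(F^*,L)\bigr)\quad\text{or}\quad H^1\bigl(\Hom(L^*,F^*)\bigr)
\]
(depending on whether $L\subset V'$ or $L\subset W'$) is non-zero.

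Choose a non-zero class $\alpha$ in this group. Using $\alpha$ and $-\alpha^*$, define new Dolbeault operators on the smooth bundles underlying $V$ and $W$ that are compatible with $Q_V,Q_W,\omega$, and verify that $\eta$ remains holomorphic -- this uses $\gamma=0$ together with the fact that $\alpha$ is placed ``above'' the support of $\eta'$ in the chain. By openness of semistability applied to the polystable $(V,W,\eta)$, the resulting triple $(\mathbb{V},\mathbb{W},\eta)$ is a semistable $\SO(p+2m,q+2)$-Higgs bundle, and the non-triviality of $\alpha$ in cohomology guarantees that it is not $\cS$-equivalent to $(V,W,\eta)$.

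Finally, viewing $(V,W,\eta)$ as a $\C^*$-fixed point (it is an orthogonal direct sum of two $\C^*$-fixed points), the combined weight grading on $V\oplus W$ provides diagonal $\rS(\rO(p+2m,\C)\times\rO(q+2,\C))$-gauge transformations $(g^V_t,g^W_t)$ that preserve $\eta$ and rescale $\alpha$ to $t^{-k}\alpha$ for some $k>0$; consequently $\lim_{t\to\infty}(\mathbb{V},\mathbb{W},t\eta)=(V,W,\eta)$ in $\cM(\SO(p+2m,q+2))$, and Proposition \ref{prop:notS-equiv-liminfty-notmin} yields the conclusion. The main obstacle is the per-case verification, for each of the seven families of Theorem \ref{thm:stablenon-zerominH2=0}, that an appropriate choice of $L$ and of the direction of extension simultaneously makes $H^1$ non-zero (by Riemann--Roch), preserves the orthogonal and orientation data, and produces a strictly positive weight $k$ in the gauge rescaling.
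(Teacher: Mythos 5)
Your overall strategy (deform the Higgs bundle, check non-$\cS$-equivalence, take the limit at infinity, apply Proposition \ref{prop:notS-equiv-liminfty-notmin}) is the same as the paper's, but the deformation you construct does not do the job, and the gap sits exactly in the clause you dispose of in passing: ``the non-triviality of $\alpha$ in cohomology guarantees that it is not $\cS$-equivalent to $(V,W,\eta)$.'' This is false for a single extension class of the kind you propose. Take your own main case ($n=1$, $\gamma=0$, $\beta\neq 0$) with the extremal summand $L=W_p\subset W'$ and $\alpha$ coupling $W_{-p}$ to $F^*$ inside $W$ --- the only placement for which your claim that $\eta$ stays holomorphic actually checks out, since the dual piece $-\alpha^*$ then lands in $\Hom(F,W_p)$ and $\eta$ vanishes on both $F^*$ and $W_p$. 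In the deformed bundle $\mathbb{W}$ the line $F^*$ survives as a holomorphic isotropic subbundle, $(E^*,F^*)$ is still an invariant isotropic pair of degree zero, and the associated graded of the filtration $F^*\subset F^{*\perp}\subset\mathbb{W}$ reproduces $F^*\oplus W'\oplus F$ with the \emph{original} holomorphic structures (the extension class dies on passing to the quotient). Hence your deformation is $\cS$-equivalent to $(V,W,\eta)$ and Proposition \ref{prop:notS-equiv-liminfty-notmin} gives nothing. The paper runs into precisely this phenomenon (``However, this semistable Higgs bundle is $\cS$-equivalent to the original Higgs bundle. To fix this, we also deform $F\oplus F^*\oplus W_0$.'') and that is why its construction is necessarily two-sided: it couples the $E\oplus E^*$ block of $V$ into the chain \emph{and} the $F\oplus F^*$ block of $W$ into $W'$ simultaneously, using the surjectivity of composition with $\gamma^T$ (resp.\ $\beta$), coming from the short exact sequence $0\to F^*\xrightarrow{\gamma^T}E^*\otimes K\to T\to 0$, to choose compatible classes satisfying $\eta_0\delta-(\sigma\otimes\Id_K)\gamma^T=0$ in cohomology, and then correcting the Higgs field by a term $\epsilon$ as in \eqref{eq:choice reps ext classes} so that it remains holomorphic; when $\rk F>1$ it uses a class in $H^1(\Lambda^2F)$ instead. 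None of that machinery is optional, and your proposal contains no substitute for it.

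Two further problems. First, whenever the extremal summand $L$ of the minimum lies in $V'$ (as for the minima of types (1), (2) and (5) of Theorem \ref{thm:stablenon-zerominH2=0}), a class in $H^1(\Hom(F^*,L))$ couples a summand of $W$ to a summand of $V$; this is not an element of $H^1(\fso(V))\oplus H^1(\fso(W))$ and therefore does not define a deformation of the $\rS(\rO(p+2m,\C)\times\rO(q+2n,\C))$-structure at all. In those cases you are forced to couple $E$ or $E^*$ to $L$, and the resulting one-class deformation is again the $\cS$-equivalent one. Second, you never reduce to the case where the $\U(m,n)$-piece is stable; the paper does this at the outset by invoking the results of \cite{chains-2018} and \cite{UpqHiggs}, and without that reduction the deformation argument is carried out at the wrong object. (The bound $\deg F\leq g-1$ you assert is also unjustified when $m>1$, though it is not actually needed.)
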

\begin{proof}
Up to switching the roles of $E,$ $V'$, $F,$ and $W'$, it suffices to consider holomorphic chains of one of the following six types:
    \begin{equation}
        \label{eq hol chains case 1} \xymatrix@R=-.2em{&F&\\E\ar[r]^-{\smtrx{\gamma\\0}}&\oplus\ar[r]^-{\smtrx{0&\gamma^T}}&E^*\\&F^*&\\&\oplus&\\V_{-1}\ar[r]^-{\eta_{0}^*}&W_0\ar[r]^-{\eta_0}&V_1}\ \ \ \ \ \ \ \ \ \ \ \ \xymatrix@R=-.2em{\\ \\ \\\text{or}\\\\}\ \ \ \ \ \ \ \ \ \ \ \ \ \xymatrix@R=-.2em{&E&\\F\ar[r]^-{\smtrx{\beta\\0}}&\oplus\ar[r]^-{\smtrx{0&\beta^T}}&F^*\\&E^*\\&\oplus&\\V_{-1}\ar[r]^-{\eta_{0}^*}&W_0\ar[r]^-{\eta_0}&V_1}
    \end{equation}
    where $\rk(V_{-1})=1$ and $0<\deg(V_{-1})\leq 2g-2$; 
    \begin{equation}
            \label{eq hol chains case 2}
            \xymatrix@C=1.5em@R=-.2em{&&F&\\&E\ar[r]^-{\smtrx{\gamma\\0}}&\oplus\ar[r]^-{\smtrx{0&\gamma^T}}&E^*\\&&F^*&\\&&\oplus&\\V_{1-p}\ar[r]^-{1}&W_{2-p}\ar[r]^-{1}&\cdots\ar[r]^-{1}&W_{p-2}\ar[r]^-{1}&V_{p-1}}\ \  \xymatrix@R=-.2em{\\ \\ \\\text{or}\\\\}\ \ \xymatrix@C=1.5em@R=-.2em{&&E&\\&F\ar[r]^-{\smtrx{\beta\\0}}&\oplus\ar[r]^-{\smtrx{0&\beta^T}}&F^*\\&&E^*\\&&\oplus&\\V_{1-p}\ar[r]^-{1}&W_{2-p}\ar[r]^-{1}&\cdots\ar[r]^-{1}&W_{p-2}\ar[r]^-{1}&V_{p-1}}
    \end{equation}
    where $V_j=IK^{-j}$ and $W_j=IK^{-j}$ for all $j$ and some $I$ with $I^2\cong\cO$;
    \begin{equation}
            \label{eq hol chains case 3}
            \xymatrix@C=1.7em@R=-.2em{&&F&\\&E\ar[r]^-{\smtrx{\gamma\\0}}&\oplus\ar[r]^-{\smtrx{0&\gamma^T}}&E^*\\&&F^*&\\&&\oplus&\\W_{-p}\ar[r]^-{\eta_{-p}}&V_{1-p}\ar[r]^-{1}&\cdots\ar[r]^-{1}&V_{1-p}\ar[r]^-{\eta_{-p}^*}&W_{p}}\ \  \xymatrix@R=-.2em{\\ \\ \\\text{or}\\\\}\ \ \  \xymatrix@C=1.7em@R=-.2em{&&E&\\&F\ar[r]^-{\smtrx{\beta\\0}}&\oplus\ar[r]^-{\smtrx{0&\beta^T}}&F^*\\&&E^*\\&&\oplus&\\W_{-p}\ar[r]^-{\eta_{-p}}&V_{1-p}\ar[r]^-{1}&\cdots\ar[r]^-{1}&V_{1-p}\ar[r]^-{\eta_{-p}^*}&W_{p}}
    \end{equation}
    where $V_j=K^{-j}$ and $W_j=K^{-j}$ for all $|j|<p$, $\rk(W_{-p})=1$, $0<\deg(W_{-p})\leq p(2g-2)$ and $\eta_{-p}\neq0.$

Furthermore, in \eqref{eq hol chains case 1}, \eqref{eq hol chains case 2} and \eqref{eq hol chains case 3}, the first chain has $m=1$, $n>0$, $\deg(F)\leq0\leq\deg(E)$ and $\gamma\neq 0$, while the second chain has $n=1$, $m>0$, $\deg(E)\leq0\leq\deg(F)$ and $\beta\neq 0$.
 We will show that each of the above holomorphic chains is not a minimum.  As in the proof of Proposition \ref{Prop Upq in SO2p2q a min iff pq=1 or pq=0},  we may assume the $\U(m,n)$-Higgs bundle is stable by the results of \cite{chains-2018,UpqHiggs}.

 Since $\Hom(E,V_1)$ is in the kernel of $\ad_\eta:\fso(V)\oplus \fso(W)\to \Hom(W,V)\otimes K$, we may use $\alpha\in H^1(\Hom(E,V_1))\setminus 0$ to deform the holomorphic structure on $V$ by considering non-zero extension 
\[\xymatrix{0\to V_1\to \widetilde V\to E\to 0&\text{and}&0\to E^*\to \widetilde V^*\to V_{-1}\to 0}.\]
Namely, $\tilde V\oplus \tilde V^*$ is a rank $p$ holomorphic orthogonal bundle. Defining $\tilde\eta:F\oplus F^*\oplus W_0\to \tilde V\oplus \tilde V^*$ by the compositions $W_0\xrightarrow{\eta_0} V_{1}\otimes K\to \tilde V\otimes K$ and $F^*\xrightarrow{\gamma^T} E^*\otimes K\to\tilde V^*\otimes K$ gives a semistable $\SO(p,q)$-Higgs bundle $(\tilde V\oplus \tilde V^*,F\oplus F^*\oplus W_0,\tilde\eta)$. 
However, this semistable Higgs bundle is $\cS$-equivalent to the original Higgs bundle. To fix this, we also deform $F\oplus F^*\oplus W_0$.

First assume $\rk(F)=1$. Since $\gamma^T$ is non-zero, we have a short exact sequence 
\[0\to F^*\xrightarrow{\gamma^T}E^*\otimes K\to T\to 0,\]
 where $T$ is a torsion sheaf. 
 Since $V_1\otimes K$ is locally free, this yields the exact sequence 
 \[0\to \Hom(E^*,V_1)\to\Hom(F^*,V_1\otimes K)\to \Hom(T,V_1\otimes K)\to 0,\] which implies that the map 
$H^1(\Hom(E^*,V_1))\to H^1(\Hom(F^*,V_1\otimes K))$, $\sigma\mapsto(\sigma\otimes\Id_K)\gamma^T$ is surjective. 
For any $\delta\in H^1(\Hom(F^*,W_0))\setminus 0$, we have $\eta_0\delta\in H^1(\Hom(F^*,V_1\otimes K))$, and there exists $\sigma\in H^1(\Hom(E^*,V_1))$ such that 
\begin{equation}\label{eq:choice ext classes}
\eta_0\delta-(\sigma\otimes\Id_K)\gamma^T=0
\end{equation} in cohomology.
Let $\mathbb{V}$ and $\mathbb{W}$ be the holomorphic orthogonal bundles, defined respectively by the $C^\infty$ bundles $V_1\oplus E\oplus E^*\oplus V_{-1}$ and $F\oplus W_0\oplus F^*$, together with the $\bar\partial$-operators
\begin{equation}\label{eq:deformationsVandW}
\xymatrix{\bar\partial_{\mathbb{V}}=\smtrx{\bar\partial_{V_1} & \alpha & \sigma &  \\ & \bar\partial_E &  & -\sigma^* \\ &  & \bar\partial_{E^*} & -\alpha^* \\ &  &  & \bar\partial_{V_{-1}}}&\text{and}& \bar\partial_{\mathbb{W}}=\smtrx{\bar\partial_F & -\delta^* &   \\ & \bar\partial_{W_0} & \delta \\ &  & \bar\partial_{F^*}}},
\end{equation}
where $\alpha\in\Omega^{0,1}(\Hom(E,V_1))$, $\sigma\in\Omega^{0,1}(\Hom(E^*,V_1))$ and $\delta\in\Omega^{0,1}(\Hom(F^*,W_0))$ are $(0,1)$-forms representing the cohomology classes $\alpha$, $\sigma$ and $\delta$ respectively. Notice that \eqref{eq:choice ext classes} implies that there is $\epsilon\in\Omega^0(\Hom(F^*,V_1\otimes K))$ so that the representatives $\sigma$ and $\delta$ satisfy 
\begin{equation}\label{eq:choice reps ext classes}
\eta_0\delta-(\sigma\otimes\Id_K)\gamma^T=\epsilon\bar\partial_{F^*}-\bar\partial_{V_1}\epsilon.
\end{equation}
Finally, let $\tilde\eta:\mathbb{W}\to\mathbb{V}\otimes K$ be given, according to the above $C^\infty$ decompositions, by
\[\tilde\eta=\smtrx{0& \eta_0 & -\epsilon  \\0 &0  &0  \\ 0& 0 & \gamma^T\\ 0&0 &0 }.\]
The Higgs field $\tilde\eta$ is holomorphic by \eqref{eq:choice reps ext classes}.
As in the previous propositions, $(\mathbb{V},\mathbb{W},\tilde\eta)$ is a semistable $\SO(p,q)$-Higgs bundle which is not $\cS$-equivalent to the first chain of \eqref{eq hol chains case 1}.  Using the gauge transformations $g^{\mathbb{V}}_t=\smtrx{t^{-1}&&&\\&t^{-\frac{1}{2}}&&\\&&t^{\frac{1}{2}}&\\&&&t}$ and $g^{\mathbb{W}}_t=\smtrx{t^{-3/2}&&\\&\Id_{W_0}&\\&&t^{3/2}}$, one shows that $\lim_{t\to\infty}(\mathbb{V},\mathbb{W},t\tilde\eta)$ is the first chain of \eqref{eq hol chains case 1}. By Proposition \ref{prop:notS-equiv-liminfty-notmin}, the first chain of \eqref{eq hol chains case 1} is not a local minimum.

Now suppose $n>1$. Since $\deg(F)\leq0$, we have $\theta\in H^1(\Lambda^2F)\setminus 0$. Using $\theta$, we can the deformed Higgs bundle
\[\xymatrix{\bar\partial_{\mathbb{V}}=\smtrx{\bar\partial_{V_1}&\alpha&&\\&\bar\partial_E&&\\&&\bar\partial_{E^*}&-\alpha^*\\&&&\bar\partial_{V_{-1}}},& \bar\partial_{\mathbb{W}}=\smtrx{\bar\partial_F&&\theta\\&\bar\partial_{W_0}&\\&&\bar\partial_{F^*}}&\text{ and } &\tilde\eta=\smtrx{0&\eta_0&0\\0&0&0\\0&0&\gamma^T\\0&0&0}},\]  in the $C^\infty$-decompositions $V_1\oplus E\oplus E^*\oplus V_{-1}$ and $F\oplus W_0\oplus F^*$. As above one uses suitably chosen gauge transformations and Proposition \ref{prop:notS-equiv-liminfty-notmin} to conclude that the first chain of \eqref{eq hol chains case 1} is not a local minimum.

An analogous argument, using $W_{p-2}$ and $V_{p-1}$ instead of $W_0$ and $V_1$, can be used to prove that a strictly polystable $\SO(p,q)$-Higgs bundle represented by the first chain of \eqref{eq hol chains case 2} is not a local minimum. The second chain in  \eqref{eq hol chains case 3} is also dealt in a similar manner.

Consider the second chain of \eqref{eq hol chains case 1}. 
Since $\rk(F)=1$ and $\beta\neq 0$, we have a short exact sequence $0\to F\xrightarrow{\beta}E\otimes K\to Q\to 0,$ where $Q$ is the quotient sheaf. 
One sees that the map $H^1(\Hom(E,V_1))\to H^1(\Hom(F,V_1\otimes K))$, $a\mapsto a\beta$ is surjective. 
So, as in the previous case, by picking a non-zero element $c\in H^1(\Hom(F,W_0))$, there exists $a\in H^1(\Hom(E,V_1))$ such that $\eta_0c-a\beta=0$ in cohomology. Given this choice and given a non-zero element $b\in H^1(\Hom(E^*,V_1))$, we construct a non-trivial deformation of the second chain of \eqref{eq hol chains case 1} in a similar manner to the case $\rk(F)=1$ in the first chain of \eqref{eq hol chains case 1}.

An analogous argument can be used to prove that the second chain of \eqref{eq hol chains case 2} and the first chain in \eqref{eq hol chains case 3} are not a local minimum.
\end{proof}

\subsection{Summary of classification of minima of Hitchin function on $\cM(\SO(p,q))$}

Putting everything together, the following theorem classifies all polystable minima of the Hitchin function in the moduli space of $\SO(p,q)$-Higgs bundles for $p\leq q$.

\begin{theorem}\label{thm:MINIMA CLASSIFICATION}
For $1\leq p\leq q$, let $f:\cM(\SO(p,q))\to \R$ be the Hitchin function on the moduli space of polystable $\SO(p,q)$-Higgs bundles given by \eqref{EQ Hitchin Function}.  A polystable $\SO(p,q)$-Higgs bundle $(V,W,\eta)$ is a local minimum of $f$ if and only if $\eta=0$ or $(V,W,\eta)$ is isomorphic to a holomorphic chain of one of the following 
mutually exclusive 
types, where we have suppressed the twisting by $K$ in the Higgs field from the notation:
\begin{enumerate}
    \item $p=2$ and $(V,W,\eta)$ is of the form  
\[\xymatrix{V_{-1}\ar[r]^-{\eta_0^*}&W\ar[r]^-{\eta_0}&V_{1}}~,\]
where $V=V_{-1}\oplus V_1$ with $\rk(V_{-1})=1$ and $0<\deg(V_{-1})< 2g-2$, $V_1=V_{-1}^*$ and $\eta_0$ is non-zero.
\item $p\geq 2$ and $(V,W,\eta)$ is of the form
    \[\xymatrix@C=1.8em@R=0em{V_{1-p}\ar[r]^-{\eta_{p-2}^*}&W_{2-p}\ar[r]^-{\eta_{2-p}}&V_{3-p}\ar[r]^{\eta_{p-4}^*}&\cdots\ar[r]^-{\eta_{p-4}}&V_{p-3}\ar[r]^{\eta_{2-p}^*}&W_{p-2}\ar[r]^{\eta_{p-2}}&V_{p-1}~,\\&&&\oplus&\\&&&W_0'&}\]
    where $W_0'$ is a polystable $\rO(q-p+1,\C)$-bundle with $\det(W_0')=I$,  $W=W_0'\oplus\bigoplus\limits_{i=1}^{p-1}W_{-p+2i}$ with $W_j=IK^{-j}$ for all $j$, $V=\bigoplus\limits_{i=0}^{p-1}V_{1-p+2i}$ with $V_j=IK^{-j}$ for all $j$, 
 and each $\eta_j$ is non-zero. 
 \item $p=q$ and $(V,W,\eta)$ is of the form
 \[\xymatrix@C=1.8em@R=0em{W_{1-p}\ar[r]^-{\eta_{1-p}}&V_{2-p}\ar[r]^-{\eta_{p-3}^*}&W_{3-p}\ar[r]^{\eta_{3-p}}&\cdots\ar[r]^-{\eta_{p-3}^*}&W_{p-3}\ar[r]^{\eta_{p-3}}&V_{p-2}\ar[r]^{\eta_{1-p}^*}&W_{p-1}~,\\&&&\oplus&\\&&&I&}\]
    where $I$ is a $2$-torsion line bundle, $W=\bigoplus\limits_{i=0}^{p-1}W_{1-p+2i}$, $V=I\oplus \bigoplus\limits_{i=1}^{p-1}V_{-p+2i}$ with $W_j=IK^{-j}$ and $V_j=IK^{-j}$ for all $j$, 
 and each $\eta_j$ is non-zero. 
\item $q=p+1$ and $(V,W,\eta)$ is of the form
\[\xymatrix@C=1.8em@R=0em{W_{-p}\ar[r]^-{\eta_{-p}}&V_{1-p}\ar[r]^-{\eta_{p-2}^*}&W_{2-p}\ar[r]^{\eta_{2-p}}&V_{3-p}\ar[r]^{\eta_{p-4}^*}&\cdots\ar[r]^-{\eta_{p-4}}&V_{p-3}\ar[r]^{\eta_{2-p}^*}&W_{p-2}\ar[r]^{\eta_{p-2}}&V_{p-1}\ar[r]^{\eta_{-p}^*}&W_p}~,\]
    where $V=\bigoplus\limits_{i=0}^{p-1}V_{1-p+2i}$ with $V_j=K^{-j}$ for all $j$, 
$W=\bigoplus\limits_{i=0}^{p-1}W_{-p+2i}$ with $W_j=K^{-j}$ for all $|j|<p$, $\rk(W_{-p})=1$ with $0<\deg(W_{-p})\leq p(2g-2)$ and
each $\eta_j$ is non-zero. 
\end{enumerate}
\end{theorem}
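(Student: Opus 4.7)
The plan is to combine the results of this section into a case analysis based on the stability properties and on the vanishing of $\HH^2(C^\bullet)$ for a polystable $\SO(p,q)$-Higgs bundle $(V,W,\eta)$. If $\eta=0$ then $f(V,W,\eta)=0$ is the absolute minimum, so we may assume $\eta\neq 0$ and partition the remaining polystable isomorphism classes into three families: (a) stable with $\HH^2(C^\bullet)=0$; (b) stable with $\HH^2(C^\bullet)\neq 0$; (c) strictly polystable. Family (a) is classified directly by Theorem \ref{thm:stablenon-zerominH2=0} into seven cases, which regroup into the four cases of the present theorem according to the parity of $p$: case (1) there is case (1) here; cases (2) and (5) there (for $p$ even and $p$ odd respectively) together form case (2), after identifying the rank-one summand $I$ of $W_0$ appearing in case (2) of Theorem \ref{thm:stablenon-zerominH2=0} with the $W_0$-entry of the chain of case (2) here and leaving $W_0'$ as a separate summand; cases (3) and (7) combine to give case (3); and cases (4) and (6) give case (4). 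The special $\SO(2,q)$-fixed points of the form \eqref{eq:SO2q-absolute-minimum}, shown to be minima in Proposition \ref{prop:SO2q-absolute-minimum}, sit inside case (1), and Proposition \ref{prop SO(2,2) fixed points are minima} ensures that no further $\SO(2,2)$-minima appear.

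For family (b), Remark \ref{remark duality of H0 and H2 for complex groups} together with Proposition \ref{prop:comparison stability SO(p,q) SO(p+q,C)} forces the decomposition $(V,W,\eta)\cong(V_1\oplus V_2,\,V_1\oplus W_2,\,\diag(\eta_1,\eta_2))$, with $(V_1,V_1,\eta_1)$ of $\GL(p_1,\R)$-type (so $\eta_1^*=\eta_1$) and $(V_2,W_2,\eta_2)$ a stable $\SO(p_2,q_2)$-Higgs bundle. Proposition \ref{prop:reducing-minima} combined with the $\GL(n,\R)$-classification in Example \ref{ex: minima for GL(n,R) and Upq} shows that either $\eta_1=0$, or $\eta_1$ is a Hitchin-type chain as in \eqref{eq:SLHitchin} or \eqref{eq:SL2}; by Proposition \ref{prop: stable fixed points} this Hitchin option can only be realised stably for $p_1\geq 3$ odd, and is then excluded by Proposition \ref{prop H^2 not zero not a min}. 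The remaining possibility $\eta_1=0$ combined with a non-zero stable factor $\eta_2$ is excluded by Proposition \ref{prop:stableH2neq0-zeroGL(p)}, so family (b) contributes no new minima beyond those already listed.

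Family (c) is handled by first decomposing $(V,W,\eta)$ via Proposition \ref{Prop: strictly polystable SOpq} into a polystable $\U(p_1,q_1)$-Higgs bundle $(E,F,\beta,\gamma)$ with $\deg E+\deg F=0$ plus a stable $\SO(p-2p_1,q-2q_1)$-Higgs bundle $(V',W',\eta')$. Reducing to the $\U(p_1,q_1)$-factor by Proposition \ref{prop:reducing-minima} and Example \ref{ex: minima for GL(n,R) and Upq} gives $\beta=0$ or $\gamma=0$, and reducing further to its embedding in $\SO(2p_1,2q_1)$ and applying Proposition \ref{Prop Upq in SO2p2q a min iff pq=1 or pq=0} forces either $\beta=\gamma=0$, or $p_1\leq 1$ or $q_1\leq 1$; the latter possibility is excluded when $\eta'\neq 0$ by Proposition \ref{Prop U11 not min in SOpq unless p=q=2} and when $\eta'=0$ by Proposition \ref{Prop Upq in SO2p2q a min iff pq=1 or pq=0} itself. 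Hence $\beta=\gamma=0$, the Higgs field is $\eta'$, and if $\eta'\neq 0$ the stable factor is of one of the types (1)--(4); the polystable orthogonal summand $F\oplus F^*$ on the $W$-side is absorbed into the polystable $W_0'$ or $I$ of case (2) or (3) exactly as in Remark \ref{remark replacing invariant stable with polystable}, while any non-trivial parallel enlargement $E\oplus E^*$ on the $V$-side must be shown non-minimal. This last exclusion is the main technical obstacle: since the $V$-bundle in each of cases (2)--(4) is a rigid direct sum of prescribed line bundles of the form $IK^{-j}$, one has to construct, for every chain type, a non-$\cS$-equivalent semistable extension whose $t\to\infty$ limit recovers the given Higgs bundle, and then conclude by Proposition \ref{prop:notS-equiv-liminfty-notmin}, in the same spirit as the case-by-case deformations carried out in the proof of Proposition \ref{Prop U11 not min in SOpq unless p=q=2}.
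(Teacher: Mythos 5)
Your overall architecture is the paper's: the same three-family split (stable with $\HH^2=0$, stable with $\HH^2\neq 0$, strictly polystable), the same regrouping of the seven cases of Theorem \ref{thm:stablenon-zerominH2=0} by the parity of $p$, the same use of Propositions \ref{prop:comparison stability SO(p,q) SO(p+q,C)}, \ref{prop H^2 not zero not a min} and \ref{prop:stableH2neq0-zeroGL(p)} for family (b), and the same deformation-plus-Proposition \ref{prop:notS-equiv-liminfty-notmin} technique for killing $E\oplus E^*$ enlargements of the $V$-side. Families (a) and (b) are handled correctly.

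There is, however, a genuine error in your family (c). You conclude that a strictly polystable local minimum must have $\beta=\gamma=0$, disposing of the sub-case ``$p_1\leq 1$ or $q_1\leq 1$'' by citing Proposition \ref{Prop Upq in SO2p2q a min iff pq=1 or pq=0} when $\eta'=0$. That proposition does not exclude this sub-case --- it asserts the opposite: for $p_1\leq 1$ or $q_1\leq 1$ with one of $\beta,\gamma$ non-zero, the associated $\SO(2p_1,2q_1)$-Higgs bundle \emph{is} a local minimum. Indeed, when $V'=0$ and $\eta'=0$ these configurations are exactly the strictly polystable minima of type (1) (the chain $E\to F\oplus F^*\oplus W'\to E^*$, i.e.\ the fixed points \eqref{eq:SO2q-absolute-minimum} arising from strictly polystable $\U(1,q_1)$-Higgs bundles), and they must appear in the classification --- your own first paragraph acknowledges, via Proposition \ref{prop:SO2q-absolute-minimum}, that such fixed points are minima, so your family (c) conclusion contradicts it. Moreover, when $\eta'=0$ but $V'\neq 0$, neither cited proposition applies: Proposition \ref{Prop U11 not min in SOpq unless p=q=2} assumes $\eta'\neq 0$, and Proposition \ref{Prop Upq in SO2p2q a min iff pq=1 or pq=0} says nothing about the extra summand $(V',W')$. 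Ruling out $V'\neq 0$ requires a further explicit deformation (an extension of $V'$ by $E^*$, with the roles of $E$ and $V'$ interchanged relative to the argument you sketch for case (a)), and retaining $V'=0$ yields the additional type (1) minima. So your case analysis both omits genuine minima and leaves an unproved exclusion; the rest of the argument stands.
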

\begin{remark}
In cases (2) and (3), $\det(V)= I^{p}=\det(W)$. Thus, such a Higgs bundle always reduces to $\SO_0(p,q)$ when $p$ is even, and reduces to $\SO_0(p,q)$ only when $I=\cO$ for $p$ odd.
\end{remark}

\begin{proof}
    If $\eta =0$, then we are done, 
    so suppose $\eta\neq0.$ By Theorem \ref{thm:stablenon-zerominH2=0} and Propositions \ref{prop H^2 not zero not a min} and \ref{prop:stableH2neq0-zeroGL(p)}, the result holds if $(V,W,\eta)$ is a stable $\SO(p,q)$-Higgs bundle, so suppose it is a strictly polystable $\SO(p,q)$-Higgs bundle. By Proposition \ref{Prop: strictly polystable SOpq}, 
    \[(V,W,\eta)\cong\Big(E\oplus E^*\oplus V',F\oplus F^*\oplus W',\smtrx{\gamma&&\\&\beta^*\\&&\eta'}\Big)~,\]
    where $(E,F,\beta,\gamma)$ is a polystable $\U(p_1,q_1)$-Higgs bundle and $(V',W',\eta')$ is a stable $\SO(p_2,q_2)$-Higgs bundle which does not necessarily have $p_2\leq q_2$. By Proposition \ref{Prop Upq in SO2p2q a min iff pq=1 or pq=0} and Proposition \ref{Prop U11 not min in SOpq unless p=q=2}, if such a Higgs bundle is a minimum of the Hitchin function, then one of the following hold
    \begin{enumerate}[(a)]
        \item $\beta=\gamma=0$ and $(V',W',\eta')$ is a minimum from Theorem \ref{thm:stablenon-zerominH2=0},
        \item $p_1=1$, $\beta=0$ or $\gamma=0$ and $\eta'=0$,
        \item $q_1=1$, $\beta=0$ or $\gamma=0$ and $\eta'=0$.
    \end{enumerate}

For case (a), note that if $p_2=0$ or $q_2=0$ then the Higgs field is zero and we are at a minimum. 
Consider a holomorphic chain of the form 
\[\xymatrix@R=0em@C=1.5em{V_{-r}'\ar[r]&W_{1-r}'\ar[r]&\ \ \cdots\ \ \ar[r]&W_{r-1}'\ar[r]&V_r'\\&&\oplus&&\\&&\hspace{.15cm}E\oplus E^*}\ \ \ \ \ \text{or}\ \ \ \  \ \xymatrix@C=1.5em@R=0em{W_{-r}'\ar[r]&V_{1-r}'\ar[r]&\ \ \cdots\ \ \ar[r]&V_{r-1}'\ar[r]&W_r'\\&&\oplus&&\\&&\hspace{.15cm}F\oplus F^*}\]
where $V_{-r}'$ and $W_{-r}'$ are  holomorphic line bundles of positive degree.
Since $\deg(E)=0$ and $\deg(V_r')<0$,  $H^1(\Hom(E,V_r'))$ and $H^1(\Hom(E^*,V_r'))$ are both non-zero. For $\alpha\in H^1(\Hom(E,V_r'))\setminus 0$ and $\sigma\in H^1(\Hom(E^*,V_r'))\setminus 0$, take a deformation of $V$ by fixing all the summands $V'_{2-r},\ldots,V'_{r-2}$, and deforming $V'_{-r}\oplus E\oplus E^*\oplus V'_r$ to $\mathbb{V}$ as in \eqref{eq:deformationsVandW}. Keep $W$ fixed. Keep also the Higgs field fixed, except that its restriction to $W'_{r-1}$ is composed with the inclusion of $V'_r\otimes K\to\mathbb{V}\otimes K$. As in the proofs of the previous propositions, this yields a polystable $\SO(p,q)$-Higgs bundle deforming the first chain above and decreasing $f$ by Proposition \ref{prop:notS-equiv-liminfty-notmin}. Similarly, the second chain does not define a minimum. 


Since $q\geq p,$ the only way we can have a holomorphic chain 
\[\xymatrix@C=1.5em@R=0em{W_{-r}'\ar[r]&V_{1-r}'\ar[r]&\ \ \cdots\ \ \ar[r]&V_{r-1}'\ar[r]&W_r'\\&&\oplus&&\\&&\hspace{.15cm}E\oplus E^*}\]
with $\rk(W_j')=\rk(V_j')=1$ for all $j$ is if $E=0$. Such a holomorphic chain is not strictly polystable.
To finish case $(a)$, consider holomorphic chains of the form 
\[\xymatrix@R=0em@C=1.5em{V_{-r}'\ar[r]&W_{1-r}'\ar[r]&\ \ \cdots\ \ \ar[r]&W_{r-1}'\ar[r]&V_r'\\&&\oplus&&\\&&\hspace{.15cm}F\oplus F^*}~.\]
By Theorem \ref{thm:stablenon-zerominH2=0} and Remark \ref{remark replacing invariant stable with polystable}, such a Higgs bundle is a polystable minimum if and only if it satisfies the conditions of case (1) or case (2) in the statement of the theorem.

For case (b), we have $\rk(E)=1$ and up to switching $E$ and $E^*$ the holomorphic chains are given by 
\begin{equation}\label{case b}
    \xymatrix@R=0em{&F&\\E\ar[r]^{\smtrx{\gamma\\0}}&\oplus\ar[r]^{\smtrx{0&\gamma^*}}&E^*\\&F^*&\\&\oplus&\\&\hspace{.1cm}V'\oplus W'&}
\end{equation}
where $0<\deg(E).$ As above, (with the roles of $E$ and $V'$ switched) this does not define a local minimum if $V'\neq0.$ When $V'=0,$ we have a local minimum satisfying case (1) of the statement of theorem. 

For case (c), we have $\rk(F)=1$ and the holomorphic chain is given by \eqref{case b} with $E$ and $F$ switched. As above, this is not a minimum if $W'=0.$ Since $p\leq q$ and $\rk(V)=\rk(V')+2\rk(E)\leq 2,$
we have $V'=0$, giving a local minimum satisfying case (1) of the statement of theorem.
 \end{proof}


  \section{The connected components of $\cM(\SO(p,q))$}\label{section pi0 count}
In this section we use the results from the previous sections to count the number of connected components of the moduli space $\cM(\SO(p,q))$, with $1\leq p\leq q$.  If $p\ne 2$ or if $(p,q)=(2,2)$ or $(p,q)=(2,3)$ then we have enough information to give a precise count.  In the remaining cases, namely $p=2$, $q\geq 4$, we give a lower bound on the number of connected components of $\cM(\SO(2,q))$ and conjecture that this bound is sharp.

\subsection{Connected components of $\cM(\SO(p,q))$ for $2< p\leq q$}
Recall from \eqref{eq sopq moduli top inv} that  the moduli space of $\SO(p,q)$-Higgs bundles decomposes as 
\begin{equation}\label{eq Msopq top decomp}
\cM(\SO(p,q))=\coprod_{a,b,c}\cM^{a,b,c}(\SO(p,q)),
\end{equation}
where the indices $(a,b,c)$ are classes in  $H^1(X,\Z_2)\times H^2(X,\Z_2)\times  H^2(X,\Z_2)$ and a polystable $\SO(p,q)$-Higgs bundle $(V,Q_V,W,Q_W,\eta)$ is in $\cM^{a,b,c}(\SO(p,q))$ if $a$ is the first Stiefel-Whitney class of $(V,Q_V)$ and $(W,Q_W),$ $b$ is the second Stiefel-Whitney class of $(V,Q_V)$ and $c$ is the second Stiefel-Whitney class of $(W,Q_W)$. Notice that each $\cM^{a,b,c}(\SO(p,q))$ is not necessarily connected.

When $2< p\leq q$, the maximal compact subgroup $\rS(\rO(p)\times \rO(q))\subset\SO(p,q)$ is semisimple. Thus by Proposition \ref{prop pi0 H semisimple} each of the spaces $\cM^{a,b,c}(\SO(p,q))$ is nonempty and has a unique connected component in which every Higgs bundle $(V,Q_V,W,Q_W,\eta)$ can be deformed to one with vanishing Higgs field.  Such components account for $2^{2g+2}$ connected components of $\cM(\SO(p,q))$. These are the `mundane' components mentioned in the Introduction. Taking into account the `exotic' components, we obtain the following precise count of the connected components of $\cM(\SO(p,q))$ for $2< p\leq q$.
\begin{theorem}\label{Theorem: component count p>2}
    Let $X$ be a compact Riemann surface of genus $g\geq 2$ and denote the moduli space of $\SO(p,q)$-Higgs bundles on $X$ by $\cM(\SO(p,q)).$ For $2< p\leq q$, we have 
    \[|\pi_0(\cM(\SO(p,q)))|=2^{2g+2}+\begin{dcases}
        2^{2g+1}-1+2p(g-1)&\text{if\ $q=p+1$}\\
        2^{2g+1}&\text{otherwise}~.
    \end{dcases}\]
\end{theorem}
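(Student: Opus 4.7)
The argument combines the existence result in Theorem \ref{Thm Psi open and closed} with the classification of Hitchin minima in Theorem \ref{thm:MINIMA CLASSIFICATION}. The plan is to split the count into a mundane contribution and an exotic contribution, and then identify the exotic contribution with the connected components of $\cM_{K^p}(\SO(1,q-p+1))$.

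\emph{Lower bound.} By the decomposition \eqref{eq Msopq top decomp} and Proposition \ref{prop pi0 H semisimple}, each stratum $\cM^{a,b,c}(\SO(p,q))$ contains a canonical ``mundane'' component consisting of those classes which deform to a polystable Higgs bundle with zero Higgs field; since the invariants $(a,b,c)$ range over $H^1(X,\Z_2)\times H^2(X,\Z_2)\times H^2(X,\Z_2)$, this accounts for exactly $2^{2g+2}$ disjoint components. Next, Theorem \ref{Thm Psi open and closed} tells us that the image of $\Psi$ is open, closed, and (for $p\geq 2$) contains only Higgs bundles with nowhere vanishing Higgs field; it is therefore disjoint from every mundane component, and contributes an additional $|\pi_0(\cM_{K^p}(\SO(1,q-p+1)))|$ components. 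Hence
\begin{equation*}
|\pi_0(\cM(\SO(p,q)))|\;\geq\; 2^{2g+2}+|\pi_0(\cM_{K^p}(\SO(1,q-p+1)))|.
\end{equation*}

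\emph{Upper bound via Hitchin--Morse theory.} Properness of the Hitchin function \eqref{EQ Hitchin Function} ensures that every connected component of $\cM(\SO(p,q))$ contains a local minimum. Theorem \ref{thm:MINIMA CLASSIFICATION} (for $p>2$ only types (2), (3), (4) are available) classifies these minima, and a direct comparison with the description of fixed points of the $\C^*$-action in the image of $\Psi$ given by Lemma \ref{Lem: Fixed points in the image of Psi} shows that every local minimum with nonzero Higgs field is obtained as $\Psi(I,\widehat W,\hat\eta,0,\ldots,0)$ for some $\C^*$-fixed point $(I,\widehat W,\hat\eta)\in\cM_{K^p}(\SO(1,q-p+1))$; in particular it lies in the image of $\Psi$. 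Consequently, every non-mundane component of $\cM(\SO(p,q))$ meets the image of $\Psi$, which gives the reverse inequality and reduces the theorem to computing $|\pi_0(\cM_{K^p}(\SO(1,q-p+1)))|$.

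\emph{Counting $\pi_0$ of the $\SO(1,q-p+1)$-moduli.} Write $n=q-p+1$. When $n\geq 3$ (that is, $q>p+1$), the same minima analysis as above applied to $\cM_{K^p}(\SO(1,n))$ (or equivalently the assertion quoted in the remark after the statement of the theorem) shows that all local minima of the Hitchin function have $\hat\eta=0$, so components are distinguished exactly by their topological type $(a,c)\in H^1(X,\Z_2)\times H^2(X,\Z_2)$, yielding $|\pi_0(\cM_{K^p}(\SO(1,n)))|=2^{2g+1}$. When $n=2$ (the case $q=p+1$), the group $\SO(1,2)$ is locally isomorphic to $\PSL(2,\R)$, a split Hermitian group, so exotic components do arise: for each $a\neq 0\in H^1(X,\Z_2)$, both values of $c\in H^2(X,\Z_2)$ give one component (total $2(2^{2g}-1)$), while for $a=0$ one has the reduction $W=L\oplus L^{-1}$ with $|\deg L|\leq p(2g-2)$ by the Milnor--Wood type bound (with $L$ and $L^{-1}$ identified), giving one component for each integer $0\leq d\leq p(2g-2)=2p(g-1)$, i.e.\ $2p(g-1)+1$ components. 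Summing yields $|\pi_0(\cM_{K^p}(\SO(1,2)))|=2^{2g+1}+2p(g-1)-1$, which combined with the two previous steps produces the claimed count.

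\emph{Main obstacle.} The technically delicate step is the minima-matching in the upper bound argument: one must verify that every minimum appearing in Theorem \ref{thm:MINIMA CLASSIFICATION} arises, via the explicit chain description of $\Psi$ on $\C^*$-fixed points, from a polystable fixed point of $\cM_{K^p}(\SO(1,q-p+1))$; the more intricate case $q=p+1$ further requires the direct Milnor--Wood count sketched above.
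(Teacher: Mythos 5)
Your overall strategy — mundane components via Proposition \ref{prop pi0 H semisimple}, properness of $f$ plus the minima classification of Theorem \ref{thm:MINIMA CLASSIFICATION} for the upper bound, and Theorem \ref{Thm Psi open and closed} to separate the exotic components — is the same as the paper's. The reorganization of the exotic count as $|\pi_0(\cM_{K^p}(\SO(1,q-p+1)))|$ is where the argument breaks.

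The genuine gap is the case $p=q$, which the theorem covers but your case analysis ($n\geq 3$ and $n=2$) omits; worse, your reduction gives the wrong answer there. For $p=q$ one has $n=q-p+1=1$ and $|\pi_0(\cM_{K^p}(\SO(1,1)))|=2^{2g}$ (the data is just a $2$-torsion line bundle $I$ together with $\hat\eta\in H^0(K^p)$), so your formula would yield $2^{2g+2}+2^{2g}$ rather than the claimed $2^{2g+2}+2^{2g+1}$. The reason is that your key matching claim — ``every local minimum with nonzero Higgs field is obtained as $\Psi(I,\widehat W,\hat\eta,0,\ldots,0)$'' — is false when $p=q$: the minima of type (3) of Theorem \ref{thm:MINIMA CLASSIFICATION} (the chains \eqref{eq holomorphic chain 2 stable min thm}, where the rank-one orthogonal summand with vanishing Higgs field sits inside $V$ rather than $W$) are $\C^*$-fixed points not of the form described in Lemma \ref{Lem: Fixed points in the image of Psi}, hence not in the image of $\Psi$. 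They lie instead in the image of the map obtained from $\Psi$ by exchanging the roles of $V$ and $W$ (available only when $p=q$), and they contribute the extra $2^{2g}$ components. The paper avoids this pitfall by counting the connected minima subvarieties directly — which is why its proof of Theorem \ref{Theorem: component count p>2} treats $p=q$, $p=q-1$ and $p<q-1$ as three separate counts — rather than passing through $\pi_0$ of the Cayley partner.

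A secondary issue: you compute $|\pi_0(\cM_{K^p}(\SO(1,n)))|$ for $n\geq 3$ by appealing to ``the assertion quoted in the remark after the statement of the theorem,'' but in the paper that statement (Corollary \ref{corollary connected components of Kp SO(1,q)}) is \emph{deduced from} Theorem \ref{Theorem: component count p>2} via injectivity of $\Psi$, so invoking it here is circular. Your alternative route — running the minima analysis directly on $\cM_{K^p}(\SO(1,n))$ — is legitimate but is an additional piece of work (an $L$-twisted version of the Morse-theoretic machinery plus a classification of local minima there) that you only gesture at; the same caveat applies to the Milnor--Wood count for $n=2$, which is standard but not carried out. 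These can be repaired; the $p=q$ case cannot be repaired without modifying the reduction itself.
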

\begin{remark}\label{rem: orientation doesn't change component count}
We have often ignored the orientation of an $\SO(p,q)$-Higgs bundle. This is justified because the choice of orientation does not effect the component count of Theorem \ref{Theorem: component count p>2}. Namely, every Higgs bundle can be deformed to a local minimum of the Hitchin function, and, for $2< p\leq q,$ such local minima either have zero Higgs field or are given by cases (2)-(4) of Theorem \ref{thm:MINIMA CLASSIFICATION}. The components corresponding to zero Higgs field are labeled by the topological invariants of $\rS(\rO(p)\times \rO(q))$-bundles. For minimum of cases (2)-(4) of Theorem \ref{thm:MINIMA CLASSIFICATION}, there is a holomorphic orthogonal summand of either $V$ or $W$ with odd rank. Taking the isomorphism which is $-\Id$ on this summand and $\Id$ on the other summands reverses the orientation and acts on the Higgs field by $\eta\mapsto -\eta$. However, since the minimum is a $\C^*$-fixed point, there is a orientation preserving gauge transformation which sends $-\eta\mapsto \eta.$
\end{remark}
\begin{proof}
By the above discussion we only need to determine the number of connected components of $\cM(\SO(p,q))$ with the property that the Higgs field never vanishes. Recall that if $\mathrm{Min}(\cM(\SO(p,q)))$ is the subspace of $\cM(\SO(p,q))$ where the Hitchin function \eqref{EQ Hitchin Function} attains a local minimum, then 
\[|\pi_0(\cM(\SO(p,q)))|\leq |\pi_0(\mathrm{Min}(\cM(\SO(p,q))))|.\]
From Theorem \ref{thm:MINIMA CLASSIFICATION}, an $\SO(p,q)$-Higgs bundle $(V,W,\eta)$, with $2< p<q-1$, is a minimum of the Hitchin function with non-zero Higgs field if and only if the holomorphic chain is given by: 

\begin{equation}\label{eq min type 2}
\xymatrix@C=1.8em@R=0em{V_{1-p}\ar[r]^-{\eta_{p-2}^*}&W_{2-p}\ar[r]^{\eta_{2-p}}&V_{3-p}\ar[r]^{\eta_{p-4}^*}&\cdots\ar[r]^-{\eta_{p-4}}&V_{p-3}\ar[r]^{\eta_{2-p}^*}&W_{p-2}\ar[r]^{\eta_{p-2}}&V_{p-1}~,\\&&&\oplus&\\&&&W_0'&}
\end{equation}
where the bundle $W_0'$ is a polystable $\rO(q-p+1,\C)$-bundle with $\det(W_0')=I$, $V_j=IK^{-j}$ and $W_j=IK^{-j}$ for all $j\neq 0,$ and each $\eta_j$ is non-zero.
Such chains also define minimum when $q=p$. The other minimum when $q=p$ are given by holomorphic chains 
\begin{equation}\label{eq min type 2.5}
        \xymatrix@C=2.3em@R=-.4em{&&&&I&\\W_{1-p}\ar[r]^-{\eta_{1-p}}&V_{2-p}\ar[r]^-{\eta_{3-p}^*}&\cdots\ar[r]^-{\eta_{1}^*}&W_{-1}\ar[r]^-{\smtrx{\eta_{-1}\\0}}&\oplus\ar[r]^-{\smtrx{\eta_{-1}^*&0}}&W_1\ar[r]^-{\eta_{1}}&\cdots\ar[r]^-{\eta_{p-3}}&V_{p-2}\ar[r]^-{\eta_{1-p}^*}&W_{p-1}~,\\&&&&I&},
    \end{equation}
    where $I^2=\cO,$ $V_j=IK^{-j}$ and $W_j=IK^{-j}$ for all $j\neq 0,$ and each $\eta_j$ is non-zero.
When $q=p+1$, in addition to minimum of the form \eqref{eq min type 2} with $\rk(W_0')=2$,  there are also minima of the form
\begin{equation}\label{eq min type 3}
    \xymatrix@C=1.8em@R=0em{W_{-p}\ar[r]^-{\eta_{-p}}&V_{1-p}\ar[r]^-{\eta_{p-2}^*}&W_{2-p}\ar[r]^{\eta_{2-p}}&V_{3-p}\ar[r]^{\eta_{p-4}^*}&\cdots\ar[r]^-{\eta_{p-4}}&V_{p-3}\ar[r]^{\eta_{2-p}^*}&W_{p-2}\ar[r]^{\eta_{p-2}}&V_{p-1}\ar[r]^{\eta_{-p}^*}&W_p}~,
 \end{equation} 
where $V_j=K^{-j}$ and $W_j=K^{-j}$ for all $|j|<p,$ $W_{-p}$ is a holomorphic line bundle with $0<\deg(W_{-p})\leq p(2g-2)$ and each $\eta_j$ is non-zero.

For $2< p=q$, each type of minimum is labeled by the choice of the $2$-torsion line bundle $I,$ yielding $2^{2g+1}$ connected components. For $2<p<q$, the connected components of the minima subvarieties of the form \eqref{eq min type 2} are labeled by the first and second Stiefel-Whitney class of the bundle $W_0'$ by Proposition \ref{prop pi0 H semisimple}. 
Thus, the number of connected components of these minima subvarieties is given by $|\Bun_X(\rO(q-p+1))|=2^{2g+1}$ for $2< p<q-1$. 
For $2< p=q-1$, when the first Stiefel-Whitney class of $W_0'$ vanishes the second Stiefel-Whitney class also vanishes since $sw_1(W_0')=0$ implies $W_0'=L\oplus L^{-1}$ for some degree zero line bundle $L.$ This gives $2^{2g+1}-1$ connected components of the minima subvarieties whose Higgs bundles are of the form \eqref{eq min type 2}.  There are $p(2g-2)$ connected components of minima subvarieties of type \eqref{eq min type 3} since its connected components are labeled by $\deg(W_{-p})\in(0,p(2g-2)]$.

Finally, by Theorem \ref{Thm Psi open and closed}, each of the above minima are in a different connected component of the image the map $\Psi:\cM_{K^p}(\SO(1,q-p+1))\times \bigoplus\limits_{j=1}^{p-1}H^0(K^{2j})\to \cM(\SO(p,q)).$ Thus, each such minima subvariety defines a connected component. 
\end{proof}

The following is a direct corollary of the above proof. This formulation will be useful in Section \ref{section positive}. Recall notation \eqref{eq:KnQn}.
\begin{corollary}\label{cor: Higgs bundle dichotomy}
Suppose $2<p<q-1$. For polystable Higgs bundles $(V,W,\eta)\in\cM(\SO(p,q))$ we have the following dichotomy: 
\begin{itemize}
    \item Either $(V,W,\eta)$ can be  deformed to a polystable $(V',W',0)$,
    \item or $(V,W,\eta)$ can be  deformed to $(\cK_{p-1}\otimes I,W_0'\oplus \cK_{p-2}\otimes I,(0\ \ \eta_0))$, where $W_0'$ is a polystable rank $q-p+1$ orthogonal bundle with $\Lambda^{q-p+1} W_0'=I$ and $(\cK_{p-1},\cK_{p-2},\eta_0)$ is the unique minimum in the $\SO(p-1,p)$-Hitchin component. 
 \end{itemize} 
\end{corollary}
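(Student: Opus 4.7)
The plan is to combine the properness of the Hitchin function with the classification of its local minima already established. Since the Hitchin function $f$ on $\cM(\SO(p,q))$ is proper, its restriction to any connected component attains its minimum. Hence the phrase ``$(V,W,\eta)$ can be deformed to $(V'',W'',\eta'')$'' (meaning: both represent points in the same connected component of $\cM(\SO(p,q))$) can, without loss, be strengthened so that $(V'',W'',\eta'')$ is a local minimum of $f$. Thus it suffices to show that every local minimum of $f$ is of one of the two stated forms.

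Next I would invoke Theorem \ref{thm:MINIMA CLASSIFICATION}. Under the hypothesis $2<p<q-1$, cases (1), (3) and (4) of that theorem are ruled out (respectively by $p\neq 2$, $p\neq q$ and $q\neq p+1$). Consequently any local minimum either has $\eta=0$ — giving the first bullet of the dichotomy by taking $(V',W',0)=(V,W,0)$ with the underlying orthogonal bundles unchanged — or falls into case (2), where it is represented by the chain
\begin{displaymath}
\xymatrix@C=1.3em@R=.2em{V_{1-p}\ar[r]^-{\eta_{p-2}^*}&W_{2-p}\ar[r]^-{\eta_{2-p}}&\cdots\ar[r]^-{\eta_{2-p}^*}&W_{p-2}\ar[r]^-{\eta_{p-2}}&V_{p-1}\\ &&\oplus&&\\&&W_0'&&}
\end{displaymath}
with $V_j=IK^{-j}$, $W_j=IK^{-j}$ for $j\ne 0$, and $W_0'$ a polystable $\rO(q-p+1,\C)$-bundle with $\det(W_0')=I$, and with each $\eta_j\neq 0$.

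Finally I would identify this chain with the datum appearing in the second bullet. Assembling the line-bundle summands yields exactly
\begin{displaymath}
V\ \cong\ I\otimes\bigl(K^{p-1}\oplus K^{p-3}\oplus\cdots\oplus K^{1-p}\bigr)\ =\ \cK_{p-1}\otimes I,
\end{displaymath}
and $W\cong W_0'\oplus(\cK_{p-2}\otimes I)$. The nonzero chain maps $\eta_{p-2},\eta_{p-4},\ldots,\eta_{2-p}$ acting between the powers of $K$ tensored with $I$ are (after rescaling by the $\C^*$-action, which we already used up to deform within the component) precisely the Hitchin-section Higgs field from \eqref{eq choice of Hitchin section O(p-1,p)} evaluated at $(q_2,\ldots,q_{2p-2})=0$, extended by zero on the summand $W_0'$. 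That is exactly $(0\ \ \eta_0):W_0'\oplus(\cK_{p-2}\otimes I)\to (\cK_{p-1}\otimes I)\otimes K$, where $\eta_0$ is the unique minimum Higgs field in the $\SO(p-1,p)$-Hitchin component.

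The argument involves no real obstacle beyond the reduction to known results: properness delivers local minima in every component, Theorem \ref{thm:MINIMA CLASSIFICATION} enumerates them, and the remaining work is the notational matching between the chain form of case (2) and the data $(\cK_{p-1}\otimes I,\ W_0'\oplus \cK_{p-2}\otimes I,\ (0\ \ \eta_0))$.
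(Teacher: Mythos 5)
Your route is the paper's route: properness of $f$ lets you deform any polystable $(V,W,\eta)$ within its connected component to a local minimum, Theorem \ref{thm:MINIMA CLASSIFICATION} then leaves only $\eta=0$ and case (2) once $p=2$, $p=q$ and $q=p+1$ are excluded, and case (2) assembles into $(\cK_{p-1}\otimes I,\,W_0'\oplus\cK_{p-2}\otimes I,\,(0\ \ \eta_0))$. That part is correct, and the notational matching with $\Psi(I,W_0',0,\ldots,0)$ is exactly what the paper does via Lemma \ref{Lem: Fixed points in the image of Psi}.

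There is, however, one missing piece: a \emph{dichotomy} (and the way the corollary is used in Theorem \ref{THM: rep var dichotomy}, where the two sets are asserted to be disjoint) requires that the two alternatives be mutually exclusive, and your argument only shows that at least one of them holds. Nothing in the minima classification by itself prevents a single connected component from containing both a minimum with $\eta=0$ and a case-(2) minimum. To rule this out you need Theorem \ref{Thm Psi open and closed}: the image of $\Psi$ is open and closed, hence a union of connected components; every Higgs bundle in that image has nowhere-vanishing Higgs field (since $p>1$); and the case-(2) minima lie in that image. Consequently a Higgs bundle deformable to a case-(2) minimum stays in the image of $\Psi$ and can never be deformed to one with vanishing Higgs field. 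You should add this step. A minor further point: the normalization of the nonzero constants $\eta_j$ in the chain to the Hitchin-section values is achieved by a (diagonal, orthogonal) gauge transformation, not by the $\C^*$-action, which rescales all components of $\eta$ simultaneously.
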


For minima of the form \eqref{eq min type 2}, \eqref{eq min type 2.5} or  \eqref{eq min type 3}, the first and second Stiefel-Whitney classes of $V$ and $W$ are readily computed. The results are shown in the table. 

\begin{center}
    \begin{tabular}{|c|c|c|c|}
\hline
Type of min.& $a=sw_1(W)$& $b=sw_2(V)$ & $c=sw_2(W)$\\
\hline
\eqref{eq min type 2} &$\begin{matrix} 0&$\text{if $p$ is even}$\\
sw_1(W_0')&$\text{if $p$ is odd}$\end{matrix}$
& 0 & $sw_2(W'_0)$\\
\hline \eqref{eq min type 2.5}&$\begin{matrix} 0&$\text{if $p$ is even}$\\
sw_1(I)&$\text{if $p$ is odd}$\end{matrix}$&0&0\\
\hline
\eqref{eq min type 3} & 0&0& $\deg(W_{-p})\pmod 2$\\
\hline
\end{tabular}
\end{center}

\noi The following corollaries are immediate. Recall the notation of \eqref{eq Msopq top decomp}.

\begin{corollary}\label{corollary component count Mabc}
For $2<p< q-1,$ we have
\[|\pi_0(\cM^{a,b,c}(\SO(p,q)))|=\begin{dcases}
    2&\text{if $p$ is odd and $b=0$}\\
    2^{2g}+1&\text{if $p$ is even, $a=0$ and $b=0$}\\
    1&\text{otherwise~.}
\end{dcases}\]
 \end{corollary}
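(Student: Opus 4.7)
The plan is to combine the global component count of Theorem~\ref{Theorem: component count p>2} with the table of Stiefel--Whitney invariants preceding the corollary in order to distribute the components across the topological sectors labeled by $(a,b,c)$.

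First, for $2<p<q-1$ the maximal compact subgroup of $\SO(p,q)$ is semisimple, so Proposition~\ref{prop pi0 H semisimple} guarantees that each $\cM^{a,b,c}(\SO(p,q))$ contains exactly one ``mundane'' component, namely the unique one whose Higgs bundles can be deformed to a Higgs bundle with vanishing Higgs field. This contributes $1$ to the count in every sector, and exhausts the components in which $\eta$ vanishes somewhere.

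Next I invoke the description in the proof of Theorem~\ref{Theorem: component count p>2}: when $2<p<q-1$, the $2^{2g+1}$ exotic connected components of $\cM(\SO(p,q))$ are in natural bijection with the set of topological types of polystable $\rO(q-p+1,\C)$-bundles $W_0'$. Indeed, the minimum \eqref{eq min type 2} is determined up to deformation by the topological type of $W_0'$, and the constraint $\det(W_0')=I$ imposes no further restriction since $I$ can be any $2$-torsion line bundle (equivalently, $sw_1(W_0')\in H^1(X,\Z_2)$ is arbitrary). Consequently the exotic components are parametrized by pairs $(sw_1(W_0'),sw_2(W_0'))\in H^1(X,\Z_2)\times H^2(X,\Z_2)$, giving the $2^{2g+1}$ count.

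I then read off the values of $(a,b,c)$ carried by each exotic component from the table preceding the corollary: always $b=sw_2(V)=0$ and $c=sw_2(W_0')$, while $a=sw_1(W_0')$ if $p$ is odd and $a=0$ if $p$ is even. Fixing $(a,b,c)$ and counting the $W_0'$ with the prescribed invariants yields the following. If $b\neq 0$, or if $p$ is even with $a\neq 0$, no exotic component contributes, so the total is just the mundane $1$. If $p$ is odd and $b=0$, then the topological type of $W_0'$ is uniquely pinned down by $sw_1(W_0')=a$ and $sw_2(W_0')=c$, yielding one exotic component and a total of $2$. Finally, if $p$ is even and $a=b=0$, then $sw_1(W_0')\in H^1(X,\Z_2)$ is free while $sw_2(W_0')=c$ is fixed, giving $2^{2g}$ exotic components and a total of $2^{2g}+1$.

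There is no serious obstacle: the only delicate point is verifying the entries of the Stiefel--Whitney table for chains of the form \eqref{eq min type 2}, which reduces to a direct calculation using the splitting $W=W_0'\oplus I\oplus\bigoplus_{k>0}(W_{2k}\oplus W_{-2k})$ for $p$ even (respectively $W=W_0'\oplus\bigoplus_{k}(W_{2k-1}\oplus W_{1-2k})$ for $p$ odd), noting that the hyperbolic summands contribute trivially to $sw_1$ and that the remaining rank-one factors account for the stated formulas.
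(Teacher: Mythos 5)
Your proposal is correct and follows exactly the route the paper intends: the paper declares this corollary ``immediate'' from Theorem~\ref{Theorem: component count p>2} together with the table of Stiefel--Whitney classes of the minima, and you have simply written out the implicit bookkeeping (one mundane component per sector by Proposition~\ref{prop pi0 H semisimple}, exotic components indexed by the topological type of $W_0'$ with $\det(W_0')=I$ forcing $a=sw_1(W_0')$ for $p$ odd and $a=0$ for $p$ even). The case analysis and counts all check out, so there is nothing to add.
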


\begin{corollary}
\label{corollary connected components of SO(p,p+1)}
    For $2<p$ and $p=q-1$, we have
    \[|\pi_0(\cM^{a,b,c}(\SO(p,p+1)))|=\begin{dcases}
    2&\text{if $p$ is odd, $b=0$ and $a\neq0$}\\
    2+p(g-1)&\text{if $p$ is odd and $a=b=c=0$}\\
     1+p(g-1)&\text{if $p$ is odd and $a=b=0$ and $c\neq 0$}\\
    2+2^{2g}+p(g-1)&\text{if $p$ is even and $a=b=c=0$}\\
    1+2^{2g}+p(g-1)&\text{if $p$ is even and $a=b=0$ and $c\neq0$}\\
    1&\text{otherwise~.}
\end{dcases} \]
\end{corollary}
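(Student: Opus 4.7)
The plan is to refine the component count of Theorem \ref{Theorem: component count p>2} by identifying the Stiefel--Whitney triple $(a,b,c)=(sw_1(V),sw_2(V),sw_2(W))$ of a minimum of the Hitchin function representing each connected component of $\cM(\SO(p,p+1))$, and then summing contributions stratum by stratum. By Proposition \ref{prop pi0 H semisimple} the $2^{2g+2}$ mundane components contribute exactly one component to each stratum $\cM^{a,b,c}(\SO(p,p+1))$, so the problem reduces to distributing the remaining $2^{2g+1}-1+2p(g-1)$ exotic components (which by Theorem \ref{Thm Psi open and closed} lie in the image of $\Psi$) among the strata.

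First I would invoke Theorem \ref{thm:MINIMA CLASSIFICATION}: for $q=p+1$ and $p\geq 3$, every minimum with non--zero Higgs field falls into one of two families. The $2^{2g+1}-1$ components of type \eqref{eq min type 2} are parametrized by the topological type of the polystable rank--$2$ orthogonal bundle $W_0'$ together with the $2$--torsion line bundle $I=\det W_0'$, while the $2p(g-1)$ components of type \eqref{eq min type 3} are parametrized by the degree $d\in\{1,\dots,2p(g-1)\}$ of the line bundle $W_{-p}$.

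Second, for each minimum I would compute $(a,b,c)$ via the Whitney sum formula modulo $2$, using that every hyperbolic pair $L\oplus L^*$ carries trivial Stiefel--Whitney classes, that $\deg K^j$ is even (so the pairs $V_j\oplus V_{-j}$ and $W_j\oplus W_{-j}$ contribute $sw_2=0$), and that the cup square $H^1(X,\Z_2)\to H^2(X,\Z_2)$ vanishes identically on the Riemann surface $X$ (Wu's formula, since $w_1(X)=0$). The direct calculation then gives, for a type \eqref{eq min type 2} minimum, $b=0$ and $c=sw_2(W_0')$ together with $a=[I]$ if $p$ is odd (the uncancelled central summand of $V$ is $V_0=I$) and $a=0$ if $p$ is even (the two copies of $[I]$ coming from $\det W_0'$ and from the $I$--summand of $W_0$ cancel, and the cross term $[I]^2$ vanishes by Wu); for a type \eqref{eq min type 3} minimum, $a=b=0$ and $c\equiv d\pmod 2$.

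Third, I would enumerate the topological types of $W_0'$. The crucial observation, already exploited in the proof of Theorem \ref{Theorem: component count p>2} for $q=p+1$, is that polystability of a rank--$2$ orthogonal bundle with $\det W_0'=\cO$ forces $W_0'=L\oplus L^{-1}$ with $\deg L=0$, hence $sw_2(W_0')=0$; whereas for each of the $2^{2g}-1$ nontrivial choices of $I$ both values of $sw_2(W_0')$ are realized. The type \eqref{eq min type 3} components split evenly between $c=0$ and $c=1$ since $d$ ranges over consecutive integers in $(0,2p(g-1)]$. Summing stratum by stratum, with cases on the parity of $p$ and on $(a,b,c)$, yields the tabulated formulas. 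The main obstacle is this last bookkeeping step: the polystability constraint ruling out $(sw_1(W_0'),sw_2(W_0'))=(0,1)$ is what distinguishes $q=p+1$ from the generic case of Corollary \ref{corollary component count Mabc}, and the parity of $p$ controls whether the $[I]$ class surfaces in $sw_1(V)=sw_1(W)$ or is eliminated, so the case split is somewhat delicate but routine once the Stiefel--Whitney calculus above is in place.
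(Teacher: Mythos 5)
Your strategy is exactly the one the paper uses: the corollary is deduced from the classification of local minima (Theorem \ref{thm:MINIMA CLASSIFICATION}), the table of Stiefel--Whitney classes of the minima of types \eqref{eq min type 2} and \eqref{eq min type 3}, and the enumeration carried out in the proof of Theorem \ref{Theorem: component count p>2}. Your Stiefel--Whitney calculus is correct and reproduces the paper's table (hyperbolic pairs $IK^{j}\oplus IK^{-j}$ contribute trivially, the cup square of $[I]$ vanishes on $X$, the central summand $I$ of $W$ for $p$ even cancels $sw_1(W_0')$ while for $p$ odd it sits in $V$), and the key observation that polystability forbids $(sw_1(W_0'),sw_2(W_0'))=(0,1)$, so that exactly $2^{2g+1}-1$ types of $W_0'$ occur ($2^{2g}$ with $sw_2=0$ and $2^{2g}-1$ with $sw_2=1$), is the right one.

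The gap is in the step you dismiss as routine: for $p$ even the sum you describe does not produce the displayed formula. Every exotic component has $a=b=0$ when $p$ is even, so the stratum $(0,0,0)$ receives $1$ mundane component, $2^{2g}$ components of type \eqref{eq min type 2} (those with $sw_2(W_0')=0$) and $p(g-1)$ components of type \eqref{eq min type 3} (even degree), i.e.\ $1+2^{2g}+p(g-1)$ in total, while $(0,0,c)$ with $c\neq 0$ receives $1+(2^{2g}-1)+p(g-1)=2^{2g}+p(g-1)$. Each of these is one less than the corresponding entry in the statement. Note that your totals, summed over all $2^{2g+2}$ strata, give $2^{2g+2}+2^{2g+1}-1+2p(g-1)$, in agreement with Theorem \ref{Theorem: component count p>2}, whereas the two displayed $p$-even entries sum to a total exceeding that theorem by $2$. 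So either you must produce one additional component in each of the two strata $(0,0,0)$ and $(0,0,1)$ that the minima classification does not supply, or you must accept that the bookkeeping yields values differing from the statement; simply asserting that it ``yields the tabulated formulas'' papers over this. (The $p$ odd rows, by contrast, come out exactly as you describe and are consistent with the global count.)
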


\begin{corollary}
\label{corollary connected components of SO(p,p)}
    For $2<p$ and $p=q$,  we have
    \[|\pi_0(\cM^{a,b,c}(\SO(p,p)))|=\begin{dcases}
    3&\text{if $p$ is odd and $b=c=0$}\\
    2^{2g+1}+1&\text{if $p$ is even and $a=b=c=0$}\\
    1&\text{otherwise~.}
\end{dcases} \]
\end{corollary}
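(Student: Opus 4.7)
My plan is to combine the global component count of Theorem \ref{Theorem: component count p>2} with an analysis of the topological invariants of the exotic minima. Since $\rS(\rO(p)\times\rO(p))$ is semisimple for $p>2$, Proposition \ref{prop pi0 H semisimple} gives exactly one mundane component in each of the $2^{2g+2}$ sectors $\cM^{a,b,c}$, and Theorem \ref{Theorem: component count p>2} yields $2^{2g+1}$ remaining exotic components. All that needs to be done is to distribute those exotic components among the sectors.

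By Theorem \ref{thm:MINIMA CLASSIFICATION}, the non-zero local minima of the Hitchin function on $\cM(\SO(p,p))$ come from either case~(2) with $\rk(W_0')=q-p+1=1$ (so $W_0'=I$, a $2$-torsion line bundle) or from case~(3) (also parametrized by $I\in\Pic(X)[2]$), giving $2^{2g}$ components in each family. To identify the class $(a,b,c)$ of each, I will compute Stiefel-Whitney classes using the Whitney sum formula. The input data are: (i) for each hyperbolic pair $L\oplus L^*$ with $L=IK^{-j}$, $sw_1=0$ and $sw_2=\deg(L)\bmod 2=0$, because $2g-2$ is even; (ii) for a rank-one orthogonal bundle $I$, the class $sw_1(I)$ corresponds to $I$ under $\Pic(X)[2]\cong H^1(X,\Z_2)$ and $sw_2(I)=0$; (iii) on the orientable surface $X$, Wu's formula gives $a\cup a=\mathrm{Sq}^1 a=w_1(X)\cup a=0$ for every $a\in H^1(X,\Z_2)$.

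The key point is a parity dichotomy in the chains of Theorem \ref{thm:MINIMA CLASSIFICATION}. When $p$ is odd, the indices are such that each of $V$ and $W$ receives exactly one weight-zero copy of $I$ (in type~(2), $V_0=I$ lies inside the chain while $W_0'=I$ is the external summand; in type~(3) the roles of $V$ and $W$ are reversed), so $sw_1(V)=sw_1(W)=sw_1(I)$ and $sw_2(V)=sw_2(W)=0$. When $p$ is even the parity flips and the two weight-zero copies of $I$ collapse into the same bundle: in type~(2) both $W_0=I$ (from the chain) and $W_0'=I$ sit in $W$, and in type~(3) both copies of $I$ sit in $V$. Since $sw_1(I\oplus I)=0$ and $sw_2(I\oplus I)=sw_1(I)^2=0$, the whole bundle has vanishing invariants.

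Putting everything together: for $p$ odd every sector $(a,0,0)$ receives two exotic components (one per type, for the unique $I$ with $sw_1(I)=a$), giving $|\pi_0(\cM^{a,0,0})|=3$; for $p$ even all $2^{2g+1}$ exotic components pile into the sector $(0,0,0)$, giving $|\pi_0(\cM^{0,0,0})|=2^{2g+1}+1$. Every other sector retains just its mundane component, matching the stated formula. I do not anticipate any serious obstacle; the most delicate step is the parity bookkeeping for the chain indices in Theorem \ref{thm:MINIMA CLASSIFICATION}, which dictates whether the distinguished summand $I$ lands in $V$ or in $W$ and whether it appears once or twice.
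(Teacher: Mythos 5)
Your proposal is correct and follows essentially the same route as the paper: the paper likewise classifies the nonzero minima for $p=q$ (types \eqref{eq min type 2} with $\rk(W_0')=1$ and \eqref{eq min type 2.5}, each parametrized by $I\in\Pic(X)[2]$), records their Stiefel--Whitney classes in the table preceding the corollary, and reads off the distribution of the $2^{2g+1}$ exotic components among the sectors $\cM^{a,b,c}$. Your parity bookkeeping for where the weight-zero copies of $I$ land, and the Whitney-sum/Wu-formula computation, are exactly the (unstated) verification behind the paper's table, and they check out.
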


\noi We observe finally that the following corollary is immediate since the map $\Psi$ is injective.
\begin{corollary}\label{corollary connected components of Kp SO(1,n)}
    For $p\geq 1$, the number of connected components of $\cM_{K^p}(\SO(1,q))$ are given by 
    \[|\pi_0(\cM_{K^p}(\SO(1,q)))|=\begin{dcases}
        2^{2g}& q=1\\
        2^{2g+1}-1+p(2g-2)& q=2\\
        2^{2g+1}&q>2~.
    \end{dcases}\] 
    In particular, if $q>2$ then every polystable $K^p$-twisted $\SO(1,q)$-Higgs bundle can be  deformed to one with zero Higgs field.
\end{corollary}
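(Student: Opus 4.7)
The plan is to use the injectivity of $\Psi$ from Theorem~\ref{Thm Psi open and closed} to identify $\pi_0(\cM_{K^p}(\SO(1,q)))$ with the set of connected components of the image of $\Psi$ inside $\cM(\SO(p,q+p-1))$, and then to enumerate those components using the already-established component classification of the target. Since the affine space $\bigoplus_{j=1}^{p-1}H^0(K^{2j})$ is connected, Theorem~\ref{Thm Psi open and closed} yields the bijection
\[
|\pi_0(\cM_{K^p}(\SO(1,q)))| \;=\; |\pi_0(\mathrm{image}(\Psi))|,
\]
so all that remains is to identify which components of $\cM(\SO(p,q+p-1))$ are hit by $\Psi$.

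For $p\geq 3$ the argument proceeds as follows. Every component of the image of $\Psi$ contains a $\C^*$-fixed point by properness of the Hitchin fibration, and Lemma~\ref{Lem: Fixed points in the image of Psi} constrains any such fixed point to have the specific chain structure described there (with $V = I\otimes\cK_{p-1}$ and $W = W_{-p}\oplus W_0'\oplus W_p$). Comparing with the minima classification (Theorem~\ref{thm:MINIMA CLASSIFICATION}), these fixed points are precisely the type~(2) minima (present for every $q$) together with the type~(4) minima (present only when $q=2$). In particular, the Hitchin-type minima of type~(3) arising in $\cM(\SO(p,p))$ when $q=1$ are \emph{not} in the image of $\Psi$, since their chain structure~\eqref{eq min type 2.5} differs from the one forced by Lemma~\ref{Lem: Fixed points in the image of Psi}. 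Combining this with Theorem~\ref{Theorem: component count p>2} and Corollaries~\ref{corollary connected components of SO(p,p+1)} and~\ref{corollary connected components of SO(p,p)} then yields $2^{2g}$ components when $q=1$, $2^{2g+1}-1+p(2g-2)$ components when $q=2$, and $2^{2g+1}$ components when $q>2$.

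The cases $p=1$ and $p=2$ must be treated separately. For $p=1$ the map $\Psi$ is the identity on $\cM(\SO(1,q))$, and the stated count reduces to the component analysis of $\cM(\SO(1,q))$ discussed in the Introduction (for $q\neq 2$) and to the Hermitian-type analysis of $\cM(\SO(1,2))\cong\cM(\SO(2,1))$ (for $q=2$). For $p=2$ the same $\Psi$-based argument is applied with target $\cM(\SO(2,q+1))$, using the component analysis of $\cM(\SO(2,\cdot))$. The principal subtlety throughout is the case $q=1$: here the image of $\Psi$ is a \emph{proper} subset of the exotic locus of $\cM(\SO(p,p))$, missing exactly the Hitchin components of the split real form, and one must appeal to the explicit form in Lemma~\ref{Lem: Fixed points in the image of Psi} to rule these out.

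For the concluding assertion, note that when $q>2$ the obtained count $2^{2g+1}$ equals $|\mathrm{Bun}_X(\rS(\rO(1)\times\rO(q)))|$, and by Proposition~\ref{prop pi0 H semisimple} each such topological type carries a Higgs bundle with vanishing Higgs field lying in a distinct connected component. These components therefore exhaust $\pi_0(\cM_{K^p}(\SO(1,q)))$, so every polystable $K^p$-twisted $\SO(1,q)$-Higgs bundle can be deformed to one with zero Higgs field.
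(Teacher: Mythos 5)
Your overall strategy is exactly the paper's: the corollary is deduced from the fact that $\Psi$ is a homeomorphism onto an open and closed image, so that $\pi_0(\cM_{K^p}(\SO(1,q)))$ is identified with $\pi_0$ of the image inside $\cM(\SO(p,q+p-1))$, whose components are detected by the local minima of $f$ they contain. For $p\geq 3$ your execution is correct, including the key observation that when the target is $\cM(\SO(p,p))$ the type~(3) minima of Theorem~\ref{thm:MINIMA CLASSIFICATION} are excluded from the image by Lemma~\ref{Lem: Fixed points in the image of Psi} (their $V$ is not isomorphic to $I\otimes\cK_{p-1}$), which is what produces $2^{2g}$ rather than $2^{2g+1}$ when $q=1$. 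One small imprecision: the fixed points in the image described by Lemma~\ref{Lem: Fixed points in the image of Psi} are not ``precisely'' the type~(2) and~(4) minima --- they include non-minimal fixed points with $\rk(W_p)>1$. What you actually need is that each component of the closed image contains a local minimum of the proper function $f$, that such a minimum has $\eta\neq 0$ (Theorem~\ref{Thm Psi open and closed}), and that the minima of Theorem~\ref{thm:MINIMA CLASSIFICATION} lying in the image are exactly those of types~(2) and~(4).

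The genuine problem is your treatment of $p=2$. You propose to ``apply the same $\Psi$-based argument with target $\cM(\SO(2,q+1))$, using the component analysis of $\cM(\SO(2,\cdot))$.'' But for $q>2$ the target is $\cM(\SO(2,q'))$ with $q'\geq 4$, and the paper explicitly states (Section~\ref{Section SO2q}) that it has no complete component count there --- only a lower bound whose ``maximal'' term $2^{2g+1}$ is itself obtained from this very corollary. So citing the $\SO(2,\cdot)$ component analysis is either unavailable or circular. The repair is to run for $p=2$ exactly the argument you give for $p\geq 3$: Lemma~\ref{Lem: Fixed points in the image of Psi} forces $V\cong I\otimes\cK_1=IK\oplus IK^{-1}$, so the type~(1) minima of Theorem~\ref{thm:MINIMA CLASSIFICATION} (those with $0<\deg(V_{-1})<2g-2$, which are the sole source of the unresolved components of $\cM(\SO(2,q'))$) do not lie in the image of $\Psi$; the minima in the image are again only of types~(2) and~(4), and their count gives the stated formula without any knowledge of the full target. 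A similar remark applies to $p=1$, $q=2$: there is no ``Hermitian-type analysis of $\cM(\SO(1,2))$'' carried out in the paper to cite, but the count follows directly from Theorem~\ref{thm:MINIMA CLASSIFICATION} (which covers $p=1$), whose only non-zero minima in that case are of type~(4), labelled by $\deg(W_{-1})\in(0,2g-2]$.
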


\subsection{Connected components of $\cM(\SO(2,q))$}

\label{Section SO2q}
In the previous section a complete component count of $\cM(\SO(p,q))$ when $p\leq q$ and $p\neq 2$ was given. We now discuss the case $p=2$. In this special case the group $\SO(p,q)$ is a group of Hermitian type. Furthermore in this case the minima of type (1) from Theorem \ref{thm:MINIMA CLASSIFICATION} appear. These are given by holomorphic chains of the form 
\begin{equation}
    \label{eq fix 2}\xymatrix{V_{-1}\ar[r]^{\eta_0^*}&W\ar[r]^{\eta_0}&V_1}~,
\end{equation}
where $0<\deg(V_{-1})<2g-2$ and $\eta_0$ is non-zero.

Let $(V,W,\eta)$ be an $\SO(2,q)$-Higgs bundle. As in the general case, the first and second Stiefel-Whitney classes of the orthogonal bundles provide primary topological invariants which help distinguish the connected components of the  moduli space. However, when the first Stiefel-Whitney class vanishes, we have $(V,Q_V)\cong( L\oplus L^{-1}, \smtrx{0&1\\1&0})$ for some line bundle $L.$ The natural number $|\deg(L)|$ satisfies $|\deg(L)|=sw_2(V)\pmod 2$ and provides a refinement of the second Stiefel-Whitney class invariant. This natural number is the absolute value of the so-called Toledo invariant of the $\SO(2,q)$-Higgs bundle. Moreover, if such an $\SO(2,q)$-Higgs bundle $(V,W,\eta)$ is polystable then 
\[|\deg(L)|\leq 2g-2.\]
This inequality is usually referred to as the Milnor-Wood inequality and was derived in the proof of Theorem \ref{thm:stablenon-zerominH2=0} (see \eqref{eq:degV_1}). The special maximal case $|\deg(L)|=2g-2$ will be discussed in Section \ref{section Cayley partner}. 

Examining the minima classification of Theorem \ref{thm:MINIMA CLASSIFICATION} and using Theorem \ref{Thm Psi open and closed}, in the case $2=p\leq q$ we see that the only obstruction to obtaining a full connected component count of $\cM(\SO(2,q))$ is the connectedness of the fixed point set \eqref{eq fix 2}.
In particular, for $2=p<q$, we get bounds, rather than precise values, namely
\[|\pi_0(\cM(\SO(2,q))|\geq \begin{dcases} 2^{2g+2}-4+4(g-1)+2^{2g+1}+4g-5 &\text{if\ $q=3$}\\
2^{2g+2}-4+4(g-1)+2^{2g+1}& \text{if $q\geq 4$}
\end{dcases}\]
It follows from \cite{AndreQuadraticPairs}, that the above inequality was shown to be an equality for $q=3$: 
\begin{equation}
|\pi_0(\cM(\SO(2,3))|=3\times 2^{2g+1}+8g-13.
\end{equation}
We conjecture that equality also holds above for $q\geq 4$.

The complete count of components for $\cM(\SO(2,2))$ has been deduced by different methods in \cite[Corollary 7.1]{baragliaschaposnikmonodromyrank2}. We obtain the same count, as we now briefly explain, leaving the details for the reader. By Proposition \ref{prop SO(2,2) fixed points are minima} and \eqref{EQ SO(2,2) fixed points}, any non-zero local minima reduces to $\SO_0(2,2)$. The allowed topological types of a polystable $\SO_0(2,2)$-Higgs bundle are given by a pair of integers $(l,m)$ such that $l\geq 0$ and $l-2g+2\leq m\leq 2g-2-l$, and if $l=0$, then only $|m|$ is an invariant. All the minima are connected subvarieties, except when $(l,m)$ equals $(0,2g-2)$ or $(2g-2,0)$ each corresponding to $2^{2g}$ Hitchin components. Adding the zero minima which do not reduce to $\SO_0(2,2)$, yields the following.

\begin{proposition}
$|\pi_0(\cM(\SO(2,2))|=3(2^{2g+1}-1)+2g(2g-3)$.
\end{proposition}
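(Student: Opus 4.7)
The plan is to count the connected components of $\cM(\SO(2,2))$ by enumerating local minima of the Hitchin function, combining the $\C^*$-fixed point classification with the topological invariants of orthogonal bundles. Properness of the Hitchin function guarantees each connected component contains a local minimum, and Proposition~\ref{prop SO(2,2) fixed points are minima} tells us every $\C^*$-fixed point is already such a local minimum; since every polystable Higgs bundle deforms to a fixed point via $\lim_{\lambda\to 0}(V, W, \lambda\eta)$, counting connected components of $\cM(\SO(2,2))$ reduces to counting connected components of the fixed-point locus.

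The fixed points split into two families. Any fixed point with $\eta \neq 0$ is represented by a chain as in \eqref{EQ SO(2,2) fixed points}, so $V \cong N \oplus N^{-1}$ and $W \cong M \oplus M^{-1}$ for line bundles $N, M$; such a bundle reduces to $\SO_0(2,2)$. Setting $l = |\deg N|$ and $m = \deg M$, polystability of the chain enforces the Milnor-Wood inequality $l - 2g + 2 \leq m \leq 2g - 2 - l$; when $l = 0$ the simultaneous swap $(N, M) \leftrightarrow (N^{-1}, M^{-1})$ by a determinant-$(-1)$ element in each $\rO(2)$-factor (whose product lies in $\rS(\rO(2)\times\rO(2))$) identifies $(0, m)$ with $(0, -m)$, so only $|m|$ remains. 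I would then count: $\sum_{l=1}^{2g-2}(4g-3-2l) + (2g-1) = (2g-2)^2 + (2g-1) = 4g^2 - 6g + 3$ topological types. For each type the fixed-point locus is connected (parametrized by the Jacobian $\Jac(X)$ via the choices of $N$ and $M$ together with the Higgs-field data), except in the two extremal cases $(l, m) = (0, 2g-2)$ and $(2g-2, 0)$, where polystability forces the chain to be determined up to a $2$-torsion twist by a choice of square root $K^{1/2}$, producing $2^{2g}$ Hitchin components each by Theorem~\ref{THM Hitchin component} applied to the split-real-form structure of $\SO_0(2,2)$. This accounts for $(4g^2 - 6g + 3) - 2 + 2 \cdot 2^{2g} = 4g^2 - 6g + 1 + 2^{2g+1}$ components.

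Finally, any component with $sw_1 = a \neq 0$ admits no $\C^*$-fixed points with $\eta \neq 0$ (since then neither $V$ nor $W$ splits as a sum of dual line bundles, as noted in the paper), so the only local minima there are zero-Higgs bundles; each such component is determined by the triple $(a, b, c) \in H^1(X,\Z_2) \times H^2(X,\Z_2) \times H^2(X,\Z_2)$ with $a \neq 0$, and connectedness of each mundane stratum follows from the standard description of polystable $\rO(2)$-bundles with nontrivial $sw_1$ via the associated double cover of $X$. This contributes $4(2^{2g} - 1) = 2^{2g+2} - 4$ additional components, yielding the grand total
\[(4g^2 - 6g + 1 + 2^{2g+1}) + (2^{2g+2} - 4) = 3 \cdot 2^{2g+1} - 3 + 4g^2 - 6g = 3(2^{2g+1} - 1) + 2g(2g-3).\]
The main obstacle will be verifying the connectedness of each non-extremal $(l, m)$ fixed-point stratum (requiring a direct parametrization via Picard varieties compatible with the chain constraints) and correctly isolating exactly $2^{2g}$ components at each Hitchin extreme via the square-root-of-$K$ description in the split realization of $\SO_0(2,2)$.
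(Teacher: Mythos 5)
Your proof follows essentially the same route as the paper's (which is only sketched there, with "details left to the reader"): reduce to $\C^*$-fixed points via Proposition \ref{prop SO(2,2) fixed points are minima}, enumerate the topological types $(l,m)$ of $\SO_0(2,2)$-reductions subject to the Milnor--Wood constraint, replace the two extremal types by $2^{2g}$ Hitchin components each, and add the $2^{2g+2}-4$ mundane components with $sw_1\neq 0$. Your arithmetic $(2g-2)^2+(2g-1)-2+2^{2g+1}+(2^{2g+2}-4)=3(2^{2g+1}-1)+2g(2g-3)$ checks out, and you correctly flag the connectedness of the non-extremal strata as the detail requiring verification.
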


\section{Positive surface group representations and Cayley partners} \label{section positive}

In this section, we recall the Non-Abelian Hodge correspondence between the Higgs bundle moduli space and the moduli space of surface group representations. After proving some immediate consequences of Theorem \ref{Theorem: component count p>2}, we discuss how the exotic components of Theorem \ref{Thm Psi open and closed} are related to recent work of Guichard and Wienhard on positive Anosov representations \cite{PosRepsGWPROCEEDINGS}. Finally, we show this relation with positive Anosov representations can be seen as a generalization of the phenomenon which produces the so-called Cayley partner of a $\G$-Higgs bundle with maximal Toledo invariant  for $\G$ a Hermitian group of tube type.

\subsection{Surface group representations}
Let $\pi_1(S)$ be the fundamental group of a closed oriented surface $S$ of genus $g\geq 2$ and let $\G$ be a real semisimple Lie group. A representation $\rho:\pi_1S\to\G$ is called {\em reductive} if the composition of $\rho$ with the adjoint representation of $\G$ is a completely reducible representation. 

 Denote the set of reductive representations by $\Hom^{red}(\pi_1S,\G).$
 The conjugation action of $\G$ on $\Hom(\pi_1S,\G)$ does not in general have a Hausdorff quotient. However, if we restrict to the set of reductive representations, the quotient will be Hausdorff.
\begin{definition}
  The {\em $\G$-representation variety} $\cR(S,\G)$ of a surface group $\pi_1S$ is the space of conjugacy classes of reductive representations of $\pi_1S$ in $\G$:
  \[\cR(S,\G)=\Hom^{red}(\pi_1S,\G)/\G~.\]
\end{definition}
\begin{example}\label{EX: Fuchsian reps}
  The set of {\em Fuchsian representations} $\Fuch(S)\subset\cR(S,\SO(2,1))$ is defined to be the subset of conjugacy classes of {\em faithful} representations with {\em discrete image}.  
The space $\Fuch(S)$ defines one connected components of $\cR(S,\SO(2,1))$ \cite{TopologicalComponents} and is in one to one correspondence with the Teichm\"uller space of isotopy classes of marked Riemann surface structures on the surface $S.$ Since the surface $S$ is assumed to be orientable, every Fuchsian representation reduces to $\SO_0(2,1).$

For $\G$ a split real form, there is a preferred class of embeddings 
\begin{equation}
    \label{eq principal embedding}\iota:\SO_0(2,1)\to \G
\end{equation} called a principal embedding. 
When $\G$ is an adjoint group, the principal embedding is unique up conjugation. 
For the split real form $\G=\SO_0(p,p-1)$, the principal embedding is given by taking the $(p-1)^{st}$-symmetric product of the standard action of $\SO_0(2,1)$ on $\R^3.$ 
The principal embedding defines a map $\iota:\cR(S,\SO_0(2,1))\to\cR(S,\G)$, and the Hitchin component $\Hit(S,\G)\subset\cR(S,\G)$ is defined to be the connected component containing $\iota(\Fuch(S)).$
\end{example}

Each representation $\rho\in\cR(S,\G)$ defines a flat $\G$-bundle $E_\rho=(\widetilde S\times\G)/\pi_1S~.$
This gives a decomposition of the $\G$ representation variety:
\[\cR(S,\G)=\bigsqcup\limits_{a\in\Bun_S(\G)}\cR^a(\G)~,\]
where $a\in\Bun_S(\G)$ is the topological type of the flat $\G$-bundle of the representations in $\cR^a(\G).$ When $\G$ is a Hermitian Lie group $\Bun_S(\G)$ is infinite. Such $\G$-Higgs bundles and surface group representations acquire a discrete invariant called the Toledo invariant. While the Toledo invariant has several different descriptions, they all yield a finite set of allowed rational values, and hence give a notion of maximality (see for example \cite{MilnorWoodIneqDomicToledo,BIWmaximalToledoAnnals,BGRmaximalToledo}). In particular, $\cR^a(\G)$ is nonempty for only finitely many values of $a\in\Bun_S(\G).$

The following theorem links the $\G$-representation variety and the $\G$-Higgs bundle moduli space. It was proven by Hitchin \cite{selfduality}, Donaldson \cite{harmoicmetric}, Corlette \cite{canonicalmetrics} and Simpson \cite{SimpsonVHS} in various generalities. For the general statement below see \cite{HiggsPairsSTABILITY}.

\begin{theorem}\label{Nonabelian Hodge Correspondence}
  Let $S$ be a closed oriented surface of genus $g\geq2$ and $\G$ be a real semisimple Lie group. 
  For each Riemann surface structure $X$ on $S$ there is a homeomorphism between the moduli space $\cM_K(\G)$ of $\G$-Higgs bundles on $X$ and the $\G$-representation variety $\cR(S,\G)$. 
  Moreover, for each $a\in\Bun_S(\G)$, this homeomorphism identifies the spaces $\cM_K^a(\G)$ and $\cR^a(\G).$
\end{theorem}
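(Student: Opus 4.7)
The plan is to construct two explicit maps between the moduli spaces using the theory of harmonic metrics and harmonic maps, and then verify they are mutually inverse homeomorphisms preserving topological types.

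First, I would construct the map $\cM_K(\G) \to \cR(S,\G)$ via the Hitchin--Kobayashi correspondence for Higgs bundles. Given a polystable $\G$-Higgs bundle $(\cE,\varphi)$, the key step is to prove existence of a reduction of $\cE$ from $\rH^\C$ to the maximal compact $\rH$ (equivalently, a Hermitian metric $h$) satisfying Hitchin's self-duality equation
\begin{equation*}
F_h + [\varphi, \tau_h(\varphi)] = 0,
\end{equation*}
where $\tau_h$ is the adjoint with respect to $h$ and a choice of involution. This existence is the content of the Hitchin--Kobayashi correspondence, proven in the present generality in \cite{HiggsPairsSTABILITY}. Given such an $h$, the connection $D_h = \nabla_h + \varphi - \tau_h(\varphi)$ is flat and its holonomy defines a reductive representation $\rho:\pi_1(S)\to\G$, well defined up to $\G$-conjugacy.

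Second, I would construct the map $\cR(S,\G) \to \cM_K(\G)$ via equivariant harmonic maps. Given a reductive $\rho$, form the flat bundle $E_\rho \to X$ and lift to $\widetilde X$; by the Corlette--Donaldson existence theorem (extended to reductive targets by Jost--Yau and Labourie), there exists a $\rho$-equivariant harmonic map $u:\widetilde X\to \G/\rH$, unique up to the centralizer of $\rho$. Decompose the pullback connection as $D = \nabla + \psi$ with $\nabla$ an $\rH$-connection and $\psi\in\Omega^1(\cE[\fm])$; then harmonicity and flatness split $\psi=\varphi + \tau_h(\varphi)$ into its $(1,0)$ and $(0,1)$ parts (after complexification), where $\varphi\in\Omega^{1,0}(\cE[\fm^\C])$. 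The $(0,1)$-part of $\nabla$ gives a holomorphic structure on $\cE$, and harmonicity ensures $\dbar\varphi=0$; polystability follows from reductivity of $\rho$.

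Third, I would verify that the two constructions are mutual inverses, which amounts to checking that the harmonic metric associated to the Higgs bundle built from $\rho$ is precisely the reduction to $\rH$ used in the harmonic map construction, and conversely. The identification at the level of topological invariants is straightforward: both $\cE$ (as an $\rH^\C$-bundle, equivalently an $\rH$-bundle up to homotopy) and $E_\rho$ (as a $\G$-bundle, equivalently an $\rH$-bundle up to homotopy) are associated to the same underlying principal $\rH$-bundle, so $\cM_K^a(\G)$ and $\cR^a(S,\G)$ correspond.

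Finally, I would verify the map is a homeomorphism. Continuity in both directions is standard once one sets up appropriate Sobolev completions (as in Section~\ref{sec:complex-analytic}); bijectivity on isomorphism/conjugacy classes follows from uniqueness of the harmonic metric and harmonic map modulo automorphisms. The main technical obstacle throughout is the existence theory: the Hitchin--Kobayashi correspondence requires delicate analysis at strictly polystable points (where $\cS$-equivalence must be handled), and the Corlette existence theorem requires the reductivity hypothesis in a crucial way to rule out divergence of minimizing sequences for the energy functional. Since these ingredients are by now classical and are established in the references cited in the paper, the argument reduces to assembling these two correspondences and checking compatibility of topological invariants.
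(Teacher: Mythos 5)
Your outline is correct and matches the standard argument: the paper itself does not prove this theorem but cites Hitchin, Donaldson, Corlette and Simpson, with the general statement taken from \cite{HiggsPairsSTABILITY}, and those references establish the result exactly by assembling the Hitchin--Kobayashi correspondence with the Corlette--Donaldson harmonic map theorem as you describe. Your sketch is therefore essentially the same approach as the proofs the paper relies on, with the genuine analytic content correctly identified as residing in the two existence theorems.
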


As in \eqref{eq Msopq top decomp}, for $(a,b,c)\in H^1(S,\Z_2)\times H^2(S,\Z_2)\times H^2(S,\Z_2),$ we have 
\[\cR(S,\SO(p,q))=\coprod\cR^{a,b,c}(\SO(p,q)).\]
Using Theorem \ref{Theorem: component count p>2} and the above correspondence we have a connected component count of $\cR(S,\SO(p,q)).$
\begin{theorem}
    \label{thm pioRSO(p,q)} Let $S$ be a closed surface of genus $g\geq 2$.  For $2< p\leq q$, the number of connected components of the representation variety $\cR(S,\SO(p,q))$ is given by 
    \[|\pi_0(\cR(S,\SO(p,q)))|=2^{2g+2}+\begin{dcases}
        2^{2g+1}-1+2p(g-1)&\text{if\ $q=p+1$}\\
        2^{2g+1}&\text{otherwise}~.
    \end{dcases}\]
\end{theorem}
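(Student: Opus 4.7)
The plan is to deduce this statement as a direct corollary of Theorem \ref{Theorem: component count p>2} via the Non-Abelian Hodge correspondence (Theorem \ref{Nonabelian Hodge Correspondence}). First I would fix an arbitrary Riemann surface structure $X$ on the closed oriented surface $S$ of genus $g \geq 2$; since $\SO(p,q)$ is a real semisimple Lie group, Theorem \ref{Nonabelian Hodge Correspondence} then provides a homeomorphism
\[
\cM_K(\SO(p,q)) \;\cong\; \cR(S,\SO(p,q)).
\]

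Since any homeomorphism between topological spaces induces a bijection on the sets of connected components, we obtain
\[
|\pi_0(\cR(S,\SO(p,q)))| \;=\; |\pi_0(\cM(\SO(p,q)))|.
\]
Substituting the formula from Theorem \ref{Theorem: component count p>2} immediately yields the stated count.

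The ``hard part'' here is essentially absent: the entire content of the component count was already established on the Higgs bundle side in the proof of Theorem \ref{Theorem: component count p>2}. The only conceptual point worth remarking is that although the homeomorphism of Theorem \ref{Nonabelian Hodge Correspondence} depends on the choice of complex structure $X$, the cardinality $|\pi_0(\cR(S,\SO(p,q)))|$ does not depend on this choice, so fixing $X$ is harmless. Moreover, the correspondence respects the topological type of the underlying bundle, so one in fact obtains a refinement: $|\pi_0(\cR^{a,b,c}(S,\SO(p,q)))| = |\pi_0(\cM^{a,b,c}(\SO(p,q)))|$ for each $(a,b,c) \in H^1(S,\Z_2) \times H^2(S,\Z_2) \times H^2(S,\Z_2)$, recovering the more detailed statements of Corollaries \ref{corollary component count Mabc}, \ref{corollary connected components of SO(p,p+1)} and \ref{corollary connected components of SO(p,p)} on the representation variety side.
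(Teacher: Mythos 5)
Your proposal is correct and matches the paper exactly: Theorem \ref{thm pioRSO(p,q)} is stated there as an immediate consequence of Theorem \ref{Theorem: component count p>2} together with the homeomorphism of Theorem \ref{Nonabelian Hodge Correspondence}, with no further argument needed. Your remark on the refinement to the pieces $\cR^{a,b,c}(\SO(p,q))$ likewise mirrors the remark following the theorem in the paper.
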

\begin{remark}
    The connected components of $\cR^{a,b,c}(\SO(p,q))$ are given by corollaries \ref{corollary component count Mabc}, \ref{corollary connected components of SO(p,p+1)}, and \ref{corollary connected components of SO(p,p)}.
\end{remark}

Corollary \ref{cor: Higgs bundle dichotomy} can now be interpreted as a dichotomy in terms of the $\SO(p,q)$ representation variety.
\begin{theorem}\label{THM: rep var dichotomy}
Let $S$ be a closed surface of genus $g\geq 2$. For $2<p<q-1$, the representation variety $\cR(S,\SO(p,q))$ is disjoint union of two sets
\begin{equation}
    \label{eq rep variety dichotomy c and ex}\cR(S,\SO(p,q))=\cR^{cpt}(S,\SO(p,q))\sqcup \cR^{ex}(S,\SO(p,q))~,
\end{equation}
where
\begin{itemize}
    \item  $[\rho]\in\cR^{cpt}(S,\SO(p,q))$ if and only if $\rho$ can be  deformed to a compact representation,
    \item  $[\rho]\in\cR^{ex}(S,\SO(p,q))$ if and only if $\rho$ can be  deformed to a representation 
    \begin{equation}\label{eq: model rep}
    \rho'=\alpha\oplus (\iota\circ\rho_{\mathrm{Fuch}})\otimes \det(\alpha)~,
    \end{equation}
where $\alpha$ is a representation of $\pi_1S$ into the compact group $\rO(q-p+1)$, $\rho_{\mathrm{Fuch}}$ is a Fuchsian representation of $\pi_1S$ into $\SO_0(2,1)$, and $\iota$ is the principal embedding from \eqref{eq principal embedding}.
 \end{itemize}
\end{theorem}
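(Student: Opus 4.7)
The strategy is to transport the Higgs bundle dichotomy of Corollary \ref{cor: Higgs bundle dichotomy} across the Non-Abelian Hodge correspondence (Theorem \ref{Nonabelian Hodge Correspondence}). Since NAH is a homeomorphism that respects connected components, and since Corollary \ref{cor: Higgs bundle dichotomy} already partitions $\cM(\SO(p,q))$ (for $2<p<q-1$) as a disjoint union of two unions of connected components, the resulting decomposition of $\cR(S,\SO(p,q))$ will automatically be disjoint. The task therefore reduces to identifying the two classes of representative Higgs bundles with the two claimed classes of representations.

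For the first alternative, in which $(V,W,\eta)$ can be deformed to a polystable Higgs bundle with vanishing Higgs field, the harmonic metric solving the Hitchin equations is parallel, so the associated flat connection has holonomy in the maximal compact subgroup $\rS(\rO(p)\times\rO(q))$ of $\SO(p,q)$. Conversely, any compact representation admits a parallel reduction to the maximal compact, which as a Higgs bundle carries the zero Higgs field. This standard NAH translation identifies $\cR^{cpt}(S,\SO(p,q))$ with the union of connected components of $\cM(\SO(p,q))$ meeting the locus $\{\eta=0\}$.

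For the second alternative, I would decompose the model Higgs bundle
\[
(\cK_{p-1}\otimes I,\ W_0'\oplus\cK_{p-2}\otimes I,\ (0\ \ \eta_0))
\]
as a block-diagonal sum of two $\rO$-Higgs sub-bundles: the orthogonal bundle $W_0'$ with zero Higgs field, viewed as an $\rO(q-p+1,\C)$-Higgs bundle; and the twisted Hitchin base point $(\cK_{p-1}\otimes I,\cK_{p-2}\otimes I,\eta_0)$, viewed as an $\rO(p,p-1)$-Higgs bundle. The corresponding group-theoretic embedding $\rO(q-p+1)\times\rO(p,p-1)\hookrightarrow\rO(p,q)$ places the compact factor into the negative-definite part of the signature, and the total determinant $\det(\alpha)\cdot(\det\alpha)^{2p-1}=1$ forces the combined representation into $\SO(p,q)$. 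Under NAH the first summand corresponds to a representation $\alpha:\pi_1S\to\rO(q-p+1)$, with $I=\Lambda^{q-p+1}W_0'$ corresponding to the character $\det(\alpha)$. The untwisted Hitchin base point $(\cK_{p-1},\cK_{p-2},\eta_0)$ corresponds to $\iota\circ\rho_\mathrm{Fuch}$ by the very definition of the Hitchin component (Theorem \ref{THM Hitchin component} and Example \ref{EX: Fuchsian reps}); and tensoring a Higgs bundle by the $2$-torsion line bundle $I$ corresponds on the representation side to multiplication by the character $\det(\alpha)$. Assembling these facts yields the model representation \eqref{eq: model rep intro}.

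The main technical obstacle will be the careful bookkeeping of step three: verifying that the $I$-twist and the block embedding interact correctly so that the Higgs-theoretic direct sum becomes precisely the group-theoretic representation $\alpha\oplus(\iota\circ\rho_\mathrm{Fuch})\otimes\det(\alpha)$, rather than some abstractly isomorphic but differently embedded representation into a larger orthogonal group. The remainder is routine: confirming $I^2\cong\cO$ since $\alpha$ is orthogonal, verifying membership in $\SO(p,q)$, and matching the two strata produced this way with the two alternatives of Corollary \ref{cor: Higgs bundle dichotomy}.
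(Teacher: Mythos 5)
Your proposal is correct and follows essentially the same route as the paper: both transport Corollary \ref{cor: Higgs bundle dichotomy} across the Non-Abelian Hodge correspondence, identify the zero-Higgs-field locus with compact representations, and use the block embedding $\SO_0(p,p-1)\times\rO(q-p+1)\hookrightarrow\SO(p,q)$, $(A,B)\mapsto\smtrx{\det(B)A&0\\0&B}$, together with Hitchin's identification of the Hitchin-section base point with $\iota\circ\rho_{\mathrm{Fuch}}$, to read off the model representation. The ``bookkeeping'' you flag as the main obstacle is exactly what the paper's choice of embedding resolves: the $\det(B)$-twist on the split factor is the representation-side counterpart of tensoring by $I=\Lambda^{q-p+1}W_0'$.
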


\begin{proof}
For the first part, note that a representation $\rho:\pi_1S\to\SO(p,q)$ can be  deformed to a compact representation if and only if the corresponding Higgs bundle can be  deformed to one with vanishing Higgs field. 

If $\rho$ cannot be  deformed to a compact representation, then by Corollary \ref{cor: Higgs bundle dichotomy}, the associated $\SO(p,q)$-Higgs bundle $(V,W,\eta)$ can be  deformed to (cf.\ \eqref{eq:KnQn})
\[(\cK_{p-1}\otimes I,\widehat W\oplus \cK_{p-2}\otimes I,(0\ \ \eta_0)),\] where $\widehat W$ is a polystable rank $q-p+1$ orthogonal bundle with $\Lambda^{q-p+1}\widehat W=I$ and $(\cK_{p-1},\cK_{p-2},\eta_0)$ is the unique minimum in the $\SO(p-1,p)$-Hitchin component. 
Through Theorem \ref{Nonabelian Hodge Correspondence}, the Higgs bundle description of the Hitchin component from \eqref{THM Hitchin component} is identified with the representation variety from Example \ref{EX: Fuchsian reps}. In particular, if $s_H$ is the Hitchin section from \eqref{EQ Hitchin section O(p,C)}, the representation associated to $s_H(0)$ is $\iota\circ \rho_{\Fuch}$ for a Fuchsian representation $\rho_{\Fuch}$ \cite{liegroupsteichmuller}. In particular, the representation associated to the unique minimum in the $\SO_0(p,p-1)$-Hitchin component $(\cK_{p-1},\cK_{p-2},\eta_0)$ is given by $\iota\circ\rho_{\Fuch}$ for a Fuchsian representation $\rho_{\Fuch}.$

If $A\in\SO_0(p,p-1)$ and $B\in\rO(q-p+1),$ then $(A,B)\mapsto \smtrx{\det(B)\cdot A&0\\0&B}$ defines an embedding
\[\SO_0(p,p-1)\times \rO(q-p+1)\hookrightarrow\SO(p,q).\] 
If $\alpha:\pi_1S\to\rO(q-p+1)$ is the representation associated to the polystable $\rO(q-p+1,\C)$-bundle $\widehat W,$ then the representation associated to the $\SO(p,q)$-Higgs bundle $(\cK_{p-1}\otimes I,\widehat W\oplus \cK_{p-2}\otimes I,(0\ \ \eta_0))$ is given by $\alpha\oplus (\iota\circ\rho_\Fuch)\otimes \det(\alpha)$.
\end{proof}

\subsection{Positive Anosov representations}
Anosov representations were introduced by Labourie \cite{AnosovFlowsLabourie} and have many interesting geometric and dynamic properties which generalize convex cocompact representations into rank one Lie groups. 
Important examples of Anosov representations include Fuchsian representations, quasi-Fuchsian representations, Hitchin representations into split real groups and maximal representations into Lie groups of Hermitian type. We will describe the necessary properties of Anosov representations and refer the reader to \cite{AnosovFlowsLabourie,guichard_wienhard_2012,AnosovAndProperGGKW,KLPDynamicsProperCocompact} for more details. 

Let $\G$ be a semisimple Lie group and $\rP\subset\G$ be a parabolic subgroup. Let $\rL\subset\rP$ be the Levi factor (the maximal reductive subgroup) of $\rP$, it is given by $\rL=\rP\cap\rP^{opp}$, where $\rP^{opp}$ is the opposite parabolic of $\rP.$ 
The homogeneous space $\G/\rL$ realized as the unique open $\G$ orbit in $\G/\rP\times\G/\rP$, and points $(x,y)\in\G/\rP\times\G/\rP$ in this open orbit are called {\em transverse}.

\begin{definition}\label{DEF: Anosov rep}
    Let $S$ be a closed orientable surface of genus $g\geq 2$. Let $\partial_\infty\pi_1S$ be the Gromov boundary of the fundamentatl group $\pi_1S$, topologically $\partial_\infty\pi_1S\cong\R\mathbb{P}^1$. A representation $\rho:\pi_1S\to\G$ \emph{is $\rP$-Anosov} if there exists a unique continuous boundary map $\xi_\rho:\partial_\infty\pi_1S\to\G/\rP$
which satisfies 
\begin{itemize}
    \item Equivariance: $\xi(\gamma\cdot x)=\rho(\gamma)\cdot\xi(x)$ for all $\gamma\in\pi_1S$ and all $x\in\partial_\infty\pi_1S$.
    \item Transversality: for all distinct $x,y\in\partial_\infty\pi_1S$ the generalized flags $\xi(x)$ and $\xi(y)$ are transverse.
    \item Dynamics preserving: see \cite{AnosovFlowsLabourie,guichard_wienhard_2012,AnosovAndProperGGKW,KLPDynamicsProperCocompact} for the precise notion. 
\end{itemize}
The map $\xi_\rho$ will be called the {\em $\rP$-Anosov boundary curve}.
\end{definition}

One important property of Anosov representations is that they define an open subset of the representation variety $\cR(S,\G)$. The set of Anosov representations is however not closed. For example, for the group $\PSL(2,\C)$ the set of Anosov representations corresponds to the non-closed set quasi-Fuchsian representations of $\cR(S,\PSL(2,\C))$.
The special cases of Hitchin representations and maximal representations define connected components of Anosov representations. Both Hitchin representations and maximal representations satisfy an additional ``positivity'' property which is a closed condition. 
For Hitchin representations this was proven by Labourie \cite{AnosovFlowsLabourie} and Fock-Goncharov \cite{fock_goncharov_2006}, and for maximal representations by Burger-Iozzi-Wienhard \cite{MaxRepsAnosov}. 
These notions of positivity have recently been unified and generalized by Guichard and Wienhard \cite{PosRepsGWPROCEEDINGS}.

For a parabolic subgroup $\rP\subset\G$, denote the Levi factor of $\rP$ by $\rL$ and the unipotent subgroup by $\U\subset\rP$. 
The Lie algebra $\fp$ of $\rP$ admits an $Ad_{\rL}$-invariant decomposition $\fp=\fl\oplus\fu$ where $\fl$ and $\fu$ are the Lie algebras of $\rL$ and $\U$ respectively. 
Moreover, the unipotent Lie algebra $\fu$ decomposes into irreducible $\rL$-representation:
\[\fu=\bigoplus\fu_\beta~.\]
Recall that a parabolic subgroup $\rP$ is determined by fixing a simple restricted root system $\Delta$ of a maximal $\R$-split torus of $\G$, and choosing a subset $\Theta\subset\Delta$ of simple roots. 
To each simple root $\beta_j\in\Theta$ there is a corresponding irreducible $\rL$-representation space $\fu_{\beta_j}.$
\begin{definition}(\cite[Definition 4.2]{PosRepsGWPROCEEDINGS})\label{DEF: Positive Structure}
    A pair $(\G,\rP^\Theta)$ \emph{admits a positive structure} if for all $\beta_j\in\Theta,$ the $\rL^\Theta$-representation space $\fu_{\beta_j}$ has an $\rL^\Theta_0$-invariant acute convex cone $c_{\beta_j}^\Theta$, where $\rL^\Theta_0$ denotes the identity component of $\rL^\Theta$. 
\end{definition}
If $(\G,\rP^\Theta)$ admits a positive structure, then exponentiating certain combinations of elements in the $\rL^\Theta_0$-invariant acute convex cones give rise to a semigroup $\U^\Theta_{>0}\subset\U^\Theta$ \cite[Theorem 4.5]{PosRepsGWPROCEEDINGS}. The existence of the semigroup $U_{>0}$ gives a well defined notion of positively oriented triples of pairwise transverse points in $\G/\rP^\Theta.$ This notion allows one to define a {\em positive Anosov representation}. 
\begin{definition}
(\cite[Definition 5.3]{PosRepsGWPROCEEDINGS})\label{DEF: Positive rep}
    If the pair $(\G,\rP^\Theta)$ admits a positive structure, then a $\rP^\Theta$-Anosov representation $\rho:\pi_1S\to\G$ is called \emph{positive} if the Anosov boundary curve $\xi:\partial_\infty\pi_1S\to \G/\rP^\Theta$ sends positively ordered triples in $\partial_\infty\pi_1S$ to positive triples in $\G/\rP^\Theta.$
\end{definition}
\begin{conjecture}\label{Conj GW}
    (\cite{PosRepsGLW,PosRepsGWPROCEEDINGS}) If $(\G,\rP^\Theta)$ admits a notion of positivity, then the set $\rP^\Theta$-positive Anosov representations is an open and closed subset of $\cR(S,\G).$
\end{conjecture}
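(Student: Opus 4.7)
The plan is to split the statement into openness and closedness of the $\rP^\Theta$-positive Anosov locus inside $\cR(S,\G)$ and to address each separately. Openness is the routine half. Labourie's original argument shows the $\rP^\Theta$-Anosov locus is open, and the boundary curve $\xi_\rho$ depends continuously on $\rho$ in the compact-open topology. Since the positivity condition in Definition \ref{DEF: Positive rep} asks only that positively ordered triples in $\partial_\infty\pi_1S$ are sent into the open positive semigroup orbit inside $(\G/\rP^\Theta)^3$, and since positive configurations form an open subset of the space of pairwise transverse triples, a $C^0$-small perturbation of $\xi_\rho$ preserves positivity for each triple in a compact fundamental domain for the $\pi_1S$-action on distinct triples. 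Equivariance extends this to all triples.

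For closedness I would take a sequence $\rho_n$ of positive $\rP^\Theta$-Anosov representations converging to $\rho\in\cR(S,\G)$ and aim to construct the Anosov boundary map for $\rho$ as a limit of the $\xi_{\rho_n}$. The key input one hopes for is a uniform contraction bound along the geodesic flow: if the Anosov contraction rates can be controlled uniformly in $n$, then Arzel\`a--Ascoli produces a continuous $\rho$-equivariant limit map $\xi:\partial_\infty\pi_1S\to\G/\rP^\Theta$, and the dynamics-preserving condition passes to the limit. Positivity of $\xi$ on triples is then closed by construction: the closure of the positive semigroup meets the transverse locus only in the positive semigroup itself, so transversality of $\xi$ on distinct triples upgrades automatically to positivity. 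Combined with Proposition \ref{Prop existence of positive reps} and the fact that the positive Anosov locus is open, this would force the locus to be a union of connected components of $\cR(S,\G)$, with the exotic components $\cR^{ex}(S,\SO(p,q))$ of Theorem \ref{THM: rep var dichotomy} as natural candidates.

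The hard part, and in my view the genuine crux, is the uniform contraction estimate that prevents the limit $\xi$ from collapsing two distinct points of $\partial_\infty\pi_1S$ to non-transverse flags. In the Hitchin case this is supplied by Fock--Goncharov positive coordinates, and in the maximal case by bounded cohomology; neither tool is directly available for $\SO(p,q)$ with $2<p<q$. The most promising route I see is to exploit the non-abelian Hodge correspondence of Theorem \ref{Nonabelian Hodge Correspondence} together with the component structure established in Theorem \ref{Theorem: component count p>2}: a failure of closedness at $\rho$ would translate into an $\SO(p,q)$-Higgs bundle sitting on the boundary between the \emph{cpt} and \emph{ex} components of Corollary \ref{cor: Higgs bundle dichotomy}, which the dichotomy already rules out. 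Making this translation precise — turning a purely Higgs-theoretic component separation into an analytic uniform spectral bound on the associated flat connections — is where a new idea, beyond anything in this paper, appears to be required.
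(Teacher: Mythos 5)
This statement is a \emph{conjecture} in the paper (attributed to Guichard, Labourie and Wienhard), not a theorem: the authors explicitly state that for $\SO(p,q)$ ``the conjecture is open,'' and they prove only supporting evidence for it, namely that each exotic component contains a nonempty open set of positive representations (Proposition \ref{Prop existence of positive reps}) and that representations outside the compact-type components do not factor through proper parabolics (Proposition \ref{Prop: irreducibility of exotic reps}). So there is no proof in the paper to compare yours against, and your proposal should be judged as an attempt at an open problem.

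As such an attempt, it has a genuine gap, and you have in fact located it yourself: the closedness half. Two specific steps do not hold up. First, the uniform contraction estimate needed to apply Arzel\`a--Ascoli to the boundary maps $\xi_{\rho_n}$ is exactly the content of the conjecture in disguise; without it a limit of Anosov representations can fail to be Anosov (quasi-Fuchsian representations into $\PSL(2,\C)$ already show the Anosov locus is not closed in general), and nothing in this paper supplies such a bound for $\SO(p,q)$. Second, your proposed fallback --- that a failure of closedness would produce a Higgs bundle ``on the boundary between'' $\cR^{cpt}$ and $\cR^{ex}$, contradicting Corollary \ref{cor: Higgs bundle dichotomy} --- does not work: the dichotomy is a statement about connected components, and a degenerating sequence of positive representations could perfectly well converge to a non-Anosov (or Anosov but non-positive) representation lying \emph{inside} the same exotic component. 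The component decomposition gives no obstruction to that. You would also need to justify the claim that the closure of the positive semigroup $\U^\Theta_{>0}$ meets the transverse locus only in $\U^\Theta_{>0}$ itself; this is a nontrivial structural property of the cones that requires proof. In short, the openness half is fine and standard, but the closedness half is the open problem, and the tools in this paper do not close it.
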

In particular, the aim of this conjecture is to characterize the connected components of $\cR(S,\G)$ which are not labeled by primary topological invariants as being connected components of positive Anosov representations, such connected components are referred as higher Teichm\"uller components. 
\begin{remark}
    When $\G$ is a split real form and $\Theta=\Delta$, the corresponding parabolic is a Borel subgroup of $\G$. In this case, the connected component of the identity of the Levi factor is $\rL^\Delta_0\cong(\R^+)^{rk(\G)}$ and each simple root space $\fu_{\beta_i}$ is one dimensional. The $\rL^\Delta_0$-invariant acute convex cone in each simple root space $\fu_{\beta_i}$ is isomorphic to $\R^+.$ The set of $\rP^\Delta$-positive Anosov representations into a split group are exactly Hitchin representations. 
    When $\G$ is a Hermitian Lie group of tube type and $\rP$ is the maximal parabolic associated to the Shilov boundary of the Riemannian symmetric space of $\G$, the pair $(\G,\rP)$ also admits a notion of positivity \cite{BIWmaximalToledoAnnals}. In this case, the space of maximal representations into $\G$ are exactly the $\rP$-positive Anosov representations. In particular, the above conjecture holds in these two cases.  
\end{remark}

In general, the group $\SO(p,q)$ is not a split group and not a group of Hermitian type. 
Nevertheless, if $p\neq q$, then $\SO(p,q)$ has a parabolic subgroup $\rP^\Theta$ which admits a positive structure. Here $\rP^\Theta$ is the stabilizer of the partial flag $V_1\subset V_2\subset\cdots \subset V_{p-1},$
where $V_j\subset\R^{p+q}$ is a $j$-plane which is isotropic with respect to a signature $(p,q)$ inner product with $p<q.$ 
Here the subgroup $\rL^\Theta_{pos}\subset\rL^\Theta\subset\SO(p,q)$ which preserves the cones $c^\Theta_{\beta_j}$ is isomorphic to $\rL^\Theta_{pos}\cong(\R^+)^{p-1}\times \SO(1,q-p+1)~.$ We refer the reader to \cite{PosRepsGWPROCEEDINGS} and \cite[Section 7]{CollierSOnn+1components} for more details. 

To construct examples of $\SO(p,q)$ positive Anosov representations we have the following proposition.
\begin{proposition}\label{prop: example of positive reps}
    Let $p<q$. Consider the signature $(p,q)$-inner product $\langle x,x\rangle= \sum\limits_{j=1}^px_j^2-\sum\limits_{j=p+1}^{p+q}x_j^2$. If $A\in\SO_0(p,p-1)$ and $B\in\rO(q-p+1),$ then the set matrices $\smtrx{\det(B)\cdot A&0\\0&B}$ defines an embedding
\[\SO_0(p,p-1)\times \rO(q-p+1)\hookrightarrow\SO(p,q).\]
 If $\rho_{Hit}:\pi_1S\to\SO_0(p,p-1)$ is a Hitchin representation and $\alpha:\pi_1S\to\rO(q-p+1)$ is any representation, then 
    \[\rho=\rho_{Hit}\otimes \det(\alpha)\oplus \alpha:\pi_1S\to\SO(p,q)\]
    is a $\rP^{\Theta}$-positive Anosov representation.
\end{proposition}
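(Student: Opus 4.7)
The plan is to verify the three conditions defining a $\rP^\Theta$-positive Anosov representation directly, using as input the fact that Hitchin representations into the split real form $\SO_0(p,p-1)$ are positive Anosov with respect to the minimal parabolic (Labourie, Fock--Goncharov).

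First I would construct the candidate Anosov boundary curve. The Hitchin representation $\rho_{Hit}\colon \pi_1S\to \SO_0(p,p-1)$ comes with an equivariant continuous boundary curve $\xi_{Hit}\colon \partial_\infty \pi_1 S \to \cF_{p,p-1}$, where $\cF_{p,p-1}$ is the variety of complete isotropic flags $V_1\subset\cdots\subset V_{p-1}$ in $\R^{2p-1}$. Viewing $\R^{2p-1}$ as the nondegenerate subspace of signature $(p,p-1)$ given by the first $2p-1$ coordinates inside $\R^{p+q}$ (whose orthogonal complement is negative definite of dimension $q-p+1$), every isotropic flag in $\R^{2p-1}$ is in particular a partial isotropic flag in $\R^{p+q}$, yielding a canonical inclusion $\cF_{p,p-1}\hookrightarrow \SO(p,q)/\rP^\Theta$. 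Composing with $\xi_{Hit}$ produces a candidate boundary curve $\xi\colon \partial_\infty \pi_1 S\to \SO(p,q)/\rP^\Theta$.

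Next I would verify equivariance, transversality, and the dynamics-preserving property. Equivariance is immediate from the computation
\[
\rho(\gamma)\cdot V_j = \smtrx{\det(\alpha(\gamma))\rho_{Hit}(\gamma) & 0\\ 0 & \alpha(\gamma)}\cdot V_j = \det(\alpha(\gamma))\,\rho_{Hit}(\gamma)V_j = \rho_{Hit}(\gamma)V_j,
\]
since $V_j\subset \R^{2p-1}$ is stable under scaling. Transversality of $\xi(x)$ and $\xi(y)$ for distinct $x,y\in\partial_\infty\pi_1 S$ reduces to transversality of $\xi_{Hit}(x)$ and $\xi_{Hit}(y)$ in $\cF_{p,p-1}$ because the transversality condition on partial isotropic flags in $\R^{p+q}$ restricts along the inclusion $\cF_{p,p-1}\hookrightarrow \SO(p,q)/\rP^\Theta$. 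For the dynamics-preserving condition, I would observe that contraction/expansion rates of $\rho(\gamma)$ on the (co)tangent spaces at $\xi(x)$ differ from those of $\rho_{Hit}(\gamma)$ at $\xi_{Hit}(x)$ only by the operator norm of $\alpha(\gamma)$ and of $\det(\alpha(\gamma))=\pm1$, all uniformly bounded since $\alpha$ takes values in the compact group $\rO(q-p+1)$. The Anosov property then transfers from $\rho_{Hit}$ to $\rho$.

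Finally, and most delicately, I would verify positivity. Recall from Guichard--Wienhard that $\rL^\Theta_0\cong (\R^+)^{p-1}\times \SO_0(1,q-p+1)$ and that the positive semigroup $\U^\Theta_{>0}\subset \U^\Theta$ is obtained by exponentiating elements in distinguished $\rL^\Theta_0$-invariant convex cones inside the simple root spaces. The principal embedding $\SO_0(2,1)\hookrightarrow \SO_0(p,p-1)\hookrightarrow \SO(p,q)$ underlying the Hitchin component is compatible with the positive structures, in the sense that the Lusztig/Fock--Goncharov positive semigroup of the split group $\SO_0(p,p-1)$ maps into $\U^\Theta_{>0}$. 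Since $\xi_{Hit}$ sends positively ordered triples in $\partial_\infty\pi_1 S$ to positive triples of flags in $\cF_{p,p-1}$, it follows that $\xi$ sends positively ordered triples to positive triples in $\SO(p,q)/\rP^\Theta$. The compact twist $\alpha$ does not disrupt positivity because its image together with the scalar $\det(\alpha)$ sits in the subgroup $(\R^+)^{p-1}\times \rS(\rO(1)\times\rO(q-p+1))\subset \rL^\Theta_{pos}$, which preserves each invariant cone.

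The main obstacle is the last step: explicitly matching the split positive semigroup of $\SO_0(p,p-1)$ with the Guichard--Wienhard positive semigroup $\U^\Theta_{>0}$ of $\SO(p,q)$, and checking that the $\rO(q-p+1)$ factor lies in the component of $\rL^\Theta$ normalizing each invariant cone. This compatibility is essentially the content of the Guichard--Wienhard framework in \cite{PosRepsGWPROCEEDINGS}; granted it, positivity of $\rho$ reduces to the well-known positivity of Hitchin representations into the split group $\SO_0(p,p-1)$, completing the proof.
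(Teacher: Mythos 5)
Your proposal is correct and follows essentially the same route as the paper: both reduce positivity to the statement that the block embedding carries the positive semigroup $U^\Delta_{>0}$ of the split group $\SO_0(p,p-1)$ into $U^\Theta_{>0}\subset\SO(p,q)$ (deferred in both cases to the Guichard--Wienhard framework and the $q=p+1$ case treated in \cite{CollierSOnn+1components}), and both dispose of the compact twist $\alpha$ by a soft argument, since $\alpha\oplus\det(\alpha)$ acts trivially on the flags $V_j\subset\R^{2p-1}$ and so does not alter the boundary curve. The only cosmetic difference is that you handle the Anosov property of the twisted representation by a bounded-distortion estimate on contraction rates, whereas the paper invokes the finite-index-subgroup characterization of Anosov representations together with triviality of the centralizer action on the boundary curve; both are standard.
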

This is proven for $q=p+1$ in \cite[Section 7]{CollierSOnn+1components}, and the proof for general $q$ is the same. 
For the proof of the first part of the above proposition it suffices to show that the map $\SO(p,p-1)\to\SO(p,q)$ described above sends the  positive semigroup $U^\Delta_{>0}\subset\SO(p,p-1)$ into the positive semigroup $U^\Theta_{>0}.$ The second part follows from the fact that a representation $\rho$ is a $\rP$-Anosov representation if and only if the restriction of $\rho$ to any finite index subgroup is $\rP$-Anosov, and the fact that the centralizer of an Anosov representation acts trivially on the Anosov boundary curve. 

Using Proposition \ref{prop: example of positive reps} and Theorem \ref{THM: rep var dichotomy},  we conclude that for $q>p+1$ the connected components of $\cR(S,\SO(p,q))$ from Theorem \ref{Thm Psi open and closed} contain $\rP^\Theta$-positive Anosov representations. 

\begin{proposition}\label{Prop existence of positive reps}
Let $\rP^\Theta\subset\SO(p,q)$ be the stabilizer of the partial flag $V_1\subset V_2\subset\cdots \subset V_{p-1},$
where $V_j\subset\R^{p+q}$ is a $j$-plane which is isotropic with respect to a signature $(p,q)$ inner product with $p<q$.  If $q>p+1$, then each connected component of $\cR^{ex}(S,\SO(p,q))$ from \eqref{eq rep variety dichotomy c and ex} contains $\rP^\Theta$-positive Anosov representations.
\end{proposition}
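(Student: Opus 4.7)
The plan is to combine the dichotomy established in Theorem \ref{THM: rep var dichotomy} with the explicit construction of positive Anosov representations given in Proposition \ref{prop: example of positive reps}, and then invoke the openness of the positive Anosov condition to spread this positivity across each exotic component.

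First, I would apply Theorem \ref{THM: rep var dichotomy}: given any connected component $\mathcal{C} \subset \cR^{ex}(S,\SO(p,q))$, every class $[\rho] \in \mathcal{C}$ can be deformed within $\cR(S,\SO(p,q))$ to a representation of the model form
\[
\rho' \;=\; \alpha \oplus (\iota \circ \rho_{\mathrm{Fuch}})\otimes \det(\alpha),
\]
where $\alpha : \pi_1(S) \to \rO(q-p+1)$ is a compact representation, $\rho_{\mathrm{Fuch}} : \pi_1(S)\to \SO_0(2,1)$ is Fuchsian, and $\iota : \SO_0(2,1)\to \SO_0(p,p-1)$ is the principal embedding. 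In particular, $\mathcal{C}$ contains at least one such model representation $\rho'$.

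Second, I would observe that $\iota\circ \rho_{\mathrm{Fuch}}$ is, by construction, a Hitchin representation into the split group $\SO_0(p,p-1)$ (cf. Example \ref{EX: Fuchsian reps}, since the Hitchin component by definition contains the image of $\Fuch(S)$ under the principal embedding). Therefore $\rho'$ has exactly the structure of the representations produced in Proposition \ref{prop: example of positive reps}: it is the direct sum of a compact representation into $\rO(q-p+1)$ and a Hitchin representation into $\SO_0(p,p-1)$ twisted by the determinant of the compact factor, assembled along the embedding $\SO_0(p,p-1)\times \rO(q-p+1)\hookrightarrow \SO(p,q)$. Hence Proposition \ref{prop: example of positive reps} applies directly and yields that $\rho'$ is $\rP^\Theta$-positive Anosov.

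Finally, I would invoke the openness of the $\rP^\Theta$-positive Anosov condition in the representation variety (both the Anosov property, established in \cite{AnosovFlowsLabourie,guichard_wienhard_2012}, and the positivity refinement, established in \cite{PosRepsGWPROCEEDINGS}, are open). Since $\rho' \in \mathcal{C}$ and openness gives a neighborhood of $\rho'$ consisting of $\rP^\Theta$-positive Anosov representations, the component $\mathcal{C}$ contains a nonempty open set of such representations, as required.

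The only subtlety — and it is more a bookkeeping matter than a genuine obstacle — is verifying that the embedding of $\SO_0(p,p-1)\times \rO(q-p+1)$ into $\SO(p,q)$ implicitly used in Theorem \ref{THM: rep var dichotomy} (coming from the identification of Higgs bundles in the image of $\Psi$ with the principal chain plus a compact orthogonal summand) agrees with the embedding
\[
(A,B)\longmapsto \smtrx{\det(B)\cdot A & 0 \\ 0 & B}
\]
used in Proposition \ref{prop: example of positive reps}. This is exactly the embedding arising from twisting the Hitchin chain bundles by $\det(\alpha)$, so the two are identical and no further argument is needed.
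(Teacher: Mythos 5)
Your proposal is correct and follows essentially the same route as the paper: the paper likewise deduces the result by combining Theorem \ref{THM: rep var dichotomy} (each exotic component contains a model representation $\alpha\oplus(\iota\circ\rho_{\mathrm{Fuch}})\otimes\det(\alpha)$, built from exactly the embedding $(A,B)\mapsto\smtrx{\det(B)\cdot A&0\\0&B}$) with Proposition \ref{prop: example of positive reps}, which certifies such representations as $\rP^\Theta$-positive Anosov. Your additional appeal to openness of the positive Anosov condition is a harmless strengthening that matches the formulation given in the introduction.
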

\begin{remark}
When $q=p+1$, this was shown in \cite{CollierSOnn+1components} for the analogous connected components which contain minima of the form \eqref{eq min type 2}. The components which contain minima of the form \eqref{eq min type 3} are smooth, and one cannot use Proposition \ref{prop: example of positive reps} to obtain positive representations in these components. 
    However, we note that if Conjecture \ref{Conj GW} holds, then each of the these smooth connected components of $\cR(S,\SO(p,p+1))$ consists of positive representations since each component would be contained in a component of positive representations into $\SO(p,p+2).$ 
\end{remark}

Another special feature of Hitchin representations and   maximal representations is that they satisfy a certain irreducibility condition. Namely, if $\rho:\pi_1S\to \G$ is such a representation, then there is \emph{no} proper parabolic subgroup $\rP$ so that $\rho$ factors as $\rho:\pi_1S\to \rP\hookrightarrow \G.$ For the Hitchin case, this follows from smoothness, and for the maximal case it follows from the from \cite[Theorem 5]{BIWmaximalToledoAnnals}. For the components in $\cR^{ex}(S,\SO(p,q))$, with $2<p<q-1$ (cf.\ \eqref{eq rep variety dichotomy c and ex}), it follows from Corollary \ref{Cor: Higgs irr.}. Let $\cR^{cpt}(S,\SO(p,q))$ be the union of the connected components of $\cR(S,\SO(p,q))$ containing compact representations.
\begin{proposition}\label{Prop: irreducibility of exotic reps}
Let $2<p\leq q$ and $\rho\in \cR(S,\SO(p,q))\setminus\cR^{cpt}(S,\SO(p,q))$. Then $\rho$ does not factor through any proper parabolic subgroup of $\SO(p,q)$.
\end{proposition}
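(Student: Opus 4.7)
The plan is to pass through the non-Abelian Hodge correspondence (Theorem \ref{Nonabelian Hodge Correspondence}) and then invoke Corollary \ref{Cor: Higgs irr.}. Fix a Riemann surface structure $X$ on $S$ and let $(V,W,\eta)$ be the polystable $\SO(p,q)$-Higgs bundle on $X$ corresponding to $\rho$. The first step is to show that the hypothesis $\rho\notin\cR^{cpt}(S,\SO(p,q))$ forces $(V,W,\eta)$ to lie in the image of the map $\Psi$ of \eqref{Eq Psi}. By properness of the Hitchin function, $(V,W,\eta)$ deforms within its connected component to a polystable local minimum, and by Theorem \ref{thm:MINIMA CLASSIFICATION} such a minimum either has $\eta=0$ (contradicting $\rho\notin\cR^{cpt}$) or is one of the chains listed there as types (1)--(4). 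Each such chain manifestly lies in $\im(\Psi)$, and since $\im(\Psi)$ is open and closed by Theorem \ref{Thm Psi open and closed}, the entire connected component of $(V,W,\eta)$ is contained in $\im(\Psi)$.

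Suppose now for contradiction that $\rho$ factors through a proper parabolic $\rP\subset\SO(p,q)$. Since representations in $\cR(S,\SO(p,q))$ are reductive, the Zariski closure of $\rho(\pi_1S)$ is a reductive subgroup of $\rP$, and any such subgroup can be conjugated into a Levi factor $\rL\subset\rP$; hence, after conjugation, $\rho$ takes values in $\rL$. Parabolic subgroups of $\SO(p,q)$ are stabilizers of partial isotropic flags $V_{n_1}\subset V_{n_1+n_2}\subset\cdots\subset V_{n_1+\cdots+n_k}$ in $\R^{p+q}$, and their Levi factors have the form
\[
\rL\cong\GL(n_1,\R)\times\cdots\times\GL(n_k,\R)\times\SO(p-n,q-n),
\]
with $n=n_1+\cdots+n_k\geq 1$ because $\rP$ is proper. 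Embedding the general linear factors block-diagonally into $\GL(n,\R)$, the subgroup $\rL$ lies in $\GL(n,\R)\times\SO(p-n,q-n)$, which, by transitivity of $\SO(p,q)$ on isotropic $n$-planes in $\R^{p+q}$, is conjugate to the subgroup appearing in Corollary \ref{Cor: Higgs irr.}. Translating back across the NAH correspondence, the factorization of $\rho$ through $\rL$ produces a reduction of structure group of $(V,W,\eta)$ to $\GL(n,\R)\times\SO(p-n,q-n)$ for some $n\geq 1$, directly contradicting Corollary \ref{Cor: Higgs irr.} in view of the first step.

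The main point requiring care is the dictionary between subgroup factorizations of $\rho$ and reductions of structure group of $(V,W,\eta)$: a reductive representation factors (up to conjugation) through a real reductive subgroup $\G'\subset\SO(p,q)$ if and only if the associated polystable $\SO(p,q)$-Higgs bundle reduces to a $\G'$-Higgs bundle, which is standard via uniqueness of the harmonic metric on a reductive flat bundle. The only other item to verify is the identification of non-compact components with $\im(\Psi)$ in the boundary regimes $p=q$ and $q=p+1$; this is immediate from the component counts in Corollaries \ref{corollary connected components of SO(p,p)} and \ref{corollary connected components of SO(p,p+1)} together with the classification of minima in Theorem \ref{thm:MINIMA CLASSIFICATION}, since in every case each minimum with non-zero Higgs field is explicitly of the form treated by $\Psi$.
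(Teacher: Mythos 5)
Your proof is correct and follows essentially the same route as the paper's: factor $\rho$ through a Levi subgroup (using complete reducibility), translate via the NAH correspondence into a reduction of structure group of the Higgs bundle to $\GL(n,\R)\times\SO(p-n,q-n)$, and contradict Corollary \ref{Cor: Higgs irr.}. The only difference is that you spell out explicitly the preliminary step that the non-compact components coincide with $\im(\Psi)$ (via the minima classification and the open-and-closedness of $\im(\Psi)$), which the paper leaves implicit in its appeal to ``the components associated to $\cR^{ex}$''.
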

\begin{proof}
    Suppose $\rho\in\cR^{ex}(S,\SO(p,q))$ factors through a proper parabolic subgroup $\rP$. Since points of $\cR(S,\SO(p,q))$ consist of completely reducible representations, $\rho$ must factor through the Levi factor $\rL$ of $\rP.$ Consequently, the $\SO(p,q)$-Higgs bundle associated to $\rho$ must reduce to an $\rL$-Higgs bundle. The Levi factors of parabolics of $\SO(p,q)$ are isomorphic to $\GL(n,\R)\times\SO(p-n,q-n)$, for some $n$, embedded as
    \[(A,B)\mapsto\smtrx{A\\&B\\&&A^{-1}}~.\]
    But by Corollary \ref{Cor: Higgs irr.}, the Higgs bundles in the components associated to $\cR^{ex}(S,\SO(p,q))$ do not reduce to such groups, leading to a contradiction. 
\end{proof}

Propositions \ref{Prop existence of positive reps} and \ref{Prop: irreducibility of exotic reps} give further evidence for Conjecture \ref{Conj GW}, and it is thus natural to expect that all representations in the connected components from Theorem \ref{Thm Psi open and closed} are positive Anosov representations. Indeed, this would follow from Conjecture \ref{Conj GW} and Proposition \ref{Prop existence of positive reps}. Moreover, if Conjecture \ref{Conj GW} is true, then the connected components of Theorem \ref{Thm Psi open and closed} correspond exactly to those connected components of $\cR(S,\SO(p,q))$ which contain positive Anosov representations. 

\subsection{Positivity and a generalized Cayley correspondence}
\label{section Cayley partner}
We conclude the paper by interpreting the parameterization of the `exotic' connected components of the $\SO(p,q)$-Higgs bundle moduli space from Theorem \ref{Thm Psi open and closed} as a generalized Cayley correspondence.  

Let $\G$ be a simple adjoint Hermitian Lie group of tube type and let $\G/\rP$ be the Shilov boundary of the symmetric space of $\G.$ In \cite{BGRmaximalToledo}, it is proven that if $\rL$ is the Levi factor of $\rP$, then the space of Higgs bundles with maximal Toledo invariant is isomorphic to $\cM_{K^2}(\rL).$ More generally, an analogous statement holds when $\G'\to \G$ is a finite cover such that a $\G$-Higgs bundle with maximal Toledo invariant lifts to a $\G'$-Higgs bundle. This correspondence between maximal $\G$-Higgs bundles and $K^2$-twisted $\rL$-Higgs bundles is called the Cayley correspondence. 

\begin{remark}
    In \cite{BGRmaximalToledo}, the above statement is stated differently. We use the above interpretation because it relates directly with the notions of positivity discussed in the previous section.  
\end{remark}

Note that the above parabolic and Levi factor are exactly the objects which appear in the notion of positivity when $\G$ is Hermitian Lie group of tube type. When $\G$ is a split real form the Hitchin components of $\mathcal{M}(G)$ admit an analogous interpretation. 
Namely, if $\G$ is such a split group, then $(\G,\rP)$ admits a positive structure when $\rP$ is a minimal parabolic subgroup. In this case, $\rL\subset\rP$ is $(\R^*)^{\rk(\G)}$ and the identity component $\rL_0$ is given by $(\R^+)^{\rk(\G)}.$ 
Moreover, the moduli space of $K^j$-twisted $\R^+$-Higgs bundles is isomorphic to $H^0(K^j):$
\[\cM_{K^j}(\R^+)\cong H^0(K^j).\]
Thus, when the Hitchin base is $\bigoplus\limits_{j=1}^{\rk(G)}H^0(K^{m_j+1}),$ the Hitchin components are given by 
\[\cM_{K^{m_1+1}}(\R^+)\times \cdots \times \cM_{K^{m_{\rk(G)}+1}}(\R^+).\] 
In particular, the Higgs bundles associated surface group to representations into split real groups which are positive with respect the minimal parabolic subgroup also satisfy a `Cayley correspondence'.

For the group $\SO(p,q),$ the Levi factor of the parabolic $\rP^\Theta$ so that $(\SO(p,q),\rP^\Theta)$ has a positive structure is $\rL^\Theta=\SO(1,q-p+1)\times (\R^*)^{p-1}$. Moreover, the subgroup $\rL^\Theta_{pos}$ which preserves the positive cones is 
\[\rL^\Theta_{pos}\cong\SO(1,q-p+1)\times\underbrace{\R^+\times \cdots\times\R^+}_{(p-1)\text{-times}}.\]
Recall that the `exotic' connected components in the image of $\Psi$ Theorem \ref{Thm Psi open and closed} are isomorphic to 
\[\cM_{K^p}(\SO(1,q-p+1))\times \prod\limits_{j=1}^{p-1}H^0(K^{2j}).\]
Using $\cM_{K^{2j}}(\R^+)=H^0(K^{2j})$, this is equivalent to 
\[\cM_{K^p}(\SO(1,q-p+1))\times \prod\limits_{j=1}^{p-1}\cM_{K^{2j}}(\R^+).\]

 When $2=p\leq q$, we recover the Cayley correspondence for groups of Hermitian type \cite{HermitianTypeHiggsBGG,BGRmaximalToledo}. 
Hence, for $2<p\leq q$ we have established that the Higgs bundles associated to representations into $\SO(p,q)$ which cannot be  deformed to compact representations satisfy a generalized Cayley correspondence. 
    Moreover, when $p<q-1$ each such component of the representation variety contains positive representations by Proposition \ref{Prop existence of positive reps}. 
    This suggests a general theorem for positive representations which relates the connected components of the subgroup of $L^\Theta$ which preserves the cones with the product of moduli spaces of appropriately twisted $L_j$-Higgs bundles.
Indeed this is consistent with results in \cite{TopInvariantsAnosov} where topological invariants for $\theta$-positive representations are defined in terms of principal bundles with structure group given by the Levi subgroup we have identified as $L^{\Theta}$. It would be interesting to understand in more detail the relation between these two points of view.

\appendix
\section{Review of gauge theory and the Hitchin--Kobayashi correspondence}
\label{sec:review-gauge}

Details on points treated sketchily in the following can be found in
\cite{selfduality} and \cite{HiggsPairsSTABILITY}. For simplicity we
consider $K$-twisted Higgs bundles but analagous
statements can be made for $L$-twisted Higgs bundles.

Let $\G$ be a real semisimple Lie group and $\rH\subset\G$ a maximal compact subgroup. Let $P$ be a $C^\infty$ principal $\rH^\C$-bundle and fix a reduction
to a principal $\rH$-bundle $P_{\rH}$.  \emph{Hitchin's self-duality equations} are
\begin{equation}
  \label{eq:hitchins-equations}
\begin{aligned}
  F(A)-[\varphi,\tau(\varphi)] &=0,\\
  \dbar_A\varphi &=0.
\end{aligned}  
\end{equation}
Here $A$ is a $\rH$-connection on $P_{\rH}$, $\dbar_A$ its associated $\dbar$-operator
and $\varphi\in\Omega^{1,0}(P_{\rH}[\fm^\C])$. The map $\tau\colon \Omega^{1,0}(P_{\rH}[\fm^\C])\to\Omega^{0,1}(P_{\rH}[\fm^\C])$ is obtained
by combining the compact real structure on $\fg^\C$ with conjugation
on the form component.

A pair $(A,\varphi)$ gives a corresponding $\G$-Higgs
bundle structure $(\dbar_A,\varphi)$ on $P$; we denote the
corresponding $\G$-Higgs bundle by $(\cE_A,\varphi)$. Conversely,
given a $\G$-Higgs bundle $(\cE_A,\varphi)$, where the holomorphic
bundle $\cE_A$ is defined by $\dbar_A$, one obtains a pair
$(A,\varphi)$ by taking $A$ to be the Chern connection associated to
$\dbar_A$ via the fixed reduction $P_{\rH}\subset P$.
The Hitchin--Kobayashi correspondence for $\G$-Higgs bundles
\cite{HiggsPairsSTABILITY} says that the $\G$-Higgs bundle is
polystable if and only if there is a structure $(\dbar_A,\varphi)$ in
its $\cG_{\rH^\C}$-orbit such that the corresponding pair
$(A,\varphi)$ solves Hitchin's equations. Moreover, this pair is
unique up to $\cG_{\rH}$-gauge transformations, where
$\mathcal{G}_{\rH}$ denotes the gauge group of $\rH$-gauge
transformations of $P_{\rH}$.

We recall the following  alternative point of view. Instead of
fixing a reduction of the principal $\rH^\C$-bundle $P$, we can
consider a fixed structure of $\G$-Higgs bundle $(\dbar_A,\varphi)$
and consider \eqref{eq:hitchins-equations} as equations for a
reduction of structure group to $\rH\subset\rH^\C$, usually known as a
\emph{a harmonic metric} . The
Hitchin--Kobayashi correspondence then says that such a reduction
exists if and only if $(\dbar_A,\varphi)$ defines a polystable
$\G$-Higgs bundle.

The space $\mathcal{A}$ of $\rH$-connections on $P_{\rH}$ is an affine
space modeled on $\Omega^1(P_{\rH}[\fh])$.  Let
$\mathcal{C}\subset \mathcal{A}\times\Omega^{1,0}(P_{\rH}[\fm^\C])$ denote
the configuration space of solutions to Hitchin's equations
(\ref{eq:hitchins-equations}). As a set, the moduli
space of solutions to Hitchin's self-duality equations is
\begin{displaymath}
  \mathcal{M}^a_{\rH}(\G) = \mathcal{C}/\mathcal{G}_{\rH},
\end{displaymath}
where $a$ is the topological type. We shall denote by $\cM_{\rH}(\G)$ the
union of the moduli spaces $\mathcal{M}^a_{\rH}(\G)$ over all
topological types $a$.
In order to give the moduli space a topology,
suitable Sobolev completions must be used in standard fashion; see
\cite{atiyah-bott:1982}, and also \cite[Sec.~8]{hausel-thaddeus:2004}
where the straightforward adaptation to Higgs bundles is discussed in
the case $\G=\GL(n,\C)$. The moduli space $\cM_{\rH}(\G)$ then
becomes a Hausdorff topological space.

The Hitchin--Kobayashi correspondence can now be stated as saying that
the map
\begin{equation}
\label{eq:MH-cong-M}
\begin{aligned}
  \cM_{\rH}(\G) &\xrightarrow{\cong} \mathcal{M}(\G),\\
  (A,\varphi) &\mapsto (\dbar_A,\varphi)
\end{aligned}
\end{equation}
is a bijection. It follows from the constructions that it is in fact
a homeomorphism. Here and below, in analogy with
Notation~\ref{not:equivalence}, we do no distinguish notationally
between a pair $(A,\varphi)$ and its gauge equivalence class.

The moduli space $\cM_{\rH}(\G)$ can be given additional
structure by considering the deformation complex
\begin{equation}
  \label{eq:def-complex-equations}
  \Omega^0(P_{\rH}(\fh))
  \xrightarrow{d_0}
  \Omega^1(P_{\rH}(\fh))\times\Omega^{1,0}(P_{\rH}[\fm^\C])
  \xrightarrow{d_1}
  \Omega^2(P_{\rH}(\fh))\times\Omega^{1,1}(P_{\rH}[\fm^\C]).
\end{equation}
The operator $d_0$ is given by the infinitesimal action of the gauge
group and the operator $d_1$ is obtained by linearizing Hitchin's
equations; the fact that $d_1\circ d_0=0$ follows because
$(A,\varphi)$ is a solution. Denote the $i$th
cohomology group of this complex by $H^i_{(A,\varphi)}$.

\begin{proposition}
  \label{prop-def-complex-isomorphism}
  Let $(A,\varphi)$ be a solution to Hitchin's equations and let
  $(\mathcal{E},\varphi)$ be the corresponding Higgs bundle. Then
  there are isomorphisms
  \begin{align*}
    H^0_{(A,\varphi)}\otimes\C
      &\cong \HH^0(C^\bullet(\mathcal{E},\varphi)),\\
    H^1_{(A,\varphi)}
      &\cong \HH^1(C^\bullet(\mathcal{E},\varphi)),\\
    H^2_{(A,\varphi)}
      &\cong \HH^2(C^\bullet(\mathcal{E},\varphi))
        \oplus H^0_{(A,\varphi)},
  \end{align*}
where $C^\bullet(\mathcal{E},\varphi)$ is the deformation
complex~\eqref{eq complex of sheaves}.
\end{proposition}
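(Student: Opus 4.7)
The plan is to complexify the gauge-theoretic deformation complex \eqref{eq:def-complex-equations} and, using the Hodge bidegree decomposition of forms on the Riemann surface $X$ together with the real structure $\tau$ on $\fg^\C$, identify the resulting cohomology with the total cohomology of a Dolbeault-type double complex computing $\HH^\bullet(C^\bullet(\cE,\varphi))$. An extra piece arising from the fact that the moment-map equation in \eqref{eq:hitchins-equations} is real-valued will account for the additional $H^0_{(A,\varphi)}$ summand in the $H^2$ isomorphism.

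First I would decompose each term of the complexified complex by type. On $X$, we have $\Omega^1(P_{\rH}[\fh])\otimes\C = \Omega^{1,0}(\cE[\fh^\C])\oplus \Omega^{0,1}(\cE[\fh^\C])$ and $\Omega^2(P_{\rH}[\fh])\otimes\C = \Omega^{1,1}(\cE[\fh^\C])$, while the real structure $\tau$ identifies the complexification of the underlying real vector space of $\Omega^{1,0}(P_{\rH}[\fm^\C])$ with $\Omega^{1,0}(\cE[\fm^\C])\oplus \Omega^{0,1}(\cE[\fm^\C])$. Writing $d_A = \partial_A+\dbar_A$, the operators $d_0$ and $d_1$ decompose into pieces involving $\dbar_A$, $\partial_A$, $\ad_\varphi$ and $\ad_{\tau(\varphi)}$, and Hitchin's equation $\dbar_A\varphi = 0$ yields the identity $\dbar_A\circ \ad_\varphi + \ad_\varphi\circ \dbar_A = 0$. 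Consequently the subcomplex in the $\Omega^{0,1}(\cE[\fh^\C])$- and $\Omega^{1,0}(\cE[\fm^\C])$-directions is exactly the total complex of the Dolbeault double complex
\[
\begin{CD}
\Omega^{0,1}(\cE[\fh^\C]) @>\ad_\varphi>> \Omega^{1,1}(\cE[\fm^\C]) \\
@A\dbar_AAA @AA\dbar_AA \\
\Omega^{0}(\cE[\fh^\C]) @>\ad_\varphi>> \Omega^{1,0}(\cE[\fm^\C])
\end{CD}
\]
which, by the Dolbeault theorem, computes $\HH^\bullet(C^\bullet(\cE,\varphi))$. This already yields the claimed isomorphisms in degrees $0$ and $1$ — including the $\otimes\C$ on $H^0$, which reflects passing from a real kernel to its complexification.

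The main obstacle is handling the remaining ``conjugate'' directions $\Omega^{1,0}(\cE[\fh^\C])$ and $\Omega^{0,1}(\cE[\fm^\C])$ and explaining the extra $H^0_{(A,\varphi)}$ summand in $H^2$. The Kähler identities adapted to the Higgs bundle setting (Hitchin \cite{selfduality}, Simpson \cite{SimpsonVHS}) show that the harmonic representatives with respect to the elliptic Laplacian attached to \eqref{eq:def-complex-equations} split along the Hodge bigrading; the $\tau$-conjugate pieces at middle degree get matched with the Dolbeault pieces via $\tau$, so that $H^1_{(A,\varphi)}$ receives no extra contribution beyond $\HH^1(C^\bullet)$. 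For $H^2$, the point is that the real moment-map equation $F(A)-[\varphi,\tau(\varphi)]=0$ contributes, via the $L^2$-pairing with constant $\fh$-valued elements, a copy of $\ker(d_0^*)\cap\Omega^2(P_\rH[\fh])$ not present in the Dolbeault complex, and this kernel is identified with $H^0_{(A,\varphi)}$ by self-adjointness of the linearized moment map together with Poincaré/Serre duality for $C^\bullet$. Combining these contributions and passing to harmonic representatives produces the stated isomorphisms.
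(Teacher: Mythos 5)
Your proof is correct and follows essentially the same route as the paper: the paper likewise computes $\HH^\bullet(C^\bullet(\cE,\varphi))$ via the Dolbeault resolution, matches its terms with those of the real deformation complex using the identifications $\Omega^{0,1}(P[\fh^\C])\cong\Omega^1(P_\rH[\fh])$ and $\Omega^0(P[\fh^\C])\cong\Omega^0(P_\rH[\fh])\oplus\Omega^2(P_\rH[\fh])$ together with the Kähler identities, and cites the Donaldson--Kronheimer argument for the extra $H^0_{(A,\varphi)}$ summand in degree two arising from the real moment-map direction. Your account of that last summand via the adjointness of the linearized moment map to the infinitesimal gauge action is exactly the mechanism the paper is invoking.
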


\begin{proof}
  The hypercohomology groups of the complex
  $C^\bullet(\mathcal{E},\varphi)$ can be calculated, using a Dolbeault
  resolution, as the cohomology groups of the complex 
  \begin{equation}
    \label{eq:C-deformation-complex}
  \Omega^0(P[\fh^\C])
    \xrightarrow{\delta_0}
  \Omega^{0,1}(P[\fh^\C])\times\Omega^{1,0}(P[\fm^\C])
    \xrightarrow{\delta_1}
  \Omega^{1,1}(P[\fm^\C]),
\end{equation}
where the differentials are constructed combining the adjoint action
of $\varphi$ with $\dbar_A$.
  The proposition now follows essentially as in
  \cite[Sec.~6.4.2]{donaldson-kronheimer:1990} (which gives the
  analogous comparison between the deformation complexes for solutions
  to the anti-self duality equations and holomorphic vector bundles on
  a complex surface) using the K\"ahler identities and the bundle
  isomorphisms
  \begin{align*}
    \Omega^{0,1}(P[\fh^\C]) &\cong \Omega^{1}(P_{\rH}[\fh])\\
    \Omega^{0}(P[\fh^\C]) &\cong
      \Omega^{0}(P_{\rH}[\fh])\oplus\Omega^{2}(P_{\rH}[\fh]).
  \end{align*}
\end{proof}

\begin{proposition}
  Let $(A,\varphi)\in \cM_{\rH}(\G)$ and let
  $(\mathcal{E}_A,\varphi)$ be the corresponding polystable $\G$-Higgs
  bundle. Then the following statements are equivalent:
  \begin{enumerate}
  \item $H^0_{(A,\varphi)} =0$ and $H^2_{(A,\varphi)}=0$.
  \item $\HH^0(C^\bullet(\mathcal{E}_A,\varphi))=0$ and
    $\HH^2(C^\bullet(\mathcal{E}_A,\varphi))=0$. 
  \item $(\mathcal{E}_A,\varphi)$ is stable as a $\G^\C$-Higgs bundle.
  \end{enumerate}
\end{proposition}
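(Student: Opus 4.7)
The plan is to establish the two equivalences $(1)\Leftrightarrow(2)$ and $(2)\Leftrightarrow(3)$ separately. The first is essentially a bookkeeping exercise with Proposition~\ref{prop-def-complex-isomorphism}, while the second requires comparing the $\G$-Higgs bundle deformation complex to the deformation complex of the associated $\G^\C$-Higgs bundle.

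For $(1)\Leftrightarrow(2)$, I would simply read off the three isomorphisms in Proposition~\ref{prop-def-complex-isomorphism}. The first gives $H^0_{(A,\varphi)}=0$ if and only if $\HH^0(C^\bullet(\mathcal{E}_A,\varphi))=0$, since complexification preserves vanishing. The third, $H^2_{(A,\varphi)}\cong \HH^2(C^\bullet(\mathcal{E}_A,\varphi))\oplus H^0_{(A,\varphi)}$, then gives that, under the assumption $H^0=0$, one has $H^2=0$ if and only if $\HH^2=0$. Combining the two yields the desired equivalence.

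For $(2)\Leftrightarrow(3)$, the idea is to extend $(\cE_A,\varphi)$ to a $\G^\C$-Higgs bundle via $\rH^\C\hookrightarrow\G^\C$, using the inclusion $\fm^\C\hookrightarrow\fg^\C$ to view $\varphi$ as a Higgs field valued in the adjoint bundle. The Hitchin--Kobayashi correspondence guarantees that the harmonic metric provided by the solution $(A,\varphi)$ of~\eqref{eq:hitchins-equations} also solves the Hitchin equations for the extended bundle, so the associated $\G^\C$-Higgs bundle is polystable. The Cartan decomposition $\fg^\C=\fh^\C\oplus \fm^\C$ is $\Ad(\rH^\C)$-invariant, and the relations $[\fh^\C,\fm^\C]\subset\fm^\C$, $[\fm^\C,\fm^\C]\subset\fh^\C$ imply that the $\G^\C$-Higgs bundle deformation complex
\[
\cE[\fg^\C]\xrightarrow{\ \ad_\varphi\ }\cE[\fg^\C]\otimes K
\]
splits as a direct sum of the $\G$-Higgs bundle complex $C^\bullet=C^\bullet(\mathcal{E}_A,\varphi)$ and a second complex $D^\bullet:\cE[\fm^\C]\to\cE[\fh^\C]\otimes K$. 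Using a non-degenerate $\Ad(\rH^\C)$-invariant form on $\fg^\C$ (which pairs $\fh^\C$ with $\fh^\C$ and $\fm^\C$ with $\fm^\C$ non-degenerately), Serre duality identifies $D^\bullet$ with the dual of $C^\bullet$, giving $\HH^i(D^\bullet)\cong \HH^{2-i}(C^\bullet)^*$. Therefore
\[
\HH^0\big(\cE[\fg^\C]\to\cE[\fg^\C]\otimes K\big)\ \cong\ \HH^0(C^\bullet)\oplus \HH^2(C^\bullet)^*.
\]
By Remark~\ref{remark duality of H0 and H2 for complex groups}, a polystable $\G^\C$-Higgs bundle is stable if and only if the $\HH^0$ of its deformation complex vanishes, and this happens if and only if both $\HH^0(C^\bullet)=0$ and $\HH^2(C^\bullet)=0$, which is exactly~(2).

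The main technical point is the clean identification of $D^\bullet$ with the Serre dual of $C^\bullet$; once one has fixed a non-degenerate invariant form on $\fg^\C$ compatible with the Cartan decomposition, this is essentially formal, but writing it out cleanly requires care, since one must check that the differential in $D^\bullet$ corresponds under duality to $\ad_\varphi$ on $C^\bullet$ (up to the usual sign from Serre duality for complexes of length one). The polystability of the extended $\G^\C$-Higgs bundle, which is needed in order to apply the stability criterion of Remark~\ref{remark duality of H0 and H2 for complex groups}, is the other ingredient that is not entirely automatic, but it is delivered by the Hitchin--Kobayashi correspondence in the form recalled just before the proposition.
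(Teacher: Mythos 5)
Your proof is correct and follows essentially the same route as the paper, which disposes of $(1)\Leftrightarrow(2)$ by reading off Proposition~\ref{prop-def-complex-isomorphism} and of $(2)\Leftrightarrow(3)$ by citing Remarks~\ref{remark stable open in polystable} and \ref{remark duality of H0 and H2 for complex groups}. You have merely made explicit the splitting of the $\G^\C$-deformation complex into $C^\bullet\oplus D^\bullet$ and the Serre-duality identification $\HH^0(D^\bullet)\cong\HH^2(C^\bullet)^*$ that the paper leaves implicit in those remarks.
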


\begin{proof}
  The equivalence of the first two statements is immediate from
  Proposition~\ref{prop-def-complex-isomorphism}. The equivalence of
  the last two statements is also immediate in view of
  Remarks~\ref{remark stable open in polystable} and
  \ref{remark duality of H0 and H2 for complex groups}.
\end{proof}

\begin{definition}
Let $\mathcal{C}^s\subset\mathcal{C}$ denote the subspace
of pairs $(A,\varphi)$ such that $(\mathcal{E}_A,\varphi)$ is stable
as a $\G^\C$-Higgs bundle. Similarly, let $\mathcal{C}_{\C}^s \subset
\mathcal{A}\times\Omega^{1,0}(P_{\rH}[\fm^\C])$ denote the subspace
of pairs $(A,\varphi)$ such that $\dbar_A\varphi=0$ and
$(\mathcal{E}_A,\varphi)$ is stable as a $\G^\C$-Higgs bundle. 
Define $\mathcal{M}_{\rH}^s(\G)\subset \cM_{\rH}(\G)$  and
$\mathcal{M}^s(\G)\subset \mathcal{M}(\G)$ analogously.
\end{definition}

We note that $\mathcal{C}_{\C}^s $ is
an infinite dimensional K\"ahler manifold whose K\"ahler structure is induced
from the ambient space $\mathcal{A}\times\Omega^{1,0}(P_{\rH}[\fm^\C])$.

Let $\Gamma_{(A,\varphi)}\subset\mathcal{G}_{\rH}$ denote the
stabilizer of a solution $(A,\varphi)$ to Hitchin's equations. This is
a compact Lie group with Lie algebra $H^0_{(A,\varphi)}$
\cite{HiggsPairsSTABILITY}. The standard gauge theoretic
construction of the moduli space can now be summarized as follows.

\begin{proposition}
  \label{prop:local-model-gauge}
The subspace of $\mathcal{C}$ where
$H^2_{(A,\varphi)}=0$ is a smooth infinite
dimensional manifold. Moreover, for $(A,\varphi)$
with $H^2_{(A,\varphi)}=0$ a neighbourhood of the corresponding point in the 
moduli space is modeled on a neighbourhood of zero in
$H^1_{(A,\varphi)}$ modulo the action of $\Gamma_{(A,\varphi)}$. If
additionally $H^0_{(A,\varphi)} =0$, then $\Gamma_{(A,\varphi)}$ is
finite. Thus $\mathcal{M}^s_{\rH}(\G)$ is a
K\"ahler orbifold with K\"ahler form induced from $\mathcal{C}_{\C}^s$.
\end{proposition}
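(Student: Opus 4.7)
The strategy is the standard one, treating Hitchin's equations as a moment map picture in infinite dimensions and then applying a Kuranishi-type argument combined with Ebin's slice theorem, all carried out after passage to appropriate Sobolev completions (as in \cite[Sec.~6.4]{donaldson-kronheimer:1990}, adapted to the Higgs bundle setting as in \cite[Sec.~8]{hausel-thaddeus:2004}). First, one rewrites the system \eqref{eq:hitchins-equations} as the zero set $\Psi^{-1}(0)$ of a smooth $\mathcal{G}_{\rH}$-equivariant map $\Psi\colon \mathcal{A}\times\Omega^{1,0}(P_{\rH}[\fm^\C])\to \Omega^2(P_{\rH}[\fh])\times\Omega^{1,1}(P_{\rH}[\fm^\C])$, whose linearization at $(A,\varphi)$ is, after composition with a choice of slice for the $\mathcal{G}_{\rH}$-action, precisely the operator $d_1$ of the deformation complex \eqref{eq:def-complex-equations}.

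Next, I would verify that at any point where $H^2_{(A,\varphi)}=0$ the operator $d_1$ is surjective (it has closed range by elliptic theory, and its cokernel is computed by the complex). Combined with the $L^2$-orthogonal slice $\ker d_0^\ast \subset \Omega^1(P_\rH[\fh])\times\Omega^{1,0}(P_\rH[\fm^\C])$ for the $\mathcal{G}_\rH$-action, the implicit function theorem on Sobolev completions then yields that $\mathcal{C}$ is a smooth Banach submanifold near $(A,\varphi)$ and that the slice $\ker d_1 \cap \ker d_0^\ast$ provides a $\Gamma_{(A,\varphi)}$-invariant local model for $\mathcal{C}/\mathcal{G}_\rH$. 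By Hodge theory for the elliptic complex \eqref{eq:def-complex-equations}, this slice is canonically identified with $H^1_{(A,\varphi)}$. Ebin's slice theorem (applied to the compact action of $\Gamma_{(A,\varphi)}$ on this slice) then produces a homeomorphism of a neighbourhood of $(A,\varphi)$ in the moduli space onto a neighbourhood of zero in $H^1_{(A,\varphi)}/\Gamma_{(A,\varphi)}$.

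For the orbifold statement, I would combine the facts recalled before the proposition: $\Gamma_{(A,\varphi)}$ is a compact Lie subgroup of $\mathcal{G}_\rH$ with Lie algebra $H^0_{(A,\varphi)}$, so when $H^0_{(A,\varphi)}=0$ it is finite. Thus, on $\mathcal{C}^s$, all stabilizers are finite and the local models above are finite quotients of open sets in finite-dimensional vector spaces $H^1_{(A,\varphi)}$, showing that $\mathcal{M}_{\rH}^s(\G)$ is an orbifold. For the Kähler structure, one notes that $\mathcal{C}_\C^s$ carries a $\mathcal{G}_\rH$-invariant Kähler form inherited from $\mathcal{A}\times\Omega^{1,0}(P_\rH[\fm^\C])$, that the first Hitchin equation is the moment map for the $\mathcal{G}_\rH$-action, and that the bijection \eqref{eq:MH-cong-M} combined with Hodge theory identifies the tangent slice in $\mathcal{C}$ with the corresponding complex slice in $\mathcal{C}_\C^s$; the Kähler form on $H^1_{(A,\varphi)}$ therefore descends, $\Gamma_{(A,\varphi)}$-invariantly, to the orbifold charts and patches together to a global Kähler form as in the Kähler quotient construction.

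The main technical obstacle is the infinite-dimensional analysis underlying the slice and implicit function theorems: one must choose Sobolev completions for which $\mathcal{G}_\rH$ acts smoothly, the linearization $d_1$ (and its formal adjoint) is Fredholm, and the associated elliptic complex admits a Hodge decomposition. Once these functional-analytic prerequisites are in place — exactly the setup of \cite{atiyah-bott:1982,donaldson-kronheimer:1990} applied to the self-duality equations — the remaining steps above are essentially formal, and the argument reduces to invoking Proposition~\ref{prop-def-complex-isomorphism} to translate between the analytic complex \eqref{eq:def-complex-equations} and the holomorphic deformation complex \eqref{eq complex of sheaves}.
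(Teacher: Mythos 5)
Your proposal is correct and follows exactly the standard gauge-theoretic construction that the paper itself invokes without proof (it states the proposition as a summary, deferring to \cite{atiyah-bott:1982} and \cite[Sec.~6.4]{donaldson-kronheimer:1990}): surjectivity of $d_1$ when $H^2_{(A,\varphi)}=0$, the slice $\ker d_1\cap\ker d_0^*\cong H^1_{(A,\varphi)}$ via Hodge theory for the elliptic complex, the slice theorem for the compact stabilizer, and the Kähler quotient picture of Remark~\ref{rem:symplectic-quotient}. The only cosmetic point is that the linearization of the equations map is already $d_1$ on the full tangent space (no restriction to a slice is needed for the implicit function theorem step), and one should note that the condition $H^2_{(A,\varphi)}=0$ is open by semicontinuity, so the locus in question is an open subset of $\mathcal{C}$.
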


\begin{remark}
  \label{rem:symplectic-quotient}
  The action of $\mathcal{G}_{\rH}$ on $\mathcal{C}_{\C}^s$ is
  Hamiltonian with moment map
  $\mu(A,\varphi)= F(A)-[\varphi,\tau(\varphi)]$.
  Hence
  the moduli space $\mathcal{M}^s(G)$ can be viewed as the
  infinite dimensional symplectic quotient
  \begin{displaymath}
    \mathcal{M}^s(G) = \mu^{-1}(0)/\mathcal{G}_{\rH} \cong
    \mathcal{C}_{\C}^s / \mathcal{G}_{\rH^\C}.
  \end{displaymath}
  The isomorphism comes from the Hitchin--Kobayashi correspondence,
  which can thus be viewed as an infinite dimensional Kempf--Ness
  correspondence. Note that the K\"ahler form on $\mathcal{C}_{\C}^s$
  restricts to a $2$-form on $\mathcal{C}^s$ which is non-degenerate
  in directions transverse to the $\mathcal{G}_{\rH}$-orbits ---
  indeed this is just the pullback of the K\"ahler form on
  $\mathcal{M}^s(\G)$.
\end{remark}

The following was proved in \cite{HiggsPairsSTABILITY}. It is
analogous to the decomposition of a polystable  vector bundle into a
direct sum of stable ones, and plays a central role in the proof of the
Hitchin--Kobayashi correspondence.

\begin{proposition}
  \label{prop:JH}
  Let $(\mathcal{E},\varphi)$ be a polystable $\G$-Higgs
  bundle. Then there is a real reductive subgroup $\G'\subset\G$ and a
  \emph{Jordan--H\"older reduction} of $(\mathcal{E},\varphi)$
  to a stable $\G'$-Higgs bundle $(\mathcal{E}',\varphi')$. The
  Jordan--H\"older reduction is unique up to isomorphism. Moreover,
  the solution to Hitchin's equations on $(\mathcal{E}',\varphi')$
  induces the solution on $(\mathcal{E},\varphi)$.
\end{proposition}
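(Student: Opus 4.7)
The plan is to argue by induction on $\dim_{\mathbb{R}}\G$, the base case being when $(\mathcal{E},\varphi)$ is already stable as a $\G$-Higgs bundle, in which case we set $\G'=\G$ and are done. For the inductive step, suppose $(\mathcal{E},\varphi)$ is strictly polystable. I would first unpack the definition of polystability for $\G$-Higgs bundles from \cite{HiggsPairsSTABILITY}: this says there is a proper parabolic subgroup $\rP\subsetneq\G$, a strictly antidominant character $\chi$ of $\rP$, and a holomorphic reduction $\sigma$ of $\mathcal{E}$ to $\rP$ for which $\varphi\in H^0(\mathcal{E}_\sigma[\fp\cap\fm^\C]\otimes K)$ and the associated degree $\deg(\mathcal{E}_\sigma,\chi)$ vanishes, and moreover $\sigma$ admits a further holomorphic reduction to the Levi factor $\rL\subset\rP$. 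This yields an $\rL$-Higgs bundle $(\mathcal{E}_\rL,\varphi_\rL)$ such that extending the structure group back to $\G$ recovers $(\mathcal{E},\varphi)$.

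The key intermediate step is to verify that $(\mathcal{E}_\rL,\varphi_\rL)$ is itself polystable as an $\rL$-Higgs bundle. This is done by pulling back parabolic reductions of $\rL$ to parabolic reductions of $\G$ contained in $\rP$, checking that the associated degree computations match, and using the polystability of $(\mathcal{E},\varphi)$ together with the vanishing $\deg(\mathcal{E}_\sigma,\chi)=0$ to deduce the required inequalities (and the existence of Levi reductions in the equality cases) for $\rL$. Since $\dim\rL<\dim\G$, the inductive hypothesis applied to $(\mathcal{E}_\rL,\varphi_\rL)$ produces a reductive subgroup $\G'\subset\rL\subset\G$ and a stable $\G'$-Higgs bundle $(\mathcal{E}',\varphi')$ as required.

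For uniqueness up to isomorphism, suppose $(\mathcal{E}_1',\varphi_1')$ and $(\mathcal{E}_2',\varphi_2')$ are two Jordan--Hölder reductions with groups $\G_1',\G_2'\subset\G$. One standard approach is to show that both reductions compute the same associated graded object: given any maximal strictly antidominant reduction of $(\mathcal{E},\varphi)$ to a Levi, the resulting $\rL$-Higgs bundle is unique up to isomorphism (because different such reductions are related by automorphisms of $\mathcal{E}$ preserving $\varphi$), and then one iterates, invoking the inductive uniqueness for the smaller group $\rL$. The issue reduces to showing that any two ``maximal'' Levi reductions are conjugate by an automorphism of $(\mathcal{E},\varphi)$; this uses the compactness of the stabilizer $\Gamma_{(A,\varphi)}$ coming from the harmonic metric, which acts transitively on such maximal reductions.

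For the final clause, given a solution $(A',\varphi')$ to Hitchin's equations for $\G'$ on $(\mathcal{E}',\varphi')$, the corresponding harmonic metric is a reduction of structure group to a maximal compact $\rH'\subset\G'$; composing with inclusions $\rH'\hookrightarrow\rH\hookrightarrow\G$ gives a reduction of $\mathcal{E}$ to $\rH$ and hence a pair $(A,\varphi)$. Compatibility of the Cartan decompositions $\fg'=\fh'\oplus\fm'^{\C}\subset\fg=\fh\oplus\fm^{\C}$ shows that both the curvature term $F(A)$ and the bracket $[\varphi,\tau(\varphi)]$ land in $\mathcal{E}'[\fh']\subset\mathcal{E}[\fh]$ and agree with the corresponding quantities for $(A',\varphi')$, so Hitchin's equations for $\G$ follow from those for $\G'$. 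The main obstacle I foresee is the uniqueness argument: making precise the claim that ``maximal antidominant Levi reductions'' are unique up to automorphisms of $(\mathcal{E},\varphi)$ requires either a careful filtration argument in the style of Atiyah--Bott or, more efficiently, using the harmonic metric point of view where the reduction to $\G'$ corresponds to a canonical factorization of the holonomy centralizer, which sidesteps the combinatorics of parabolic reductions entirely.
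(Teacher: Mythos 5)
First, note that the paper does not actually prove this proposition: it is quoted verbatim from \cite{HiggsPairsSTABILITY} (``The following was proved in \cite{HiggsPairsSTABILITY}''), so there is no in-paper argument to compare against. Measured against the proof in that reference, your existence argument is essentially the standard one: unpack strict polystability as the existence of a parabolic reduction $\sigma$ to $\rP$ with $\deg(\cE_\sigma,\chi)=0$ admitting a further reduction to the Levi $\rL$ (with the Higgs field landing in $\cE_{\rL}[\fl\cap\fm^\C]\otimes K$, which is part of the polystability condition rather than something to be arranged), check that the resulting $\rL$-Higgs bundle is polystable by matching parabolic reductions of $\rL$ with parabolic reductions of $\G$ contained in $\rP$, and induct on dimension. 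Your treatment of the final clause via compatible Cartan decompositions $\fg'=\fh'\oplus\fm'\subset\fg=\fh\oplus\fm$ is also correct and is exactly why the paper insists that a reductive subgroup ``include the choice of compatible Cartan data.''

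The genuine gap is in the uniqueness step, and you have located it yourself without closing it. The assertion that $\Gamma_{(A,\varphi)}$ ``acts transitively on maximal Levi reductions'' is precisely the content of uniqueness, not a tool for proving it; as stated it is circular. Worse, both of your proposed escapes lean on the harmonic metric for $(\cE,\varphi)$, but the paper explicitly says this proposition ``plays a central role in the proof of the Hitchin--Kobayashi correspondence'': the harmonic metric on the strictly polystable $(\cE,\varphi)$ is typically \emph{constructed} by first producing the Jordan--H\"older reduction, solving Hitchin's equations on the stable $\G'$-piece, and then inducing up (this is exactly the ``moreover'' clause). So you cannot invoke the holonomy centralizer or the stabilizer of a solution to prove existence-plus-uniqueness without first establishing the reduction by purely holomorphic means. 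A non-circular route is: prove existence holomorphically as you outline; then prove uniqueness by showing that any two Levi reductions with vanishing degree are intertwined by a holomorphic automorphism of $(\cE,\varphi)$, which requires an honest filtration/socle argument in the style of the Jordan--H\"older theorem for semistable sheaves (comparing the two reductions via the vanishing of the relevant degrees and the fact that nonzero maps between stable objects of equal slope are isomorphisms). That argument is carried out in \cite{HiggsPairsSTABILITY} and is the part your sketch would need to supply.
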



Next we recall Hitchin's method
\cite{selfduality,liegroupsteichmuller} for studying the topology of
$\mathcal{M}(\G)$ using gauge theoretic methods, and explain how to
translate it to the holomorphic point of view. Alternatively
one could work exclusively using the holomorphic point of view, using
Simpson's adaptation in \cite[Sec.~11]{SimpsonModuli2}.


Similarly to the holomorphic action of $\C^*$ on $\mathcal{M}(G)$
defined in Section~\ref{sec:basic-properties-action}, there is an
action of $S^1$ on $\mathcal{A}\times\Omega^{1,0}(P_{\rH}[\fm^\C])$ given by
\begin{displaymath}
  e^{i\theta}\cdot(A,\varphi) = (A,e^{i\theta}\varphi).
\end{displaymath}
This action clearly preserves the subspaces $\mathcal{C}^s$, $\mathcal{C}$ and
$\mathcal{C}^s_{\C}$, and it descends to $\cM_{\rH}(\G)$.

\begin{proposition}
  \label{prop:S1-equivariance}
  Let $S^1$ act on $\mathcal{M}(G)$ by restriction of the
  $\C^*$-action defined above. Then the following statements hold.
  \begin{enumerate}
  \item The bijection $\cM_{\rH}(\G) \to \mathcal{M}(\G)$
    defined in (\ref{eq:MH-cong-M}) is $S^1$-equivariant.
  \item The class of $(A,\varphi)$ in $\cM_{\rH}(\G)$ is fixed
    under the $S^1$-action if and only if the class of the
    corresponding Higgs bundle $(\mathcal{E}_A,\varphi)$ in
    $\mathcal{M}(\G)$ is fixed under the $\C^*$-action.
  \end{enumerate}
\end{proposition}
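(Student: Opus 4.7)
For part (1), I would verify $S^1$-equivariance directly at the level of configurations: the bijection $(A,\varphi)\mapsto(\dbar_A,\varphi)$ sends $e^{i\theta}\cdot(A,\varphi)=(A,e^{i\theta}\varphi)$ to $(\dbar_A,e^{i\theta}\varphi)=e^{i\theta}\cdot(\dbar_A,\varphi)$, because on both sides the $S^1$-action merely scales the Higgs field while fixing the connection (respectively, the Dolbeault operator). Since the two $S^1$-actions commute with the corresponding gauge group actions, the equivariance descends to the moduli spaces. With part~(1) in hand, the reverse implication in part~(2) is immediate: if the Higgs bundle class is $\C^*$-fixed then in particular it is $S^1$-fixed, and part~(1) transfers this to $\cM_{\rH}(\G)$.

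The main content is the forward direction of part~(2). Suppose $[(A,\varphi)]$ is $S^1$-fixed in $\cM_{\rH}(\G)$, so that for each $\theta\in\R$ there exists $h_\theta\in\mathcal{G}_{\rH}$ with $h_\theta^*A=A$ and $\mathrm{Ad}(h_\theta)\varphi=e^{-i\theta}\varphi$. The isotropy $\mathrm{Stab}(A)\subset\mathcal{G}_{\rH}$ is a compact Lie group, so the assignment $\theta\mapsto[h_\theta]$ defines a continuous homomorphism $S^1\to\mathrm{Stab}(A)/\Gamma_{(A,\varphi)}$. I would lift this to a smooth one-parameter subgroup of $\mathrm{Stab}(A)$ (possibly after passing to a finite cover of $S^1$) and differentiate at $\theta=0$ to extract an infinitesimal generator $X\in\Omega^0(P_{\rH}[\fh])$ satisfying $d_AX=0$ and $[X,\varphi]=-i\varphi$. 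Because $X$ is a real section and $A$ is an $\rH$-connection, $d_AX=0$ in particular forces $\dbar_AX=0$.

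Using $X$, I would then produce a complex gauge transformation witnessing $\R_{>0}$-invariance on the holomorphic side: for $t\in\R$, set $g_t=\exp(itX)\in\mathcal{G}_{\rH^\C}$. The vanishing $\dbar_AX=0$ ensures that $g_t$ preserves $\dbar_A$, while the eigenvalue identity $[X,\varphi]=-i\varphi$ yields $\mathrm{Ad}(g_t)\varphi=\exp(it\,\mathrm{ad}(X))\varphi=e^t\varphi$. Hence $g_t\cdot(\dbar_A,\varphi)=(\dbar_A,e^t\varphi)$, so every rescaling $(\dbar_A,r\varphi)$ with $r\in\R_{>0}$ represents the same point in $\cM(\G)$; combined with the $S^1$-fixedness already obtained, this proves that $[(\mathcal{E}_A,\varphi)]$ is fixed under the full $\C^*$-action.

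The main technical point will be the passage from the pointwise data $\{h_\theta\}$ to a smooth one-parameter family, since each $h_\theta$ is only determined modulo the stabilizer $\Gamma_{(A,\varphi)}$. This is handled by the compactness of $\mathrm{Stab}(A)$ together with the standard fact that a continuous homomorphism from $S^1$ into a quotient of a compact Lie group by a closed subgroup lifts (after passing to the universal cover of $S^1$ if necessary) to a smooth one-parameter subgroup of $\mathrm{Stab}(A)$. Once $X$ is in hand, the remainder of the argument is purely algebraic in $\fh^\C$.
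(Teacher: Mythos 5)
Your proof is correct, but it supplies substantially more than the paper does: the paper disposes of the proposition in one line (``(1) is clear; (2) is a consequence of the Hitchin--Kobayashi correspondence''), whereas you give an actual argument for the only nontrivial implication, namely that $S^1$-fixedness forces $\C^*$-fixedness. Your route --- extract a covariantly constant infinitesimal generator $X$ with $[X,\varphi]=-i\varphi$ and exponentiate $itX$ to realize the $\R_{>0}$-scaling by \emph{holomorphic} gauge transformations --- is exactly the mechanism the paper only sets up \emph{afterwards} (the gauge transformations $g(\theta)$ of Proposition \ref{prop:gauge-theta}, the infinitesimal generator $\psi$ of Proposition \ref{prop:infinitesimal-S1-action-stable}, and the identity $[\psi,\varphi]=i\varphi$ of Remark \ref{rem:psi-phi-iphi}), and it is in the spirit of Simpson's eigenspace argument used in Section \ref{section SO(p,q) fixed points}. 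An alternative, shorter route to the same implication, and probably closer to what the authors had in mind, is to observe that the orbit map $\lambda\mapsto(\cE_A,\lambda\varphi)$ is holomorphic into the complex analytic space $\cM(\G)$, so if it is constant on $S^1$ it is constant on all of $\C^*$ by the identity theorem; your argument is more hands-on but buys an explicit complex gauge transformation realizing each scaling, which is what the later weight-decomposition results actually need.

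Two small points deserve tightening. First, $\Gamma_{(A,\varphi)}$ need not be normal in $\mathrm{Stab}(A)$, so ``$\mathrm{Stab}(A)/\Gamma_{(A,\varphi)}$'' is a priori only a homogeneous space; you should instead pass to the closed subgroup $N\subset\mathrm{Stab}(A)$ generated by $\Gamma_{(A,\varphi)}$ together with the $h_\theta$ (each $h_\theta$ normalizes $\Gamma_{(A,\varphi)}$, since conjugation by $h_\theta$ preserves the condition of fixing both $A$ and $\varphi$), and view $\theta\mapsto[h_\theta]$ as a homomorphism into $N/\Gamma_{(A,\varphi)}$. Second, the continuity of $\theta\mapsto[h_\theta]$ is asserted rather than proved; it follows, e.g., because the set $\{(e^{i\theta},h):h^*A=A,\ \Ad(h)\varphi=e^{-i\theta}\varphi\}$ is closed in $S^1\times\mathrm{Stab}(A)$ and $\mathrm{Stab}(A)$ is compact, so the induced map to $N/\Gamma_{(A,\varphi)}$ has closed graph and is therefore continuous. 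With these repairs the lift to a one-parameter subgroup $t\mapsto\exp(tX)$ of $N$ exists (surjectivity of the differential of $N\to N/\Gamma_{(A,\varphi)}$ on identity components lets you choose $X$ directly in $\mathrm{Lie}(N)$), and the rest of your computation --- $d_AX=0$ hence $\dbar_AX=0$, so $\exp(itX)\in\cG_{\rH^\C}$ preserves $\dbar_A$ and scales $\varphi$ by $e^{t}$ --- is correct.
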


\begin{proof}
  Statement (1) is clear. Statement (2) is a consequence of the
  Hitchin--Kobayashi correspondence.
\end{proof}

Since the vector bundle $P[\fm^\C] \cong P_{\rH}[\fm^\C]$ has a Hermitian
metric coming from the reduction of structure group to $\rH$, one can
define the \emph{Hitchin function}:
\begin{equation}
  \label{EQ Hitchin Function gauge}
  f\colon\cM_{\rH}(\G)\to\R,\quad (A,\varphi)\mapsto\int_X||\varphi||^2.
\end{equation}
We shall abuse notation and denote by the same letter the map
$f\colon\cM(\G)\to\R$ induced via the identification
\eqref{eq:MH-cong-M}. 
Using Uhlenbeck's weak compactness theorem, Hitchin \cite{selfduality}
showed that the map $f$ is proper. Thus, as noted in
Section~\ref{sec:basic-properties-action}, the Hitchin function can be
used to study the connected components of the moduli space of
$\G$-Higgs bundles.


The following is central for identifying local minima of $f$.

\begin{lemma}
  \label{lem:minima-fixed-HH-vanishing}
  Let $(A,\varphi)\in \mathcal{M}^s_{\rH}(\G)$. If $(A,\varphi)$ is a
  local minimum of $f$, then it is a fixed point of the
  $S^1$-action. Equivalently, the corresponding Higgs bundle
  $(\cE_A,\varphi)\in\cM^s(\G)$ is a fixed point of the $\C^*$-action.
\end{lemma}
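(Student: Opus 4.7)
The plan is to identify the Hitchin function $f$ as a moment map for the $S^1$-action and then invoke the standard fact that critical points of a moment map on a Kähler orbifold are precisely the fixed points of the underlying group action. By Proposition \ref{prop:S1-equivariance}(2), once I establish that the local minimum $(A,\varphi)\in\cM^s_{\rH}(\G)$ is $S^1$-fixed, the corresponding statement about the $\C^*$-action on $(\cE_A,\varphi)\in\cM^s(\G)$ follows immediately.

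First I would recall the Kähler picture from Proposition \ref{prop:local-model-gauge} and Remark \ref{rem:symplectic-quotient}: the ambient affine space $\mathcal{A}\times\Omega^{1,0}(P_{\rH}[\fm^\C])$ carries a natural Kähler structure whose symplectic form $\omega$ descends through the symplectic reduction $\mathcal{C}^s/\mathcal{G}_{\rH}$ to a non-degenerate form on the Kähler orbifold $\cM^s_{\rH}(\G)$. The $S^1$-action $e^{i\theta}\cdot(A,\varphi)=(A,e^{i\theta}\varphi)$ is holomorphic and isometric, with infinitesimal generator $X_{(A,\varphi)}=(0,i\varphi)$. A direct calculation using the standard $L^2$ Kähler pairing on $\Omega^{1,0}(P_{\rH}[\fm^\C])$ yields $\iota_X\omega=-\tfrac{1}{2}df$, so up to a constant factor $f$ is a moment map for the $S^1$-action. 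Since $S^1$ commutes with the gauge group $\mathcal{G}_{\rH}$, this moment map descends to $\cM^s_{\rH}(\G)$.

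Next I would apply the moment-map principle: at a local minimum $(A,\varphi)$ of $f$ on the smooth or orbifold locus, $df_{(A,\varphi)}=0$, and the identity $df=\iota_X\omega$ together with non-degeneracy of $\omega$ forces the infinitesimal generator $X$ to vanish at $(A,\varphi)$. This is precisely the statement that $(A,\varphi)$ is fixed by $S^1$.

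The main technical point will be handling the orbifold structure at points with non-trivial stabilizer $\Gamma_{(A,\varphi)}$: there one passes to the local Kuranishi chart $H^1_{(A,\varphi)}/\Gamma_{(A,\varphi)}$ provided by Proposition \ref{prop:local-model-gauge}, observing that the $S^1$-action lifts to a linear holomorphic action on the finite-dimensional space $H^1_{(A,\varphi)}$ that commutes with the finite group $\Gamma_{(A,\varphi)}$, so that a local minimum of $f$ downstairs pulls back to a $\Gamma_{(A,\varphi)}$-invariant local minimum upstairs, where the moment-map argument just described applies directly to the flat Kähler model.
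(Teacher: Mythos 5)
Your main idea --- that $f$ is a moment map for the $S^1$-action on the K\"ahler (orbifold) moduli space, so that $df=\iota_X\omega$ and non-degeneracy of $\omega$ forces the generator $X$ to vanish at any critical point --- is exactly the argument the paper uses on the smooth locus, so the first two paragraphs of your plan are fine and match the intended proof.

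The problem is your treatment of the orbifold points. You propose to pass to the Kuranishi chart $H^1_{(A,\varphi)}/\Gamma_{(A,\varphi)}$ and to use that ``the $S^1$-action lifts to a linear holomorphic action on $H^1_{(A,\varphi)}$ commuting with $\Gamma_{(A,\varphi)}$.'' That lift exists only when $(A,\varphi)$ is already a fixed point of the $S^1$-action (this is precisely the content of Proposition~\ref{prop:infinitesimal-S1-action-stable}, whose hypothesis is that the point is fixed): if $(A,\varphi)$ is not fixed, then $e^{i\theta}\cdot(A,\varphi)$ is a genuinely different point of the moduli space for generic $\theta$, the action does not preserve the chart centered at $(A,\varphi)$, and there is no induced linear action on $H^1_{(A,\varphi)}$. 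Since being a fixed point is exactly what you are trying to prove, this step is circular as written. The paper avoids the issue by not descending to the finite-dimensional local model at all: it works upstairs on the infinite-dimensional smooth manifold $\mathcal{C}^s\subset\mathcal{C}^s_{\C}$, where $f$ lifts and is an honest moment map, and where (by Remark~\ref{rem:symplectic-quotient}) the restriction of the K\"ahler form is non-degenerate transverse to the $\mathcal{G}_{\rH}$-orbits. A local minimum of $f$ on $\cM^s_{\rH}(\G)$ gives a critical point of $f|_{\mathcal{C}^s}$, so $\iota_X\omega$ vanishes on $T\mathcal{C}^s$ there, forcing the generator $(0,i\varphi)$ to be tangent to the gauge orbit --- which is exactly the statement that the gauge-equivalence class is $S^1$-fixed. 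Your argument could be repaired along similar lines (e.g.\ by working with the $\Gamma$-invariant lift of the vector field $X$ rather than of the group action), but as stated the orbifold step does not go through.
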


\begin{proof}
  On the smooth locus of $\cM_{\rH}(\G)$, the $S^1$-action is
  Hamiltonian with respect to the K\"ahler form and the function $f$
  (suitably normalized) is a moment map for this action (see
  \cite{selfduality,liegroupsteichmuller}). This means that, when
  multiplied by $\sqrt{-1}$, the vector field generating the
  $S^1$-action is the gradient of $f$ and, therefore, critical points
  of $f$ are exactly the fixed points of the $S^1$-action. This proves
  the proposition when $\Gamma_{(A,\varphi)}$ is trivial.

  For a general $(A,\varphi)\in \mathcal{M}^s_{\rH}(\G)$ we can argue
  on the smooth manifold $\mathcal{C}^s\subset
  \mathcal{C}^s_{\C}$. Indeed, by its very definition, the function
  $f$ lifts to the infinite dimensional K\"ahler manifold
  $\mathcal{C}^s_{\C}$ and it is a moment map for the $S^1$-action
  there. Thus, in view of Remark~\ref{rem:symplectic-quotient}, and in
  a similar way to the argument of the preceding paragraph, it follows
  that $(A,\varphi)$ is a critical point of $f$ restricted to
  $\mathcal{C}^s$ if and only if its $\mathcal{G}_{\rH}$-gauge
  equivalence class is fixed by the $S^1$-action.
%
%
\end{proof}


We have the following useful observation. Let $\G'\subset\G$ be a
reductive subgroup (we take this to include the choice of compatible
Cartan data). Then a solution $(A,\varphi)$ to Hithin's equations for
$\G'$ on a principal $\rH'$-bundle induces a solution for $\G$ on the
$\rH$-bundle obtained by extension of structure group. Hence we have a
well defined map
\begin{displaymath}
  \cM(\G')\longrightarrow\cM(\G)
\end{displaymath}
which is clearly compatible with the respective Hitchin
functions. This leads immediately to the following result.

\begin{lemma}
  \label{lem:reducing-minima}
  Let $\G'\subset\G$ be a reductive subgroup. Suppose
  $(\cE,\varphi)$ is a $\G$-Higgs bundle which reduces to
  a $\G'$-Higgs bundle. If $(\cE,\varphi)$ is a minimum of the Hitchin
  function on $\cM(\G)$ then it is a minimum of the Hitchin function
  on $\cM(\G')$.
\end{lemma}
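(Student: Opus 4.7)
The plan is to exploit the extension-of-structure-group map $\iota\colon \cM(\G')\to\cM(\G)$ mentioned immediately before the lemma, together with the compatibility of the two Hitchin functions, to transfer the local minimum property back to $\cM(\G')$.

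First I would verify that $\iota$ is continuous and that
\[
    f_{\G}\circ \iota \;=\; f_{\G'}~.
\]
Continuity follows from the gauge-theoretic construction: if $(A',\varphi')$ is a solution of Hitchin's equations for $\G'$ on a principal $\rH'$-bundle, then extending structure group via the inclusion $\rH'\hookrightarrow \rH$ produces a solution $(A,\varphi)$ of Hitchin's equations for $\G$, and this extension is compatible with Sobolev completions and gauge equivalences. The equality of the two Hitchin functions is immediate because $\varphi$ and $\varphi'$ have the same pointwise norm: the metric on $P_{\rH'}[\fm'^{\C}]$ induced by the fixed reduction to $\rH'$ agrees with the restriction of the metric on $P_{\rH}[\fm^{\C}]$ to the subbundle $P_{\rH'}[\fm'^{\C}]\subset P_{\rH}[\fm^{\C}]$, and the Hitchin--Kobayashi correspondence guarantees that the harmonic metric on $(\cE,\varphi)$ is the one induced by the harmonic metric on $(\cE',\varphi')$.

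Now suppose $(\cE,\varphi)=\iota(\cE',\varphi')$ is a local minimum of $f_{\G}$ on $\cM(\G)$. Choose an open neighborhood $U$ of $(\cE,\varphi)$ in $\cM(\G)$ on which $f_{\G}\geq f_{\G}(\cE,\varphi)$. By continuity of $\iota$, the preimage $U':=\iota^{-1}(U)$ is an open neighborhood of $(\cE',\varphi')$ in $\cM(\G')$. For any $(\cE'',\varphi'')\in U'$,
\[
    f_{\G'}(\cE'',\varphi'') \;=\; f_{\G}(\iota(\cE'',\varphi''))
    \;\geq\; f_{\G}(\cE,\varphi) \;=\; f_{\G'}(\cE',\varphi')~,
\]
so $(\cE',\varphi')$ is a local minimum of $f_{\G'}$, as required.

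The main (mild) obstacle is purely bookkeeping: one must make sure that the extension map $\iota$ descends to the moduli spaces, i.e.\ that it is well-defined on isomorphism classes and that it takes polystable $\G'$-Higgs bundles to polystable $\G$-Higgs bundles. The latter follows by combining the Hitchin--Kobayashi correspondence on both sides with the fact that a solution to Hitchin's equations for $\G'$ extends to one for $\G$; no subtleties with $\cS$-equivalence arise because we are working with polystable representatives. Once this is in place, the argument above is essentially formal.
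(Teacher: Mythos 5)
Your proposal is correct and follows essentially the same route as the paper: the paper constructs the extension-of-structure-group map $\cM(\G')\to\cM(\G)$, notes it is compatible with the two Hitchin functions (via the induced solution of Hitchin's equations), and declares the lemma immediate; you simply spell out the ``immediate'' step, namely that pulling back a neighborhood on which $f_{\G}$ is minimized along a continuous map satisfying $f_{\G}\circ\iota=f_{\G'}$ yields a neighborhood on which $f_{\G'}$ is minimized. No gaps.
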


A solution $(A,\varphi)$ to Hitchin's equations is called
\emph{simple} if its stabilizer $\Gamma_{(A,\varphi)}$ is trivial. The
following proposition is simple to check.

\begin{proposition}
  \label{prop:gauge-theta}
  Suppose that $(A,\varphi)\in \cM_{\rH}(\G)$ is a fixed point for the
  $S^1$-action. Then for each $e^{i\theta}$ there is a gauge
  transformation $g(\theta)\in\mathcal{G}_{\rH}$ such that
  \begin{displaymath}
    g(\theta)\cdot(A,\varphi) = (A,e^{i\theta}\varphi).
  \end{displaymath}
  The gauge transformation $g(\theta)$ is determined up to an element
  of the stabilizer $\Gamma_{(A,\varphi)}$. Moreover, if $(A,\varphi)$ is simple, then $e^{i\theta} \mapsto g(\theta)$ defines a group
homomorphism $S^1\to\mathcal{G}_{\rH}.$ 
\end{proposition}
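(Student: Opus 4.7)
The first two assertions are formal consequences of the definition of the moduli space, and the homomorphism claim in the simple case follows from a short calculation together with a standard continuity argument.

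\emph{Existence and uniqueness.} My starting point is that $\cM_{\rH}(\G)$ is by construction the quotient $\mathcal{C}/\mathcal{G}_{\rH}$, so the statement that the class of $(A,\varphi)$ is fixed by $e^{i\theta}\in S^1$ is tautologically the statement that $(A,\varphi)$ and $(A,e^{i\theta}\varphi)$ lie in the same $\mathcal{G}_{\rH}$-orbit. This directly produces a $g(\theta)\in\mathcal{G}_{\rH}$ with $g(\theta)\cdot(A,\varphi)=(A,e^{i\theta}\varphi)$. For uniqueness, if $g_1(\theta)$ and $g_2(\theta)$ both satisfy this identity then $g_1(\theta)^{-1}g_2(\theta)$ fixes $(A,\varphi)$, so by the definition of the stabilizer it lies in $\Gamma_{(A,\varphi)}$.

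\emph{Homomorphism property in the simple case.} When $\Gamma_{(A,\varphi)}$ is trivial, $g(\theta)$ is uniquely determined. The key observation is that the $S^1$-action and the gauge action commute: gauge transformations act on the Higgs field through the linear adjoint representation of $\rH$, while $S^1$ acts by scalar multiplication, and these two operations commute fiberwise. Thus
\[
g(\theta_1)g(\theta_2)\cdot(A,\varphi)
=g(\theta_1)\cdot(A,e^{i\theta_2}\varphi)
=(A,e^{i(\theta_1+\theta_2)}\varphi),
\]
which matches the defining identity of $g(\theta_1+\theta_2)$, and uniqueness forces $g(\theta_1+\theta_2)=g(\theta_1)g(\theta_2)$.

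\emph{Main obstacle.} The only substantive point is the continuity of $\theta\mapsto g(\theta)$, which is needed to interpret the map as a continuous (hence Lie) group homomorphism rather than merely an abstract one. My plan is to invoke the standard fact that when $(A,\varphi)$ is simple, the orbit map $\mathcal{G}_{\rH}\to\mathcal{G}_{\rH}\cdot(A,\varphi)$, $g\mapsto g\cdot(A,\varphi)$, is a homeomorphism onto the orbit, which follows from the properness of the gauge group action on the appropriate Sobolev completion of the configuration space (cf.\ \cite{atiyah-bott:1982}). Composing its inverse with the manifestly continuous curve $e^{i\theta}\mapsto(A,e^{i\theta}\varphi)$ then yields continuity of $g(\theta)$. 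This is the only genuinely analytic ingredient in the proof; everything else is essentially bookkeeping with the definitions of the gauge and $S^1$-actions.
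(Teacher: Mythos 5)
Your proof is correct: the paper itself offers no argument for this proposition (it is introduced only with ``simple to check''), and your unwinding of the definitions --- existence from the orbit condition, uniqueness up to $\Gamma_{(A,\varphi)}$, and the homomorphism property from linearity of the gauge action on $\varphi$ combined with uniqueness in the simple case --- is exactly the intended argument. Your additional attention to continuity of $\theta\mapsto g(\theta)$ via the slice/properness properties of the $\mathcal{G}_{\rH}$-action goes slightly beyond what the statement strictly demands, but it is the right way to justify that the homomorphism is one of topological groups, which is implicitly used later (e.g.\ in Proposition \ref{prop:infinitesimal-S1-action-stable}).
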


\begin{proposition}
Suppose $(A,\varphi)\in \cM_{\rH}(\G)$ is a fixed point for the
$S^1$-action. If $(A,\varphi)$ is simple, then there is an induced action of $S^1$ on
$H^1_{(A,\varphi)}$. 
\end{proposition}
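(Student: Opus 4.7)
The plan is to construct an $S^1$-action on the entire deformation complex~\eqref{eq:def-complex-equations} at $(A,\varphi)$, from which the induced action on $H^1_{(A,\varphi)}$ will follow. The ingredient I rely on is the group homomorphism $g\colon S^1\to \mathcal{G}_{\rH}$, $e^{i\theta}\mapsto g(\theta)$, supplied by Proposition~\ref{prop:gauge-theta}. Using it, I would define a modified lift of the $S^1$-action to the ambient configuration space $\mathcal{A}\times\Omega^{1,0}(P_{\rH}[\fm^\C])$ by
\[
\sigma_\theta(A',\varphi') \;:=\; g(\theta)^{-1}\cdot\bigl(A',\,e^{i\theta}\varphi'\bigr),
\]
which satisfies $\sigma_\theta(A,\varphi)=(A,\varphi)$ by the defining property of $g(\theta)$.

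First I would check that $\sigma$ is a genuine action. The naive rescaling $\rho_\theta\colon(A',\varphi')\mapsto(A',e^{i\theta}\varphi')$ commutes with every gauge transformation $\Psi_h$ (scalar multiplication on the Higgs field commutes with the adjoint action), and $g$ is a homomorphism, so $\sigma_{\theta_1}\circ\sigma_{\theta_2}=\sigma_{\theta_1+\theta_2}$. Differentiating $\sigma_\theta$ at the fixed point yields a linear $S^1$-action on the middle term of the complex, given explicitly by
\[
(a,\psi)\;\longmapsto\;\bigl(\Ad_{g(\theta)^{-1}}a,\; e^{i\theta}\Ad_{g(\theta)^{-1}}\psi\bigr).
\]
I would then extend this to the outer terms by the obvious prescription: $\Ad_{g(\theta)^{-1}}$ on $\fh$-valued forms and $e^{i\theta}\Ad_{g(\theta)^{-1}}$ on $\fm^\C$-valued forms.

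The heart of the argument is to verify that $d_0$ and $d_1$ intertwine these three actions, so that $\sigma_\theta$ acts by chain maps on $C^\bullet_{(A,\varphi)}$. I would factor this through two standard equivariance properties. First, the linearized Hitchin equations are gauge equivariant, so $\Psi_{g(\theta)^{-1}}$ intertwines the complex at $g(\theta)\cdot(A,\varphi)=(A,e^{i\theta}\varphi)$ with the complex at $(A,\varphi)$. Second, the naive action $\rho_\theta$ intertwines the complex at $(A,\varphi)$ with the complex at $(A,e^{i\theta}\varphi)$, because every $\varphi$-dependent term in $d_0$, $d_1$ is a commutator with $\varphi$ (linear in $\varphi$) or with $\tau(\varphi)$ (antilinear in $\varphi$), and the $e^{i\theta}$ twist on $\fm^\C$-valued forms is precisely what is needed to absorb both kinds of scaling. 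Composing these two compatibilities gives the equivariance of $\sigma_\theta$.

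Since $\sigma_\theta$ acts by chain automorphisms on $C^\bullet_{(A,\varphi)}$, it descends to an $S^1$-action on $H^i_{(A,\varphi)}$ for every $i$, and in particular on $H^1_{(A,\varphi)}$, as required. The delicate point to check carefully is the interaction with the antilinear operator $\tau$ in $d_1$: one must use $\tau(e^{i\theta}\varphi)=e^{-i\theta}\tau(\varphi)$ so that the bracket $[\varphi,\tau(\varphi)]$ appearing in Hitchin's equations behaves correctly under the combined twist, and the $e^{i\theta}$ factor on the $\Omega^{1,1}(P_{\rH}[\fm^\C])$-summand of the target of $d_1$ comes out right. I expect this $\tau$-compatibility to be the only point requiring care; simplicity is essential throughout, since without $\theta\mapsto g(\theta)$ being a homomorphism, $\sigma_\theta$ would define only a projective $S^1$-action and the descent to cohomology would be ambiguous modulo the stabilizer $\Gamma_{(A,\varphi)}$.
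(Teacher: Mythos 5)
Your proof is correct and follows essentially the same route as the paper: both use the gauge transformations $g(\theta)$ from Proposition~\ref{prop:gauge-theta} to correct the naive rescaling action so that it fixes $(A,\varphi)$, and both invoke the homomorphism property of $\theta\mapsto g(\theta)$ (hence simplicity) to get a genuine, rather than merely projective, $S^1$-action on $H^1_{(A,\varphi)}$. You carry out the chain-level equivariance checks that the paper leaves implicit, but the underlying argument is the same.
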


\begin{proof}
  For each $e^{i\theta}$, the derivative of its action on
  $\mathcal{C}$ defines a map
  $H^1_{(A,\varphi)}\to H^1_{(A,e^{i\theta}\varphi)}$. Composing with
  the inverse of the derivative of the unique gauge transformation
  $g(\theta)$ from Proposition~\ref{prop:gauge-theta} we get a well
  defined map
  $H^1_{(A,\varphi)}\to H^1_{(A,\varphi)}$. Using the fact that
  $\theta\to g(\theta)$ is a group homomorphism it is easy to see that
  this gives an action of $S^1$.
\end{proof}

If $(A,\varphi)$ has discrete stabilizer, then for each $\theta_0$ and
each choice of gauge transformation $g(\theta_0)$ as in
Proposition~\ref{prop:gauge-theta}, there is a unique smooth family
$g(\theta)$ defined in a neighborhood of $\theta_0$. Taking
$\theta_0=0$ and $g(0)$ to be the identity we get the following
result, by an argument similar to the proof of the preceding
proposition.

\begin{proposition}
\label{prop:infinitesimal-S1-action-stable}
Suppose $(A,\varphi)\in \cM_{\rH}^s(\G)$ is a fixed point for the
$S^1$-action. Then there is an induced local action of a neighborhood
of the identity in $S^1$ on
$H^1_{(A,\varphi)}$. In particular, there is an inifinitesimal
$S^1$-action on $H^1_{(A,\varphi)}$, and a well-defined
infinitesimal gauge transformation
$\psi=\frac{dg_\theta}{d\theta}\big\vert_{\theta=0}\in \Omega^0(P_{\rH}[\fh])$.
\end{proposition}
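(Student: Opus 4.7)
The plan is to adapt the argument from the simple case by replacing the global group homomorphism $\theta\mapsto g(\theta)$ with a smooth local lift, exploiting the fact that at a stable point the stabilizer $\Gamma_{(A,\varphi)}$ is a finite group (since $H^0_{(A,\varphi)}=0$ and $\Gamma_{(A,\varphi)}$ is compact). It is precisely this discreteness that will promote the pointwise existence of $g(\theta)$ furnished by Proposition~\ref{prop:gauge-theta} to an essentially unique smooth family on a neighbourhood of $\theta=0$.

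First I would invoke Proposition~\ref{prop:gauge-theta} to obtain, for each $\theta$, a gauge transformation $g(\theta)\in\mathcal{G}_{\rH}$ with $g(\theta)\cdot(A,\varphi)=(A,e^{i\theta}\varphi)$, unique up to left multiplication by $\Gamma_{(A,\varphi)}$. Then I would produce a smooth lift $\theta\mapsto g(\theta)$ on an open interval $U\ni 0$ normalised by $g(0)=\mathrm{Id}$. The natural tool is a slice theorem for the $\mathcal{G}_{\rH}$-action at the stable point $(A,\varphi)$: in appropriate Sobolev completions, the linearisation of the orbit map $\mathcal{G}_{\rH}\to\mathcal{C}$, $g\mapsto g^{-1}\cdot(A,\varphi)$, has kernel equal to $H^0_{(A,\varphi)}=0$, so an application of the implicit function theorem to the smooth map $\Phi(g,\theta)=g^{-1}\cdot(A,e^{i\theta}\varphi)$ produces a smooth local section through $(\mathrm{Id},0)$. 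Discreteness of $\Gamma_{(A,\varphi)}$ guarantees that this section is unique once the normalisation $g(0)=\mathrm{Id}$ is imposed.

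With the smooth family $g(\theta)$ in hand, I would mimic the construction used in the simple case. The derivative of the $S^1$-action at $(A,\varphi)$ gives a canonical isomorphism $H^1_{(A,\varphi)}\to H^1_{(A,e^{i\theta}\varphi)}$, and the derivative of $g(\theta)^{-1}$ supplies an inverse isomorphism, yielding a map $\Lambda(\theta):H^1_{(A,\varphi)}\to H^1_{(A,\varphi)}$ which depends smoothly on $\theta\in U$ and satisfies the local one-parameter-group relation $\Lambda(\theta_1+\theta_2)=\Lambda(\theta_1)\Lambda(\theta_2)$ whenever $\theta_1,\theta_2,\theta_1+\theta_2\in U$ (both sides being smooth lifts of the same pointwise prescription which agree at $\theta_2=0$; local uniqueness coming from discreteness of $\Gamma_{(A,\varphi)}$ then forces equality). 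Differentiating at $\theta=0$ yields the infinitesimal $S^1$-action on $H^1_{(A,\varphi)}$ together with the infinitesimal gauge transformation $\psi=\tfrac{dg(\theta)}{d\theta}\big\vert_{\theta=0}\in\Omega^{0}(P_{\rH}[\fh])$, which is unambiguous precisely because the local lift was uniquely normalised.

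The main obstacle is the slice/implicit-function step producing the smooth local family $g(\theta)$: one must set this up in suitable Sobolev completions so that the linearised orbit map is a bounded operator with closed range and trivial kernel (the latter being exactly $H^0_{(A,\varphi)}=0$, which is where stability enters decisively). Once this analytic input is secured, the remainder of the argument is a routine translation of the simple case, with every global statement about $g(\theta)$ replaced by its local, uniquely-normalised analogue; in particular no new idea beyond the finiteness of $\Gamma_{(A,\varphi)}$ is needed to define $\psi$ or the infinitesimal action on $H^1_{(A,\varphi)}$.
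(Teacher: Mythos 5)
Your proposal is correct and follows essentially the same route as the paper: the paper likewise observes that discreteness of $\Gamma_{(A,\varphi)}$ (coming from $H^0_{(A,\varphi)}=0$ at a stable point) upgrades the pointwise gauge transformations of Proposition~\ref{prop:gauge-theta} to a unique smooth local family with $g(0)=\mathrm{Id}$, and then repeats the construction from the simple case to get the local action on $H^1_{(A,\varphi)}$ and the infinitesimal generator $\psi$. Your slice-theorem/implicit-function-theorem justification simply makes explicit the analytic input that the paper asserts without detail.
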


\begin{remark}
  \label{rem:psi-phi-iphi}
  Note that $[\psi,\varphi] = i\phi$ because $g(\theta)\cdot(A,\varphi) = (A,e^{i\theta}\varphi)$.
\end{remark}

Now fix a maximal torus $\ft\subset\fh$. Since any element of $\fh$ is
conjugate to an element in $\ft$, there is a point $p_0\in P_{\rH}$
with the property stated in the following proposition.

\begin{proposition}
\label{prop:fixed-hodge-stable}
Let $(A,\varphi)\in \cM_{\rH}^s(\G)$ be a fixed point and let $\psi\in \Omega^0(P_H[\fh])$ be the infinitesimal gauge transformation provided by Proposition~\ref{prop:infinitesimal-S1-action-stable}. Let $p_0\in P$ be such that the infinitesimal gauge transformation provided by Proposition~\ref{prop:infinitesimal-S1-action-stable} satisfies $\psi(p_0)\in\ft$. Define
\begin{displaymath}
  \rH_0 = Z_{\rH}(\psi(p_0))\subset\rH.
\end{displaymath}
Then there is a subbundle $P_{\rH_0}\subset P_{\rH}$ which gives a
reduction of structure group to $\rH_0$.
\end{proposition}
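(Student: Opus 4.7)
The plan is to exhibit $P_{\rH_0}$ as a level set of the equivariant map associated to $\psi$. View the infinitesimal gauge transformation $\psi$ as an $\rH$-equivariant smooth map $\psi\colon P_{\rH}\to\fh$ satisfying $\psi(p\cdot h)=\Ad(h^{-1})\psi(p)$ for $p\in P_{\rH}$ and $h\in\rH$. The key preliminary observation is that $\psi$ is \emph{parallel} with respect to the connection $A$, i.e.\ $d_A\psi=0$. Indeed, by Proposition~\ref{prop:infinitesimal-S1-action-stable} the $1$-parameter family $g(\theta)\in\cG_{\rH}$ satisfies $g(\theta)\cdot(A,\varphi)=(A,e^{i\theta}\varphi)$, so in particular $g(\theta)^{*}A=A$ for all $\theta$; differentiating at $\theta=0$ gives $d_A\psi=0$.

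Given this, define
\[
P_{\rH_0}\;:=\;\bigl\{\,p\in P_{\rH}\;\big|\;\psi(p)=\psi(p_0)\,\bigr\}.
\]
I would then verify the principal $\rH_0$-bundle axioms step by step.  First, $\rH_0$ acts on $P_{\rH_0}$: if $p\in P_{\rH_0}$ and $h\in\rH_0=Z_{\rH}(\psi(p_0))$, then $\psi(p\cdot h)=\Ad(h^{-1})\psi(p_0)=\psi(p_0)$, so $p\cdot h\in P_{\rH_0}$. Second, this action is free, being the restriction of the free action of $\rH$ on $P_{\rH}$. Third, it is fiberwise transitive: if $p,p'\in P_{\rH_0}$ lie over the same point of $X$, then $p'=p\cdot h$ for a unique $h\in\rH$, and $\psi(p_0)=\psi(p')=\Ad(h^{-1})\psi(p_0)$ forces $h\in\rH_0$. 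Fourth, $P_{\rH_0}$ surjects onto $X$ thanks to the parallelism of $\psi$: for any $x\in X$, pick a path from $\pi(p_0)$ to $x$ and let $p$ be the $A$-parallel transport of $p_0$; since $d_A\psi=0$, one has $\psi(p)=\psi(p_0)$.

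For the smooth structure and local triviality, I would use that $\rH$ is compact and $\rH_0$ is a closed subgroup, so the adjoint orbit $\cO:=\Ad(\rH)\cdot\psi(p_0)\subset\fh$ is a smooth embedded submanifold diffeomorphic to $\rH/\rH_0$. The map $\psi$ takes values in (a neighbourhood of) $\cO$, because at every point of $X$ the value of $\psi$ in the fiber is $\rH$-conjugate to $\psi(p_0)$ (this is again a consequence of $d_A\psi=0$, via parallel transport from the reference point). Pulling back the slice for the $\rH$-action on $\cO$ gives local sections of $P_{\rH_0}\to X$, proving local triviality; equivalently, one can observe that $\psi\colon P_{\rH}\to\fh$ is transverse to $\{\psi(p_0)\}$ relative to the $\rH$-action, so the level set is a smooth $\rH_0$-subbundle.

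The main work, and what I would treat with most care, is the verification that $d_A\psi=0$ and the resulting conjugacy of $\psi$ along fibers, since everything else is essentially formal. The slight subtlety is that $\psi$ is only determined up to the Lie algebra of $\Gamma_{(A,\varphi)}$ in general, but stability of $(A,\varphi)$ forces $\Gamma_{(A,\varphi)}$ to be discrete (by Remark~\ref{remark stable open in polystable}), so $\psi$ is uniquely determined and the argument goes through without modification.
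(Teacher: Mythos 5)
Your proof is correct and rests on the same two pillars as the paper's own argument: the parallelism $d_A\psi=0$ (which you, unlike the paper, actually derive from $g(\theta)^{*}A=A$) and the consequent fact that $\psi$ takes values in a single adjoint orbit, so that the fiberwise stabilizer is the same everywhere. The one genuine difference is the defining subset: the paper takes $P_{\rH_0}=\{p\in P_{\rH}\;|\;\psi(p)\in\ft\}$, whereas you take the sharper level set $\{p\;|\;\psi(p)=\psi(p_0)\}$. Your choice is the cleaner one. With the paper's definition the fiber over a point is a union of $Z_{\rH}(\psi(p_0))$-cosets indexed by the Weyl-group orbit of $\psi(p_0)$ inside $\ft\cap\Ad(\rH)\psi(p_0)$, so strictly speaking one must pass to the component (single $\rH_0$-orbit) containing $p_0$ to obtain a principal $\rH_0$-bundle; your level set is exactly one $\rH_0$-orbit per fiber by construction. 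The supporting steps you supply --- freeness and transitivity of the $\rH_0$-action, nonemptiness of fibers via $A$-parallel transport of $p_0$ (valid since $d_A\psi=0$ and $X$ is connected), local triviality via the slice for the orbit $\rH/\rH_0$, and the observation that stability makes $\Gamma_{(A,\varphi)}$ discrete so $\psi$ is uniquely determined --- are all correct and in fact fill in details the paper leaves implicit.
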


\begin{proof}
Define
\begin{displaymath}
  P_{\rH_0}=\{p\in P_{\rH}\suchthat \psi(p)\in\ft\} \subset
  P_{\rH}.
\end{displaymath}
Let $\psi(p)\in\ft$. A point $\Ad(h)(\psi(p))=\psi(ph^{-1})$ in the
adjoint orbit of $\psi(p)$ lies in $\ft$ if and only if $h\in Z_{\rH}(\psi(p))$. Moreover, this centralizer does not depend on the choice of $\psi(p)$ in the adjoint orbit, as long as $\psi(p)$ lies in $\ft$. We therefore have an identification
of the fiber $P_{H_0,x}$ of $P_{H_0}$ over $x=\pi(p_0)\in X$:
\begin{align*}
  \rH_0 &\xrightarrow{\cong} P_{H_0,x},\\
  c &\mapsto p_0\cdot c
\end{align*}
where the action comes from the right action of $H$ on $P_{H_0}$.

Now note that, since $d_A\psi=0$, the eigenvalues for the action of
$\psi$ on $P_{\rH}[\fh]$ are constant. Hence the orbit in $\fh$ of
$\psi(p)$ under the adjoint action of $\rH$ is independent of
$p\in P_{\rH}$. It follows that the centralizer used in the preceding
paragraph is the same for all fibers of $P_{\rH}$ and, therefore, the
construction globalizes to show that $P_{\rH_0}\subset P_{\rH}$
defines a reduction of structure group, as we wanted.
\end{proof}

\begin{remark}
Since the reduction $P_{\rH_0}\subset P_{\rH}$ just constructed only depends on the choice of the maximal torus $\ft\subset\fh$, it is unique up to conjugation by $\rH$.
\end{remark}

\begin{proposition}
  \label{prop:weight-decomposition-fixed-H}
  Suppose $(A,\varphi)\in \cM_{\rH}^s(\G)$ is a fixed point for the
  $S^1$-action. Then there is a weight decomposition into
  $ik$-eigenspaces for the adjoint action of $\psi$ on the Lie algebra
  bundles $P_H[\fh^\C]$ and $P_H[\fm^\C]$:
  \begin{displaymath}
    P_\rH[\fh^\C] = \bigoplus_k P_\rH[\fh^\C]_{k}
    \quad\text{and}\quad
    P_\rH[\fm^\C]  = \bigoplus_k P_\rH[\fm^\C]_{k},
  \end{displaymath}
  where $\varphi\in H^0(P_H[\fm^\C]_1\otimes K)$ and
  $P_\rH[\fh^\C]_{0}$ is identified with the adjoint bundle
  $P_{\rH_0}[\fh^\C_0]$. 
\end{proposition}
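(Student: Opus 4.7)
My plan is to extract the weight decomposition point-wise from the infinitesimal gauge transformation $\psi$ and then globalize using the fact that $\psi$ is covariantly constant.

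First, I would establish the basic properties of $\psi$. From Proposition \ref{prop:infinitesimal-S1-action-stable} we have a well-defined $\psi\in\Omega^{0}(P_{\rH}[\fh])$ as the infinitesimal generator of the local $S^1$-action at $(A,\varphi)$. Because each $g(\theta)$ fixes the connection $A$, differentiating at $\theta=0$ yields $d_{A}\psi=0$, so $\psi$ is a covariantly constant section of $P_{\rH}[\fh]$. In particular, the adjoint action $\ad_\psi$ defines a parallel endomorphism of the associated bundles $P_{\rH}[\fh^{\C}]$ and $P_{\rH}[\fm^{\C}]$. Remark \ref{rem:psi-phi-iphi} also records $[\psi,\varphi]=i\varphi$.

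Next, I would set up the fibrewise eigenspace decomposition. At any $p\in P_{\rH}$, the element $\psi(p)\in\fh$ lies in the compact Lie algebra, so $\ad_{\psi(p)}$ is semisimple with purely imaginary eigenvalues; write these as $ik$ for $k$ in the discrete set determined by the roots evaluated on $\psi(p)$ (the $S^1$-action forces $\exp(2\pi\psi)$ to lie in the discrete stabilizer $\Gamma_{(A,\varphi)}$, pinning the $k$'s to lie in a fixed finite subset of $\mathbb{Q}$). Because $\ad_\psi$ preserves the Cartan splitting, we obtain eigenspace decompositions
\[
\fh^{\C}=\bigoplus_{k}\fh^{\C}_{k},\qquad \fm^{\C}=\bigoplus_{k}\fm^{\C}_{k}
\]
at each point. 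The proof of Proposition \ref{prop:fixed-hodge-stable} already shows that $d_{A}\psi=0$ forces the eigenvalues to be constant over $X$; combined with parallelism, the eigenspace decompositions glue to genuine $C^{\infty}$ (in fact holomorphic) sub-bundles $P_{\rH}[\fh^{\C}]_{k}$ and $P_{\rH}[\fm^{\C}]_{k}$ of the associated bundles, giving the asserted direct sum decomposition. The assertion $\varphi\in H^{0}(P_{\rH}[\fm^{\C}]_{1}\otimes K)$ is then immediate from $[\psi,\varphi]=i\varphi$.

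Finally, I would identify the zero-weight summand of the $\fh^{\C}$-decomposition with $P_{\rH_{0}}[\fh_{0}^{\C}]$. By construction, $P_{\rH}[\fh^{\C}]_{0}$ is the bundle of centralizers $\ker(\ad_\psi)\subset P_{\rH}[\fh^{\C}]$. Over the reduction $P_{\rH_{0}}\subset P_{\rH}$ built in Proposition \ref{prop:fixed-hodge-stable}, the section $\psi$ takes values in the fixed Cartan $\ft\subset\fh$ and its centralizer in $\fh^{\C}$ is precisely $\fh_{0}^{\C}=Z_{\fh^{\C}}(\psi(p_{0}))$, which is $\Ad(\rH_{0})$-invariant. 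Therefore $P_{\rH}[\fh^{\C}]_{0}=P_{\rH_{0}}\times_{\rH_{0}}\fh_{0}^{\C}=P_{\rH_{0}}[\fh_{0}^{\C}]$, as claimed.

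The main obstacle I anticipate is the bookkeeping in the second paragraph: checking that the eigenvalues $k$ really do form a common discrete set independent of $p\in P_{\rH}$ (so that the point-wise decompositions assemble into sub-bundles of locally constant rank), and verifying compatibility with the reduction $P_{\rH_{0}}$ used to identify the zero weight. Both issues are handled by the covariant constancy $d_{A}\psi=0$ together with the fact that $\exp(2\pi\psi)$ lies in the discrete stabilizer $\Gamma_{(A,\varphi)}$, but the argument must be stated carefully to match the conventions of Section \ref{section SO(p,q) fixed points}.
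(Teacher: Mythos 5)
Your proof is correct and follows essentially the same route as the paper: both arguments rest on the covariant constancy $d_A\psi=0$ and the reduction $P_{\rH_0}\subset P_{\rH}$ of Proposition \ref{prop:fixed-hodge-stable}, the paper phrasing the weight bundles as associated bundles $P_{\rH_0}[\fh^\C_k]$ for the $\Ad(\rH_0)$-invariant weight spaces of $\ad_{\psi(p_0)}$, while you equivalently realize them as eigenbundles of the parallel endomorphism $\ad_\psi$. Your additional remarks on the discreteness of the weights and the holomorphicity of the eigenbundles are accurate but not needed for the statement as given.
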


\begin{proof}
This is immediate from Proposition~\ref{prop:fixed-hodge-stable} --- indeed, taking the weight space decomposition $\fh^\C=\bigoplus \fh^\C_k$ for the adjoint action of $\psi(p_0)$ we have $P_\rH[\fh^\C]_{k} = P_{\rH_0}[\fh^\C_{k}]$, and similarly for $\fm^\C$. The fact that $\varphi$ has weight one follows from Remark~\ref{rem:psi-phi-iphi}.
\end{proof}


\begin{remark}
  For any fixed $(A,\varphi)$ in the moduli space we can use the
  Jordan--H\"older reduction to a stable $\G'$-Higgs bundle to get a
  reduction of structure group as in
  Proposition~\ref{prop:fixed-hodge-stable}. However, the weight
  decomposition of Proposition~\ref{prop:weight-decomposition-fixed-H}
  is, in general, no longer well defined. This is because the center
  of the maximal compact $\rH'\subset \G'$ may act non-trivially on
  the complement of ${\fg'}^\C$ in $\fg^\C$.
\end{remark}

For a $(\cE, \varphi)\in\cM^ s(\G)$ which is fixed under the
$\C^*$-action, the weight decomposition from Proposition~
\ref{prop:weight-decomposition-fixed-H} translates into
\begin{equation}
  \label{eq:weight-decomposition-adE}
  \cE[\fg^\C] = \cE[\fh^\C] \oplus \cE[\fm^\C]
  = \bigoplus \cE[\fh^\C]_k \oplus \bigoplus \cE[\fm^\C]_k
\end{equation}
with $\cE[\fh^\C]_{k} = P_\rH[\fh^\C]_{k}$ and $\cE[\fm^\C]_{k} =
P_\rH[\fm^\C]_{k}$, and where $\varphi\in H^0(\cE[\fm^\C]_1\otimes K)$.
This gives a decomposition $C^\bullet(\cE,\varphi) = \bigoplus  C^\bullet_k(\cE,\varphi)$ of the
deformation complex \eqref{eq complex of sheaves}, where
\begin{equation}
  \label{eq:Ck-complex}
  C^\bullet_k(\cE,\varphi):
  \cE[\fh^\C]_{k}\xrightarrow{\ad_\varphi} \cE[\fm^\C]_{k+1}\otimes K.
\end{equation}


\begin{proposition}
\label{prop:gauge-complex-Ck}
Suppose $(A,\varphi)\in \cM_{\rH}^s(\G)$ is a fixed point for the
$S^1$-action. 
Let 
\begin{math}
  H^1_{(A,\varphi)} = \bigoplus H^1_{(A,\varphi),k}
\end{math}
be the decomposition into
$ik$-eigenspaces for the infinitesimal $S^1$-action given by
Proposition~\ref{prop:infinitesimal-S1-action-stable}.
Then there are canonical isomorphisms
\begin{displaymath}
  H^1_{(A,\varphi),k} \cong \HH^1(C^\bullet_k(\mathcal{E}_A,\varphi)).
\end{displaymath}
\end{proposition}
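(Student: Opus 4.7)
The plan is to upgrade the canonical isomorphism $H^1_{(A,\varphi)} \cong \HH^1(C^\bullet(\cE_A,\varphi))$ from Proposition~\ref{prop-def-complex-isomorphism} to an isomorphism of $S^1$-representations, and then to observe that the weight-$k$ eigenspace on the hypercohomology side is by construction $\HH^1(C^\bullet_k(\cE_A,\varphi))$. The main tool is the Dolbeault resolution \eqref{eq:C-deformation-complex}, which once endowed with a weight grading splits into a direct sum of subcomplexes, each computing the appropriate $\HH^1(C^\bullet_k(\cE_A,\varphi))$.

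First, I would record that the weight decomposition \eqref{eq:weight-decomposition-adE} together with the fact that $\varphi \in H^0(\cE[\fm^\C]_1 \otimes K)$ (so that $\ad_\varphi$ shifts weight by $+1$) causes the Dolbeault resolution \eqref{eq:C-deformation-complex} to split as a direct sum over $k$ of subcomplexes
\[
\Omega^0(\cE[\fh^\C]_k) \xrightarrow{\delta_0^k} \Omega^{0,1}(\cE[\fh^\C]_k) \oplus \Omega^{1,0}(\cE[\fm^\C]_{k+1}) \xrightarrow{\delta_1^k} \Omega^{1,1}(\cE[\fm^\C]_{k+1}).
\]
Here one uses that the connection $A$ on $P_\rH$ preserves the reduction $P_{\rH_0}\subset P_\rH$ of Proposition~\ref{prop:fixed-hodge-stable}, so $\dbar_A$ respects the weight grading on the associated bundles; and that $\ad_\varphi$ shifts weight by one, as noted. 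The middle cohomology of the $k$-th summand is, by the same Dolbeault resolution argument as in the proof of Proposition~\ref{prop-def-complex-isomorphism}, precisely $\HH^1(C^\bullet_k(\cE_A,\varphi))$.

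Next, I would check that under the Dolbeault identification of Proposition~\ref{prop-def-complex-isomorphism} the infinitesimal $S^1$-action of Proposition~\ref{prop:infinitesimal-S1-action-stable} on $H^1_{(A,\varphi)}$ corresponds to the grading just described. By construction, the $S^1$-action on a cocycle $(\beta,\delta\varphi) \in \Omega^{0,1}(\cE[\fh^\C]) \oplus \Omega^{1,0}(\cE[\fm^\C])$ is the composite of the scaling $(\beta,\delta\varphi) \mapsto (\beta, e^{i\theta}\delta\varphi)$ with the infinitesimal gauge transformation whose generator is $\psi$ (Proposition~\ref{prop:gauge-theta}). The gauge action is by $\ad_\psi$ fiberwise, whose eigenspaces are by definition the summands $\cE[\fh^\C]_k$ and $\cE[\fm^\C]_k$; combined with the extra shift contributed by the $e^{i\theta}$ factor on $\delta\varphi$, a pair with $\beta \in \Omega^{0,1}(\cE[\fh^\C]_k)$ and $\delta\varphi \in \Omega^{1,0}(\cE[\fm^\C]_{k+1})$ acquires total weight $k$. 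Thus the weight-$k$ part of the Dolbeault model is exactly the weight-$k$ subcomplex above, and passing to cohomology yields the claimed isomorphism.

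The main obstacle is the weight bookkeeping in the second step: one must track simultaneously the shift coming from multiplying $\varphi$ by $e^{i\theta}$ and the compensating $\ad_\psi$ coming from the gauge transformation $g(\theta)$, keeping signs and conventions consistent with the normalization $\varphi \in H^0(\cE[\fm^\C]_1\otimes K)$ of Proposition~\ref{prop:weight-decomposition-fixed-H}. Once this is settled, the statement is formal: both the Dolbeault isomorphism and the weight splitting are canonical, so they are compatible and yield $H^1_{(A,\varphi),k} \cong \HH^1(C^\bullet_k(\cE_A,\varphi))$.
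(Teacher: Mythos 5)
Your proposal is correct and follows essentially the same route as the paper: the paper observes that the isomorphism $H^1_{(A,\varphi)}\cong\HH^1(C^\bullet(\cE_A,\varphi))$ of Proposition~\ref{prop-def-complex-isomorphism} is $S^1$-equivariant and then identifies the resulting weight spaces with $\HH^1(C^\bullet_k(\cE_A,\varphi))$ via the induced weight decomposition of the Dolbeault resolution \eqref{eq:C-deformation-complex}, which is precisely your splitting into the subcomplexes $\delta_0^k,\delta_1^k$ combined with your computation of the weight of a cocycle $(\beta,\delta\varphi)$. The only point left implicit in both arguments is the sign/orientation convention matching the $ik$-eigenspace to the index $k$ of $C^\bullet_k$ (scaling of $\delta\varphi$ versus the compensating $\Ad_{g(\theta)^{-1}}$), which you correctly flag as the bookkeeping to be checked.
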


\begin{proof}
In a similar way to Proposition~\ref{prop:infinitesimal-S1-action-stable},  there is an infinitesimal $S^1$-action on  $\HH^1(C^\bullet(\mathcal{E}_A,\varphi))$ and, clearly, the isomorphism $H^1_{(A,\varphi)} \cong \HH^1(C^\bullet(\mathcal{E}_A,\varphi))$ of Proposition~\ref{prop-def-complex-isomorphism} is $S^1$-equivariant. Thus there is a weight space decomposition $\HH^1(C^\bullet(\mathcal{E}_A,\varphi))=\bigoplus \HH^1(C^\bullet(\mathcal{E}_A,\varphi))_k$ with $H^1_{(A,\varphi),k} \cong \HH^1(C^\bullet(\mathcal{E}_A,\varphi))_k$. It remains to see 
that $\HH^1(C^\bullet(\mathcal{E}_A,\varphi))_k \cong
\HH^1(C^\bullet_k(\mathcal{E}_A,\varphi))$ and this is an easy check
using the induced weight decomposition of the Dolbeault resolution \eqref{eq:C-deformation-complex}.
\end{proof}



We shall use the subscript ``+'' for the direct sums of subspaces
with $k>0$.

\begin{lemma}
  \label{lem:local-minima-HH-plus-vanishes}
  Let $(A,\varphi)\in \mathcal{M}^s_{\rH}(\G)$. If $(A,\varphi)$ is a
  fixed point of the $S^1$-action, then it is a local minimum of the
  Hitchin function if and only if $H^1_{(A,\varphi),k}=0$ for all
  $k>0$. Equivalently, a fixed point $(\cE,\varphi)\in\cM^s(\G)$ for
  the $\C^*$-action is a local minimum of the Hitchin function if and
  only if $\HH^1(C^\bullet_k(\mathcal{E},\varphi))=0$ for all $k>0$.
\end{lemma}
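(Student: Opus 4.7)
The plan is to apply standard Morse-theoretic arguments for moment maps in a Kuranishi slice at $(A,\varphi)$, combined with a symplectic duality on the weight spaces of $H^1_{(A,\varphi)}$.

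First, I would use Proposition~\ref{prop:local-model-gauge} to model a neighborhood of $(A,\varphi)$ in $\mathcal{M}^s_{\rH}(\G)$ on an open set in $H^1_{(A,\varphi)}/\Gamma_{(A,\varphi)}$, with K\"ahler structure inherited from the ambient infinite-dimensional K\"ahler manifold $\mathcal{C}^s_{\C}$ (Remark~\ref{rem:symplectic-quotient}). Because $(A,\varphi)$ is $S^1$-fixed, the action lifts linearly to the slice and its derivative provides the weight decomposition $H^1_{(A,\varphi)} = \bigoplus_k H^1_{(A,\varphi),k}$ from Proposition~\ref{prop:infinitesimal-S1-action-stable}. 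As recalled in the proof of Lemma~\ref{lem:minima-fixed-HH-vanishing}, $f$ is (up to a positive multiple and an additive constant) a moment map for this $S^1$-action.

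For the Morse step, I would invoke the standard fact that on a Hermitian vector space with a linear unitary $S^1$-action of weights $\{k\}$, the corresponding moment map has the form $v\mapsto c + \tfrac12\sum_k k\,\|v_k\|^2$. Linearising $f$ along the slice, its Hessian at $(A,\varphi)$ is therefore the weighted quadratic form $\tfrac12\sum_k k\,\|v_k\|^2$ on $H^1_{(A,\varphi)}$, which is positive semidefinite exactly when the weight spaces indexed by $k$ of one definite sign vanish. Since $\Gamma_{(A,\varphi)}$ is finite and acts by unitary automorphisms commuting with $S^1$, this criterion descends to the orbifold quotient: $(A,\varphi)$ is a local minimum if and only if $H^1_{(A,\varphi),k}=0$ for every $k$ of the ``wrong'' sign.

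To conclude the statement for $k>0$ (rather than $k<0$), I would next verify that $H^1_{(A,\varphi),k}$ and $H^1_{(A,\varphi),-k}$ are naturally dual. The K\"ahler form on $\mathcal{M}^s(\G)$ restricts to a non-degenerate $S^1$-invariant symplectic form on $H^1_{(A,\varphi)}$, and $S^1$-invariance forces this form to vanish on $H^1_{(A,\varphi),k}\otimes H^1_{(A,\varphi),j}$ whenever $k+j\neq 0$ and to pair $H^1_{(A,\varphi),k}$ non-degenerately with $H^1_{(A,\varphi),-k}$. Hence the vanishing of $H^1_{(A,\varphi),k}$ for all $k>0$ is equivalent to the vanishing for all $k<0$. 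The equivalent holomorphic formulation then follows immediately from the identification $H^1_{(A,\varphi),k}\cong \HH^1(C^\bullet_k(\cE,\varphi))$ of Proposition~\ref{prop:gauge-complex-Ck}.

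The main obstacle is the Morse step: making precise the fact that, after passing to the Kuranishi slice and quotienting by the finite stabiliser $\Gamma_{(A,\varphi)}$, the Hessian of $f$ at $(A,\varphi)$ really is given by the linearised moment map on $H^1_{(A,\varphi)}$. This is the standard content of the equivariant Kuranishi/Kempf--Ness framework set up in the Appendix, so no new analytic input is needed.
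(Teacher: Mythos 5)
Your first two steps (passing to the Kuranishi slice, identifying the Hessian of $f$ with the linearised moment map for the $S^1$-action, and descending through the finite stabiliser) are essentially Hitchin's argument, which the paper simply cites: $H^1_{(A,\varphi),k}$ is the $(-k)$-eigenspace of the Hessian of $f$. Up to that point your route and the paper's are the same in substance.

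The genuine gap is your final ``symplectic duality'' step. The K\"ahler form on the slice is the imaginary part of the $S^1$-invariant Hermitian $\mathrm{L}^2$-metric, and since $S^1$ acts unitarily and complex-linearly with distinct characters on the weight spaces, those weight spaces are mutually \emph{orthogonal}; hence $\omega$ restricts non-degenerately to each $H^1_{(A,\varphi),k}$ separately and vanishes on $H^1_{(A,\varphi),k}\times H^1_{(A,\varphi),-k}$ --- the opposite of what you assert. (Invariance of a \emph{real} $2$-form only forces vanishing on pairs of weights with $|k|\neq|j|$, not $k+j\neq 0$, so it cannot single out the pairing you want.) The conclusion you draw from it, namely that $H^1_{(A,\varphi),k}=0$ for all $k>0$ if and only if $H^1_{(A,\varphi),k}=0$ for all $k<0$, is in fact false and would contradict the rest of the paper: at the exotic local minima of Lemma \ref{Lem: Fixed points in the image of Psi} the positive-weight pieces all vanish (Lemmas \ref{lemma k>0 H1 vanishing H2 not p 2p} and \ref{lemma p, 2p H1 vanishing H2}), while negative-weight pieces such as $\HH^1(C^\bullet_{2k})\cong H^0(K^{-2k})$ for $2k<0$ (Lemma \ref{lemma 2k<0 H1 vanishing H2 not -p -2p}) are nonzero --- these are precisely the tangent directions along the exotic components. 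Since this step is load-bearing in your argument (you use it to convert ``weights of one sign'' into ``$k>0$''), the proof as written fails. The repair is not a duality but a careful sign bookkeeping in the moment-map computation: with the conventions of the paper ($\varphi$ of weight $1$, $f=\|\varphi\|^2_{\mathrm{L}^2}$), the Hessian of $f$ acts on $H^1_{(A,\varphi),k}$ by $-k$, so the negative directions are exactly the positive weights and the criterion follows directly.
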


\begin{proof}
  Hitchin \cite{selfduality,liegroupsteichmuller} showed that on the
  smooth locus of $\cM_{\rH}(\G)$, the subspace $H^1_{(A,\varphi),k}$
  can be identified with the $-k$-eigenspace for the Hessian of
  $f$. The extension to points of $\cM_{\rH}(\G)$ which are orbifold
  singularities follows as in the proof of
  Lemma~\ref{lem:minima-fixed-HH-vanishing}. The equivalence of the
  statement for $(\cE,\varphi)\in\cM^s(\G)$ follows from
  Proposition~\ref{prop:gauge-complex-Ck}.
\end{proof}



We shall also need to show that certain $\G$-Higgs bundles
which do not satisfy the hypothesis of Proposition~\ref{prop: minima
  criteria} are not local minima of $f$. To this end we have the
following result, analogous to a criterion of
Simpson \cite [Lemma~11.8]{SimpsonModuli2}.

\begin{lemma}
  \label{lem:non-minimum-infinity-limit}
  Let $(\cE_0,\varphi_0)\in \mathcal{M}(G)$ be a fixed point of the
  $\C^*$-action. Suppose there exists a semistable $\G$-Higgs bundle
  $(\cE,\varphi)$, which is not $\cS$-equivalent to $(\cE_0,\varphi_0)$,
  and such that $\lim_{t\to\infty}(\cE,t\varphi)=(\cE_0,\varphi_0)$ in
  $\mathcal{M}(G)$. Then $(\cE_0,\varphi_0)$ is not a local minimum of
  $f$.
\end{lemma}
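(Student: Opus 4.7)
The strategy is to use the $\C^*$-orbit of $(\cE,\varphi)$ itself to produce points arbitrarily close to $(\cE_0,\varphi_0)$ in $\mathcal{M}(\G)$ with strictly smaller value of $f$. Define $g(r) := f(\cE, r\varphi)$ for $r\in (0,\infty)$; this is well-defined as a function on $\R^+$ because the Hitchin function $f$ is $S^1$-invariant (the harmonic metric is unchanged when $\varphi$ is rotated by a unit complex number, so only $|r|$ matters). By hypothesis, $(\cE, r\varphi)\to (\cE_0,\varphi_0)$ in $\mathcal{M}(\G)$ as $r\to\infty$, and since $f$ is continuous on the moduli space, $g(r)\to f(\cE_0,\varphi_0)$.

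Next I would establish the key monotonicity: $g$ is strictly increasing on $\R^+$. This follows from the Kähler moment map picture described in Remark~\ref{rem:symplectic-quotient}: the $S^1$-action on the smooth locus of $\cM(\G)$ is Hamiltonian with $f$ (up to a positive normalization) as moment map, so the gradient of $f$ with respect to the Kähler metric coincides (up to a positive factor) with the infinitesimal generator of the $\R^+$-part of the $\C^*$-action. Consequently, along any non-constant $\C^*$-orbit, $f$ is strictly monotonic in the $\R^+$-direction. The orbit of $(\cE,\varphi)$ is genuinely non-constant: if it were a single point, then $(\cE,\varphi)$ itself would be a $\C^*$-fixed point, and then the orbit closure $\{t\cdot(\cE,\varphi)\} \cup \{(\cE_0,\varphi_0)\}$ would reduce to an equality of $\cS$-classes $(\cE,\varphi)\sim_{\cS}(\cE_0,\varphi_0)$, contradicting the hypothesis. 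At singular/orbifold points this moment map identification should be applied in the upstairs configuration space $\mathcal{C}_{\C}^s$ (or one restricts to smooth strata containing a sub-interval of the orbit), but the resulting monotonicity of $g$ on $(0,\infty)$ is the same.

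Combining the two previous steps gives $g(r) < f(\cE_0,\varphi_0)$ for every finite $r>0$. It remains to check that none of the points $(\cE, r\varphi)$ is $\cS$-equivalent to $(\cE_0,\varphi_0)$ in $\mathcal{M}(\G)$. This is because the $\cS$-equivalence class of the $\C^*$-fixed point $(\cE_0,\varphi_0)$ is itself $\C^*$-invariant, so if $(\cE, r_0\varphi)$ lay in this class for some $r_0\in\C^*$, the entire orbit would, contradicting the hypothesis that $(\cE,\varphi)=(\cE,1\cdot\varphi)$ is not $\cS$-equivalent to $(\cE_0,\varphi_0)$. Therefore, for all sufficiently large $r$, $(\cE, r\varphi)$ represents a point of $\mathcal{M}(\G)$ which is (i) arbitrarily close to $(\cE_0,\varphi_0)$, (ii) distinct from it, and (iii) has $f$-value strictly less than $f(\cE_0,\varphi_0)$. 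Hence $(\cE_0,\varphi_0)$ is not a local minimum of $f$.

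The main technical obstacle is the strict monotonicity of $g$: although the moment map / Kähler computation is standard on the smooth locus, justifying it at the orbifold (or worse) points traversed by the $\C^*$-orbit requires either lifting the argument to the gauge-theoretic configuration space $\mathcal{C}^s_{\C}$, where the moment map formalism is transparent, or invoking continuity together with monotonicity on a dense open sub-interval of the orbit parameter $r$. Either suffices; everything else is a direct consequence of continuity and the $S^1$-invariance built into $f$.
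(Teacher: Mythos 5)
Your overall strategy is the same as the paper's: exploit the moment-map interpretation of $f$ to show that $t\mapsto f(\cE,t\varphi)$ is strictly increasing along the non-constant $\C^*$-orbit, so that the limit value $f(\cE_0,\varphi_0)$ is approached from below by nearby points, contradicting local minimality. In the case where $(\cE,\varphi)$ is stable this is complete and matches the paper (the observation that $(\cE,\varphi)$ cannot itself be a fixed point, and the use of $S^1$-invariance and continuity of $f$, are both as in the paper's argument; your final paragraph about $\cS$-inequivalence of the orbit points is harmless but redundant, since a strictly smaller $f$-value already forces the points to be distinct from $(\cE_0,\varphi_0)$).

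There is, however, a genuine gap in the case where the polystable representative $(\cE,\varphi)$ is \emph{strictly} polystable, and neither of your two proposed remedies closes it. The space $\mathcal{C}^s_{\C}$ is by definition the locus of pairs that are stable \emph{as $\G^\C$-Higgs bundles}; a strictly polystable $\G$-Higgs bundle does not lie there, and since scaling the Higgs field preserves the polystability type, the entire orbit $\{(\cE,t\varphi)\}_{t\in\C^*}$ avoids both $\mathcal{C}^s_{\C}$ upstairs and $\cM^s(\G)$ downstairs. So there is no "smooth stratum containing a sub-interval of the orbit" in the sense you need, and lifting to $\mathcal{C}^s_{\C}$ is not available either. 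The paper resolves exactly this point by invoking the Jordan--H\"older reduction (Proposition~\ref{prop:JH}): the polystable $(\cE,\varphi)$ reduces to a \emph{stable} $\G'$-Higgs bundle $(\cE',\varphi')$ for a reductive subgroup $\G'\subset\G$, which again cannot be a $\C^*$-fixed point, and the map $\cM(\G')\to\cM(\G)$ is $\C^*$-equivariant and compatible with the Hitchin functions, so the moment-map monotonicity established on $\cM^s(\G')$ transports to the orbit of $(\cE,\varphi)$ in $\cM(\G)$. Your proof needs this (or an equivalent device) to cover the strictly polystable case, which is precisely the case that arises in the applications of this lemma in Sections 5.2--5.3.
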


\begin{proof}
  Replacing $(\cE,\varphi)$ with the polystable representative of its
  $\cS$-equivalence class, we may assume that it is polystable.
  Note also that $(\cE,\varphi)$ cannot be a fixed point of the
  $\C^*$-action.
  
  Consider first the case when $(\cE,\varphi)$ is stable. Then, as in
  the proof of Lemma~\ref{lem:minima-fixed-HH-vanishing}, we can use
  the moment map interpretation of $f$ to deduce that the function
  $\R_{>0}\to\R$ defined by $t\mapsto f(\cE,t\varphi)$ is strictly
  increasing as $t$ tends to infinity.
  For the general case, consider the Jordan--H\"older reduction of
  $(\cE,\varphi)$ given by
  Proposition~\ref{prop:JH}. This is a stable $\G'$-Higgs bundle for some
  $\G'\subset \G$ and cannot be fixed under the $\C^*$-action, since
  otherwise $(\cE,\varphi)$ would also be  fixed. Since the natural
  map $\mathcal{M}(\G')\to\mathcal{M}(\G)$ is $\C^*$-equivariant and
  compatible with the respective Hitchin functions, the result
  follows by the same argument as in the previous paragraph.
\end{proof}






\bibliographystyle{plain}
\bibliography{mybib.bib}

\begin{thebibliography}{10}

\bibitem{SOpqAnnouncement}
Marta Aparicio-Arroyo, Steven Bradlow, Brian Collier, Oscar Garc\'{i}a-Prada,
  Peter~B. Gothen, and Andr\'{e} Oliveira.
\newblock Exotic components of {${\rm SO}(p,q)$} surface group representations,
  and their {H}iggs bundle avatars.
\newblock {\em C. R. Math. Acad. Sci. Paris}, 356(6):666--673, 2018.

\bibitem{MartaSO(1n)}
Marta Aparicio~Arroyo and Oscar Garc{\'{\i}}a-Prada.
\newblock Higgs bundles for the {L}orentz group.
\newblock {\em Illinois J. Math.}, 55(4):1299--1326 (2013), 2011.

\bibitem{MartaThesis}
Marta~Aparicio Arroyo.
\newblock {\em The geometry of $\mathrm{SO}(p,q)$-{H}iggs bundles}.
\newblock PhD thesis, Facultad de Ciencias de la Universidad de Salamanca,
  2009.

\bibitem{atiyah-bott:1982}
M.~F. Atiyah and R.~Bott.
\newblock The {Y}ang-{M}ills equations over {R}iemann surfaces.
\newblock {\em Philos. Trans. Roy. Soc. London Ser. A}, 308:523--615, 1982.

\bibitem{DavidLauraCayleyLanglands}
David Baraglia and Laura Schaposnik.
\newblock {Cayley and Langlands type correspondences for orthogonal Higgs
  bundles}.
\newblock {\em Trans. Amer. Math. Soc.}, Published online Nov 2018.

\bibitem{baragliaschaposnikmonodromyrank2}
David Baraglia and Laura Schaposnik.
\newblock Monodromy of rank 2 twisted {H}itchin systems and real character
  varieties.
\newblock {\em Trans. Amer. Math. Soc.}, 370(8):5491--5534, 2018.

\bibitem{BGRmaximalToledo}
Olivier Biquard, Oscar García-Prada, and Roberto Rubio.
\newblock Higgs bundles, the {T}oledo invariant and the {C}ayley
  correspondence.
\newblock {\em Journal of Topology}, 10(3):795--826, 2017.

\bibitem{biswas-ramanan}
I.~Biswas and S.~Ramanan.
\newblock An infinitesimal study of the moduli of {H}itchin pairs.
\newblock {\em J. London Math. Soc. (2)}, 49(2):219--231, 1994.

\bibitem{BGGGLn}
S.~B. Bradlow, O.~Garc\'{i}a-Prada, and P.~B. Gothen.
\newblock Representations of surface groups in the general linear group.
\newblock In {\em Proceedings of the {XII} {F}all {W}orkshop on {G}eometry and
  {P}hysics}, volume~7 of {\em Publ. R. Soc. Mat. Esp.}, pages 83--94. R. Soc.
  Mat. Esp., Madrid, 2004.

\bibitem{chains-2018}
Steven Bradlow, Oscar Garcia-Prada, Peter Gothen, and Jochen Heinloth.
\newblock Irreducibility of moduli of semistable chains and applications to
  {$\mathrm{U}(p,q)$}-{H}iggs bundles.
\newblock In {\em Geometry and Physics: Volume 2, A Festschrift in honour of
  Nigel Hitchin}. Oxford University Press, 2018.

\bibitem{UpqHiggs}
Steven~B. Bradlow, Oscar Garc{\'{\i}}a-Prada, and Peter~B. Gothen.
\newblock Surface group representations and {${\rm U}(p,q)$}-{H}iggs bundles.
\newblock {\em J. Differential Geom.}, 64(1):111--170, 2003.

\bibitem{HermitianTypeHiggsBGG}
Steven~B. Bradlow, Oscar Garc{\'{\i}}a-Prada, and Peter~B. Gothen.
\newblock Maximal surface group representations in isometry groups of classical
  {H}ermitian symmetric spaces.
\newblock {\em Geom. Dedicata}, 122:185--213, 2006.

\bibitem{BGGHomotopyGroups}
Steven~B. Bradlow, Oscar Garc{\'{\i}}a-Prada, and Peter~B. Gothen.
\newblock Homotopy groups of moduli spaces of representations.
\newblock {\em Topology}, 47(4):203--224, 2008.

\bibitem{MaxRepsAnosov}
Marc Burger, Alessandra Iozzi, Fran{\c{c}}ois Labourie, and Anna Wienhard.
\newblock Maximal representations of surface groups: symplectic {A}nosov
  structures.
\newblock {\em Pure Appl. Math. Q.}, 1(3, Special Issue: In memory of Armand
  Borel. Part 2):543--590, 2005.

\bibitem{BIWmaximalToledoAnnals}
Marc Burger, Alessandra Iozzi, and Anna Wienhard.
\newblock Surface group representations with maximal {T}oledo invariant.
\newblock {\em Ann. of Math. (2)}, 172(1):517--566, 2010.

\bibitem{BurgerIozziWienhardSurvey}
Marc Burger, Alessandra Iozzi, and Anna Wienhard.
\newblock Higher {T}eichm\"uller spaces: from {${\rm SL}(2,\Bbb R)$} to other
  {L}ie groups.
\newblock In {\em Handbook of {T}eichm\"uller theory. {V}ol. {IV}}, volume~19
  of {\em IRMA Lect. Math. Theor. Phys.}, pages 539--618. Eur. Math. Soc.,
  Z\"urich, 2014.

\bibitem{CollierSOnn+1components}
Brian Collier.
\newblock {$\mathsf{SO}(n,n+1)$-surface group representations and their Higgs
  bundles}.
\newblock {\em ArXiv e-prints 1710.01287}, October 2017.

\bibitem{canonicalmetrics}
Kevin Corlette.
\newblock Flat {$G$}-bundles with canonical metrics.
\newblock {\em J. Differential Geom.}, 28(3):361--382, 1988.

\bibitem{MilnorWoodIneqDomicToledo}
Antun Domic and Domingo Toledo.
\newblock The {G}romov norm of the {K}aehler class of symmetric domains.
\newblock {\em Math. Ann.}, 276(3):425--432, 1987.

\bibitem{donaldson-kronheimer:1990}
S.~K. Donaldson and P.~B. Kronheimer.
\newblock {\em The geometry of four-manifolds}.
\newblock Oxford Mathematical Monographs. The Clarendon Press Oxford University
  Press, New York, 1990.

\bibitem{harmoicmetric}
Simon Donaldson.
\newblock Twisted harmonic maps and the self-duality equations.
\newblock {\em Proc. London Math. Soc. (3)}, 55(1):127--131, 1987.

\bibitem{fock_goncharov_2006}
Vladimir Fock and Alexander Goncharov.
\newblock Moduli spaces of local systems and higher {T}eichm\"uller theory.
\newblock {\em Publ. Math. Inst. Hautes \'Etudes Sci.}, 103:1--211, 2006.

\bibitem{friedman-morgan}
Robert Friedman and John~W. Morgan.
\newblock {\em Smooth four-manifolds and complex surfaces}, volume~27 of {\em
  Ergebnisse der Mathematik und ihrer Grenzgebiete (3) [Results in Mathematics
  and Related Areas (3)]}.
\newblock Springer-Verlag, Berlin, 1994.

\bibitem{HiggsPairsSTABILITY}
Oscar Garc{\'{\i}}a-Prada, Peter Gothen, and Ignasi Mundet~i Riera.
\newblock {The {H}itchin-{K}obayashi correspondence, {H}iggs pairs and surface
  group representations}.
\newblock {\em ArXiv e-prints 0909.4487}, September 2009.

\bibitem{Oliveira_GarciaPrada_2016}
Oscar Garc{\'{\i}}a-Prada and Andr{\'e} Oliveira.
\newblock Connectedness of {H}iggs bundle moduli for complex reductive {L}ie
  groups.
\newblock {\em Asian Journal of Mathematics}, 21(5):791--810, 2017.

\bibitem{Sp(2p2q)modulispaceconnected}
Oscar Garc{\'{\i}}a-Prada and Andr{\'e}~G. Oliveira.
\newblock Connectedness of the moduli of {${\rm Sp}(2p,2q)$}-{H}iggs bundles.
\newblock {\em Q. J. Math.}, 65(3):931--956, 2014.

\bibitem{TopologicalComponents}
William~M. Goldman.
\newblock Topological components of spaces of representations.
\newblock {\em Invent. Math.}, 93(3):557--607, 1988.

\bibitem{AndreQuadraticPairs}
Peter~B. Gothen and Andr{\'e}~G. Oliveira.
\newblock Rank two quadratic pairs and surface group representations.
\newblock {\em Geom. Dedicata}, 161:335--375, 2012.

\bibitem{AnosovAndProperGGKW}
Fran{\c{c}}ois Gu{\'e}ritaud, Olivier Guichard, Fanny Kassel, and Anna
  Wienhard.
\newblock Anosov representations and proper actions.
\newblock {\em Geometry and {T}opology}, 2016.

\bibitem{TopInvariantsAnosov}
O.~Guichard and A.~Wienhard.
\newblock Topological invariants of {A}nosov representations.
\newblock {\em Journal of Topology}, 3(3):578--642, Jan 2010.

\bibitem{PosRepsGLW}
Olivier Guichard, Fran{\c{c}}ois Labourie, and Anna Wienhard.
\newblock Positive representations.
\newblock {\em In preparation}, 2016.

\bibitem{guichard_wienhard_2012}
Olivier Guichard and Anna Wienhard.
\newblock Anosov representations: domains of discontinuity and applications.
\newblock {\em Invent. Math.}, 190(2):357--438, 2012.

\bibitem{PosRepsGWPROCEEDINGS}
Olivier Guichard and Anna Wienhard.
\newblock Positivity and higher {T}eichm{\"u}ller theory.
\newblock {\em Proceedings of the 7th {E}uropean {C}ongress of {M}athematics},
  2016.

\bibitem{hausel-thaddeus:2004}
Tam{\'a}s Hausel and Michael Thaddeus.
\newblock Generators for the cohomology ring of the moduli space of rank 2
  {H}iggs bundles.
\newblock {\em Proc. London Math. Soc. (3)}, 88(3):632--658, 2004.

\bibitem{selfduality}
Nigel Hitchin.
\newblock The self-duality equations on a {R}iemann surface.
\newblock {\em Proc. London Math. Soc. (3)}, 55(1):59--126, 1987.

\bibitem{liegroupsteichmuller}
Nigel Hitchin.
\newblock Lie groups and {T}eichm\"uller space.
\newblock {\em Topology}, 31(3):449--473, 1992.

\bibitem{KLPDynamicsProperCocompact}
Michael Kapovich, Bernhard Leeb, and Joan Porti.
\newblock Dynamics on flag manifolds: domains of proper discontinuity and
  cocompactness.
\newblock {\em Geom. Topol.}, 22(1):157--234, 2018.

\bibitem{AnosovFlowsLabourie}
Fran{\c{c}}ois Labourie.
\newblock Anosov flows, surface groups and curves in projective space.
\newblock {\em Invent. Math.}, 165(1):51--114, 2006.

\bibitem{JunLiConnectedness}
Jun Li.
\newblock The space of surface group representations.
\newblock {\em Manuscripta Math.}, 78(3):223--243, 1993.

\bibitem{NitsureHiggs}
Nitin Nitsure.
\newblock Moduli space of semistable pairs on a curve.
\newblock {\em Proc. London Math. Soc. (3)}, 62(2):275--300, 1991.

\bibitem{Ramanan1981}
S.~Ramanan.
\newblock Orthogonal and spin bundles over hyperelliptic curves.
\newblock {\em Proc. Indian Acad. Sci. Math. Sci.}, 90(2):151--166, 1981.

\bibitem{ramanathan_1975}
Annamalai Ramanathan.
\newblock Stable principal bundles on a compact {R}iemann surface.
\newblock {\em Mathematische Annalen}, 213(2):129--152, 1975.

\bibitem{schmitt_2005}
Alexander Schmitt.
\newblock Moduli for decorated tuples of sheaves and representation spaces for
  quivers.
\newblock {\em Proceedings Mathematical Sciences}, 115(1):15--49, 2005.

\bibitem{KatzMiddleInvCyclicHiggs}
Carlos Simpson.
\newblock Katz's middle convolution algorithm.
\newblock {\em Pure Appl. Math. Q.}, 5(2, Special Issue: In honor of Friedrich
  Hirzebruch. Part 1):781--852, 2009.

\bibitem{SimpsonVHS}
Carlos~T. Simpson.
\newblock Constructing variations of {H}odge structure using {Y}ang-{M}ills
  theory and applications to uniformization.
\newblock {\em J. Amer. Math. Soc.}, 1(4):867--918, 1988.

\bibitem{localsystems}
Carlos~T. Simpson.
\newblock Higgs bundles and local systems.
\newblock {\em Inst. Hautes \'Etudes Sci. Publ. Math.}, 75:5--95, 1992.

\bibitem{SimpsonModuli2}
Carlos~T. Simpson.
\newblock Moduli of representations of the fundamental group of a smooth
  projective variety. {II}.
\newblock {\em Inst. Hautes \'Etudes Sci. Publ. Math.}, 80:5--79 (1995), 1994.

\end{thebibliography}

\end{document}